\documentclass{article}
\usepackage[utf8]{inputenc}
\usepackage{hyperref}

\usepackage[english]{babel}
\usepackage{amsmath}
\usepackage{amssymb}
\usepackage{amsthm}
\usepackage{amsfonts}
\usepackage{mathtools}
\usepackage{makeidx}
\usepackage{etoolbox}
\usepackage{bbm}
\usepackage{subfiles}
\usepackage{graphicx}
\usepackage{subcaption}
\usepackage{svg}
\usepackage{mathrsfs}
\usepackage{mathalpha}
\usepackage{array}
\newcolumntype{C}[1]{>{\centering\arraybackslash}m{#1}}
\mathtoolsset{showonlyrefs}

\usepackage{pgfplots}
\usepackage{tikz}
\usetikzlibrary{decorations.markings,arrows.meta,calc,positioning}
\tikzset{
    mid arrow/.style={postaction={decorate,decoration={
        markings,
        mark=at position 0.55 with {\arrow[scale=0.7]{Stealth}}
    }}}
}
\pgfplotsset{compat=1.11}

\usepackage{xcolor}
\usepackage{dsfont}
\usepackage{pdflscape}

\usepackage[left=2cm,right=2cm]{geometry}

\usepackage[backend=biber,
    style=numeric,
    url=false,
    doi=false,
    eprint=false]{biblatex}
\usepackage{csquotes}
\bibliography{biblio.bib}

\newtheorem{thm}{Theorem}[section]
\newtheorem{cor}[thm]{Corollary}
\newtheorem{prop}[thm]{Proposition}
\newtheorem{lem}[thm]{Lemma}

\newtheorem{claim}[thm]{Claim}

\AfterEndEnvironment{prob}{\noindent\ignorespaces}

\theoremstyle{definition}
\newtheorem{defn}[thm]{Definition}

\newtheorem{exmp}[thm]{Example}

\theoremstyle{remark}
\newtheorem{rem}[thm]{Remark}

\numberwithin{equation}{section}

\newcommand{\R}{\mathbb{R}}

\newcommand{\N}{\mathbb{N}}

\newcommand{\al}{\alpha}
\newcommand{\be}{\beta}
\newcommand{\e}{\eta}

\newcommand{\eps}{\varepsilon}

\newcommand{\I}{\mathrm{Id}}

\newcommand{\pa}{\partial}

\newcommand{\la}{\lambda}

\newcommand{\U}{\mathcal{U}}

\newcommand{\tu}{\Tilde{u}}

\newcommand{\cL}{\mathcal{L}}

\DeclareMathAlphabet{\mathpzc}{OT1}{pzc}{m}{it}
\DeclareMathAlphabet{\mathboondox}{U}{BOONDOX-cal}{m}{n}
\DeclareMathAlphabet{\matheuler}{U}{eur}{m}{n}

\makeatletter
\newcommand{\bigcomp}{%
  \DOTSB
  \mathop{\vphantom{\sum}\mathpalette\bigcomp@\relax}%
  \slimits@
}
\newcommand{\bigcomp@}[2]{%
  \begingroup\m@th
  \sbox\z@{$#1\sum$}%
  \setlength{\unitlength}{0.9\dimexpr\ht\z@+\dp\z@}%
  \vcenter{\hbox{%
    \begin{picture}(1,1)
    \bigcomp@linethickness{#1}
    \put(0.5,0.5){\circle{1}}
    \end{picture}%
  }}%
  \endgroup
}
\newcommand{\bigcomp@linethickness}[1]{%
  \linethickness{%
      \ifx#1\displaystyle 2\fontdimen8\textfont\else
      \ifx#1\textstyle 1.65\fontdimen8\textfont\else
      \ifx#1\scriptstyle 1.65\fontdimen8\scriptfont\else
      1.65\fontdimen8\scriptscriptfont\fi\fi\fi 3
  }%
}

\makeatother

\title{Second order optimality conditions for piecewise regular extremals in Optimal Control}

\author{Andrei A. Agrachev \thanks{SISSA, Trieste, Italy. agrachev@sissa.it}, Michele Motta \thanks{SISSA, Trieste, Italy. mimotta@sissa.it}}

\begin{document}

\maketitle

\begin{abstract}
    \noindent We study second-order optimality conditions for optimal control problems with integral cost. We consider extremals obtained by concatenating finitely many regular arcs and prove both a necessary condition for weak local optimality and a sufficient condition for strong local optimality within this class.
    The key object is the Jacobi curve, a curve of Lagrangian subspaces encoding the second variation along the reference extremal. In the regular case, this curve is smooth and can be identified with the tangent spaces to a suitable field of extremals. For piecewise regular extremals, the field of extremals is only piecewise smooth, and the associated Jacobi curve has discontinuities at the switching times. This makes the relation between vertical intersections, conjugate points, and optimality more delicate.
    We develop this discontinuous Jacobi-curve framework and show how it yields effective second-order tests. The case of one-dimensional free-control systems is studied in detail as an illustrative class in which piecewise regular extremals arise naturally.
\end{abstract}

\tableofcontents

\section{Introduction}
\label{sec:Introduction}
\subsection{Formulation of the problem and motivations}
\label{sec:formulation-OCP-free-time}
In this paper we study the following class of optimal control problems.
Let $U\subset \R^m$ be a subset having the following structure:
it is the union of finitely many smooth\footnote{Throughout the whole paper, by smooth we always mean $C^\infty$-smooth.} submanifolds $U_i$:
\begin{equation}
    \label{eq:structure-of-U}
    U = \bigcup_{i=1}^n U_i, 
    \quad 
    U_i \text{ smooth $m_i$-dimensional submanifold without boundary, with} \, U_i\cap U_j=\emptyset \text{ if } i\neq j.
\end{equation}
Moreover, let $M$ be a $d$-dimensional smooth manifold and $f : M \times U \to TM$ such that $f(\cdot, u)$ is a smooth vector field for all $u\in U$ and $f|_{M\times U_i}$ is smooth for every $i\in\{1,\dots,n\}$. 
For $q_0,q_1\in M$, we consider the control system
\begin{align}
    \label{eq:control-system}
    & \dot q = f(q,u(t)), 
    \\
    \label{eq:control-system-boundary-condition}
    & q(0)=q_0, \, q(T)=q_1,
\end{align}
where $u\in L^\infty([0,T];U)$. In this paper, we mainly focus on the case with free final time $T>0$.
However, in Section~\ref{sec:fixed-final-time} we show how the problem with fixed final time can be reduced to the free final time. 
For this reason, Theorems~\ref{thm:nec-cond-of-optimality} and \ref{thm:suff-cond-optimality} are formulated for both cases. 

Let us denote by $\mathfrak U_T \coloneqq L^\infty([0,T];U)$ the class of admissible controls on the interval $[0,T]$ and by $\mathfrak{U}  = \cup_{T>0} \mathfrak{U} _T$. 
Moreover, given any control $u\in\mathfrak{U} $, we denote by $q(t;q_0,u)$ the solution of Equation~\eqref{eq:control-system} with initial point $q_0$ and control equal to $u$.
If there is no ambiguity on the initial point, we simply write $q(t;u)$ instead of $q(t;q_0,u)$.

Let $L : M \times U \to \R$ be a continuous function such that for every $U_i$ the restriction $L|_{M\times U_i}$ is smooth. 
We consider the cost of the form $J: \mathfrak{U}  \to \R$ defined as
\begin{equation}
    \label{eq:cost-functional}
    J(u) = \int_0 ^T L(q(t;u),u(t)) \, dt, \quad u\in\mathfrak{U} _T.
\end{equation}
We want to study the following optimal control problem (OCP) with free final time
\begin{equation}
    \label{eq:formulation-OCP}
    \min \{ J(u) \mid u\in \mathfrak U, \ q(\cdot \, ; u) \text{ satisfies \eqref{eq:control-system},\eqref{eq:control-system-boundary-condition}} \},
\end{equation}
and the optimal control problem with fixed final time
\begin{equation}
    \label{eq:formulation-OCP-fixed-final-time-intro}
    \min \{ J(u) \mid u\in \mathfrak U_T, \, q(\cdot \, ; u) \text{ satisfies \eqref{eq:control-system},\eqref{eq:control-system-boundary-condition}, with } T>0 \text{ fixed} \}.
\end{equation}

To motivate our framework, we provide some examples of control systems which are in the form described before.
\begin{exmp}[Bang-bang control systems]
    \label{ex:bang-bang-control-system-2-control}
    Let us consider a control system of the form
    \begin{equation}
        \dot q = \sum_{i=1}^n u_i(t) f_i(q), 
        \quad 
        u \in U 
        = 
        \bigcup_{i=1}^n U_i.
    \end{equation}
    where $f_1,\dots,f_n$ are smooth vector fields on $M$ and $U_i=\{e_i\}$, $e_i$ being the $i^{\text{th}}$-vector of the canonical basis of $\R^n$.
    We consider the time-optimal control problem, that is, we want to minimize the final time $T$ to reach a point $q_1$ from a point $q_0$. 
    This corresponds to taking $L(q,u)=1$ in \eqref{eq:cost-functional}.  
    Clearly, admissible controls for this system are piecewise constant functions with values in $U$. 
\end{exmp}

\begin{exmp}[OCP with $L^1$ cost]
    \label{ex:L1-control-system-2-control}
    Let us consider a control system of the form
    \begin{equation}
        \dot q = f_0(q) + \sum_{i=1}^m u_i(t) f_i(q), 
        \quad 
        u \in U = \{u\in\R^m \mid |u|\leq 1\}.
    \end{equation}
    where, again, $f_0,f_1,\dots,f_m$ are smooth vector fields on $M$.
    Given two points $q_0,q_1 \in M$, we consider the optimal control problem given by the minimization of the functional
    \begin{equation}
        J(u) = \int_0 ^T |u(t)| \, dt,
    \end{equation}
    among all controls steering the system from $q_0$ to $q_1$.
    Even though the integrand of $J$ is not smooth, the set of control values $U$ can be conveniently divided into three submanifolds without boundary: the origin $U_0=\{0\}$, the interior of the ball $U_1=\{u\in \R^m \mid |u|<1\}$ and the boundary of the ball $U_2=\{u\in \R^m \mid |u|=1\}$. 
    It is straightforward to check that this optimal control problem is in the form considered above.  
\end{exmp}

\begin{exmp}[Problem of Calculus of Variations with a non-convex Lagrangian]
    \label{ex:non-conv-tonelli}
    Let $M=\R^d$ and $U=\R^d$. We consider the free control system
    \begin{equation}
        \dot q = u, \quad q(0) = q_0, \ q(T) = q_1.
    \end{equation}
    The set of admissible control over the time interval $[0,T]$ is $\mathfrak U_T=L^\infty([0,T];\mathbb R^d)$. 
    Let $L\in C^\infty(\R^d \times \R^d ; \R)$ be a positive and super-linear function, that is, for every $q\in\R^d$ it satisfies
    \begin{equation}
        \lim _{|u|\to + \infty} \frac{L(q,u)}{|u|} = +\infty.
    \end{equation}
    In the literature of Calculus of Variations, a function $L$ satisfying the above assumptions together with strict convexity is called a Tonelli Lagrangian.
    In our framework, we do not assume convexity: we suppose that $L$ is a \emph{double-well} function, that is it has exactly three distinct critical points, two of which are local minimum points and one is a local maximum point. 
    We want to minimize the functional
    \begin{equation}
        J(u)=\int_0 ^T L(q(t),u(t)) dt.
    \end{equation}
    Dividing $U=\R^d$ into domains of convexity for $L$, one can obtain this non-convex problem as a problem in the form above. In Section \ref{sec:example-free-control-system}, we discuss more details of this example in the one-dimensional case.
\end{exmp}

In the general framework of optimal control, Pontryagin maximum principle (PMP) provides a powerful tool for solving OCPs by characterizing necessary conditions for optimality. 
One possible statement of PMP is given in Theorem \ref{thm:PMP}. We refer to \cite{AgSa} for a wider introduction to PMP and some of its applications.
However, PMP only detects critical points of the cost functional and one cannot distinguish between local minima, maxima, and saddle points. 
Furthermore, even if PMP determines uniquely the control, it does not provide any information about where the control ceases to be the optimal solution, similarly to what happens to the solution of the geodesics equation on Riemannian manifolds. 

For some OCPs, PMP alone can be enough to find the optimal solution, but there are many instances for which this task can be quite difficult without additional information. 
For this reason, in the Sixties some first examples of necessary conditions of second order were introduced, see for instance \cite{Kelley,Goh,Krener}. These ideas were subsequently recast in a geometric and symplectic framework, see for instance \cite{Agr77,Agr97,Agr98}, and further developed in the monograph \cite{AgSa}. 
In \cite{BonCaiTrel07}, a numerical implementation to check second order conditions in an automated way is studied. 
More recent contributions include \cite{AgSteZe02,PogSte04}, where the case of \emph{bang-bang} extremals is studied, \cite{MauOsm16}, where second order conditions for weak-optimality of broken-extremals are considered, and \cite{AroBonGoh16,StefZezz17,PoggStef-bang-sing-bang,ChittStef,Dmi08,DmiOsm22,AroMotRam20}, where significant advances in second-order theory were made in many different directions.

The aim of this paper is to prove both a necessary and a sufficient condition for optimality for the class of problems introduced before. 
The common feature of the previous examples is that the strong Legendre condition (see Corollary \ref{cor:legendre-condition} and the discussion below) is satisfied on portions of the trajectory, but not on the whole time interval. 
Classical second-order theory applies naturally to smooth regular extremals, see \cite{AgSa}, Chapter 20 and 21.  
The purpose of this paper is to provide a unified geometric framework for the wide class of piecewise regular extremals, which includes as particular instances regular extremals and \emph{bang-bang} extremals, and to derive both necessary and sufficient second-order optimality conditions.

\subsection{Main results}
\label{sec:main-results}
\begin{table}
    \centering 
    \begin{tabular}{ | C{0.31\linewidth} | C{0.07\linewidth} | C{0.25\linewidth} | C{0.27\linewidth} | } 
        \hline
        Name & Symbol & Definition & Comments \\[5pt] 
        \hline
        PMP Hamiltonian &$h_u (\lambda)$ & $\langle \la , f_u(q) \rangle - L(q,u)$  & - \\[5pt]
        \hline
        PMP Hamiltonian on the $U_j$ &$h_u ^j (\lambda)$ & $\langle \la , f_u(q) \rangle - L(q,u)$, $u\in U_j$  & - \\[5pt]
        \hline
        Maximized Hamiltonian over $U_j$ & $H^j (\lambda)$ & $\max_{v\in U_j} h_v(\lambda)= h_{u_j(\la)}(\la)$ & - \\ [5pt]
        \hline
        Maximized Hamiltonian & $H (\lambda)$ & $\max_{v\in U} h_v(\lambda) = h_{u_{\max}(\la)}(\la)$ & Along $\tilde \la$, $H(\tilde \la_t) = H^j (\tilde \la_t)$ for $t\in(t_{j-1},t_j)$\\ [5pt]
        \hline
        Transported-back Hamiltonian vector fields at $\tilde\la_0$& $\vec {\mathcal{H}}^j $ & $\left. (\widetilde{\Phi}_{0,t}^{-1})_*  \vec H^j \right|_{\tilde \la_0}$ &  Constant for $t\in[t_{j-1},t_j]$ \\
        \hline
        Transported-back Hamiltonian & $\matheuler{h}_t (\lambda,u)$ & $ (h_u(\cdot) - h_{\tilde u(t)}) \circ \widetilde{\Phi}_t$ & At $\la=\tilde \la_0$, $\matheuler{h}_u(\tilde \la_0) = h_u (\tilde \la_t) - H^j(\tilde \la_t)$ for $t\in(t_{j-1},t_j)$\\ [5pt]
        \hline
    \end{tabular}
    \caption{Notation for some Hamiltonian functions introduced throughout the paper.}
\end{table}
The purpose of this subsection is to give a rigorous statement of the two main results proved in this paper, which are Theorems \ref{thm:nec-cond-of-optimality} and \ref{thm:suff-cond-optimality}. However, a precise statement requires quite a lot of different notions, which we recall before the statements.

In this paper, we follow the notation used in the book \cite{AgBaBo}, see  in particular Sections 4.1 and 4.2.
The angled brackets $\langle \cdot \, , \cdot\rangle$ stand for the duality pairing between tangent and cotangent spaces, that is, given $q\in M$ and $\lambda \in T^*_q M$, $v\in T_qM$, the covector $\lambda$ applied to the vector $v$ is denoted by $\langle \lambda,v\rangle$. 
The cotangent bundle $T^*M$ has a canonical symplectic form $\sigma$, that allows us to define Hamiltonian vector fields on $T^* M$ as follows. 
Given a function $h\in C^\infty (T^*M)$, the corresponding Hamiltonian vector field $\vec h$ is defined by
$$
d h = \sigma (\cdot , \vec h).
$$
Moreover, we denote by $\{\cdot \,,\cdot\}$ the Poisson bracket between functions:
\begin{equation}
    \{a,b\} \coloneqq \sigma(\vec a, \vec b), \; a,b\in C^\infty (T^*M).
\end{equation}
If $\la$ is a function of the time variable $t$, to avoid overuse of parenthesis we write $\la_t$ in place of $\la(t)$. 
Also, we recall that, if a curve $\la (\cdot)$ in $T^*M$ satisfies $\dot \la = \vec a(\la)$ for some function $a\in C^\infty(T^*M)$, then, for $b\in C^\infty(T^*M)$, we have
\begin{equation}
    \label{eq:identities-symplectic-geometry}
    \frac{d}{dt}[b(\la_t)] = \langle d_{\la_t}b , \dot \la_t \rangle = \langle d_{\la_t}b , \vec a(\la_t) \rangle = \sigma_{\la_t} ( \vec a , \vec b) = \{a,b\}(\la_t).
\end{equation} 
If $(V,\sigma)$ is a symplectic space and $W\subset V$, we denote by $W^\angle \coloneqq \{ v \in V \mid \sigma (v,w) = 0, \ \forall w \in W\} $.
If $\psi: M \to M$ is a smooth diffeomorphism and $X$ is a smooth vector field on $M$, we denote by $\psi_* X$ the push-forward of $X$ through $\psi$ and $\psi_* X(q)$ denotes the differential of $\psi$ at the point $q$ applied to the vector $X(q)\in T_q M$.
As we are going to describe in this section, we are interested in the behaviour of admissible trajectories of the control system \eqref{eq:control-system} in a neighbourhood of a reference trajectory. 
Thus, it is not restrictive to assume that all vector fields under consideration are compactly supported in this neighbourhood and, in particular, we always assume that they are complete. 

To solve optimal control problems, PMP is a fundamental tool. We recall the statement for the reader's convenience.
\begin{thm}[PMP]
    \label{thm:PMP}
    Let $\tilde u \in \mathfrak U_T$ be an optimal solution either for Problem \eqref{eq:formulation-OCP} or for \eqref{eq:formulation-OCP-fixed-final-time-intro}. 
    Define
    \begin{equation}
        \label{eq:def-hamilt-PMP}
        h_u(\la) = \langle \la , f(q,u) \rangle +\nu L(q,u), 
        \quad 
        \la\in T^*M, \, u\in U, \, \nu \in \{0,-1\}.
    \end{equation}
    Then, there exists a non-trivial pair 
     \begin{equation}
         (\nu,\tilde\la_t)\neq 0, \, \quad \tilde\la_t \in T_{\tilde q(t)}^*M, \, \nu \in \R,
     \end{equation}
     such that the following conditions hold:
     \begin{align}
         \label{eq:ham-syst-PMP}
         &\dot {\tilde \la}_t = \vec h_{\tu(t)}(\tilde \la_t), \quad \text{for a.e. }\, t\in[0,T],
         \\[5pt]
         \label{eq:PMP-max-cond}
         &h_{\tu(t)}(\tilde \la_t) = \max _{u\in U} h_{u}(\tilde \la_t) \quad \text{for a.e. }\, t\in[0,T],
         \\
         &\nu \leq 0.
     \end{align}
     Moreover, $h_{\tu(t)}(\tilde \la_t) = \text{ const.}$ and the constant is equal to $0$ in the case of free final time.
\end{thm}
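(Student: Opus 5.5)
The plan is to follow the classical route via the endpoint map, as in \cite{AgSa} and \cite{AgBaBo}. The cost is absorbed as an extra state coordinate. Consider the extended system on $\widehat M = \R\times M$, $\hat q=(q^0,q)$, with dynamics $\dot{\hat q}=\hat f(\hat q,u)=(L(q,u),f(q,u))$. For free final time, time is also a control parameter: reparametrize $t=\int_0^s \alpha(r)\,dr$ with $\alpha>0$, so that $\dot{\hat q}=\alpha\hat f(\hat q,u)$. Define the endpoint map $F:(u,T)\mapsto \hat q(T;u)$ (for fixed $T$, only $u\mapsto\hat q(T;u)$). If $\tilde u$ is optimal, the image of a neighbourhood of $(\tilde u,T)$ cannot contain points $(J(\tilde u)-\eps,q_1)$ with $\eps>0$. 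Hence $F(\tilde u,T)$ is a boundary point of the attainable set from $(0,q_0)$.

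The key step is a first-order openness statement: if $F(\tilde u,T)$ is a boundary point, the linearization of the attainable set at that point cannot be all of $\R\times T_{q_1}M$. Since $U$ is only a union of submanifolds and controls are merely $L^\infty$, I will not differentiate with respect to $u$ directly. Instead I will use needle (McShane) variations. For Lebesgue points $\tau_1<\dots<\tau_k$ of $\tilde u$, values $v_i\in U$ and small lengths $\eps_i\ge0$, replace $\tilde u$ by $v_i$ on $[\tau_i-\eps_i,\tau_i]$. The resulting endpoint is differentiable in $(\eps_1,\dots,\eps_k)\in\R^k_{\ge0}$ at $0$. Its derivative is $\sum_i \eps_i P_{\tau_i,T*}\big(\hat f(\cdot,v_i)-\hat f(\cdot,\tilde u(\tau_i))\big)(\hat{\tilde q}(\tau_i))$, where $P_{\tau,T}$ is the flow of the reference nonautonomous field. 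In the free-time case there are additional directions $\pm\hat f(\cdot,\tilde u(T))$, coming from varying $T$ at a Lebesgue point, or equivalently from varying $\alpha$. These vectors generate a convex cone $K$.

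Next comes the topological step, which I expect to be the main obstacle. If $K$ were the whole space, one could choose finitely many needle variations whose derivatives positively span $\R^{d+1}$. The map from the simplex of parameters would then be continuous, differentiable at the vertex with surjective conic derivative, and a Brouwer fixed-point argument would show that its image covers a neighbourhood of the endpoint. This contradicts optimality. The technical work here is the uniformity of the first-order expansion in $\eps$ for several simultaneous needles, using only $L^\infty$ controls and Lebesgue points. Assuming the dynamics are compactly supported near the trajectory, as the paper does, makes the flows complete and the estimates uniform.

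Since $K\neq\R^{d+1}$ is convex, there is a nonzero covector $\hat\la_T=(\nu,\la_T)$ with $\langle\hat\la_T,w\rangle\le0$ for all $w\in K$. Because the direction $(-1,0)$ is excluded by optimality, one may take $\nu\le0$. Transport it backward: $\la_t=(P_{t,T})^*\la_T$, i.e.\ the adjoint equation, which in cotangent language is $\dot{\tilde\la}_t=\vec h_{\tilde u(t)}(\tilde\la_t)$. Here $\nu$ stays constant because the extended dynamics do not depend on $q^0$. Nonpositivity on needle directions gives $h_{v}(\tilde\la_\tau)\le h_{\tilde u(\tau)}(\tilde\la_\tau)$ at every Lebesgue point, which is the maximum condition a.e. Nontriviality follows from $\hat\la_T\neq0$. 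For free time, nonpositivity on $\pm\hat f(\cdot,\tilde u(T))$ forces $h_{\tilde u(T)}(\tilde\la_T)=0$. Finally, $t\mapsto H(\tilde\la_t)=\max_u h_u(\tilde\la_t)$ is Lipschitz with zero derivative a.e., by the standard envelope argument comparing $h_{\tilde u(t)}$ at nearby Lebesgue points. Hence the Hamiltonian is constant, and in the free-time case this constant is $0$.
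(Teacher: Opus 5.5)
The paper gives no proof of this statement: it is recalled as the classical PMP with a pointer to \cite{AgSa}, and your argument is essentially the standard proof found there (cost as an extra state, needle and time-rescaling variations, a Brouwer-type conic openness lemma, convex separation, backward transport of the covector, and the envelope argument for constancy). The one step to state more carefully is the sign $\nu\le 0$, since $K\neq\R^{d+1}$ alone does not fix it: you need the directional form of your openness lemma, namely that optimality forces $(-1,0)\notin\operatorname{int}K$ (run the Brouwer argument on a simplex of needle directions whose cone contains $(-1,0)$ in its interior, which produces attainable endpoints $(J(\tilde u)-\eps,q_1)$), and then you separate $(-1,0)$ from $\operatorname{int}K$; two smaller points are that $T$ need not be a Lebesgue point of $\tilde u$, so the free-time condition should come from $\alpha$-variations at interior Lebesgue points, and that for unbounded $U$ the Lipschitz envelope argument should use the supremum over a bounded set containing the essential range of $\tilde u$.
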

We call \emph{extremal trajectory} any function $\tilde \la : [0,T] \to T^*M$ satisfying \eqref{eq:ham-syst-PMP} and \eqref{eq:PMP-max-cond} with control equal to $\tilde u \in \mathfrak U$. 
An extremal trajectory is called \emph{normal} if $\nu=-1$ and \emph{abnormal} if $\nu=0$.
In the following, we consider only normal extremal trajectories. 

For $i\in \{1,\dots,n\}$, we define $h_u ^i$ to be the restriction of the Hamiltonian function $h_u$ to the submanifold $U_i$, i.e. $h_u ^i (\lambda) = h_u (\lambda)$ for $u\in U_i$ and $\la\in T^*M$. 
Moreover, we introduce the maximized Hamiltonian functions $H,H^i : T^*M \to \R$ defined as   
\begin{align}
    \label{eq:def-max-hamilt}
    H^i(\la) = \max_{u\in U_i} h_u ^i(\la), 
    \quad
    H(\la) = \max_{u\in U} h_u(\la).
\end{align}
In particular, we have that $H(\lambda) = \max_{i=1,\dots,n}H^i(\lambda)$.
From PMP, we immediately obtain the following Corollary. 
\begin{cor}[Legendre condition]
    \label{cor:legendre-condition}
    Let $\tilde u \in \mathfrak U_T$ be an optimal solution either for Problem \eqref{eq:formulation-OCP} or for \eqref{eq:formulation-OCP-fixed-final-time-intro}. 
    Suppose that $\tilde \la$ is an extremal trajectory of $\tilde u$. 
    If for $\ell\in\{1,\dots,n\}$ we have
    \begin{equation}
        H(\tilde \lambda_t) 
        = 
        H^{\ell}(\tilde \lambda_t),
    \end{equation}
    for $t\in I\subset [0,T]$, where $I$ is an interval, 
    then 
    \begin{equation}
        \label{eq:legendre-condition-intervall}
        \tag{LC1}
        \left. \frac{\pa^2 h_u ^\ell}{\pa u^2}\right|_{u=\tilde u(t),\la=\tilde \la_t} \leq 0, \quad \text{for a.e. } t\in I.
    \end{equation}
    Moreover, if for $\ell_1,\ell_2\in\{1,\dots,n\}$ we have
    \begin{align}
        \label{eq:hamilt-max-prima-switch}
        H(\tilde \lambda_t) 
        = 
        H^{\ell_1}(\tilde \lambda_t)
        > 
        \max_{i\neq \ell_1} \{H^i(\tilde \la_t)\}, 
        \quad 
        t\in(\tau-\eps,\tau],
        \\
        \label{eq:hamilt-max-dopo-switch}
        H(\tilde \lambda_t) 
        = 
        H^{\ell_2}(\tilde \lambda_t)
        > 
        \max_{i\neq \ell_2} \{H^i(\tilde \la_t)\}, 
        \quad 
        t\in[\tau,\tau+\eps),
    \end{align}
    for $\tau\in [0,T]$ and $\eps>0$, 
    then 
    \begin{equation}
        \label{eq:legendre-condition-switch}
        \tag{LC2}
        \left\{ H^{\ell_1}, H^{\ell_2} \right\} (\tilde \la_\tau) \geq 0.
    \end{equation}
\end{cor}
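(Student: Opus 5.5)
Both conditions follow from the maximality condition \eqref{eq:PMP-max-cond} of Theorem~\ref{thm:PMP} applied to the normal extremal $\tilde\la$. We do not need any second-variation machinery: (LC1) is a pointwise second-order necessary condition for an interior maximum, and (LC2) is a first-order comparison of the competing branches $H^{\ell_1}$ and $H^{\ell_2}$ near the switching time $\tau$.

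\textbf{Proof of (LC1).} Fix $t\in I$ outside the null sets where \eqref{eq:ham-syst-PMP} or \eqref{eq:PMP-max-cond} fail. Since $H(\tilde\la_t)=H^\ell(\tilde\la_t)$ and the maximum is attained at $\tilde u(t)$, we may take $\tilde u(t)\in U_\ell$ for a.e.\ $t\in I$. If $\tilde u(t)$ lies in another $U_i$, then $H^i(\tilde\la_t)=H(\tilde\la_t)$ as well, and the statement should be read with $\ell$ replaced by the index of the stratum containing $\tilde u(t)$. The map $u\mapsto h^\ell_u(\tilde\la_t)$ is smooth on the manifold $U_\ell$, which has no boundary. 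By \eqref{eq:PMP-max-cond} it attains a maximum over $U_\ell$ at the interior point $\tilde u(t)$. Hence its differential vanishes there, and its Hessian, computed in any chart of $U_\ell$, is negative semidefinite. The Hessian is intrinsic at a critical point, so this is exactly \eqref{eq:legendre-condition-intervall}.

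\textbf{Proof of (LC2).} Define $\varphi(t)=H^{\ell_1}(\tilde\la_t)-H^{\ell_2}(\tilde\la_t)$ for $t$ near $\tau$. By \eqref{eq:hamilt-max-prima-switch} we have $\varphi> 0$ on $(\tau-\eps,\tau)$, and by \eqref{eq:hamilt-max-dopo-switch} we have $\varphi<0$ on $(\tau,\tau+\eps)$. At $\tau$ both conditions give $H^{\ell_1}(\tilde\la_\tau)=H(\tilde\la_\tau)=H^{\ell_2}(\tilde\la_\tau)$, so $\varphi(\tau)=0$. Therefore the one-sided derivatives of $\varphi$ at $\tau$, where they exist, are $\le 0$.

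We now compute these derivatives. We assume, as is implicit in the setting of piecewise regular extremals, that $H^{\ell_1}$ and $H^{\ell_2}$ are differentiable near $\tilde\la_\tau$. For $t<\tau$ we have $\dot{\tilde\la}_t=\vec h_{\tilde u(t)}(\tilde\la_t)$ with $\tilde u(t)\in U_{\ell_1}$ realizing $H^{\ell_1}$. By the envelope argument, $d_{\tilde\la_t}H^{\ell_1}=d_{\tilde\la_t}h_{\tilde u(t)}$, so $\dot{\tilde\la}_t=\vec H^{\ell_1}(\tilde\la_t)$. Then \eqref{eq:identities-symplectic-geometry} gives
\[
\dot\varphi(t)=\{H^{\ell_1},H^{\ell_1}-H^{\ell_2}\}(\tilde\la_t)=-\{H^{\ell_1},H^{\ell_2}\}(\tilde\la_t).
\]
Letting $t\to\tau^-$ and using continuity, the left derivative is $-\{H^{\ell_1},H^{\ell_2}\}(\tilde\la_\tau)$. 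The requirement that it be $\le0$ is precisely \eqref{eq:legendre-condition-switch}. The right side gives the same quantity, since $\{H^{\ell_2},H^{\ell_1}-H^{\ell_2}\}=-\{H^{\ell_1},H^{\ell_2}\}$, and this serves as a consistency check.

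\textbf{Main obstacle.} The delicate point is justifying the identity $\dot{\tilde\la}_t=\vec H^{\ell_1}(\tilde\la_t)$, that is, the differentiability of $H^{\ell_1}$. If that fails, we avoid it with a comparison argument. We use $h^{\ell_1}_{\tilde u(\tau^-)}\le H^{\ell_1}$, with equality along the extremal, and similarly for $\ell_2$. This bounds $\varphi$ from above and below by differences of smooth Hamiltonians whose derivatives along the flow are the same Poisson bracket evaluated at $\tilde\la_\tau$. The sign argument then goes through unchanged.
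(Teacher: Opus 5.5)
Your proof is correct and follows the paper's argument. (LC1) is the second-order necessary condition at the interior maximum $\tilde u(t)$ of $u\mapsto h^\ell_u(\tilde\la_t)$ on the boundaryless $U_\ell$, and (LC2) comes from the sign change of $H^{\ell_1}-H^{\ell_2}$ along $\tilde\la$ at $\tau$, with the derivative evaluated through \eqref{eq:identities-symplectic-geometry}; the paper differentiates $H^{\ell_2}-H^{\ell_1}$ instead, which is the same up to sign.

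Your closing fallback is unnecessary, because the bracket in \eqref{eq:legendre-condition-switch} already presupposes that $H^{\ell_1},H^{\ell_2}$ are differentiable. As written it is also inaccurate: $h^{\ell_1}_{\tilde u(\tau^-)}$ agrees with $H^{\ell_1}$ along $\tilde\la$ only at $t=\tau$, so it does not sandwich $\varphi$ between differences of smooth Hamiltonians.
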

The inequality in \eqref{eq:legendre-condition-intervall} follows by \eqref{eq:PMP-max-cond}, since $\tilde u(t)$ is a local maximum point for the function $U_{\ell} \ni u\mapsto h_u^{\ell}(\tilde \la_t)$ and $h_u^\ell$ is smooth in $u$.
Concerning \eqref{eq:legendre-condition-switch}, it is obtained from the fact that, by Equations \eqref{eq:hamilt-max-prima-switch} and \eqref{eq:hamilt-max-dopo-switch}, the derivative of the map $t \mapsto H^{\ell_2}(\tilde \lambda_t) - H^{\ell_1}(\tilde \lambda_t)$ must be positive at $\tau$. 
Evaluating the derivative as in Equation \eqref{eq:identities-symplectic-geometry} yields \eqref{eq:legendre-condition-switch}.

The bilinear form in the left-hand side of \eqref{eq:legendre-condition-intervall} is called \emph{Legendre form} and the inequalities in \eqref{eq:legendre-condition-intervall} and \eqref{eq:legendre-condition-switch} are called \emph{Legendre conditions} in the literature. 
If the inequalities are strict, they are referred to as \emph{strong Legendre conditions} and the corresponding extremal arc is called \emph{regular}.

In the book \cite{AgSa}, extremal trajectories satisfying strong Legendre condition on the whole time interval are studied in detail (in particular, see Chapter 20 and 21). 
However, in many OCPs, see for instance Section \ref{sec:example-free-control-system}, extremal trajectories are obtained by concatenating many regular arcs, while \eqref{eq:legendre-condition-intervall} is not satisfied on the whole time interval. 
For this reason, we introduce following class of extremal trajectories.
\begin{defn}[Piecewise regular extremal]
    \label{def:piecewise-regular-control}
    Let $\tilde \la : [0,T]\to T^*M$ be an extremal trajectory either for Problem \eqref{eq:formulation-OCP} or for \eqref{eq:formulation-OCP-fixed-final-time-intro}, corresponding to a control $\tilde u : [0,T] \to U$. 
    We say that $\tilde \la$ is \emph{piecewise regular} if it holds that: 
    \begin{enumerate}
        \item there are $0 = t_0 < t_1 < \dots < t_k < t_{k+1} = T$ such that for $t\in (t_{j-1},t_{j})$ 
        there is a unique $\ell(j)\in\{1,\dots,n\}$ for which $H(\tilde \lambda_t) = H^{\ell(j)}(\tilde \lambda_t)$ and it holds that
        \begin{equation}
            \label{eq:strong-legendre-condition-intervall}
            \tag{SLC1}
            \left. \frac{\pa^2 h_u ^{\ell(j)}}{\pa u^2}\right|_{u=\tilde u(t),\la=\tilde \la_t} < 0, 
            \quad \text{for a.e. } t\in (t_{j-1},t_{j}).
        \end{equation}
        \item For every $j=1,\dots,k$, we have $\left\{H ^{\ell(j)} , H ^{\ell(j+1)}\right\}(\tilde \la_{t_j}) > 0$.
    \end{enumerate}
    We call $t_1,\dots,t_k$ the \emph{switching times} of the control $\tilde u$.
    Finally, we say that the pair $(\tilde u, \tilde \la)$ is a \emph{piecewise regular extremal pair}.
\end{defn}
\noindent
To avoid using too many indices, we denote by $U_j$ the submanifold $U_{\ell(j)}$ for which Equation \eqref{eq:strong-legendre-condition-intervall} holds for $t\in(t_{j-1},t_j)$, by $u_j$ the corresponding maximizing control on $U_j$ and $H^j$ the maximized Hamiltonian over $U_j$.
By the implicit function theorem (see Proposition \ref{prop:pw-regular-controls} for more details), if $\tilde \la$ is a piecewise regular extremal there is a neighbourhood $\mathcal{N}$ in $T^*M$ of the trajectory $\tilde \la$ such that the maximizing control $u : \mathcal N \to U$ defined by 
\begin{equation}
    \label{eq:def-maimizing-control}
    u(\la) = \arg \max_{u\in U} h_u (\la), 
\end{equation}
is a well defined function. In particular, the vector field $\la \mapsto\vec h_{u(\la)}(\la)$ coincides with $\vec H(\la)$ and its flow is also well defined and Lipschitz continuous in $\mathcal{N}$.

The following objects are needed only for the rigorous definition of the Jacobi curve and can be skipped at first reading.
Given a piecewise regular extremal pair $(\tilde u, \tilde \la)$ with free final time, we introduce:
\begin{enumerate}
    \item the flow on $T^*M$ of the non-autonomous Hamiltonian vector field $\vec h_{\tilde u}$, which we denote by $(\widetilde{\Phi}_{\tau_1,\tau_2})_{\tau_1,\tau_2\in\R}$;
    \item the flow  on $T^*M$ of the maximized Hamiltonian vector field $\vec H$ defined in \eqref{eq:def-max-hamilt}, denoted by $\Phi_t$. 
    \item for $j=1,\dots,k+1$ and $t\in(t_{j-1},t_j)$, the transported-back flows $\phi_t^j$ are defined by 
    \begin{equation}
        \label{eq:def-phi-j}
        \phi^j_t 
        = 
        \widetilde{\Phi}_{0,t} ^{-1} \circ \Phi_{t-t_{j-1}} \circ \widetilde \Phi_{0,t_{j-1}};
    \end{equation}
    \item the transported-back Hamiltonian vector fields at $\tilde \la_0$, $ \vec {\mathcal{H}} ^j \coloneqq (\widetilde{\Phi}_{0,t}^{-1})_*  \vec H^j |_{\tilde\la_0}$ for $j=1,\dots,k+1$, $t\in (t_{j-1},t_j)$; 
    \item given two vectors $\eta_1,\eta_2 \in T_\la(T^*M)$, such that $\sigma_\la(\eta_1,\eta_2)\neq0$, we denote by $\mathfrak g_{\eta_1,\eta_2} : T_\la (T^*M) \to T_\la (T^*M)$ the linear transformation 
    \begin{equation}
        \label{eq:def-g-j}
        v \mapsto v + \frac{\sigma _\la (v , \eta_2)}{\sigma_\la (\eta_1,\eta_2)} (\eta_2 - \eta_1).
    \end{equation}
    That is, $\mathfrak g_{\eta_1,\eta_2} $ is the endomorphism of $T_\la(T^*M)$ fixing vectors in $\eta_2 ^\angle$ and sending $\eta_1$ to $\eta_2$.
\end{enumerate}

Notice that 
$
(\widetilde{\Phi}_{0,t}^{-1})_* \vec H^j |_{\tilde \la_0} 
= 
(\widetilde{\Phi}_{0,t_{j-1}}^{-1})_* \vec H^j |_{\tilde \la_0} 
$
for $t\in[t_{j-1},t_j]$ since $\vec H^j$ generates the flow $\widetilde \Phi_{\tau_1,\tau_2}$ for $\tau_1,\tau_2\in[t_{j-1},t_j]$. 
Hence, the vector $\vec {\mathcal{H}} ^j$ does not depend on time. 
We introduce the simplified notation $\mathfrak g_j\coloneqq\mathfrak{g}_{\vec{\mathcal H}^{j} ,\vec{\mathcal H}^{j+1}}$, with $j\geq 1$, while for $j=0$ we take any $\nu\in T_\la (T_{q_0}^*M)$ such that $\sigma_{\tilde \la_0}(\nu,\vec{\mathcal H}^{1}) \neq 0$ and set $\mathfrak g _0 \coloneqq \mathfrak{g}_{\nu,\vec{\mathcal H}^{1}}$.

In the following, we use the notation $\mathfrak{L}(T_{\tilde \la_0}(T^*M))$ to denote the Lagrangian Grassmannian of $T_{\tilde \la_0}(T^*M)$:
\begin{equation}
    \mathfrak{L}(T_{\tilde \la_0}(T^*M)) 
    \coloneqq
    \{ L \subset T_{\tilde \la_0}(T^*M) \mid L \text{ is a $d$-dimensional subspace, } \ \sigma_{\tilde \la_0}(v_1,v_2)=0, \: \forall v_1,v_2 \in L \},    
\end{equation}
and we denote by $\Pi \coloneqq T_{\tilde \la_0} (T^*_{q_0} M)$ the subspace of vertical vectors at $\tilde \la_0$, that is vectors tangent to the fibre of the cotangent bundle.

Next, we introduce the Jacobi curve associated with a piecewise regular extremal pair. As it is explained better later, this is a geometric object closely linked with the Hessian of the functional \eqref{eq:cost-functional} and it can be used to study the optimality of a given control.
In some sense, it can be seen as a generalization of the set of solutions of the Jacobi equation in Riemannian geometry.
\begin{defn}[Jacobi curve]
    \label{def:Jacobi-curve-intro}
    Let $(\tilde u, \tilde \la)$ be a piecewise regular extremal pair for Problem \eqref{eq:formulation-OCP}.
    Suppose that $(\tilde u, \tilde \la)$ are defined on the time interval $[0,T]$. 
    We define the Jacobi curve $\Lambda : [0,T] \to \mathfrak{L}(T_{\tilde \la_0}(T^*M))$ via the following algorithm:
    we set $\Lambda_0 = \Pi$; then, for $j=0,1,\dots,k$: 
    \begin{enumerate}
        \item we define ${\Lambda}_{t_j} ^{+} = \mathfrak g_j( \Lambda_{t_j} )$;
        \item for $t\in(t_{j},t_{j+1}]$, we define $\Lambda_t = B_t^{j+1} \Lambda_{t_j}^+$, where $B_t^j \coloneqq (\phi_t^j)_*$ is the linearization flow of the flow $\phi_t ^j$.
    \end{enumerate}
\end{defn}
Notice that $\phi_t^j(\tilde \la_0) = \tilde \la_0$ for every $t\in[0,T]$, hence $\Lambda_t$ is a subspace of $T_{\tilde \la_0}(T^*M)$ for every $t\in[0,T]$. 
Furthermore, since $(\widetilde{\Phi}_{t_{j-1},t})_* \vec H^j = ({\Phi}_{t-t_{j-1}})_* \vec H^j$, then $(\phi_t ^j)_*  \vec {\mathcal{H}} ^{j} =  \vec {\mathcal{H}} ^{j} $. 
In particular, it follows that $\vec {\mathcal {H}} ^{j} \in \Lambda_t$ for $t\in(t_{j-1},t_j]$. 

A similar definition of Jacobi curve for the problem with fixed final time can be given (see Definition \ref{def:Jacobi-curve-fixed-final-time}). 
Since a rigorous definition in this case requires some technical preliminaries, we postpone it to Section \ref{sec:fixed-final-time} to avoid an excessively detailed discussion at this point of the paper.

The second notion that we need to state the main results of this paper is that of conjugate points. Again, this is a generalization of the definition used in Riemannian geometry. 
\begin{defn}[Conjugate time, conjugate points]
    \label{def:conjugate-point}
    Let $(\tilde u, \tilde \la)$ be a piecewise regular extremal pair either for Problem \eqref{eq:formulation-OCP} or for \eqref{eq:formulation-OCP-fixed-final-time-intro}, both defined on the time interval $[0,T]$. 
    Let $t\in(0,T]$, $t\neq t_j$ for $j=1,\dots,k$.
    We say that $t$ is \emph{conjugate} to $0$ along $\tilde \la$ if
    \begin{equation}
        \label{eq:def-conj-time-no-switch}
        \Pi \cap \bigcap_{0\leq \tau \leq t} \Lambda(\tau)
        \neq 
        \Pi \cap \Lambda(t). 
    \end{equation} 
    If $t$ is conjugate to $0$ along $\tilde \la$, we say that the multiplicity of $t$ is $\dim (\Pi \cap \Lambda(t)) - \dim (\Pi \cap \bigcap_{0\leq \tau \leq t} \Lambda(\tau))$.
    Let now be $t=t_j$ for some $j=1,\dots,k$. 
    A direct computation shows that the space $\pi_*(\Lambda_{t_j} \cap \Lambda_{t_j}^+)$ is a codimension~1 subspace of $\pi_*(\Lambda_{t_j} + \Lambda_{t_j}^+)$, hence $\pi_*(\Lambda_{t_j} \cap \Lambda_{t_j}^+)$ divides the subspace $\pi_*(\Lambda_{t_j} + \Lambda_{t_j}^+)$ into two half spaces (see Figure \ref{fig:conj-point-switch} for an illustration).    
    We say that $t_j$ is \emph{conjugate} to $0$ along $\tilde \la$ if $\pi_*\vec {\mathcal H}^j$ and $\pi_*\vec {\mathcal H}^{j+1} $ belong to different half spaces of $\pi_*(\Lambda_{t_j} + \Lambda_{t_j}^+) \setminus \pi_*(\Lambda_{t_j} \cap \Lambda_{t_j}^+)$. 
    In this case, by convention the multiplicity of $t_j$ is always 1.

    Finally, we say that $\tilde q(t)$ is a point \emph{conjugate} to $q_0$ along $\tilde q$ if $t$ is conjugate to $0$ along $\tilde \la$.
\end{defn}

The left hand side of~\eqref{eq:def-conj-time-no-switch} takes into account constant vertical directions which might be contained in $\Lambda_t$. 
These constant vertical directions do not contribute to the presence of conjugate times, but they always occur, for instance, in the case of \emph{bang-bang} extremals. 
If there are no constant vertical direction contained in $\Lambda$, then the left-hand side is $\{0\}$ and the equation simply reads  
$
    \Pi \cap \Lambda_t
    \neq 
    0,
$
i.e. $t\in[0,T]$ is a conjugate time if there is a non-zero intersection between $\Lambda_t$ and $\Pi$.
\begin{figure}
    \centering
    \noindent
\begin{tikzpicture}[
    vec/.style={->, thick},
    blueLine/.style={blue, line width=1.2pt},
    blueExt/.style={blue, dashed, line width=1pt},
    redBorder/.style={red, dashed, line width=1pt},
    smallnode/.style={font=\small}
]

\coordinate (P) at (0,0);

\draw[blueLine] (-1.2,1.2) -- (1.2,-1.2);

\draw[blueExt] (-1.7,1.7) -- (-0.7,0.7);
\draw[blueExt] (0.7,-0.7) -- (1.7,-1.7);

\node[smallnode,blueExt] at (-0.2,-1)
{$\pi_*(\Lambda_j\cap\Lambda_j^+)$};

\draw[vec] (P) -- (0,1.1) node[above] {$\pi_* \vec {\mathcal H}^{j} $};
\draw[vec] (P) -- (1.1,0.3) node[below,xshift=3mm] {$\pi_* \vec {\mathcal H}^{j+1} $};

\fill (P) circle (0.6mm);

\draw[redBorder] (-2.6,-1.9) rectangle (2.6,1.9);

\node[smallnode,redBorder] at (1.7,2.3)
{$\pi_*(\Lambda_j+\Lambda_j^*)$};

\end{tikzpicture}
\hspace{1.5cm}
\begin{tikzpicture}[
    vec/.style={->, thick},
    blueLine/.style={blue, line width=1.2pt},
    blueExt/.style={blue, dashed, line width=1pt},
    redBorder/.style={red, dashed, line width=1pt},
    smallnode/.style={font=\small}
]

\coordinate (P) at (0,0);

\draw[blueLine] (-1.2,1.2) -- (1.2,-1.2);

\draw[blueExt] (-1.7,1.7) -- (-0.7,0.7);
\draw[blueExt] (0.7,-0.7) -- (1.7,-1.7);

\node[smallnode, blueExt] at (-0.2,-1)
{$\pi_*(\Lambda_j\cap\Lambda_j^+)$};

\draw[vec] (P) -- (0,1.1) node[above] {$\pi_* \vec {\mathcal{H}}^{j} $};
\draw[vec] (P) -- (-1.1,-0.3) node[above, xshift=-3mm] {$\pi_* \vec {\mathcal H}^{j+1}$};

\fill (P) circle (0.6mm);

\draw[redBorder] (-2.6,-1.9) rectangle (2.6,1.9);

\node[smallnode,redBorder] at (1.7,2.3)
{$\pi_*(\Lambda_j+\Lambda_j^*)$};

\end{tikzpicture}
    \caption{Graphical picture of conjugate time at a switching point. The red and the blue lines represent the subspaces $\pi_*(\Lambda_{t_j} + \Lambda_{t_j}^+)$ and $\pi_*(\Lambda_{t_j} \cap \Lambda_{t_j}^+)$, respectively. 
    In the picture on the left, the switching time $t_j$ is not a conjugate time since the two vectors $\pi_* \vec {\mathcal H}^j $ and $\pi_* \vec {\mathcal H}^{j+1} $ belong to the same half space of $\pi_*(\Lambda_{t_j} + \Lambda_{t_j}^+) \setminus \pi_*(\Lambda_{t_j} \cap \Lambda_{t_j}^+)$.
    In the picture on the right, $t_j$ is indeed a conjugate time.}
    \label{fig:conj-point-switch}
\end{figure}

\begin{defn}[Corank of an extremal control]
    Let $\tilde u : [0,T] \to U$ be an admissible control of the control system \eqref{eq:control-system}. 
    We say that $\tilde u$ has \emph{corank} $r$ if there are exactly $r$ linearly independent extremal trajectories $\lambda_1,\dots,\lambda_r$ satisfying PMP with control $\tilde u$.
\end{defn}

The purpose of this article is to prove the following two optimality conditions for piecewise regular extremal pairs. 
The first one involves the Maslov index $\mu_{\Pi}(\Lambda)$ of the Jacobi curve $\Lambda$ with respect to the vertical subspace $\Pi$. 
We introduce this notion rigorously in Section \ref{sec:maslov-index-conjugate-points}. 
Intuitively, the Maslov index counts the number of intersections, with sign and multiplicity, of a family of Lagrangian subspaces with $\Pi$. 
It is naturally related to the Morse index of the Hessian of $J$, see Theorem \ref{thm:relation-Morse-Maslov} and \cite{Abb01} for its use in Hamiltonian dynamics.
In particular, if all conjugate points are isolated, then the Maslov index of $\Lambda$ is the sum of the number of the conjugate points counted with multiplicity. 

\begin{thm}[Necessary condition for optimality]
    \label{thm:nec-cond-of-optimality}
    Let $(\tilde u, \tilde \la)$ be a piecewise regular extremal pair, either for Problem \eqref{eq:formulation-OCP} or for \eqref{eq:formulation-OCP-fixed-final-time-intro}.
    Suppose that $\tilde u$ is of corank $r$.  
    Then, if $\mu_\Pi (\Lambda)\geq r$, $\tilde u$ is not a local minimizer of its optimal control problem with respect to the $L^1$-topology of $\mathfrak U$.
\end{thm}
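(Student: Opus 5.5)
We follow the classical scheme of \cite{AgSa} (Chapters 20--21): reduce local optimality to a sign condition on a finite-dimensional restriction of the Hessian, and identify its Morse index with the Maslov index of $\Lambda$. By the reduction in Section~\ref{sec:fixed-final-time}, it is enough to treat the free final time problem \eqref{eq:formulation-OCP}.

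\textbf{Step 1: variations and second variation.} Reparametrize admissible variations of $\tilde u$ so that each regular arc is perturbed within its submanifold $U_j$. At the same time, perturb the switching times $t_1,\dots,t_k$ and the final time $T$. This gives a family of controls $u_{(v,\delta)}$, where $v$ ranges over $L^\infty$ sections tangent to the $U_j$'s and $\delta\in\R^{k+1}$. The map $(v,\delta)\mapsto u_{(v,\delta)}$ is continuous into $\mathfrak U$ with the $L^1$ topology, so small $(v,\delta)$ give $L^1$-close controls. On this finite-codimension family, the endpoint map $F$ and the cost $J$ are twice differentiable at $(0,0)$. The Lagrange multiplier rule then yields the Hessian $\mathrm{Hess}\,J = d^2(J - \tilde\la_T F)$ restricted to $\ker dF$. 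A standard result (\cite{AgSa}, Theorem 20.3) gives the following: if the negative inertia index of the Hessian on $\ker dF$ is at least the corank $r$, then the point is not a local minimum. This holds because the image of the map $(F,J)$ covers a neighbourhood of $(q_1,J(\tilde u)-\eps)$. We apply this result, which yields non-minimality in $L^1$.

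\textbf{Step 2: Hessian and Jacobi curve.} We write the Hessian in transported-back coordinates at $\tilde\la_0$ using the Hamiltonians $\matheuler{h}_t$. On each regular arc, the strong Legendre condition \eqref{eq:strong-legendre-condition-intervall} allows us to eliminate the control variation pointwise. This gives the quadratic form generated by the linearized flows $B^j_t$. At each switching time, the variation $\delta_j$ contributes a rank-one term with coefficient $\{H^j,H^{j+1}\}(\tilde\la_{t_j})>0$. The Lagrangian subspace that realizes this rank-one jump is exactly $\mathfrak g_j(\Lambda_{t_j})$. We then invoke Theorem~\ref{thm:relation-Morse-Maslov}, which equates the Morse index of $\mathrm{Hess}\,J|_{\ker dF}$ (up to the kernel/corank correction) with $\mu_\Pi(\Lambda)$. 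Combined with Step~1, the hypothesis $\mu_\Pi(\Lambda)\ge r$ gives index $\ge r$, and hence non-optimality.

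\textbf{Main obstacle.} The delicate point is the bookkeeping between Morse index and Maslov index at the discontinuities of $\Lambda$, together with the constant vertical directions. We must check three things:
\begin{enumerate}
\item The Maslov index of the discontinuous curve (defined by closing the jumps with the $\mathfrak g_j$-segments) counts each conjugate switching time once, exactly when $\pi_*\vec{\mathcal H}^j$ and $\pi_*\vec{\mathcal H}^{j+1}$ lie on opposite sides of $\pi_*(\Lambda_{t_j}\cap\Lambda_{t_j}^+)$.
\item The dimension of $\Pi\cap\bigcap_\tau\Lambda_\tau$ accounts precisely for the degeneracy that produces the corank $r$.
\end{enumerate}
To handle these, we first treat the case of a single switching and then argue by induction on $k$. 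At each switch, we compare the index of the Hessian restricted to controls supported before $t_j+\eps$ with the index of the Hessian restricted to controls supported before $t_j-\eps$. The jump in index should match the contribution of $\mathfrak g_j$ to $\mu_\Pi$.
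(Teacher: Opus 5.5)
Your route is the paper's. You pass to the reparametrized class $\mathcal U$, whose neighbourhoods correspond to $L^1$-close controls in $\mathfrak U$. You then apply the local-openness criterion of \cite{AgSa}, Theorem~20.3 (Theorem~\ref{thm:nec-opt-criterion-morse-index}), and convert Morse index into Maslov index with Theorem~\ref{thm:relation-Morse-Maslov}. Your ``main obstacle'' paragraph is really a plan for proving Theorem~\ref{thm:relation-Morse-Maslov} itself, which the paper does not reprove but imports from \cite{AgBes1}. If you carry out that induction over switchings, keep Remark~\ref{rem:causality-property-Jacobi-Curve} in mind: $\mathcal K_{\tau_2}\neq\mathcal K_{\tau_1}\oplus\mathcal K_{\tau_1,\tau_2}$, so the comparison must be made with $Q$ on all of $\mathcal V_t$ (the $\mathcal L$-derivative), not with Hessians on kernels. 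Your item~1 on conjugate switching times is not needed here, since the statement involves only $\mu_\Pi(\Lambda)$.

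The step that fails as written is the last one, ``$\mu_\Pi(\Lambda)\ge r$ gives index $\ge r$''. Theorem~\ref{thm:relation-Morse-Maslov} says $\operatorname{ind}^-=\mu_\Pi(\Lambda)-\frac12\bigl(d-\dim(\Pi\cap\bigcap_t\Lambda_t)\bigr)$. The subtracted term is the contribution of the jump of $\Lambda$ out of $\Lambda_0=\Pi$ (compare $\operatorname{ind}_\Pi(\Pi,\Delta)=\frac d2$ for $\Delta\cap\Pi=\{0\}$). It is strictly positive unless $\Lambda_t\equiv\Pi$, so no bookkeeping with the corank (your item~2) can absorb it. Hence $\mu_\Pi(\Lambda)\ge r$ only yields $\operatorname{ind}^-\ge r-\frac12\bigl(d-\dim(\Pi\cap\bigcap_t\Lambda_t)\bigr)$.

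This is not a technicality. Take $\dot q=u$ on $\R^2$ with $L=\frac12|u|^2+1$. The straight segments are regular, corank-one and free of conjugate points, and they have $\mu_\Pi(\Lambda)=\frac d2=1=r$. Yet they are globally optimal.

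What your argument actually proves is non-optimality under the normalized hypothesis $\mu_\Pi(\Lambda)-\frac12\bigl(d-\dim(\Pi\cap\bigcap_t\Lambda_t)\bigr)\ge r$, which is exactly $\operatorname{ind}^-\ge r$. The paper's proof makes the same silent identification in its final sentence. The fix is therefore to state the hypothesis in this normalized form, not to look for a justification of the unnormalized one.
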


Finally, the absence of conjugate points is a sufficient condition for optimality.

\begin{thm}[Sufficient condition, absence of conjugate points]
    \label{thm:suff-cond-optimality}
    Let $(\tilde u, \tilde \la) : [0,T] \to U \times T^*M$ be a piecewise regular extremal pair, either for Problem \eqref{eq:formulation-OCP} or for \eqref{eq:formulation-OCP-fixed-final-time-intro}. 
    Suppose that there are no points conjugate to $q_0$ along the trajectory $\tilde q$.
    Then,
    there is a neighbourhood $\mathcal{O}\subset M$ of $\tilde q$ and $\eps > 0$ such that $\tilde u$ is a strict minimizer of the functional $J$ among controls $v\in\mathfrak U$ whose admissible trajectories $q(\cdot , v) : [0,T_v] \to M$ satisfies $q(0;v)=\tilde q (0
    )$, $q(T_v ;v)=\tilde q (T)$, $q(\cdot;v)\subset \mathcal{O}$ and $|T_v-T|<\eps$.
\end{thm}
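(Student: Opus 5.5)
The plan is the classical field-of-extremals (Weierstrass / Hamilton--Jacobi) method, adapted to the piecewise smooth flow $\Phi_t$ of $\vec H$. We work with free final time; the fixed-time case then follows from the reduction of Section~\ref{sec:fixed-final-time}.

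\textbf{Step 1: the Lagrangian submanifold.} Take a smooth function $a$ near $q_0$ and set $\mathcal L_0=\{d_q a\}$, with $d_{q_0}a=\tilde\la_0$. Then set
\[
\mathcal L=\{\Phi_t(\la)\mid \la\in\mathcal L_0\cap H^{-1}(0),\ t\in(-\delta,T+\delta)\}.
\]
The function $a$ is chosen so that $T_{\tilde\la_0}\mathcal L_0$ is a Lagrangian subspace transversal to $\Pi$, close to $\Pi$, and so that its projection to $T_{q_0}M$ is a half-space transversal to $f_{\tilde u(0)}$. This is how the vertical start $\Lambda_0=\Pi$, with the correction $\mathfrak g_0$, is realized by an honest graph.

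Since $\Phi$ is only Lipschitz, $\mathcal L$ is a union of smooth pieces. Each piece lies between hypersurfaces $\{H^{j}=H^{j+1}\}$, and these hypersurfaces are transversal to the flow because $\{H^j,H^{j+1}\}(\tilde\la_{t_j})>0$. Using Proposition~\ref{prop:pw-regular-controls}, one checks that each piece is Lagrangian. Across a switching surface the two adjacent pieces share a $(d-1)$-dimensional isotropic interface. Transported back to $\tilde\la_0$, the tangent spaces of $\mathcal L$ are exactly the Jacobi curve: on $(t_j,t_{j+1})$ they are $B^{j+1}_t\Lambda^+_{t_j}$, and at $t_j$ the jump $\mathfrak g_j$ arises because the flow direction changes from $\vec{\mathcal H}^j$ to $\vec{\mathcal H}^{j+1}$ while the interface $(\vec{\mathcal H}^{j+1})^{\angle}\cap\Lambda_{t_j}$ is preserved.

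\textbf{Step 2: $\pi|_{\mathcal L}$ is a homeomorphism onto a neighbourhood $\mathcal O$ of $\tilde q$ (the main obstacle).} On the open time intervals, absence of conjugate points means $\Lambda_t\cap\Pi$ contains only the constant vertical directions. A perturbation argument on $a$ (shrinking $\mathcal L_0$ to a germ) gives $\Lambda_t\cap\Pi=0$ for all $t\neq t_j$. Hence $\pi$ is a local diffeomorphism on each smooth piece.

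At a switching time, the two pieces project onto the two sides of the projected interface $\pi_*(\Lambda_{t_j}\cap\Lambda^+_{t_j})$. The non-conjugacy condition, namely that $\pi_*\vec{\mathcal H}^j$ and $\pi_*\vec{\mathcal H}^{j+1}$ lie in the same half space, is precisely what ensures the pieces continue to the \emph{opposite} sides, so that they glue without folding. The map is then a local homeomorphism, and it is injective near the compact curve $\tilde\la$. I would prove injectivity via the standard argument that a local homeomorphism which is injective on a compact set is injective on a neighbourhood of it. The delicate points are checking the half-space criterion carefully and handling the constant vertical directions; for the latter I would first try a generic choice of $a$.

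\textbf{Step 3: Hamilton--Jacobi comparison.} On $\mathcal O$, $\mathcal L$ is the graph of $d\psi$ for a $C^1$ function $\psi$. Indeed $s=\la|_{\mathcal L}$ is closed, because each piece is Lagrangian and the interfaces are isotropic, and the domain is simply connected. Let $v$ be admissible with trajectory in $\mathcal O$ and endpoints $q_0,q_1$. Write $\la_t=(d\psi)(q(t;v))$. Since $H(\la_t)=0$ and $h_{v}\le H$,
\[
J(v)=\int_0^{T_v}\bigl(\langle\la_t,f_v\rangle-h_v(\la_t)\bigr)\,dt\;\ge\;\int_0^{T_v}\langle\la_t,\dot q\rangle\,dt=\psi(q_1)-\psi(q_0),
\]
with equality along $\tilde u$. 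The lift $\la_t$ need not be smooth, but the estimate only uses that $\psi$ is $C^1$.

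Strictness comes from uniqueness of the maximizer $u(\la)$ near $\tilde\la$. Equality forces $v(t)=u(\la_t)$ almost everywhere, so $q(\cdot;v)$ is a projected trajectory of $\vec H$ on $\mathcal L$ from $q_0$ to $q_1$. By injectivity of $\pi|_{\mathcal L}$, this trajectory coincides with $\tilde q$. The constraint $|T_v-T|<\eps$ rules out other time-reparametrized returns, so $v=\tilde u$.
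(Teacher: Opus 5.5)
Your architecture matches the paper's: a Lipschitz field of extremals generated by the graph of $da$, invertibility of $\pi$ on it, and a Hamilton--Jacobi comparison. The paper outsources that comparison to Theorem~\ref{thm:crit-cond-suff}, and your Step~3 carries it out essentially correctly. The gap is in Step~2, which is exactly where the paper's proof does its real work.

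You never show that some $a$ makes the transported-back tangent spaces $\mathfrak M_t$ of the field transversal to $\Pi$ for all $t\in[0,T]$. At switching times this must hold for every convex combination in the Clarke cone. These spaces obey the same recursion as the Jacobi curve ($B^j_t$ on arcs, $\mathfrak g_j$ at switches), but they start from a datum transversal to $\Pi$. So they are not ``exactly the Jacobi curve'', and the no-conjugate-point hypothesis on $\Lambda$ does not transfer to them without an argument.

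``Shrinking $\mathcal L_0$ to a germ'' does not change $T_{\tilde\la_0}\mathcal L_0$. An arbitrary small (or generic) choice does not work either. Both $\Lambda_0=\Pi$ and $\Lambda_0^+\supset\Pi\cap(\vec{\mathcal H}^1)^\angle$ lie on the Maslov train, so continuity gives nothing for small $t$. Worse, if $T_{\tilde\la_0}\mathcal L_0$ is the graph of a symmetric $S_0$ with a negative eigenvalue in a $(\Pi,\Delta)$-chart, the field's tangent spaces will in general meet $\Pi$ at arbitrarily small positive times (Figure~\ref{fig:choice-L0}). Then $\pi|_{\mathcal L}$ is singular at points $\tilde\la_t$ with $t$ arbitrarily small.

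The missing idea is monotonicity. On each regular arc, $t\mapsto B^j_t L$ is increasing, because its generating quadratic Hamiltonian is $\frac12\operatorname{Hess}_{\tilde\la_0}\matheuler H^j_t\ge0$ by the maximum condition (Proposition~\ref{prop:Jacobi-curve-sono-monotone} and its corollary). Hence the initial datum must be placed on the positive side of $\Pi$, e.g.\ the graph of $S_0=\eps\,\mathrm{Id}$. Monotonicity then keeps the field's tangent spaces off $\Pi$ for small $t$. When $C_t=\{0\}$ for $t>0$, a compactness argument as $\eps\to0$ excludes intersections on $[\delta,T]$.

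That compactness step breaks down when there are constant vertical directions $C_t=\Pi\cap\bigcap_{\tau\le t}\Lambda_\tau\neq\{0\}$, which always occur, e.g., along bang-bang arcs. In that case $\Lambda$ itself stays on the train for positive time, and a generic $a$ does not cure this. The paper handles it inductively in Lemma~\ref{lem:AgSa21.2}. On each interval $(s_i,s_{i+1}]$ where $C_t$ is constant, it passes to the reduced space $C_t^\angle/C_t$, which is invariant under $B_t$, and applies the positive-perturbation argument there. It then replaces the auxiliary transversal subspace by a new $\Delta_{i+1}$ so that positivity persists.

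Likewise, your half-space gluing criterion at $t_j$ is a statement about $\Lambda$, not about the perturbed field. What is actually needed is nonsingularity of the whole Clarke differential of $\pi|_{\mathcal L}$ (Proposition~\ref{prop:punti-di-switch-sono-coniugati-iff-there-is-maslov-index} and Corollary~\ref{prop:conj-point-disc-time}). This again passes to the field only through the controlled choice of $a$ above.

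A minor point in Step~3: the maximizer is not unique on the switching surfaces. Strictness still holds, because $\pi_*\vec H^j$ and $\pi_*\vec H^{j+1}$ are both transversal to the projected interface, so a competitor cannot spend positive time on it.
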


Notice that, for a corank 1 extremal, which is a generic case, the absence of conjugate points is also a necessary condition for optimality.

We also point out that the techniques that we use in this paper can be successfully applied to situations much more degenerate than the piecewise regular case, see, for instance \cite{AgBes1,AgBes2}.

\subsection{Structure of the proofs of Theorems~\ref{thm:nec-cond-of-optimality} and~\ref{thm:suff-cond-optimality}}
 
Since the proofs of Theorems \ref{thm:nec-cond-of-optimality} and \ref{thm:suff-cond-optimality} are rather long and involve several geometric constructions, we briefly describe here their main ingredients and logical structure.

The starting point is the extended endpoint map, which associates with every admissible control the endpoint of the corresponding trajectory together with its cost. 
The second differential of this map plays the role of the Hessian in finite-dimensional constrained optimization and provides the basic second-order information around a reference extremal.
In Section \ref{sec:Hessian-endpoint-map} we briefly sketch how the Hessian for piecewise regular extremals is obtained and we present the formula that we use in the subsequent part of that Section; the explicit calculations are postponed to Appendix \ref{app:diff-endpoint-map}.

The proof of the necessary condition is based on two ideas.
The first one is that if the Hessian of the extended endpoint map computed at some control is negative definite on a subspace whose dimension exceeds the corank of the control, then the extended endpoint map is locally open at this control. 
As a consequence, this control cannot be a local minimizer of our OCP. 
This is a classical result, see for instance \cite{AgSa}, Theorem 20.3, which can be applied directly to our setting.

The second idea, which is much more delicate, is that it is possible to compute geometrically the Morse index of the Hessian, i.e. the dimension of the biggest subspace where the Hessian of the extended endpoint map is negative definite, using the Maslov index of the Jacobi curve. 
This second idea is also very general and is widely applied also in the study of Hamiltonian dynamical systems, see, for instance \cite{Abb01}.

More precisely, we show that the Jacobi curve introduced in Definition \ref{def:Jacobi-curve-intro} can be identified with the $L$-derivative of problem \eqref{eq:formulation-OCP}, which is a symplectic object naturally associated with the second differential of the endpoint map. 
We present a working characterization of the $L$-derivative in Section \ref{subsec:Jacobi-curve-OCP}, while its relation with Lagrange multipliers and constrained second-order optimization is discussed in Appendix \ref{sec:Lagr-multip}.

In the context of regular extremals, the $L$-derivative of the optimization problem is a smooth curve in the Lagrangian Grassmannian.
The central geometric idea of our paper is that, although in our case the extremal is only piecewise regular, one can still associate with it a Jacobi curve by combining the classical smooth evolution along each regular arc with explicit jump transformations at the switching times. 

The second key ingredient is the Maslov index. 
Roughly speaking, the Maslov index is a number associated with a curve of Lagrangian subspaces counting the number of intersections with a fixed Lagrangian subspace.
In Section \ref{sec:maslov-index-conjugate-points} we recall an extension of the classical definition, introduced in \cite{Agr90,AgBes1}, which works also for discontinuous curves.
Then, we establish the precise relation between the Maslov index of the Jacobi curve and the Morse index of the Hessian of the endpoint map (see Theorem \ref{thm:relation-Morse-Maslov}). 
Combining this result with the abstract second-order optimality criterion yields the proof of Theorem \ref{thm:nec-cond-of-optimality}.

The proof of the sufficient condition follows a different strategy and is based on the construction of a suitable field of extremals. 
In the classical smooth theory, this amounts to constructing a submanifold of $T^*M$ whose projection onto the base manifold is locally invertible. 
In the present setting, the presence of switching points naturally leads to submanifolds with corners rather than smooth submanifolds.

The main task is therefore to construct an appropriate field of extremals with corners and to prove that its projection onto the state manifold is a local bi-Lipschitz homeomorphism.
The first part is achieved through the analysis of the manifold of Lagrange multipliers developed in Appendix \ref{sec:lagrange-multip}, while the second part relies on Clarke's inverse function theorem for Lipschitz maps. 
The absence of conjugate points guarantees that the hypotheses of Clarke's theorem are satisfied, allowing us to complete the proof of Theorem \ref{thm:suff-cond-optimality}.

\section{The endpoint map}\label{sec:preliminaries}

The aim of this Section is to introduce the endpoint map associated with the problem \eqref{eq:formulation-OCP}. 
We begin with the following result, which is mentioned in the Introduction, about the regularity of the maximizing control and the corresponding Hamiltonian flow.
This Proposition is needed to build a structure of Banach manifold on control competitors of a given reference control. 
Then, on this class of control competitors we define the endpoint map. 
Relying on this Banach manifold structure, in Section \ref{sec:Hessian-endpoint-map} we compute the derivatives of the endpoint map. 

\begin{prop}
    \label{prop:pw-regular-controls}
    Let $(\tilde u, \tilde \la)$ be a piecewise regular extremal pair. 
    Then, there is a neighbourhood $\mathcal{N}$ in $T^*M$ of the trajectory $\tilde \la$ such that the maximizing control $u_{\max} : \mathcal{N} \to U$ defined by 
    \begin{equation}
        \label{eq:def-maximiz-control}
        u_{\max} (\la) \coloneqq \arg \max _{v\in U} h_v(\la),
    \end{equation}
    is well defined and smooth out of the hypersurfaces $\{H^j=H^{j+1}\}$.
    In particular, every extremal trajectory with initial point $\la_0$ sufficiently close to $\tilde\la_0$ is a solution of the autonomous Hamiltonian system
    \begin{equation}
        \dot \la = \vec H (\la) = \vec h_{u_{\max}(\la)} (\la),
    \end{equation}
    and the flow $\Phi : \R \times T^*M \to T^*M$ induced by the vector field $\vec H$ is a Lipschitz continuous function. 
\end{prop}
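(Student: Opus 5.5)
The plan is to work locally: first along each regular arc, then near each switching point, and then cover the compact trajectory $\tilde\la([0,T])$ by finitely many such neighbourhoods.

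\emph{Regular arcs.} Fix $j$ and $t\in(t_{j-1},t_j)$. By the strong Legendre condition \eqref{eq:strong-legendre-condition-intervall} and the maximality \eqref{eq:PMP-max-cond}, $\tilde u(t)$ is a nondegenerate critical point of $U_j\ni u\mapsto h^j_u(\tilde\la_t)$. Working in a chart of $U_j$, the implicit function theorem applied to $\partial_u h^j_u(\la)=0$ gives a smooth map $\la\mapsto u_j(\la)$ near $\tilde\la_t$, with $u_j(\tilde\la_t)=\tilde u(t)$, and $u_j(\la)$ is a strict local maximum of $h^j_\cdot(\la)$. By uniqueness in the implicit function theorem, these local maps glue along the whole closed arc. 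At the endpoints $t_{j-1},t_j$ we use the one-sided limits of $\tilde u$, which exist because the arc is the restriction of a smooth regular trajectory. Hence $H^j(\la)=h^j_{u_j(\la)}(\la)$ is smooth near the arc.

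The main obstacle is to show that $u_j(\la)$ is the \emph{global} maximizer over all of $U$. This needs two facts. First, uniqueness of $\ell(j)$ gives strict separation $H^j>\max_{i\neq j}H^i$ on compact subarcs. Second, we must rule out far-away maximizers: other points of $U_j$, or points escaping to infinity, with value close to the maximum. To handle this, I would assume that the maximum of $h_\cdot(\tilde\la_t)$ is attained only at $\tilde u(t)$, which is implicit in the paper's use of $\arg\max$ in \eqref{eq:def-maimizing-control}. I would also assume a properness/upper semicontinuity condition on $u\mapsto h_u(\la)$, such as compactness of $U$ or the super-linearity of Example~\ref{ex:non-conv-tonelli}. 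A compactness argument by contradiction then shows that, for $\la$ near $\tilde\la_t$, any maximizer must lie close to $\tilde u(t)$, hence must equal $u_j(\la)$.

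\emph{Switching points.} Near $\tilde\la_{t_j}$ both $H^j$ and $H^{j+1}$ are smooth by the previous step, and all other $H^i$ are strictly smaller. Hence $H=\max(H^j,H^{j+1})$ there, and $u_{\max}$ equals $u_j$ or $u_{j+1}$ on either side of the hypersurface $\{H^j=H^{j+1}\}$. This set is a smooth hypersurface because $d(H^{j+1}-H^j)\neq0$ at $\tilde\la_{t_j}$. Indeed,
\[
d(H^{j+1}-H^j)(\vec H^j)=\{H^j,H^{j+1}\}(\tilde\la_{t_j})>0 .
\]
This gives smoothness of $u_{\max}$ off these hypersurfaces on a neighbourhood $\mathcal N$ obtained as a finite union.

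\emph{Extremals and the flow.} By PMP, any extremal starting near $\tilde\la_0$ (as long as it stays in $\mathcal N$) satisfies $\dot\la=\vec h_{u_{\max}(\la)}(\la)=\vec H(\la)$. For the Lipschitz property of the flow, note that $\vec H$ is piecewise smooth and hence bounded, but merely discontinuous across the switching surface, so the standard Lipschitz ODE theory does not apply directly. I would instead use transversality. Both $\vec H^j$ and $\vec H^{j+1}$ cross $S_j=\{H^j=H^{j+1}\}$ in the same direction, since
\[
\vec H^{j+1}(H^{j+1}-H^j)=\{H^j,H^{j+1}\}>0
\]
as well (after the sign conventions of \eqref{eq:identities-symplectic-geometry}). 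Therefore each trajectory crosses $S_j$ exactly once and transversally. The hitting time $\tau_j(\la)$ is then smooth by the implicit function theorem, and near $\tilde\la$ the flow is
\[
\Phi_t=e^{(t-\tau_j)\vec H^{j+1}}\circ e^{\tau_j\vec H^j},
\]
which is a composition of smooth maps on each side of the crossing. It is continuous across the crossing, so it is piecewise smooth and continuous, hence locally Lipschitz. Composing over the finitely many switches yields Lipschitz continuity of $\Phi$ on the relevant time interval near $\tilde\la$.
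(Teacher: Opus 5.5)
Your proposal is correct and follows the same route as the paper. Both arguments use the implicit function theorem along the regular arcs, the smooth hypersurface $\{H^j=H^{j+1}\}$ near each switching point coming from $\{H^j,H^{j+1}\}(\tilde\la_{t_j})\neq 0$, and a finite cover of the compact trajectory.

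Your version is more careful on two points that the paper glosses over.
\begin{itemize}
\item The paper's implicit-function argument only yields a \emph{local} maximizer of $u\mapsto h_u(\la)$. Your added uniqueness-plus-properness hypothesis is genuinely needed for $u_{\max}$ to be the global $\arg\max$.
\item The paper simply asserts that $\Phi$ is Lipschitz because $u_{\max}$ is piecewise smooth. What actually justifies it is your observation that $\vec H^j$ and $\vec H^{j+1}$ both cross $\{H^j=H^{j+1}\}$ in the same direction, together with the smooth hitting time this gives.
\end{itemize}
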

\begin{proof}
    For simplicity of exposition, we prove the statement just for a piecewise regular extremal with exactly one switching time $t_1$. 
    The same argument can be adapted to the case of finitely many switching times.

    We proceed as follows: for every $t\in[0,T]$, we find a neighbourhood $\mathcal{N}_t$ of $\tilde \la_t$ in $T^*M$ such that the maximizing control is well defined and unique. 
    Then, since the trajectory $\tilde \la$ is a compact subset of $T^*M$, it is possible to extract a finite sub-covering of the collection $\mathcal{N}_t$, whose union yields the neighbourhood $\mathcal N$ of $\tilde \la$ in which the maximizing control is defined.   
    The second part of the statement follows by the fact that, since the maximizing control $u_{\max}(\la)$ is a critical point of $u\mapsto h_u(\la)$, then $d_\la H = \pa_\la h_{u_{\max}(\la)} + \pa_u h |_{u=u_{\max}(\la)} d_\la u_{\max} = \pa_\la h_{u_{\max}(\la)}$, which implies $\vec H = \vec h _{u_{\max}(\lambda)}$. 
    The Lipschitz continuity of the flow $\Phi$ follows from the fact that the maximizing control is piecewise smooth and $\vec H = \vec h _{u_{\max}(\lambda)}$. 
    
    Suppose first that $t\neq t_1$, that is, $t$ is not a switching time. 
    Then, by PMP, it holds that $\pa_{u} [h_u(\tilde\la_t)]|_{u=\tilde u (t)} =0 $ and by \eqref{eq:strong-legendre-condition-intervall} we have that $\tilde u(t)$ is a strict maximum of the Hamiltonian function.
    By the implicit function theorem, there is a neighbourhood $\mathcal{N}_t$ of $\tilde \la_t$ and a smooth function $u_{\max} : \mathcal{N}_t \to U$ such that $\pa_{u} h_u|_{(u_{\max}(\la),\la)} =0$. By continuity, we have $ \pa_{ u }^2 h_u |_{(u_{\max}(\la),\la)} < 0$, hence $u_{\max}(\la)$ is a maximizer for the Hamiltonian function.
    Thus, it remains to find a neighbourhood $\mathcal{N}_{t_1}$ of $\la_{t_1}$ in which the maximizing control is well defined. 
    
    By definition of switching time, we have that $H^1 (\tilde \la_{t_1}) = H^2 (\tilde \la_{t_1})$. 
    The piecewise regularity assumption implies that $\{H^1,H^2\}(\tilde \la_{t_1}) \neq 0$, which is equivalent to $d_{\tilde \la_{t_1}}(H^1-H^2) \neq 0$. 
    By the implicit function theorem, there is a sufficiently small neighbourhood $\mathcal{N}_{t_1}$ of $\tilde \la_{t_1}$ such that $\{H^1 = H^2\} \cap \mathcal{N}_{t_1}$ is a codimension 1 submanifold of $T^*M$. 
    After shrinking $\mathcal{N}_{t_1}$, we can assume that $\{H^1 = H^2\}$ divides $\mathcal{N}_{t_1}$ in two connected components. 
    By the same argument used in the case of $t\neq t_1$, we have that the maximizing controls $u_1$ and $u_2$ are well defined and smooth in the sets $\{H^1 > H^2\} \cap \mathcal{N}_{t_1}$ and $\{H^1 < H^2\} \cap \mathcal{N}_{t_1}$ respectively. 
    Thus, the maximizing control is defined by
    \begin{equation}
        u_{\max}(\la)
        =
        \begin{cases}
            u_1(\la) & \text{ if } \la \in \{H^1 > H^2\} \cap \mathcal{N}_{t_1},
            \\
            u_2(\la) & \text{ if } \la \in \{H^1 < H^2\} \cap \mathcal{N}_{t_1},
        \end{cases}
    \end{equation}
    and is not uniquely defined on $\{H^1=H^2\}$.
    Thus, the statement of the Proposition follows.  
\end{proof}

The purpose of the next construction is to obtain a local parametrization of a suitable class of admissible control competitors whose corresponding extremal is in a neighbourhood of the reference one. 
Two kinds of variations have to be considered. 
First, the control may vary inside the same smooth branch of $U$. Second, the switching times themselves must be allowed to change. 
The first type of variation is described by local coordinates on the sets $U_j$, whereas the second is encoded by a time reparametrization. 
Combining these two ingredients yields a Banach manifold of controls on which the endpoint map becomes smooth.

Now, we make this construction rigorous.
We take (time-dependent) local coordinates on each $U_j$ in a neighbourhood of $\tilde u(t)$, depending on $\tilde\la_t$, such that $\tilde u (t) = u_j (\tilde \la (t)) = 0$.
This way, we can identify an open neighbourhood of $\tilde u(t)$ in $U_j$ with an open neighbourhood $V_j \subset \R^{\dim U_j}$ of $0\in \R^{\dim U_j}$ for every $j=1,\dots,k+1$.  
Hence, any other control $u\in L^\infty((t_{j-1}, t_j);U_j)$ close enough to $\tilde u|_{(t_{j-1},t_j)}$ can be identified with a control function with values $V_j$ for $t\in (t_{j-1}, t_j)$. 
Using these coordinates, we can rewrite the Hamiltonian system \eqref{eq:ham-syst-PMP} as
\begin{equation}
    \dot {\tilde \la} = \vec h^j _0 (\tilde \la), 
    \quad 
    t\in(t_{j-1}, t_{j}), 
    \quad 
    j=1,\dots, k+1. 
\end{equation}
We now introduce the time-reparametrization variable, whose purpose is to encode variations of the switching times.
Let $\nu \in L^\infty (\R)$ be a function such that $\nu(t)>-1$ and we denote by $\nu_j\coloneqq \nu |_{(t_{j-1},t_{j})}$. 
The control system with time reparametrization $\nu$ is
\begin{equation}
    \label{eq:hamilt-system-u-tilde}
    \dot {\la} = \big( 1+\nu_j(t) \big) \vec h^j _0 (\la), 
    \quad 
    t\in(t_{j-1}, t_{j}), 
    \quad 
    j=1,\dots, k+1. 
\end{equation}
The Hamiltonian lift of system \eqref{eq:control-system} with time reparametrization $\nu$ is
\begin{equation}
    \label{eq:hamilt-system-time-rep}
    \dot {\la} = \big( 1+\nu_j(t) \big) \vec h^j _{u(t)} (\la), 
    \quad 
    u(t)\in V_j,
    \,
    t\in(t_{j-1}, t_{j}), \quad j=1,\dots, k+1,
\end{equation}
where here we are using the identification of a neighbourhood of $\tilde u(t)$ in $U_j$ with $V_j$.
The cost functional with time reparametrizations is given by
\begin{equation}
    \label{eq:cost-functional-time-var}
    J(\nu,u) 
    = 
    \int_0 ^T \big(1+\nu(t)\big) L\big(q(t;\nu,u),u(t)\big) \, dt, 
    \quad 
    (\nu,u) \in \mathcal{U},  
\end{equation}
where we are omitting the dependence on the initial point $q_0$ since it is fixed.

We define the space of admissible controls with time reparametrizations on the interval $[0,T]$ as
\begin{equation}
    \mathcal{U} 
    \coloneqq
    \{
        (\nu,u) 
        \mid
        \nu \in L^\infty([0,T];(-1,+\infty)), 
        \, 
        u \in L^\infty([0,T];U)
        ,
        \, u(t) \in V_j \text{ for } t\in (t_{j-1},t_j)
    \}.
\end{equation}
Moreover, for $t\in[0,T]$, we denote by $\mathcal{U}_t \coloneqq \{(\nu,u)\in\mathcal U \mid \mathrm{supp} (\nu,u) \subset [0,t] \}$.  
Notice that, $\mathcal{U}$ has a structure of Banach manifold modelled on the Banach space 
\begin{equation}
    L^\infty ([0,T]; (-1,+\infty)) \times \Big( L^\infty([0,t_1];\R^{\dim U_1}) \times \cdots \times L^\infty([t_k,T];\R^{\dim U_{k+1}}) \Big).
\end{equation}

As this choice of control variations plays a central role in all the rest of the paper, it is worth spending a few more words to clarify the structure of this space:
\begin{itemize}
    \item if $\nu(t)=0$ for $t\in[0,T]$, then the control $u$ in the right-hand side of \eqref{eq:hamilt-system-time-rep} corresponds to a control whose value $u(t)$ is in the same submanifold $U_j$ where also $\tilde u(t)$ is. In particular, if $\nu(t) = 0$ for $t\in[0,T]$, $u$ is close to $\tilde u$ in $L^\infty$;
    \item if $\nu(t) \neq 0$, then the control $(\nu,u)$ in \eqref{eq:hamilt-system-time-rep} corresponds to the control $v\in \mathfrak U _{T+\int_0 ^T \nu(s) ds}$ admissible for the system \eqref{eq:control-system} defined by $v(t+\int_0 ^t \nu(s)ds) = u(t)$. 
    In particular, $v(t)$ and $\tilde u(t)$ can be in different submanifolds of $U$. 
    This fact allows us to change the switching times and the resulting control $v$ is close to $\tilde u$ in $L^1$ but not in $L^\infty$.
\end{itemize}
Introducing time reparametrizations is crucial for several reasons: first of all, the manifolds $U_j$ may be reduced to a single point (see Example \ref{ex:bang-bang-control-system-2-control}) and in this case there is no variation of the control which is close to the reference one in the $L^\infty$ norm.
Second, we can keep $T>0$ fixed, while still considering trajectories with different ending time. 
Furthermore, time reparametrizations allow us to consider controls with values in a submanifold different from $U_j$, but having at the same time a nice parametrization. 

Indeed, the space $\mathcal U$ is a much smaller space than $\mathfrak{U}$. 
Of course, to obtain necessary conditions for optimality we can restrict to consider control competitors in a smaller set.
The remarkable fact is that, as we are going to show, this class of variations is rich enough to obtain sufficient conditions for local optimality as well.

We define the endpoint map on the cotangent bundle to be the map $\mathcal E : \mathcal{U} \to T^*M$ defined as
\begin{equation}
    \label{eq:def-endpoint-map-cotangent}
    \mathcal{E}(\nu,u) 
    =
    \overrightarrow \exp \int \limits_{0} ^{T} 
        \big( 1+\nu(t) \big) \vec h_{u(t)} 
    \, dt
    \circ 
    \tilde \la_0
    ,
\end{equation}
and we denote by $\la(t;\tilde \la_0,\nu,u)$ the solution of \eqref{eq:hamilt-system-time-rep}, with the convention that if $(\nu,u)\in\mathcal{U}_t$ we stop the evolution of the system at time $t$.
Finally, we introduce the endpoint map on the base manifold $M$ to be the map $E : \mathcal{U} \to M$, $E(\nu,u) =  \pi ( \mathcal E (\nu,u))$:
\begin{equation}
    \label{eq:def-endpoint-map}
    E(\nu,u) 
    = 
    \pi
    \circ
    \overrightarrow \exp \int \limits_{0} ^{T} 
        \big( 1+\nu(t) \big) \vec h_{u(t)} 
    \, dt
    \circ
    \tilde \la_0
    .
\end{equation}

\section{Optimal control problems with fixed final time}
\label{sec:fixed-final-time}
In this section, we show how the problem with fixed final time can be rephrased as a problem with free final time. 
In particular, we define the Jacobi curve for the fixed final time problem, which we did not define in the Introduction.
This way, the previous statements of Theorems \ref{thm:nec-cond-of-optimality} and \ref{thm:suff-cond-optimality} are completely rigorous. 

The idea of reducing the fixed final time problem to the free final time one is classical. 
Instead of prescribing the terminal time, we introduce the physical time as an additional state variable. 
The resulting problem has free final time with respect to the new time parameter $s$, while the original fixed-final-time constraint is recovered by imposing the terminal condition $t(S)=T$. 
This reformulation allows us to apply directly the free-final-time theory developed in the remaining part of the paper.

Let $U$ be as in Section \ref{sec:formulation-OCP-free-time}. 
We consider the control system on the manifold $M$
\begin{equation}
    \label{eq:control-system-fixed-time}
    \dot q(t) = f \big( q(t) , u(t) \big),
    \quad
    q(0)=q_0, \, q(T)=q_1,
    \quad 
    t\in[0,T],
\end{equation}
where $f,u,q_0,q_1$ are defined as before, and $T>0$ is fixed. 
Similarly, we take the cost functional
\begin{equation}
    J(u) = \int_0 ^T L \big( q(t),u(t) \big) dt.
\end{equation}
We want to study the problem 
\begin{equation}
    \label{eq:formulation-OCP-fixed-time}
    \min \{ J(u) \mid q(\cdot \, ; u) \text{ satisfies \eqref{eq:control-system-fixed-time}} \}.
\end{equation}
Again, we can apply PMP, obtaining the Hamiltonian system
\begin{equation}
    \dot \la =\vec h_{u(t)}(\la), 
\end{equation}
where $h_u (\la) = \langle \la , f(q,u) \rangle - L(q,u)$, for $\la \in T^*M$, $u\in U$, $q=\pi(\la)$.
We want to define an endpoint map similar to the one introduced in Section \ref{sec:preliminaries}. 
To this aim, we introduce an extended control system on $M\times \R$, where we add the time $t$ as a new state-variable. 
In addition to the control $u$, we consider also time reparametrizations as admissible trajectories.
More precisely, on the extended state manifold $\widehat M = M \times \R$ we consider 
\begin{equation}
    \begin{cases}
        \dot q(s) = \big( 1+\nu(s) \big) f \big( q(s) , u(s) \big), \\
        \dot t = 1+\nu(s), 
    \end{cases}
    \quad 
    (q(0),t(0))=(q_0,0), \, (q(S),t(S))=(q_T,T),
    \quad 
    s\in[0,S].
\end{equation}
Denoting by $\tau = \langle dt , v \rangle$, for $v\in T_{(q,t)} \widehat M $, the new Hamiltonian is $\mathscr{H}_{(\nu,u)} (\tau,\la) = (1+\nu) \big( h_u(\la) + \tau \big)$.  
We can introduce a Hamiltonian system in $T^* \widehat M$ analogous to the one in Equation \eqref{eq:hamilt-system-time-rep}, but where we keep this new time variable:
\begin{equation}
    \label{eq:hamilt-syst-extended-fixed-time}
    \begin{cases}
        \dot \la_s = \big( 1+\nu(s) \big) \vec h_{u(s)}(\la_s), \\
        \dot \tau =0, \\
        \dot t = 1+\nu(s),         
    \end{cases}
\end{equation}
As in the case of free final time, we consider a piecewise regular extremal pair $(\tilde u , \tilde \la )$ and we study optimality of the corresponding trajectory $\tilde q = \pi \circ \tilde \la$ on $M$. 

Along the reference extremal we have $\nu\equiv0$, so that $t(s)=s$ and $S=T$. 
Therefore the switching times are the same in both parametrizations. 
The only substantial difference with the free-final-time case is that the Jacobi curve now lives in the tangent space of the extended cotangent bundle $T^*\widehat M$, and the initial vertical space and jump transformations must be modified accordingly.
Notice that, by PMP, $h_{\tilde u(s)}(\tilde \la_s)$ is constant, hence for the reference extremal with $\nu(s)=0$ we have that $\mathscr{H}_{\tilde u (s),0}(\tilde \la _s , \tau_s)=0$, which reads $\tau_s = - h_{\tilde u(s)}(\tilde \la_s)$.
As before, we can fix time-dependent coordinates in $U$ such that $\tilde u(s)=0$ for all $s\in[0,T]$ and parametrize the value of controls close to $\tilde u(s)$ by means of these coordinates.  
Therefore, the endpoint map of this system is
\begin{equation}
    E_{q_0}(\nu,u) 
    = 
    \widehat \pi 
    \circ 
    \overrightarrow{\exp} \int_0 ^T (1+\nu(s)) \big( \partial_t +\vec h_{u(s)} \big) ds
    \circ (\tilde \la_0,\tau_0,0),
\end{equation} 
where here $\widehat \pi : T^* \widehat M \to \widehat M$ is the canonical bundle projection.
In particular, we see that the partial derivative of $\vec {\mathscr{H}}$ with respect to $\nu$ is $\partial_t +\vec h_{u(s)}$. 

We are now ready to state the precise definition of the Jacobi curve for the fixed final time problem.
\begin{defn}[Jacobi curve, fixed final time]
    \label{def:Jacobi-curve-fixed-final-time}
    Let $(\tilde u, \tilde \la)$ be a piecewise regular extremal pair for the Problem with fixed final time.
    The Jacobi curve $\Lambda$ for this problem is a curve of Lagrangian subspaces in $\mathfrak L (T_{(\tilde \la_0, \tau_0,0)}(T^*\widehat M))$ is obtained as follows: we set $\Lambda_0 = \Pi \oplus \R \pa_\tau$, then, for $j=0,1,\dots,k$ we set
    \begin{enumerate}
        \item $\Lambda_{t_j}^+ = \Lambda_{t_j} ^ {\partial_t + \vec {\mathcal H}^{j+1}_{t_j}}$;
        \item $\Lambda_s = (\phi^{j+1}_s)_* \Lambda_{t_j}^+$ for $s\in(t_j,t_{j+1}]$.
    \end{enumerate}    
\end{defn}
Here $\phi^j_s$ is the same flow that is introduced for Definition \ref{def:Jacobi-curve-intro}.  
Notice that the right-hand side of two equations for $\tau$ and $t$ in \eqref{eq:hamilt-syst-extended-fixed-time} does not depend on the state variables $\lambda,\tau,t$, hence the corresponding linearized variables are constant of the linearized dynamic. 
In particular, the linearized flow $(\phi_s)_*$ extends naturally to this case.

\section{A one-dimensional non-convex Lagrangian example}
\label{sec:example-free-control-system}
Before going into the details of the proofs of our main results, we discuss the class of examples introduced in Example \ref{ex:non-conv-tonelli}.
This section has two purposes. 
First, it shows that piecewise regular extremals are natural optimal candidates in a simple non-convex variational problem. 
Second, it illustrates that, despite the abstract definition, conjugate points can be computed concretely and, in dimension one, this reduces to a simple sign condition.

Consider the free control system
\begin{equation}
    \label{eq:control-syst-example-free}
    \dot q = u,
\end{equation}
where $q\in \R$ and $u\in \R$, i.e., using the notation introduced before, we have $U=\R$ and $\mathcal{U} = L^\infty ([0,T] ; \R)$.
The final time $T>0$ can be either free or fixed.
We fix two values $q_0,q_1\in \R$ and we want to minimize the functional 
\begin{equation}
    \label{eq:functional-example-tonelli}
    J(u)=\int_0 ^T L(q(t),u(t)) dt,
\end{equation}
subject to \eqref{eq:control-syst-example-free} and $q(0)=q_0$ and $q(T)=q_1$, where the function $L \in C^\infty(\R \times \R ; \R) $ satisfies the following hypothesis:
\begin{enumerate}
    \item for every $q\in \R$, the function $u \mapsto L(q,u)$ is super-linear, that is 
    $
    \displaystyle
        \lim _{|u|\to + \infty} \frac{L(q,u)}{|u|} = +\infty;
    $
    \item $L(q,u) > 0$ for every $q,u\in \R$;
    \item for every $q\in \R$, $\pa^2 _u L(q,\cdot)$ has only simple zeros;
    \item for every $q\in \R$, $L(q,\cdot)$ is a \emph{double-well} function, that is it has exactly three distinct critical points, two of which are local minimum points and one is a local maximum point.   
\end{enumerate}
In the Calculus of Variations language, a function $L$ satisfying assumptions 1. together with strict convexity is said to be a Tonelli Lagrangian function.
Here, we study this minimization problem in dimension $1$ but without the usual convexity assumption. 
We refer to \cite{Mazzucc} for an introduction to Tonelli Lagrangian functions and their role in the Calculus of Variations.
We point out that Theorems \ref{thm:nec-cond-of-optimality} and \ref{thm:suff-cond-optimality} apply also to the case $d>1$, but a precise analysis of this case goes beyond the purpose of this paper.
Assumption 2. in the case of free final time is necessary to guarantee that $\inf_u J(u) > - \infty$.
In the case of fixed time, if $L$ is negative somewhere and $\inf_{q,u} L(q,u) > -\infty$, then by adding a constant to $L$ assumption 2. is always satisfied and this modification of $L$ does not change the minimizing controls.    
Finally, assumptions 3. and 4. are present just to simplify the exposition: our arguments generalize directly to the case of integrands admitting a many local minima and maxima.

\begin{figure}
    \centering
    \begin{tikzpicture}[scale=0.8,transform shape]
\begin{axis}[
    axis lines=middle,
    xmin=-2.5, xmax=3.5,
    ymin=-3, ymax=8,   
    samples=200,
    domain=-2.5:3.5,
    clip=true,
    xlabel={\large $u$},
    ylabel=\empty,
    xtick=\empty,
    ytick=\empty,
    x label style={yshift=-5mm}
]

\addplot[thick] {(x+1)^2*(x-2)^2 + 0.25*x + 0.5};
\node at (axis cs:1.7,6) {\Large $L(q,u)$};

\addplot[red,thick] {0.25*x + 0.5};

\addplot[only marks,mark=*,blue] coordinates {
    (-1,0.25)
    (2,1)
};

\end{axis}
\end{tikzpicture}
    \caption{Example of double-well functional for fixed $q$, highlighting the bi-tangent line.}
    \label{fig:double-well-functional}
\end{figure}

First, we state our results about this class of examples: our first theorem says that minimizers of this OCPs exist and, under some natural conditions, the extremal trajectories are piecewise regular; then, we give a practical characterization of conjugate points for both the free and fixed final time problem.
After this first part, we discuss in more detail the case of a particular Lagrangian function of the form $L(q,u) = \phi(u) + V(q)$, with $V(q)= \frac{q^2}{2}$, which corresponds to a ``generalized" harmonic oscillator. 
This way, we illustrate some intuitive ideas behind our analysis in a simpler case. 
Finally, we give a full proof of these results.
\subsection{Statements of the results}
\label{sec:results-example}
Our first result is about the structure of the extremal trajectories for this class of problems. 
By PMP, we define the Hamiltonian function
\begin{equation}
    \label{eq:hamilt-PMP-example-1d}
    h_u(p,q) = pu - L(q,u),
\end{equation}
where $p\in\R^{*}$ and $q,u\in \R$ as before. 
In the following, we identify $\R^*$ with $\R$ by fixing a global coordinate, so that $T^*\R \simeq \mathbb R_p \times \mathbb R_q$. 
The corresponding Hamiltonian system reads
\begin{equation}
\label{eq:ham-syst-1d-example}
    \begin{cases}
        \dot q = u, \\
        \dot p = \pa_q L(q,u), 
    \end{cases}
\end{equation}
where control $u$ has to satisfy the maximization principle $u(t) = \arg \max_{v\in \R} h_v (\la_t)$, where $\la_t = (p(t),q(t))$ denotes the extremal trajectory. 
In this case, we show that there are two competing controls, each being the maximizing one on some specific region of the plane $\R_p \times \R_q$. 
Each of these two regions corresponds, in some sense, to a convexity region of the function $L$. 
Thus, in the terminology of Section \ref{sec:formulation-OCP-free-time}, there are two submanifolds $U_1,U_2$ and the maximized Hamiltonian function is $H(\la) = \max \{H^1(\la),H^2(\la)\}$, where the two functions $H^1,H^2$ are smooth. 
\begin{thm}
\label{thm:existence-example-1D}
    Assume that the function $L$ satisfies the assumptions 1. to 4. listed above. 
    Moreover, let $H^1,H^2 : \R^2 \to \R$ be defined as in Equation \ref{eq:def-H1-example-1d} and define $\Sigma =\{\la \in \R^2 \mid H^1(\la) = H^2(\la)\}$ and $\mathcal{S}\coloneqq \big\{ \la \in \R^2 \mid \{H^1,H^2\}(\la) = 0 \} $.
    Suppose that the set 
    $\mathcal S \cap \Sigma \eqqcolon \mathcal{Z}$ is discrete.
    Then, the following conclusions hold.
    \begin{enumerate}
        \item The set of initial values of $p_0\in \R$ for which the corresponding extremal trajectory is not piecewise regular is discrete.
        \item If in addition to the previous assumptions we have that $\alpha\vec H^1(\la) +(1-\alpha)\vec H^2(\la) \neq 0$ for all $\alpha \in [0,1]$ and all $\la\in\mathcal{Z}$, then the minimum of the functional \eqref{eq:functional-example-tonelli} exists for both the free and the fixed final time problem.
    \end{enumerate}
\end{thm}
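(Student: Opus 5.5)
We first make the structure of $H$ explicit. For fixed $q$, the maximization of $u\mapsto pu-L(q,u)$ is the Legendre--Fenchel transform of $L(q,\cdot)$. By assumptions 3 and 4, $\pa_u^2L(q,\cdot)$ has exactly two simple zeros $a(q)<b(q)$. Hence $\R_u$ splits into two convexity intervals $U_1=(-\infty,a(q))$ and $U_2=(b(q),+\infty)$, where $\pa^2_uL>0$, and a concave middle region that can never contain a maximizer. On each $U_i$ the equation $p=\pa_uL(q,u)$ has a unique smooth solution $u_i(p,q)$ for $p$ in the range of $\pa_uL$ on $U_i$. By superlinearity this range is unbounded on the side of the corresponding tail. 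We set $H^i(p,q)=pu_i-L(q,u_i)$ (with $H^i=-\infty$, or suitably extended, where undefined). Then $H=\max\{H^1,H^2\}$, the strong Legendre condition \eqref{eq:strong-legendre-condition-intervall} holds automatically on each branch, and $\Sigma$ is the curve of $p$ equal to the slope of the bitangent line (Figure \ref{fig:double-well-functional}). Since $\pa_p(H^2-H^1)=u_2-u_1>0$, the set $\Sigma$ is a smooth graph $p=\bar p(q)$. Every extremal is then a trajectory of the Lipschitz field $\vec H$ (Proposition \ref{prop:pw-regular-controls}, away from $\mathcal Z$), and it is piecewise regular exactly when each crossing of $\Sigma$ is transversal. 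A crossing is transversal precisely when it occurs at a point where $\{H^1,H^2\}\neq0$, i.e.\ outside $\mathcal Z$. Tangential contact at a point of $\Sigma\setminus\mathcal Z$ is impossible, because the sign of $\{H^1,H^2\}$ is exactly the sign of the derivative of $H^2-H^1$ along either flow.

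For part 1 we fix $q_0$ and consider the one-parameter family of extremals $t\mapsto\Phi_t(p_0,q_0)$. The extremal is not piecewise regular only if it hits $\mathcal Z$ at some time in $[0,T]$ (for fixed time), or at all (for the free-time problem with $H=0$). In dimension one, energy is conserved, so the extremal lies on the level set $\{H=H(p_0,q_0)\}$. A point $z\in\mathcal Z$ is reached only if $H(p_0,q_0)=H(z)$. The map $p_0\mapsto H(p_0,q_0)$ is strictly monotone on each branch, since $\pa_pH^i=u_i$ and $u_i$ has constant sign away from at most one value. Hence the preimage of each value $H(z)$ is finite. The main obstacle is that $\mathcal Z$ is only discrete, not finite, so we must argue locally. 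Compactness of $[0,T]$ and continuity of the flow mean that the extremals with $p_0$ in a bounded interval stay in a compact set, which meets only finitely many points of $\mathcal Z$. This yields discreteness of the bad set of $p_0$. Near a point where $u_i=0$ we use superlinearity and the fact that $u$ vanishes only at isolated $p$ to retain local finiteness.

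For part 2 we use a direct method for the convexified problem. By superlinearity and positivity, minimizing sequences are bounded in $W^{1,1}$ and equi-integrable (Dunford--Pettis). For free time, positivity of $L$ and coercivity also bound $T$ from above and below. Tonelli's lower-semicontinuity theorem gives a minimizer of the relaxed functional $\int L^{**}(q,\dot q)$, where $L^{**}$ is the convexification in $u$. The relaxed minimum equals the original infimum. It remains to show that the relaxed minimizer is in fact a genuine minimizer, i.e.\ that $\dot q(t)$ avoids the open interval of the bitangent where $L^{**}<L$ for a.e.\ $t$. We argue that the relaxed Hamiltonian is still $H$, so the relaxed minimizer is an extremal of the differential inclusion $\dot\la\in\mathrm{co}\{\vec H^1,\vec H^2\}$ on $\Sigma$. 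A nontrivial relaxed (chattering) portion would force the trajectory to slide along $\Sigma$ with velocity $\alpha\vec H^1+(1-\alpha)\vec H^2$ tangent to $\Sigma$. Sliding is impossible where $\{H^1,H^2\}\neq0$, because both fields cross $\Sigma$ in the same direction. At points of $\mathcal Z$, which are isolated, the hypothesis $\alpha\vec H^1+(1-\alpha)\vec H^2\neq0$ excludes a rest point. A sliding motion therefore cannot remain at an isolated point for positive time. Hence the set of relaxed times has measure zero and the minimizer is admissible.

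We expect the delicate step to be the rigorous justification of this relaxed PMP and the sliding analysis. The fixed-time case reduces to the free-time case via Section \ref{sec:fixed-final-time}.
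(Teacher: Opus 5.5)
This is essentially the paper's proof. In part 1, conservation of $H$ along extremals plus discreteness of $\mathcal Z$ does the work; your finite level sets of $p_0\mapsto H(p_0,q_0)$ and the compactness-on-$[0,T]$ argument just make explicit the paper's one-line appeal to integrability. In part 2 you use the same convexification, direct method and sliding argument: a relaxed arc on $\Sigma$ forces $\{H^1,H^2\}=0$, hence rest at a point of $\mathcal Z$, which the hypothesis on $\alpha\vec H^1+(1-\alpha)\vec H^2$ excludes.

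The one claim to tone down, which the paper also leaves unexamined, is that positivity of $L$ bounds $T$ in the free-time case. Assumption 2 gives no uniform positive lower bound on $L$, and for $q_0=q_1$ the infimum $0$ is approached as $T\to0$ and is not attained.
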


The first assertion shows that, outside a discrete exceptional set of initial covectors, these candidates fall within the piecewise regular framework developed in the paper.
The second one guarantees the existence of optimal candidates in this class of problems. 
After the proof of these results, we also discuss briefly what happens when these hypotheses fail, see Remark \ref{rem:example-1d-hypotheses-fail}. 
Notice that, for generic smooth functions $H^1,H^2$, the two sets $\Sigma$ and $\mathcal{S}$ intersect transversally. 
Hence, the hypothesis on $\mathcal{Z}$ is satisfied generically. 

The next two propositions are not specific to the double-well Lagrangians considered in this section. 
They apply to every one-dimensional optimal control problem for which the reference extremal is piecewise regular. 
Their role is to give a concrete characterization of conjugate times in dimension one, where the Jacobi curve has a rather easy and explicit description.

First of all, since we have a global chart for the cotangent space, that is $T^*M = T^* \R_q \simeq \R_p \times \R_q$, we do not need to transport back the dynamics to the initial point. 
More precisely, the Jacobi curve in the case of free final time, since $\dim \Lambda_t =1$, is completely determined by the condition $\vec H(\tilde \la_t) \in \Lambda_t$ and the discontinuities of $\Lambda$ at the switching times corresponds exactly to the discontinuities of $\vec H(\tilde\la_t)$. 
Conjugate times can be characterized by the following property. 
\begin{prop}[Conjugate times in one-dimensional problems with free final time]
    \label{prop:conj-point-example-free}
    Let $(\tilde u , \tilde \la)$ be a piecewise regular extremal pair for problem \eqref{eq:formulation-OCP} with $M=\R$. 
    Then, denoting by $\tilde q =\pi(\tilde\la)$, we have that a time $t\in[0,T]$ is conjugate to $0$ along $\tilde \la$ if and only if
    \begin{itemize}
        \item either $\dot {\tilde q} (t)= 0$, if $t$ is not a switching time;
        \item or
        $
            \displaystyle
            \lim_{\tau \to t+} \operatorname{sgn} \left( \dot {\tilde q}(\tau) \right)
            \neq 
            \lim_{\tau \to t-} \operatorname{sgn} \left( \dot {\tilde q}(\tau) \right),
        $
        if $t$ is a switching time.
    \end{itemize}
\end{prop}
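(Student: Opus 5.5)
In dimension one, $T_{\tilde\la_0}(T^*\R)\simeq\R^2$ with coordinates $(\delta p,\delta q)$, every line is Lagrangian and $\Lambda_t$ is a line. The plan is to identify $\Lambda_t$ explicitly, after undoing the transport-back, and then read off both cases of Definition~\ref{def:conjugate-point} from the $\delta q$-component of a spanning vector. Here $\Pi=\{\delta q=0\}$ and $\pi_*$ is the projection onto the $\delta q$-coordinate.

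\emph{Step 1: identify $\Lambda_t$.} By the remark after Definition~\ref{def:Jacobi-curve-intro}, $\vec{\mathcal H}^{j}\in\Lambda_t$ for $t\in(t_{j-1},t_j]$. Since $\Lambda_t$ is one-dimensional and $\vec{\mathcal H}^j\neq 0$, this gives $\Lambda_t=\R\vec{\mathcal H}^{j}$. To check nonvanishing, note that a zero would make $\tilde\la$ an equilibrium of $\vec H^j$, which contradicts $\{H^j,H^{j+1}\}(\tilde\la_{t_j})>0$; if $k=0$ and the extremal is constant, the claim holds trivially.

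Pushing forward by $(\widetilde\Phi_{0,t})_*$ turns $\vec{\mathcal H}^j$ into $\vec H^j(\tilde\la_t)=\vec H(\tilde\la_t)$. Its $q$-component is $\pa_p h_u=u=\dot{\tilde q}(t)$ by \eqref{eq:ham-syst-1d-example}.

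To transfer intersection information with $\Pi$ back and forth, use that $(\widetilde\Phi_{0,t})_*$ preserves verticality. In dimension one, on each regular arc the flow $\widetilde\Phi_{0,t}$ is not fibre-preserving in general, so I would instead avoid transporting $\Pi$ and work directly at $\tilde\la_0$ with a careful convention. This point is the delicate part. What I would try first is to argue via the paper's remark that in $\R^2$ with the global chart one does not need the transport-back. Concretely, I would verify that the intersection data defining conjugacy are invariant when $\Lambda_t$ is replaced by the line $\R\vec H(\tilde\la_t)\subset T_{\tilde\la_t}\R^2$ and $\Pi$ by the vertical line at $\tilde\la_t$. I would check this by computing the Hessian/Maslov interpretation of Theorem~\ref{thm:relation-Morse-Maslov} on a single arc, accepting the paper's stated identification if needed.

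\emph{Step 2: non-switching times.} Because $\Lambda$ is a line, the left side of \eqref{eq:def-conj-time-no-switch} is $\{0\}$ unless $\Lambda_\tau\subset\Pi$ for all $\tau\le t$. That would force $\dot{\tilde q}\equiv 0$ on $[0,t]$, and then $\tilde q$ is constant on that interval and the condition degenerates consistently. Otherwise the condition reads $\Lambda_t\cap\Pi\neq0$, i.e.\ $\R\vec H(\tilde\la_t)$ is vertical, i.e.\ $\dot{\tilde q}(t)=0$.

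\emph{Step 3: switching times.} Here $\Lambda_{t_j}=\R\vec{\mathcal H}^j$ and $\Lambda^+_{t_j}=\mathfrak g_j(\Lambda_{t_j})=\R\vec{\mathcal H}^{j+1}$. The two lines are distinct because $\sigma(\vec{\mathcal H}^j,\vec{\mathcal H}^{j+1})=\{H^j,H^{j+1}\}(\tilde\la_{t_j})\neq0$. Hence $\Lambda_{t_j}\cap\Lambda^+_{t_j}=0$ and $\Lambda_{t_j}+\Lambda^+_{t_j}=\R^2$, so $\pi_*(\Lambda_{t_j}+\Lambda^+_{t_j})=\R$ and $\pi_*(\Lambda_{t_j}\cap\Lambda_{t_j}^+)=\{0\}$. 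The two half spaces are therefore $\R_{>0}$ and $\R_{<0}$.

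Conjugacy thus means that $\pi_*\vec{\mathcal H}^j$ and $\pi_*\vec{\mathcal H}^{j+1}$ have opposite signs. After transport these are $u_j(\tilde\la_{t_j})=\lim_{\tau\to t_j^-}\dot{\tilde q}(\tau)$ and $u_{j+1}(\tilde\la_{t_j})=\lim_{\tau\to t_j^+}\dot{\tilde q}(\tau)$. For the signs to be comparable under transport, I would use that $(\widetilde\Phi_{0,t_j})_*$ is one fixed linear map applied to both vectors; this is where Step 1's convention matters.

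A zero one-sided limit needs separate handling, where the sign limit is $0$. One cannot have both limits zero, since then $\vec H^j$ and $\vec H^{j+1}$ would both be vertical and hence $\sigma$-orthogonal, contradicting $\{H^j,H^{j+1}\}>0$. If exactly one limit vanishes, I would treat it as a degenerate case where the sign limits differ, and conclude conjugacy by continuity of the half-space picture, or by the convention implicit in Definition~\ref{def:conjugate-point}.

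The main obstacle is Step 1: justifying that verticality and the projected signs are preserved when passing between the transported-back curve at $\tilde\la_0$ and the actual vectors $\vec H(\tilde\la_t)$.
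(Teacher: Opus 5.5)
Your Steps 1--3 follow the paper's own argument: the Jacobi line is spanned by the active Hamiltonian vector, and conjugacy is verticality of that line at regular times and a sign comparison of the two projected vectors at a switch. However, the step you yourself call the main obstacle is left open, and the reason you give for it being delicate is false: $\widetilde\Phi_{0,t}$ \emph{is} fibre-preserving, in every dimension.

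For each $t$, the Hamiltonian $h_{\tilde u(t)}(\la)=\langle\la,f(q,\tilde u(t))\rangle-L(q,\tilde u(t))$ is affine on the fibres. So its Hamiltonian system has $\dot q=f(q,\tilde u(t))$, independent of $p$. Hence $\pi\circ\widetilde\Phi_{0,t}=P_{0,t}\circ\pi$, where $P_{0,t}$ is the flow of $f(\cdot,\tilde u(t))$ on $M$, and differentiating at $\tilde\la_0$ gives
\[
\pi_*\circ(\widetilde\Phi_{0,t})_*=(P_{0,t})_*\circ\pi_* .
\]
Thus $(\widetilde\Phi_{0,t})_*\Pi$ is the vertical line at $\tilde\la_t$; the paper uses exactly this in Section~\ref{sec:suff-cond-optimality}. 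For $M=\R$, the map $(P_{0,t})_*$ is multiplication by $c_t=\exp\int_0^t\partial_q f(\tilde q(s),\tilde u(s))\,ds>0$, so $c_t\equiv 1$ when $f=u$.

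Write $\vec{\mathcal H}^j_t=(\widetilde\Phi_{0,t})_*^{-1}\vec H^j(\tilde\la_t)$. This vector moves with $t$, being $(\phi^j_t)_*\vec{\mathcal H}^j_{t_{j-1}}$, and it spans $\Lambda_t$ on $(t_{j-1},t_j]$. Then $\pi_*\vec{\mathcal H}^j_t=c_t^{-1}\dot{\tilde q}(t)$. At a switching time, $\pi_*\vec{\mathcal H}^j=c_{t_j}^{-1}\dot{\tilde q}(t_j^-)$ and $\pi_*\vec{\mathcal H}^{j+1}=c_{t_j}^{-1}\dot{\tilde q}(t_j^+)$, with the same positive factor. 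This single identity closes Steps 2 and 3, and no detour through Theorem~\ref{thm:relation-Morse-Maslov} is needed.

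Your substitute justification in Step 3, that one fixed linear map is applied to both vectors, is not sufficient on its own. A linear map that does not send $\Pi$ to the vertical line can turn two vectors whose $\delta q$-components have the same sign into vectors whose $\delta q$-components have opposite signs. What makes the comparison legitimate is that the map preserves verticality, and hence acts on the quotient by $\Pi$ as a nonzero scalar.

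Two smaller points concern the degenerate cases. First, in Step~2 the case $\dot{\tilde q}\equiv 0$ on $(0,t]$ does not ``degenerate consistently''. There $\Lambda_\tau=\Pi$ for all $\tau\le t$, both sides of \eqref{eq:def-conj-time-no-switch} equal $\Pi$, and $t$ is \emph{not} conjugate although $\dot{\tilde q}(t)=0$. So the first bullet holds only under the tacit assumption, also made in the paper's proof, that the left-hand side is $\{0\}$. Second, if exactly one one-sided value $\dot{\tilde q}(t_j^\pm)$ vanishes, the corresponding projected vector is $0$, which lies in neither half space. The literal definition and the sign criterion then need not agree; note also that $\lim\operatorname{sgn}\dot{\tilde q}$ need not be $0$ when $\dot{\tilde q}\to 0$. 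This case must be excluded or settled by an explicit convention, not by appealing to ``continuity of the half-space picture''.
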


For the problem with fixed final time, as explained in Section \ref{sec:fixed-final-time}, we need to introduce an additional time variable to reduce to the free final time case (see Equation \eqref{eq:hamilt-syst-extended-fixed-time}). 
So, as in that Section, we let $t=t(s)$, where $s$ is the new time variable, so that the state manifold is $\R_t \times \R_q$ and its cotangent space is identified with $(\R_\tau \times \R_p)\times(\R_t \times \R_q)$. 
Recall that, along the reference extremal, we take $\nu\equiv 0$, hence $t(s)=s$; we therefore use the same symbols $t_j$ for the switching times in the original and reparametrized time.
Thus, the Jacobi curve is a curve of two-dimensional subspaces, which can be represented in a matrix form as 
\begin{equation}
\label{eq:matrix-form-Lambda}
    \Lambda 
    =
    \begin{bmatrix}
        \tau_1 & \tau_2 \\
        t_1 & t_2 \\
        y_1 & y_2 \\
        x_1 & x_2 
    \end{bmatrix},
\end{equation}
where $(\tau_i,t_i,y_i,x_i)$, $i=1,2$ are any two vectors generating the subspace $\Lambda$. 
Here, with a slight abuse of notation, we use again the letters $t,\tau$ to denote both the variables in $T^*(\R_t \times M)$ and the corresponding variables in the tangent space $T_{(t,\la)}(T^*(\R_t \times M))$. 
Clearly, replacing the one of the two column by any linear combination of these vectors yields the same subspace $\Lambda$.
In this notation, the initial subspace $\Lambda_0$ is given by 
\begin{equation}
    \label{eq:initial-point-Lambda-fixed-time}
    \Lambda_0 
    =
    \begin{bmatrix}
        1 & 0 \\
        0 & 0 \\
        0 & 1 \\
        0 & 0 
    \end{bmatrix},
\end{equation}
and the algorithm to obtain the Jacobi curve goes as follows:
\begin{itemize}
    \item $\Lambda_0 ^+ = \big(\Lambda_0 \cap (\pa_t + \vec H(\la_0))^\angle\big) + \R(\pa_t + \vec H(\la_0)) $
    ;
    \item $\Lambda_s = (\Phi_s)_* (\Lambda_0 ^+)$, $s\in(0,t_1]$. 
    Here $\Phi_s = e^{s\vec H}$ and $(\Phi_s)_*$ denotes its linearization in the variable $\la$ at the point $\tilde \la_s$. 
    Moreover, $(\Phi_s)_* (\pa_t|_{\la_0}) = \pa_t|_{\tilde \la _s}$ and $(\Phi_s)_* (\pa_\tau|_{\la_0}) = \pa_\tau|_{\tilde \la _s}$, so that the first two rows of $\Lambda$ are constant for $s \in (0,t_1]$.
    Notice also that $\vec H(\tilde \la _s)$ coincides either with $\vec H^1(\tilde \la _s)$ or with $\vec H^2(\tilde \la _s)$ in this time interval.
    \item $\Lambda_{t_1} ^+ = \big(\Lambda_{t_1} \cap (\pa_t + \vec H(\tilde\la_{t_1+}))^\angle\big) + \R (\pa_t + \vec H(\tilde\la_{t_1+})) $, where $\vec H(\tilde\la_{t_1+}) = \lim_{s\to t_1+}\vec H(\tilde\la_s)$;
    \item then repeat the same procedure on the remaining part of the interval $[0,T]$.  
\end{itemize}
In this case, the system of equations governing the evolution of $\Lambda_t$ in the interval of smoothness are obtained by linearizing with respect to $p,q$ the Equation \eqref{eq:ham-syst-1d-example} with $u$ equal to the maximizing control.
If $T$ is fixed, the conjugate times can also be characterized in a similar fashion to what happens in the case of $T$ free.
\begin{prop}[Conjugate times in one-dimensional problems with fixed final time]
    \label{prop:conj-point-example-fixed}
    Let $(\tilde u , \tilde \la)$ be a piecewise regular extremal pair for the problem \eqref{eq:control-syst-example-free},\eqref{eq:functional-example-tonelli}.
    Let $\Lambda$ be represented in matrix form as in Equation \eqref{eq:matrix-form-Lambda} and choose the representation such that $(\tau_1(s),t_1(s),y_1(s),x_1(s)) = \pa_t + \vec H(\tilde \la _s)$ and $\tau_2(s)=1, t_2(s)=0 $ for all $s\in [0,T]$. 
    Then, a time $s\in[0,T]$ is conjugate to $0$ along $\tilde \la$ if and only if
    \begin{enumerate}
        \item either $x_2(s)=0$, if $s$ is not a switching time;
        \item 
        or
        $
            \displaystyle
            \lim_{\varsigma \to s+} \operatorname{sgn} \left( x_2(\varsigma) \right)
            \neq 
            \lim_{\varsigma \to s-} \operatorname{sgn} \left( x_2(\varsigma) \right)
        $, 
        if $s$ is a switching time.
    \end{enumerate}
    Moreover, using again these coordinates, the right limit of $\Lambda$ at a switching time $t_j$ can be computed as follows: the first column can be chosen equal to $\pa_t + \vec H(\tilde\la_{t_j+})$ and, letting $\vec H(\tilde\la_{t_j-}) = \lim_{s\to t_j-}\vec H(\tilde\la_s)$, the second one is equal to 
    \begin{equation}
    \pa_\tau + \alpha (\vec H(\tilde \la_{t_j-}) - \vec H( \tilde \la_{t_j+})) 
    + 
    \begin{pmatrix}
        y_2(t_j) \\ x_2(t_j)    
    \end{pmatrix}
    , 
    \
    \text{ where }
    \alpha 
    = 
    \frac{
        \sigma (\vec H(\tilde \la_{t_j+}) , (y_2(t_j),x_2(t_j))^\top) - 1
        }{
        \sigma (\vec H(\tilde \la_{t_j-}),\vec H(\tilde \la_{t_j+}))
        } 
    .
    \end{equation}
\end{prop}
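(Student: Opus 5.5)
The proof is a direct computation in the four-dimensional space $T_{(\tilde\la_0,\tau_0,0)}(T^*\widehat M)$ with coordinates $(\tau,t,y,x)$, where $y$ is the $p$-component and $x$ the $q$-component. The vertical subspace is $\widehat\Pi=\{t=0,\ x=0\}$, so $\Lambda_0=\widehat\Pi$ as in \eqref{eq:initial-point-Lambda-fixed-time}. The plan has two parts: first justify the normal form of the generators, then reduce the conjugacy conditions of Definition~\ref{def:conjugate-point} to sign conditions on the single scalar $x_2$.

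\textbf{Step 1: normal form.} For the first column, $\pa_t+\vec H(\tilde\la_s)$ lies in $\Lambda_s$ for every $s$. At each jump this holds by item 1 of Definition~\ref{def:Jacobi-curve-fixed-final-time}, and on each regular arc it is preserved because $\pa_t+\vec H$ is the autonomous generating field: its linearized flow maps $\pa_t+\vec H(\tilde\la_{t_j})$ to $\pa_t+\vec H(\tilde\la_s)$. Since a one-dimensional configuration space needs no transport back, I work directly at $\tilde\la_s$. For the second column, the $\tau$- and $t$-rows are constants of the linearized dynamics, so a vector with $\tau=1,\ t=0$ stays of this form along each arc.

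At a jump I use $\Lambda^+=(\Lambda\cap w^\angle)+\R w$ with $w=\pa_t+\vec H(\tilde\la_{t_j+})$. Because $\sigma(\pa_\tau,\pa_t)=\pm1$ and $\pa_t$ is $\sigma$-orthogonal to all $(y,x)$ directions, the vector $(1,0,y_2,x_2)$ is generally not in $w^\angle$. I correct it by adding a multiple $\beta$ of the old first column $\pa_t+\vec H(\tilde\la_{t_j-})$, choosing $\beta$ so that the result is $\sigma$-orthogonal to $w$; the condition is $\beta\,\sigma(\vec H_-,\vec H_+)=\sigma(\vec H_+,(y_2,x_2))-1$, up to the paper's sign conventions. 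This gives a vector in $\Lambda\cap w^\angle$ whose $t$-row equals $\beta$. Subtracting $\beta w$ removes that $t$-component while keeping $\tau=1$, and yields exactly the stated formula with $\alpha=\beta$. Here $\sigma(\vec H_-,\vec H_+)\neq0$ by the strong Legendre condition $\{H^j,H^{j+1}\}>0$.

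\textbf{Step 2: non-switching times.} For $s$ not a switching time, I compute $\Lambda_s\cap\widehat\Pi$. A combination $a(\pa_t+\vec H)+b(\pa_\tau+(y_2,x_2))$ has $t$-component $a$, so $a=0$, and its $x$-component is then $b\,x_2$. Hence the intersection is nontrivial iff $x_2(s)=0$. Next I check that $\bigcap_{0\le\tau\le s}\Lambda_\tau\cap\widehat\Pi=0$ for $s>0$: the only candidate is $(1,0,y_2,0)$, and keeping it for all times would force $x_2\equiv0$ on an interval. This is excluded because on a regular arc the linearized equation gives $\dot x_2=\pa_p u\cdot y_2+\dots$ with $\pa_p u\neq0$ by SLC1, and $y_2$ and $x_2$ cannot vanish together since the second column is not a multiple of $\pa_\tau$ after the first jump. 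So condition \eqref{eq:def-conj-time-no-switch} reduces to $x_2(s)=0$.

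\textbf{Step 3: switching times — the main obstacle.} Here I must project $\Lambda_{t_j}$ and $\Lambda^+_{t_j}$ to the $(t,x)$-plane and identify the half-space criterion of Definition~\ref{def:conjugate-point} with a sign change of $x_2$. The projections are:
\begin{itemize}
\item $\pi_*\Lambda_{t_j}$ is spanned by $(1,\dot q_-)$ and $(0,x_2^-)$;
\item $\pi_*\Lambda^+$ is spanned by $(1,\dot q_+)$ and $(0,x_2^+)$, where $x_2^+=x_2^-+\alpha(\dot q_--\dot q_+)$.
\end{itemize}
The intersection $\Lambda\cap\Lambda^+$ is spanned by the corrected vector from Step 1, whose projection is $(\alpha,\,x_2^-+\alpha\dot q_-)=(\alpha,\,x_2^++\alpha\dot q_+)$. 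This vector is the line separating the two half-spaces, while $\pi_*\vec{\mathcal H}^{j}$ and $\pi_*\vec{\mathcal H}^{j+1}$ (with their $\pa_t$ parts included) are $(1,\dot q_\pm)$.

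The two vectors lie on opposite sides iff the determinants $\det[(\alpha,\,x_2^\pm+\alpha\dot q_\pm),(1,\dot q_\pm)]=-x_2^\pm$ have opposite signs, i.e. iff $\operatorname{sgn}x_2$ jumps. By continuity on each arc, these one-sided values are the one-sided limits in the statement. I expect the delicate parts to be tracking the sign conventions of $\sigma$ and the degenerate cases, namely $x_2^\pm=0$ or $\alpha=0$, where the sum $\pi_*(\Lambda+\Lambda^+)$ drops dimension; these I would treat separately by direct inspection.
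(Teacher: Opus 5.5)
Your proposal is correct. Steps 1 and 2 are essentially the paper's argument: you use the same corrected vector $\alpha(\pa_t+\vec H(\tilde\la_{t_j-}))+\pa_\tau+\eta$ to build $\Lambda_{t_j}^+$, and the same observation that the $t$-row kills the first generator at non-switching times. Your exclusion of constant vertical directions via SLC1 is more careful than the paper's one-line remark. The fact that the second column is never a multiple of $\pa_\tau$ follows at once from $\sigma(\pa_\tau,\pa_t+\vec H)\neq0$ and $\Lambda_s$ being Lagrangian.

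At switching times you take a genuinely different route. The paper never tests the half-space condition of Definition~\ref{def:conjugate-point} directly. It computes the triple index $\operatorname{ind}_{\Pi}(\Lambda_{t_1},\Lambda_{t_1}^+)$, i.e.\ the sign of $\mathfrak q$ on $(\Lambda_{t_1}+\Lambda_{t_1}^+)\cap(\R\pa_\tau\oplus\Pi)$, and finds $\mathfrak q(\nu)<0$ iff $\beta_1(\beta_1-\alpha)>0$. It then concludes from the identity $x_2(t_1)\,x_2(t_1+)=\beta_1(\beta_1-\alpha)(x_1-z)^2$, and uses Proposition~\ref{prop:punti-di-switch-sono-coniugati-iff-there-is-maslov-index} to identify a positive index with conjugacy.

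You instead project to the $(t,x)$-plane and compare $\det\bigl(\pi_*c,\pi_*(\pa_t+\vec H_\pm)\bigr)=-x_2^\pm$, where $c$ spans $\Lambda_{t_j}\cap\Lambda_{t_j}^+$. What this buys you:
\begin{itemize}
\item It is more elementary and checks the definition literally, so it does not rely on that proposition (the paper proves only one of its two implications).
\item It does not depend on the value or sign conventions of $\alpha$.
\item It makes explicit that the vectors to compare are $\pi_*(\pa_t+\vec{\mathcal H}^{j})$ and $\pi_*(\pa_t+\vec{\mathcal H}^{j+1})$, the Hamiltonian fields of the extended problem.
\end{itemize}
Your worry that $\pi_*(\Lambda_{t_j}+\Lambda_{t_j}^+)$ might drop dimension is unfounded, since $\dot q_\pm$ are the two distinct maximizing controls. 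Only $\pi_*(\Lambda_{t_j}\cap\Lambda_{t_j}^+)$ can degenerate.

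What the paper's route buys is the jump contribution to $\mu_\Pi(\Lambda)$ needed in Theorem~\ref{thm:relation-Morse-Maslov}, together with a uniform treatment of the boundary cases, where the two criteria actually part ways:
\begin{itemize}
\item If exactly one of $x_2(t_j)$, $x_2(t_j+)$ vanishes, $\mathfrak q$ vanishes on a nontrivial quotient, the index is $\tfrac12$, and the paper counts $t_j$ as conjugate. The criterion it really uses, e.g.\ in the harmonic-oscillator example, is $x_2(t_j)\,x_2(t_j+)\le 0$. In your picture, one projected vector then lies on the separating line, and the literal half-space test says ``not conjugate''.
\item If both vanish, then $\alpha=0$ and $\pi_*(\Lambda_{t_j}\cap\Lambda_{t_j}^+)=0$, so the half-space test is not even defined.
\end{itemize}
So your ``direct inspection'' of degenerate cases must fix a convention explicitly, and it should be the Maslov one if you want consistency with the rest of the paper.
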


For every piecewise regular extremal in one-dimensional OCP, Propositions \ref{prop:conj-point-example-free} and \ref{prop:conj-point-example-fixed} provide an explicit test for the absence of conjugate points. 
Therefore, whenever the corresponding sign condition rules out the presence of conjugate times on $(0,T]$, Theorem \ref{thm:suff-cond-optimality} yields strong local optimality of the trajectory. 
Conversely, if the Jacobi curve acquires sufficient Maslov index, for instance, in the corank-one case after the first conjugate time, Theorem \ref{thm:nec-cond-of-optimality} excludes local optimality.

\subsection{Generalized harmonic oscillator}
Before going into the details of the proof of the results exposed in the previous Subsection, to give some intuition behind our arguments we discuss a particular example modelled on the classical harmonic oscillator. 

For this model, we can carry out exact computations, which also allows us to compare the free final time and the fixed final time cases.
More precisely, we find that switching times are never conjugate to 0 in the free final time problem, while conjugate times may occur along regular arcs. 
In the fixed final time problem, the additional time variable changes the Jacobi curve and switching times may contribute to conjugacy according to the sign condition in Proposition \ref{prop:conj-point-example-fixed}.
We point out that most of the analysis in the remaining part of this Subsection can be extended without substantial changes to any Lagrangian function of the form $L(q,u)=\phi(u)-V(q)+C$. 
Our particular choice is mainly due to expository reasons: in this case computations are particularly simple and can be carried out exactly, simplifying greatly the presentation although retaining the main ideas behind the arguments for the general case. 

We consider $L(q,u)=\phi(u)-V(q)+C$, with $V(q)=\frac{q^2}{2}$, $C>0$ and 
\begin{equation}
    \phi(u) 
    = 
    \begin{cases}
        \frac{1}{2}(u + \frac{1}{2})^2 -\frac{1}{2}, \ u\leq \frac{1}{2}, \\
        \frac{1}{2}(u - \frac{1}{2})^2 +\frac{1}{2}, \ u\geq \frac{3}{2}, 
    \end{cases}
\end{equation}
is such that $\phi$ is globally $C^2$ and $\phi(u)> u - \frac 1 2$ for $u\in(\frac{1}{2},\frac{3}{2})$. 
We notice that $L$ does not satisfy the assumption $L>0$. 
In order to retain the existence discussion within the global framework of Theorem~4.1, we modify the potential $V$ outside a sufficiently large compact set. 
More precisely, fix \(R>0\) and choose a function
$
    V_R\in C^\infty(\mathbb R)
$
such that
\begin{equation}
    V_R(q)=\frac{q^2}{2}\qquad\text{for }|q|\leq R,
\end{equation}
and such that \(V_R\) is bounded from above on \(\mathbb R\). 
Since \(\phi\) is bounded from below, one can choose a constant \(C_R>0\) such that
\begin{equation}
    L_R(q,u) \coloneqq \phi(u)-V_R(q)+C_R>0
    \qquad\text{for every }(q,u)\in\mathbb R^2.
\end{equation}
The Lagrangian \(L_R\) is super-linear in the control variable and has the same double-well structure with respect to \(u\) as \(\phi\). 
Hence it satisfies the hypotheses of Theorem~4.1.

In the following, we carry out the computations using the explicit expression
\begin{equation}
    V(q)=\frac{q^2}{2}.
\end{equation}
If the reference extremal and the portion of its phase portrait considered
below are contained in \(\{|q|<R\}\), then the Hamiltonian vector field
associated with \(L_R\) agrees there with the Hamiltonian vector field of
the oscillator model. Consequently, all the computations of the extremal
flow, the Jacobi curve, and the conjugate times remain unchanged along the
reference extremal. The globally modified Lagrangian \(L_R\) is used only
to ensure that the existence statement follows from Theorem~4.1.
\medskip

The PMP Hamiltonian function of this problem is $h_u(p,q)=pu-L(q,u)=pu-\phi(u)+V(q)-C$, and, letting $\psi(p) \coloneqq \max_{u\in\R} pu - \phi(u)$, the maximized Hamiltonian function $H = \max_{u\in\R} h_u$ is 
\begin{equation}
    H(p,q) = \psi (p) + V(q) = \frac{p^2}{2} + \frac{|p-1|}{2} + \frac{q^2}{2}-C.
\end{equation}
Indeed, by the super-linearity assumption on $L$, we have that the maximum of $h_u$ is attained at a critical point and $\pa_u h_u(p,q)=0$ if and only if $p=\phi'(u)$. 
Hence, a direct computation shows that $h_u(p,q)$ reaches its global maximum for $u=p-\frac 1 2$ if $p<1$ and $u=p+\frac 1 2$ if $p>1$. 
For $p=1$, the line $y(u)=u-\frac{1}{2}$ is tangent to the graph of $\phi$ both at $u=\frac{1}{2}$ and at $u=\frac{3}{2}$.
So, defining $u_1(p)=p-\frac 12$ and $u_2(p)=p+\frac 12$, the maximized Hamiltonian is the maximum of the two Hamiltonians $H^1(p,q)\coloneqq h_{u_1(p)}(p,q)$ and $H^2(p,q)\coloneqq h_{u_2(p)}(p,q)$. 
\begin{figure}[ht]
    \centering
    \begin{tikzpicture}[scale=0.9]

\begin{axis}[
    at={(9cm,0)},
    anchor=south west,
    width=8cm,
    height=6cm,
    xmin=-2, xmax=3,
    ymin=-0.5, ymax=4,
    axis lines=middle,
    xlabel={$p$},
    ylabel={},
    samples=300,
    domain=-2:3,
    xtick=\empty,
    ytick=\empty,
]

\addplot[
    thick,
    black,
]
{0.5*x^2 + 0.5*abs(x-1)};

\addplot[
    only marks,
    mark=*,
    mark size=2pt,
]
coordinates {(1,0.5)};

\draw[dashed] (1,0) -- (1,0.5) ;

\addplot[
    only marks,
    mark=*,
    mark size=2pt,
]
coordinates {(1,0)};
\node at (axis cs:1,-0.3)
{$p=1$};

\end{axis}

\end{tikzpicture}
    \caption{Graph of $H(p,q) = \frac{p^2}{2} + \frac{|p-1|}{2} + \frac{q^2}{2} - 1$ for fixed $q$.}
    \label{fig:graph-hamiltonian-function-example}
\end{figure}
First, we focus on the structure of the extremal trajectories, showing that most of them are piecewise regular, and we find the conjugate points. 
Then, in the last part of the subsection, we prove existence of the minimum for the OCP.

By PMP, an extremal trajectory satisfies the Hamiltonian equations $\dot p = -\pa_q V (q), \ \dot q = \pa_p \psi(p)$, which reads
\begin{equation}
\label{eq:ham-syst-gen-harm-osc}
    \begin{cases}
        \dot p = -q, \\
        \dot q = p -\frac{1}{2},
    \end{cases}
    \text{ if $p(t)<1$, or } \
    \begin{cases}
        \dot p = -q, \\
        \dot q = p + \frac{1}{2},
    \end{cases}
    \text{ if $p(t)>1$,}
\end{equation}
and the set of switching points is $\Sigma = \{(1,q) \mid q\in \R\}$.  
The Poisson bracket $\{H^1,H^2\}$ is easily computed:
\begin{equation}
    \{H^1,H^2\}(p,q)
    =
    \frac{\pa H^1}{\pa p} \frac{\pa H^2}{\pa q}
    -
    \frac{\pa H^1}{\pa q} \frac{\pa H^2}{\pa p}
    =
    \left(p-\frac{1}{2}\right) q
    -
    \left(p+\frac{1}{2}\right) q
    =
    -q.
\end{equation}
Thus, the set $\mathcal{S}=\{(p,0) \mid p\in \R\}$ and the set $\mathcal{Z} = \Sigma \cap \mathcal{S} = \{(1,0)\}$ is discrete. 

Since the system \eqref{eq:control-syst-example-free} is one-dimensional and by PMP the maximized Hamiltonian is constant along extremal trajectories, the solutions of $\dot \la = \vec H (\la)$ are contained in the level sets of $H$ (see Figure \ref{fig:figure-phase-port-1d}), that is, the system \eqref{eq:ham-syst-1d-example} is integrable.  
In particular, switching points cannot accumulate and an extremal trajectory fails to be piecewise regular only if the Poisson bracket $\{H^1,H^2\}=0$ at the switching point. 
This is possible only at the point $(p,q)=(1,0)$, which is contained in the level set $H(\la)= \frac{1}{2}-C$.
\begin{figure}[ht]
    \centering
    \begin{subfigure}{0.49\linewidth}
        \centering
        \begin{tikzpicture}
\tikzset{
    directed/.style={
        postaction={
            decorate,
            decoration={
                markings,
                mark=at position 0.50 with {\arrow[scale=1.2]{Stealth}},
            }
        }
    },
    directedrev/.style={
        postaction={
            decorate,
            decoration={
                markings,
                mark=at position 0.50 with {\arrow[scale=1.2]{Stealth[reversed]}},
            }
        }
    }
}

\begin{axis} [
    xlabel={$q$ },
    ylabel={$p$ },
    xmin=-3, xmax=3,
    ymin=-2, ymax=3,
    axis lines=middle,
    xtick=\empty,
    ytick=\empty,
    unit vector ratio=1 1
]

    \foreach \H in { 1.6, 2.0, 2.5, 3.5} {
        
        \pgfmathsetmacro{\pminOne}{(1 - sqrt(8*\H - 11)) / 2}
        \pgfmathsetmacro{\pmaxOne}{min(1, (1 + sqrt(8*\H - 11)) / 2)}

        \addplot[blue, thick, directed, domain=\pminOne:\pmaxOne, samples=100] 
            ({sqrt(2*\H - x^2 + x - 3)}, {x});
        \addplot[blue, thick, directedrev, domain=\pminOne:\pmaxOne, samples=100] 
            ({-sqrt(2*\H - x^2 + x - 3)}, {x});

        \pgfmathsetmacro{\pminTwo}{max(1, (-1 - sqrt(8*\H - 3)) / 2)}
        \pgfmathsetmacro{\pmaxTwo}{(-1 + sqrt(8*\H - 3)) / 2}

        \addplot[red, thick, directed, domain=\pminTwo:\pmaxTwo, samples=100] 
            ({sqrt(2*\H - x^2 - x - 1)}, {x});
        \addplot[red, thick, directedrev, domain=\pminTwo:\pmaxTwo, samples=100] 
            ({-sqrt(2*\H - x^2 - x - 1)}, {x});
    }

    \draw[dashed, black!60] (axis cs:-3, 1) -- (axis cs:3, 1);

    \node at (2.4,1.2) {$p=1$};
    
\end{axis}
\end{tikzpicture}    
    \end{subfigure}
    \begin{subfigure}{0.49\linewidth}
        \centering
        \begin{tikzpicture}
\begin{axis} [
    xlabel={$q$ },
    ylabel={$p$ },
    xmin=-2.5, xmax=2.5,
    ymin=-1, ymax=3,
    axis lines=middle,
    xtick=\empty,
    ytick=\empty,    
    unit vector ratio=1 1
]

    \def\H{2.0}
    \def\sq{1.0} %
    \def\sp{1.0} %

    \pgfmathsetmacro{\pminOne}{(1 - sqrt(8*\H - 11)) / 2}
    \pgfmathsetmacro{\pmaxOne}{min(1, (1 + sqrt(8*\H - 11)) / 2)}
    \addplot[blue, thick, domain=\pminOne:\pmaxOne, samples=100] ({sqrt(2*\H - x^2 + x - 3)}, {x});
    \addplot[blue, thick, domain=\pminOne:\pmaxOne, samples=100] ({-sqrt(2*\H - x^2 + x - 3)}, {x});

    \pgfmathsetmacro{\pminTwo}{max(1, (-1 - sqrt(8*\H - 3)) / 2)}
    \pgfmathsetmacro{\pmaxTwo}{(-1 + sqrt(8*\H - 3)) / 2}
    \addplot[red, thick, domain=\pminTwo:\pmaxTwo, samples=100] ({sqrt(2*\H - x^2 - x - 1)}, {x});
    \addplot[red, thick, domain=\pminTwo:\pmaxTwo, samples=100] ({-sqrt(2*\H - x^2 - x - 1)}, {x});

    \draw[dashed, black!40] (axis cs:-2.5, 1) -- (axis cs:2.5, 1);
    
    \node[circle, fill=black, inner sep=1.5pt] at (axis cs:\sq, \sp) {};
    \node at (axis cs:\sq+0.25, \sp+0.25) {$(q_0,1)$} ;

    \addplot[blue, thick, domain=1:0.5, -Stealth] {-2*(x - \sq) + \sp};
    \node[above,blue] at (0.5,2) {$\vec H^1$};
    \node[above,blue] at (0.5,0) {$\pi_*\vec H^1$};
    
    \addplot[red, thick, domain=1:-0.5, -Stealth] {-0.667*(x - \sq) + \sp};
    \node[above,red] at (-0.5,2) {$\vec H^2$};
    \node[above,red] at (-0.5,0) {$\pi_*\vec H^2$};

    \draw[dotted, thick, black!70] (axis cs:\sq, \sp) -- (axis cs:\sq, 0);
    \node[circle, fill=black!70, inner sep=1.2pt] at (axis cs:\sq, 0) {};
    \node at (axis cs:1.25, -0.2){$q_0$};
    \draw[ultra thick,-Stealth, blue] 
        (axis cs:\sq, 0) -- (axis cs:{\sq - 0.5}, 0)
        node[pos=0.7, below, yshift=-3pt] { };

    \draw[ultra thick, -Stealth, red] 
        (axis cs:\sq, 0) -- (axis cs:{\sq - 1.5}, 0)
        node[pos=0.7, above, yshift=2pt] { };

\end{axis}
\end{tikzpicture}    
    \end{subfigure}
    \caption{Phase portrait of $H(p,q) = \frac{p^2}{2} + \frac{|p-1|}{2} + \frac{q^2}{2} - 1$. 
    In this case, the vector fields $\pi_* \vec H^1$ and $\pi_* \vec H^2$ have the same sign at switching points, thus switching points are never conjugate.}
    \label{fig:figure-phase-port-1d}
\end{figure}
Explicit solutions of these equations are easily computed: if $p(0)<1$, then the first smooth arc of the solution satisfies $\ddot q = -q$, $\ddot p = -p + \frac{1}{2}$ having solution
\begin{equation}
\label{eq:formula1-pq-gen-harm-osc}
    q(t) = q(0) \cos t + \left(p(0)-\frac{1}{2}\right) \sin t, 
    \quad 
    p(t) = -q(0) \sin t + \left(p(0)-\frac{1}{2}\right) \cos t + \frac 12.  
\end{equation}

If instead $p(0)>1$, then 
\begin{equation}
    q(t) = q(0) \cos t + \left(p(0)+\frac{1}{2}\right) \sin t, 
    \quad 
    p(t) = -q(0) \sin t + \left(p(0)+\frac{1}{2}\right) \cos t - \frac 12.  
\end{equation}
Then, when passing from one regular arc to the next one, one can find an analogous formula by replacing the initial conditions $(p(0),q(0))$ with the initial point of the new regular arc.

Now, we fix any piecewise regular extremal $\tilde \la$ and we want to determine its Jacobi curve. We begin with the case of free final time. As already pointed out, the Jacobi curve is $\Lambda_0 = (1,0)^\top$ and for $t>0$
\begin{equation}
    \Lambda_t = \R \vec H(p(t),q(t)) 
    = 
    \begin{cases}
        \R \vec H^1 (p(t),q(t)) = \R \left(-q(t) , p(t) -\frac{1}{2}\right) & \text{ if } p(t) <1, \\
        \R \vec H^2 (p(t),q(t)) = \R \left(-q(t) , p(t) +\frac{1}{2}\right) & \text{ if } p(t) >1, \\
    \end{cases}
\end{equation}
and extended such that $\Lambda_t$ is left-continuous at the switching times. 
If we apply Proposition \ref{prop:conj-point-example-free}, we see that for $p(t)\approx 1$ we have that $\dot q$ is positive and it follows that switching times are never conjugate times.
This is also seen clearly from the picture (see Figure \ref{fig:figure-phase-port-1d}): at the switching point, we have $\Lambda_{t_j}+\Lambda_{t_j}^+ = \R^2$ and $\Lambda_{t_j} \cap \Lambda_{t_j}^+ = \{0\}$, so $\pi_*(\Lambda_{t_j}+\Lambda_{t_j}^+)\setminus \pi_*(\Lambda_{t_j}\cap\Lambda_{t_j}^+)=\R\setminus\{0\}$ and $t_j$ is conjugate if and only if $\pi_*\vec H^1(\tilde\la_{t_j})$ and $\pi_*\vec H^2(\tilde\la_{t_j})$ have different signs, which is never the case in our example.  
Thus, conjugate times can occur only along regular arcs on which
\(p(t)<1\), since on an arc with \(p(t)>1\) one has
\begin{equation}
    \dot q(t)=p(t)+\frac12>\frac32,
\end{equation}
and hence \(\dot q\) cannot vanish.

Let \((a,b)\) be a time interval such that $\tilde \la |_{(a,b)}$ is a regular arc on which \(p(t)<1\), and write
\begin{equation}
    \lambda(a)=(p_a,q_a).
\end{equation}
Setting \(s=t-a\), the solution on this arc is obtained from \eqref{eq:formula1-pq-gen-harm-osc} by taking \((p(0),q(0))=(p_a,q_a)\). Therefore,
\begin{equation}
    \dot q(t)=p(t)-\frac12
    =
    -q_a\sin s+\left(p_a-\frac12\right)\cos s.
\end{equation}
By Proposition~4.2, the conjugate times in the interval \((a,b)\) are
precisely the times \(t=a+s\) such that
\begin{equation}
    -q_a\sin s+\left(p_a-\frac12\right)\cos s=0,
    \qquad
    0<s<b-a.
\end{equation}
If \((q_a,p_a-\frac12)\neq(0,0)\), let \(s_*\in(0,\pi]\) be the first
positive solution of this equation. Then all the conjugate times on the
same regular arc are of the form
\begin{equation}
    t=a+s_*+k\pi,
    \qquad k\in\mathbb N\cup\{0\},
\end{equation}
provided that \(t\in(a,b)\). After a switching time, the same
computation applies by taking as \((p_a,q_a)\) the initial point of the
next regular arc.
This completes the discussion about conjugate points in the case of free final time.
\medskip

Let us analyse conjugate points for the fixed final time problem. As described before, in this case we have to add a variable to the state space (and, consequently, another one to the cotangent space), so that the Jacobi curve is a curve of two-dimensional subspaces. 
The initial space is given by \eqref{eq:initial-point-Lambda-fixed-time}. 
We assume, for simplicity, that on the time interval $[0,t_1]$ the maximized Hamiltonian is equal to $H^1$, on $[t_1,t_2]$ is equal to $H^2$, then on $[t_2,t_3]$ is again $H^1$ and so on. 
The case with the reversed order can be dealt with similarly.
Then, following the convention introduced before, the right limit of $\Lambda$ at $t=0$ can be represented as
\begin{equation}
    \Lambda_0 ^+
    =
    \begin{bmatrix}
        0 & 1 \\
        1 & 0 \\
        -q_0 & -\frac{1}{p_0-{1}/{2}} \\
        p_0 -\frac{1}{2} & 0 
    \end{bmatrix}.
\end{equation}
where $\tilde \la_0 = (p_0,q_0)$ is the initial point of the extremal trajectory. 
For $t\in(0,t_1)$, the first two rows are constant, while the last two rows evolve according to the equation obtained by linearizing \eqref{eq:ham-syst-gen-harm-osc}: we have that $\dot y_i = -\pa_q ^2 V (q(s)) x_i, \ \dot x_i = \pa^2_p \psi(p(s))y_i$, which reads
\begin{equation}
    \begin{cases}
        \dot y_i = -x_i, \\
        \dot x_i = y_i,
    \end{cases}
    i=1,2,
    \text{ with } 
    \begin{pmatrix}
        y_1(0) \\ x_1(0)
    \end{pmatrix}
    =
    \begin{pmatrix}
        -q_0 \\ p_0 -\frac{1}{2}
    \end{pmatrix}
    , \ 
    \begin{pmatrix}
        y_2(0) \\ x_2(0)
    \end{pmatrix}
    =
    \begin{pmatrix}
        -\frac{1}{p_0-{1}/{2}} \\ 0
    \end{pmatrix}.
\end{equation}
For $p_0 \neq \frac{1}{2}$, the solutions are 
\begin{equation}
    x_2(s) = -\frac{1}{p_0-1/2} \sin s, 
    \quad
    y_2(s) = -\frac{1}{p_0-1/2} \cos s.
\end{equation}
and $(y_1(t_1),x_1(t_1)) = \vec H^1 (p(t_1),q(t_1))$. 
For $p_0=\frac{1}{2}$, the resulting Jacobi curve is continuous at $t=0$ and the solutions are $x_2(t)=0$, $y_2(t)=0$ constant.
We see immediately that, if $p_0\neq\frac{1}{2}$, $s=k\pi$ are conjugate times. 

We have now to characterize switching times that are also conjugate times.
The Jacobi curve is determined up to time $t_1$:
\begin{equation}
    \Lambda_{t_1}
    =
    \begin{bmatrix}
        0 & 1 \\
        1 & 0 \\
        y_1(t_1) & y_2(t_1) \\
        x_1(t_1) & x_2(t_1) 
    \end{bmatrix}.
\end{equation}
Denote by $\eta_2(t) = (y_2(t),x_2(t))^\top$.
By Proposition \ref{prop:conj-point-example-fixed}, the right limit of $\Lambda$ at $t_1$ is 
\begin{equation}
    \Lambda_{t_1}^+
    =
    \begin{bmatrix}
        0 & 1 \\
        1 & 0 \\
        \vec H^2 & \alpha (\vec H^1 -\vec H^2) +\eta_2  
    \end{bmatrix},
    \quad \text{where } \alpha = \frac{\sigma_{\la_{t_1}}(\vec H^2 , \eta_2) - 1}{\sigma_{\la_{t_1}}(\vec H^1, \vec H^2)},
\end{equation}
where $\vec H^1,\vec H^2$ are evaluated at $\la_{t_1}$ and $\eta_2$ is evaluated at $t_1$. 
We have that 
\begin{equation}
    \vec H^1 - \vec H^2 
    =
    \begin{pmatrix}
        0 \\ -1
    \end{pmatrix},
    \quad 
    \sigma_{\la_{t_1}}(\vec H^2(\la_{t_1}) , \eta(t_1))
    =
    -q(t_1) x_2(t_1) - \frac 3 2 y_2(t_1).
\end{equation}
After some simplifications, the formula for $\Lambda_{t_1}^+$ reads
\begin{equation}
    \Lambda_{t_1}^+
    =
    \begin{bmatrix}
        0 & 1 \\
        1 & 0 \\
        -q(t_1) & y_2(t_1)  \\
        \frac{3}{2} & -\frac{3}{2} \frac{y_2(t_1)}{q(t_1)}-\frac{1}{q(t_1)}
    \end{bmatrix}
    .
\end{equation}
In particular, by Proposition \ref{prop:conj-point-example-fixed}, $t_1$ is conjugate to 0 if 
\begin{equation}
    x_2(t_1) \left(-\frac{3}{2}\frac{y_2(t_1)}{q(t_1)}-\frac{1}{q(t_1)}\right) \leq 0.
\end{equation}
If $p_0=\frac{1}{2}$, then $x_2(t_1)=y_2(t_1)=0$ and the previous inequality holds true.
Otherwise, since $-q=\sigma(\vec H^1 , \vec H^2) > 0$, it reduces to
    $
    x_2(t_1)({3}y_2(t_1)+2) \leq 0,
    $
and, replacing the formulas for $x_2$ and $y_2$, this condition reads
\begin{equation}
    3P_0^2 \sin t_1 \cos t_1 + 2 P_0 \sin t_1 \leq 0, 
\end{equation}
where $P_0 = -\frac{1}{p_0-1/2}$. 
Letting $\theta = \arccos \frac{2p_0 -1}{3}$, a direct computation shows that $t_1$ satisfies this inequality if
\begin{enumerate}
    \item $t_1 \in [\pi, 2\pi] $ if $p_0 \leq -1$;
    \item $t_1\in [\theta,\pi]\cup[2\pi-\theta,2\pi]$ if $ -1 \leq p_0 < 1 $.
\end{enumerate}
Finally, since $p(t_1)=1$, from Equation \eqref{eq:formula1-pq-gen-harm-osc} $t_1$ must satisfy
\begin{equation}
    -q(0) \sin t_1 + \left(p(0)-\frac{1}{2}\right) \cos t_1 = \frac 12.
\end{equation}
Thus, putting together these conditions one can find whether $t_1$ is conjugate to zero depending on $p(0)$ and $q(0)$. 

\medskip
We are left to prove that the minimum for the OCP exists. 
To this aim, one can show that the set of extremal trajectories for the OCP with $\phi$ coincides with the set of extremal trajectories for the OCP with the convexification of $\phi$.
More precisely, we define $\phi^C(u)=u-\frac 1 2$ for $\frac{1}{2}\leq u \leq \frac{3}{2}$ and $\phi^C(u) = \phi(u)$ otherwise.
\begin{prop}
    Under the assumptions of Theorem \ref{thm:existence-example-1D}, a curve $\la : [0,T] \to \R_p \times \R_q $ is an extremal trajectory for the OCP \eqref{eq:control-syst-example-free},\eqref{eq:functional-example-tonelli} with $L(q,u)=\phi(u) -V(q)$ if and only if $\la$ is also an extremal trajectory for the OCP with $L(q,u)=\phi^C(u) -V(q)$.
\end{prop}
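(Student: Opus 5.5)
The plan is to compare the two PMP Hamiltonians directly. Write $h_u(p,q)=pu-\phi(u)+V(q)-C$ and $h^C_u(p,q)=pu-\phi^C(u)+V(q)-C$, and set $\psi(p)=\max_u (pu-\phi(u))$ and $\psi^C(p)=\max_u(pu-\phi^C(u))$. The argument has three steps.

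\textbf{Step 1: the maximized Hamiltonians agree.} Since $\phi^C\le\phi$ everywhere, with equality outside $(\frac12,\frac32)$, we have $\psi^C\ge\psi$. For the reverse inequality, note that on $[\frac12,\frac32]$ the function $\phi^C$ is affine with slope $1$. Hence $pu-\phi^C(u)$ is affine on that interval, so its maximum there is attained at an endpoint $u=\frac12$ or $u=\frac32$, where $\phi^C=\phi$. Therefore $\psi^C=\psi$, and the maximized Hamiltonians coincide:
\begin{equation}
H^C(p,q)=\psi(p)+V(q)-C=H(p,q).
\end{equation}

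\textbf{Step 2: identify the maximizing controls.} Take $p\neq1$. The function $u\mapsto pu-\phi^C(u)$ is concave, and strictly concave outside $[\frac12,\frac32]$. On $[\frac12,\frac32]$ it has slope $p-1\neq0$, so it has no maximum in the interior of that interval. Its unique maximizer is therefore the same as for $\phi$: $u_1(p)=p-\frac12$ if $p<1$ and $u_2(p)=p+\frac12$ if $p>1$. Now take $p=1$. For $\phi$ the maximizers are $\{\frac12,\frac32\}$, while for $\phi^C$ they form the whole interval $[\frac12,\frac32]$.

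\textbf{Step 3: compare the extremal trajectories.} Both problems have the Hamiltonian system $\dot q=u$, $\dot p=\partial_q V(q)$ (the sign is fixed by $L$), coupled with the maximum condition. Since the set of maximizers for $\phi$ is contained in the set for $\phi^C$, every extremal of the original problem is an extremal of the convexified one. Steps 1 and 2 show that $H$, the drift $\dot p$, and the maximizing control agree off the line $\{p=1\}$. So the converse reduces to a statement about the time set $\{t : p(t)=1\}$: any convexified extremal must spend only a null set of time there, or otherwise use only controls that are also admissible for $\phi$.

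\textbf{Main obstacle: the switching line $p=1$.} Suppose $p(t)=1$ on a set $E$ of positive measure. Then $\dot p=0$ a.e. on $E$, which gives $\partial_qV(q(t))=0$, i.e. $q(t)=0$ a.e. on $E$ in the oscillator case (with $V_R$, the critical points of $V_R$). On such a set $\dot q=0$ a.e. by the Lebesgue density argument, which forces $u=0\notin[\frac12,\frac32]$. This contradicts the maximum condition. Hence the switching set is null and the control is a.e. in the common maximizer set, so both problems have the same extremals.

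The same argument should carry over to the general setting of Theorem~\ref{thm:existence-example-1D}. The key ingredients there are that the bi-tangent segment of $\phi^C$ is the only place where the two maximum conditions differ, and that the hypothesis that $\mathcal{Z}$ is discrete, together with $\alpha\vec H^1+(1-\alpha)\vec H^2\neq0$ on $\mathcal{Z}$, excludes sliding along $\Sigma$. I expect this exclusion of sliding (singular) arcs on $\Sigma$ to be the delicate step.
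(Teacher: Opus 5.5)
Your argument is correct and is the paper's: the two problems differ only on the line $\{p=1\}$. An extremal of the convexified problem cannot spend positive time there, because $\dot p=-q$ forces $q=0$ and then $\dot q=u=0$, which is incompatible with $u\in[\frac12,\frac32]$; the paper phrases this as $\vec H^1(1,0)\neq0\neq\vec H^2(1,0)$. You are somewhat more careful than the paper's sketch, since you check $H=H^C$ explicitly and treat positive-measure sets rather than intervals. Two slips do not affect the proof: the equation should read $\dot p=-\partial_qV(q)$, and your parenthetical about $V_R$ would also need the critical points of $V_R$ to be isolated.
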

\begin{proof}[Sketch of the proof]
    Since the maximizing control for the OCP with $L(q,u)=\phi(u) - V(q)$ does not take values in the interval where the convexity of $\phi$ fails, any extremal trajectory for the OCP with $L(q,u)=\phi(u) - V(q)$ is also an extremal trajectory for the OCP with $L(q,u)=\phi^C(u) - V(q)$.
    On the other hand, an extremal trajectory $(p(t),q(t))$ for the OCP with $L(q,u) = \phi^C(u) - V(q)$ is not an extremal trajectory for the problem with $L(q,u)=\phi(u) - V(q)$ only if $p(t)=1$ for some non-trivial time interval. 
    But this is not possible since $p(t)=1$ forces $q(t)=0$ in the same time interval and we have $\vec H^1(1,0)\neq 0$, $\vec H^2(1,0)\neq 0$. 
\end{proof}
For $\phi^C$, the existence of a minimizing trajectory follows by applying the direct method of the Calculus of Variations.
By the previous Proposition, a minimizing extremal for the problem with $L=\phi^C-V$ is also an extremal for the problem with $L=\phi-V$. 
Since the minimum of the problem with $L=\phi^C-V$ is less than or equal to the infimum for the problem $L=\phi-V$, we obtain that this infimum is indeed a minimum coinciding with the minimum of the problem with $L=\phi^C-V$.

\subsection{Proof of Theorem \ref{thm:existence-example-1D}}
The remaining part of this Section is devoted to the proof of the results stated in Subsection \ref{sec:results-example}.

We apply PMP to the OCP defined by Equations \eqref{eq:control-syst-example-free}, \eqref{eq:functional-example-tonelli}.
That is, letting $\lambda=(p,q)\in T^*\mathbb R\simeq\mathbb R^2$, we define the Hamiltonian function $h_u(p,q) = pu - L(q,u)$ as in \eqref{eq:hamilt-PMP-example-1d} and we want to determine the value of $u$ maximizing $h_u(p,q)$ for fixed $p$ and $q$.
By the super-linearity of the function $L$, for fixed $(p,q)$, we have
\begin{equation}
    h_u(p,q) 
    = 
    |u| \left( 
        \left \langle p , \frac{u}{|u|} \right \rangle - \frac{L(q,u)}{|u|} 
    \right)
    \leq 
    |u| \left( 
        |p| - \frac{L(q,u)}{|u|} 
    \right)
    \xrightarrow[|u|\to +\infty]{} -\infty.
\end{equation}
Thus, the maximum point of the Hamiltonian $h_u(p,q)$ with respect to $u$ is a critical point:  
\begin{equation}
    \partial_u h_u(p,q) = 0, 
    \text{ equivalently } \  
    p=\partial_u L(q,u).
\end{equation} 
Geometrically, if $u_0$ is a critical point of $u\mapsto h_u(p,q)$, then the affine function
\begin{equation}
    y_{p,u_0}(u)
    \coloneqq 
    p(u-u_0)+L(q,u_0)
\end{equation}
is tangent to the graph of $L(q,\cdot)$ at $u_0$.
Thus, for fixed $p,q\in\R$, to find the value of the control maximizing $h_u(p,q)$ it suffices to find $u_0$ such that the line $y_{p,u_0}$ is tangent to $L(q,\cdot)$ at the point $u_0$. 

\paragraph{Step 1 - The two maximizing branches.}

By assumptions $3$ and $4$, for fixed $(p,q)$ the equation $p=\partial_uL(q,u)$ has either one or three solutions, apart from the limiting case in which one of the solutions is double. When three distinct solutions are present, the two outer ones correspond to strict local maxima of $u\mapsto h_u(p,q)$, whereas the middle one corresponds to a local minimum.

Therefore, by the implicit function theorem, there exist maximal open sets $\Omega_1,\Omega_2\subset\mathbb R^2$ and smooth functions
\begin{equation} 
    u_i:\Omega_i\longrightarrow\mathbb R,
    \qquad i=1,2,
\end{equation} 
such that $u_1(p,q)<u_2(p,q)$ whenever both are defined, and $u_1,u_2$ are precisely the two local maximizing controls.

\begin{figure}[ht]
    \centering
    \begin{tikzpicture}[scale=0.8,transform shape]
\begin{axis}[
    axis lines=middle,
    xmin=-2.5, xmax=3.5,
    ymin=-8, ymax=8,
    samples=200,
    domain=-2.5:3.5,
    clip=true,
    xlabel={$u$},
    ylabel=\empty,
    xtick=\empty,
    ytick=\empty,
    x label style={yshift=-5mm}
]

\addplot[thick] {2*(x+1)*(x-2)*(2*x-1) + 0.25};
\node at (axis cs:1.6,6) {$\pa_u L(q,u)$};

\addplot[red,thick] {2};
\node at (axis cs:1.5,2.1) [anchor=south,red] {$p$};

\addplot[only marks,mark=*,blue] coordinates {
    (-0.9,0)
    (2.1,0)
};

\draw[thick, blue, dashed] (-0.9,0) -- (-0.9,2);
\draw[thick, blue, dashed] (2.1,0) -- (2.1,2);

\addplot[only marks,mark=*,orange!90!black] coordinates {
    (-0.366, 5.446)
    (1.366, -4.946)
};

\node at (axis cs:-0.5,-0.8) [blue] {\small $u_1(p,q)$};
\node at (axis cs:2.5,-0.8) [blue] {\small $u_2(p,q)$};
\node at (axis cs:-0.366, 5.446) [anchor=south, orange!90!black] {$p_1(q)$};
\node at (axis cs:1.366, -4.946) [anchor=north, orange!90!black] {$p_2(q)$};

\end{axis}
\end{tikzpicture}
    \caption{The graph of $\pa_uL(q,\cdot)$ and the value of the slope of the bi-tangent line.}
    \label{fig:derivative-double-well-functional}
\end{figure}

The domains of these two branches have a simple form.
Indeed, there exist smooth functions $p_1,p_2:\mathbb R\to\mathbb R$, with
\(
    p_2(q)<p_1(q),
\)
such that
\begin{equation}
    \Omega_1\cap\bigl(\mathbb R\times\{q\}\bigr)
    =
    (-\infty,p_1(q))\times\{q\},
    \text{ and }
    \
    \Omega_2\cap\bigl(\mathbb R\times\{q\}\bigr)
    =
    (p_2(q),+\infty)\times\{q\}.
\end{equation}
To prove this, fix $q$ and consider the values $p_1(q)$ and $p_2(q)$ corresponding respectively to the local maximum and the local minimum of the function $u\mapsto\partial_uL(q,u)$, see Figure \ref{fig:derivative-double-well-functional}. 
Their smooth dependence on $q$ follows from the implicit function theorem and assumption $3$.
Define
\begin{equation}
    \label{eq:def-H1-example-1d}
    H^i(\la) \coloneqq h_{u_i(\la)}(\la),
    \qquad \la\in\Omega_i,
    \qquad i=1,2,
\end{equation}
and take any smooth extension of $H^i$ for $\la \not\in \Omega_i$. 
For $p<p_2(q)$, the only local maximum is $u_1(p,q)$, and it is necessarily the global maximum. Similarly, for $p>p_1(q)$, the only local maximum is $u_2(p,q)$, and it is the global maximum.
Then, we have $H(\la) = \max\{H^1(\la),H^2(\la)\}$. 
Define $\Sigma = \{\la \in T^*\R \mid H^1(\la)=H^2(\la)\}$.

\begin{figure}[ht]
    \begin{subfigure}{0.4\linewidth}
        \centering
        \begin{tikzpicture}[>=stealth, scale=0.6, transform shape]

    \def\xstart{-2.5}
    \def\xend{8}
    \def\qone{1.5}
    \def\qtwo{4.5}
    \def\xleft{-0.8}

    \draw[->, thick] (-1, 0) -- (\xend, 0) node[below] {\Large $q$};
    \draw[->, thick] (0, -1.5) -- (0, 5) node[left] {\Large  $p$};

    \draw[thick, dashed] plot [smooth, tension=0.7] coordinates {(\xstart, 3.9) (-1.8, 3.8)};
    \draw[thick] plot [smooth, tension=0.7] coordinates {(-1.8, 3.8) (1.5, 4.1) (4.5, 3.9) (7, 4.4)} node[below] {\large $p_1$};
    \draw[thick, dashed] plot [smooth, tension=0.7] coordinates {(7, 4.4) (\xend, 4.6)};

    \draw[thick, dashed] plot [smooth, tension=0.7] coordinates {(\xstart, 2.4) (-1.8, 2.3)};
    \draw[thick] plot [smooth, tension=0.7] coordinates {(-1.8, 2.3) (1.5, 2.6) (4.5, 2.4) (7, 2.9)} node[below] {\large $p_*$};
    \draw[thick, dashed] plot [smooth, tension=0.7] coordinates {(7, 2.9) (\xend, 3.1)} node[left,yshift=2mm] {\large$\Sigma=\{H^1 = H^2\}$};

    \draw[thick, dashed] plot [smooth, tension=0.7] coordinates {(\xstart, 0.9) (-1.8, 0.8)};
    \draw[thick] plot [smooth, tension=0.7] coordinates {(-1.8, 0.8) (1.5, 1.1) (3.5, 0.2) (5.5, 0.5) (7, 1.3)} node[below ] {\large $p_2$};
    \draw[thick, dashed] plot [smooth, tension=0.7] coordinates {(7, 1.3) (\xend, 1.5)};

    \draw[red, very thick] (\xleft, 5) -- (\xleft, 2.43);
    \draw[green!70!black, very thick] (\xleft, 2.43) -- (\xleft, -1);
    \draw[red, dashed, very thick] (\xleft, 5) -- (\xleft, 5.5);
    \draw[green!70!black, dashed, very thick] (\xleft, -1) -- (\xleft, -1.5);
    
    \filldraw[red] (\xleft, 2.43) circle (2pt);
    
    \node[red, anchor=east] at (\xleft, 4.2) {\Large $\{H^2 > H^1\}$};
    \node[green!70!black, anchor=east] at (\xleft, 0.5) {\Large $\{H^1 > H^2\}$};

    \draw[blue!70, thick] (\qone, -1.2) -- (\qone, 4.1);
    \draw[blue!70, thick, dashed] (\qone, -1.2) -- (\qone, -1.8);
    \node[blue!70, anchor=west] at (\qone, -1.5) {\Large $\Omega_1 \cap (\mathbb R \times {q_1})$};
    \node[blue!70, anchor=north east] at (\qone, 0) {\Large $q_1$};
    \filldraw[blue!70] (\qone, 4.1) circle (2.5pt);
    \filldraw[blue!70] (\qone, 0) circle (2.5pt);

    \draw[blue!70, thick] (\qtwo, 0.23) -- (\qtwo, 4.3);
    \draw[blue!70, thick, dashed] (\qtwo, 4.3) -- (\qtwo, 5.3);
    \node[blue!70, anchor=east] at (\qtwo, 5) {\Large $\Omega_2 \cap (\mathbb R \times {q_2}) $};
    \node[blue!70, anchor= east, yshift=-3mm] at (\qtwo, 0) {\Large $q_2$};
    \filldraw[blue!70] (\qtwo, 0) circle (2.5pt);
    \filldraw[blue!70] (\qtwo, 0.23) circle (2.5pt); %

\end{tikzpicture}
        \caption{Graphical illustration of Step 1. }
        \label{fig:example-structure-max-ham}
    \end{subfigure}
    \hspace{2cm}
    \begin{subfigure}{0.4\linewidth}
        \centering
        \begin{tikzpicture}[>=Stealth, scale=0.7,transform shape]

    \def\xA{3.2}   %
    \def\yA{1.8}
    \def\xB{5.5}   %
    \def\yB{1.6}
    \def\xstart{1.8} %
    \def\xend{8}     %

    \draw[->, thick] (-0.5, 0) -- (\xend, 0) node[below] {\large $q$};
    \draw[->, thick] (1, -1) -- (1, 4) node[left] {\large $p$};

    \draw[thick, dashed] plot [smooth, tension=0.5] coordinates {(-0.5, 1.2) (0.5, 1.7) };
    \draw[thick] plot [smooth, tension=0.5] coordinates {(0.5, 1.7) (1.5, 1.9) (2.5, 2.0) (\xA, \yA)};
    
    \draw[orange, ultra thick] plot [smooth, tension=0.5] coordinates {(2.5, 2.0) (\xA, \yA) (4.3, 1.5)};
    
    \draw[thick] (\xB, \yB) -- (6.5, 1.9);
    \draw[thick, dashed] (6.5, 1.9) -- (7.8, 2.5) node[below, black] {\large $p_*$};

    \draw[green!70!black, ultra thick] plot [smooth, tension=0.5] coordinates {(4.3, 1.5) (\xB, \yB) (6.5, 1.9)};

    \filldraw[blue!70] (\xstart, 0.8) circle (2pt) node[below] {\large $\tilde\lambda_0$};
    \draw[blue!70, thick, mid arrow] plot [smooth, tension=0.6] 
        coordinates {(\xstart, 0.8) (2.4, 1.3) (\xA, \yA)};
    \node[blue!70] at (2.6, 1) {\large $\vec{H}_1$};

    \draw[blue!70, thick, mid arrow] plot [smooth, tension=0.6] 
        coordinates {(\xB, \yB) (6.5, 1.3) (7.5, 1.2)};
    \filldraw[blue!70] (7.5, 1.2) circle (2pt) node[right] {\large $\tilde{\lambda}_T$};
    \node[blue!70] at (6.8, 1.0) {\large $\vec{H}^1$};

    \draw[red, thick, mid arrow] (\xA, \yA) .. controls (4, 3) and (5, 3) .. (\xB, \yB);
    \node[red] at (4.4, 3.1) {\large $\vec{H}^2$};

    \filldraw[red] (\xA, \yA) circle (2.5pt);
    \filldraw[red] (\xB, \yB) circle (2.5pt);

    \node[orange] (L1) at (2.5, 3) {\large $\big\{\{H^{1}, H^2 \}> 0\big\}$};
    \draw[->, orange, thin] (L1) -- (3.8, 1.8);

    \node[green!70!black] (L2) at (6.5, 3.2) {\large $\big\{\{H^{1}, H^2 \} < 0\big\}$};
    \draw[->, green!70!black, thin] (L2) -- (5, 1.6);

\end{tikzpicture}
        \caption{Structure of the extremal trajectories in the phase space. }
        \label{fig:example-pw-trajectories}
    \end{subfigure}
\end{figure}

\paragraph{Step 2 - The set $\Sigma$ is a graph.}
We want to prove the following: for every $q\in\R$, there is a unique value $p \in \big( p_2(q),p_1(q) \big)$ such that $H^1(p,q) = H^2(p,q)$, which depends smoothly on $q$.

Writing explicitly $H^1(p,q) = H^{2}(p,q)$ and taking into account that, by our assumption, we have $u_1 < u_2$, this equality is equivalent to 
\begin{equation}
    p = \frac{ L (q,u_2(p,q)) - L(q, u_1(p,q))}{u_2(p,q) - u_1(p,q)},
\end{equation}
that is, the two straight affine lines
\begin{equation}
    u\mapsto p(u-u_1(p,q)) + L(q,u_1(p,q)) 
    \quad
    \text{ and }
    \quad 
    u \mapsto p(u-u_2(p,q)) + L(q,u_2(p,q)),
\end{equation}
coincide. 
Notice that this is equivalent to saying that the line tangent to the graph of $L(q,\cdot)$ at $u_1(p,q)$ coincides with the tangent at $u_2(p,q)$.  
By the double-well shape of the graph of $L(q,\cdot)$, this can happen only for a single value of $p$, which depends smoothly on $q$. 
This proves that $\Sigma$ is a graph. 
{

}
\paragraph{Step 3 - The maximizing Hamiltonian.}
As a corollary of the previous step, we determine precisely the maximized Hamiltonian. 
For fixed $q\in \R$ and denoting by $p_*(q)$ the unique $p\in \R$ such that $H^1(p,q) = H^2(p,q)$, by continuity of the functions $H^1,H^2$ we have that 
\begin{equation}
    H(p,q)
    =
    \begin{cases}
        H^1(p,q) & \text{ if } p \leq p_*(q), \\
        H^2(p,q) & \text{ if } p \geq p_*(q).
    \end{cases}
\end{equation}
This gives the PMP description of the extremals. Away from $\Sigma$, an extremal satisfies
\begin{equation}
    \label{eq:hamilt-syste-pw-smooth-example}
    \dot\lambda
    =
    \begin{cases}
        \vec H^1(\lambda) &\text{in }\{H^1>H^2\}, \\
        \vec H^2(\lambda) &\text{in }\{H^2>H^1\}.
    \end{cases}
\end{equation}
At points of $\Sigma$, both controls are maximizing. 
In particular, every extremal trajectory is contained in a level set of the maximized Hamiltonian $H$. 
More precisely, $H$ is constant along each regular arc and its value is preserved when the extremal switches from one branch to the other, that is, system \eqref{eq:hamilt-syste-pw-smooth-example} is integrable.

\paragraph{Step 4 - Transversality at non-degenerate switching points.}
Let $\bar\lambda\in\Sigma$. 
Along an arc generated by either $\vec H^1$ or $\vec H^2$, one has
\begin{equation}\label{eq:transversality-switching-set}
    \frac{d}{dt} (H_2(\lambda_t)-H_1 (\lambda_t))
    =
    \{H_1,H_2\}(\lambda_t).
\end{equation}
Consequently, if
\(
    \{H_1,H_2\}(\bar\lambda)\ne0,
\)
then both Hamiltonian vector fields $\vec H_1$ and $\vec H_2$ are transverse to $\Sigma$ at $\bar\lambda$. 
Thus an extremal reaching $\bar\lambda$ crosses the switching set transversally, and the corresponding switching time is isolated.

Furthermore, the maximizing branch changes when $H_2-H_1$ changes sign. 
Therefore, at every switching point outside
\begin{equation} 
    \mathcal Z
    \coloneqq 
    \left\{\lambda\in\Sigma:\{H_1,H_2\}(\lambda)=0\right\},
\end{equation} 
the strong switching Legendre condition is satisfied.

\paragraph{Step 5 - Finiteness of switching times and piecewise regularity.}
We can now prove point 1. in Theorem \ref{thm:existence-example-1D}.
First, we prove that switching times cannot accumulate when $\mathcal Z$ is discrete.
Assume by contradiction that a non-constant extremal trajectory has infinitely many switching times $t_j$ converging to some $t_\infty$. Since $\lambda$ is continuous, the switching points $\lambda(t_j)$ converge to $\lambda(t_\infty)\in\Sigma$. Moreover, consecutive switching arcs alternate between the regions $\{H_1>H_2\}$ and $\{H_2>H_1\}$. By \eqref{eq:transversality-switching-set}, the corresponding values of $\{H_1,H_2\}$ have alternating signs. Hence,
\begin{equation} 
    \{H_1,H_2\}(\lambda(t_\infty))=0,
\end{equation} 
and therefore
\begin{equation} 
    \lambda(t_\infty)\in\mathcal Z.
\end{equation} 

Since $\mathcal Z$ is discrete, the only possible accumulation configuration would be a piecewise smooth spiral around the point $\lambda(t_\infty)$. 
This is impossible because the dynamics admit the first integral $H$: every extremal remains in a single level set of $H$, whereas a spiral converging to an isolated attracting equilibrium would force $H$ to be locally constant near that equilibrium. Thus switching times cannot accumulate.
It follows that every non-constant extremal has only finitely many switching times. 

Moreover, on every open interval between two consecutive switching times, the maximizing control is unique and satisfies
\begin{equation} 
    \partial^2_{uu}h_u(p,q)
    =
    -\partial^2_{uu}L(q,u)
    <0.
\end{equation} 
At a switching point outside $\mathcal Z$, the strong switching Legendre condition follows from the transversality argument above. 
Hence an extremal can fail to be piecewise regular only if it meets $\mathcal Z$.

Fix $q_0\in\mathbb R$. Since $\mathcal Z$ is discrete and the system is integrable, the set of initial covectors $p_0\in\mathbb R$ for which the extremal with initial condition
\begin{equation} 
\lambda(0)=(p_0,q_0)
\end{equation} 
meets $\mathcal Z$ is discrete. Therefore, outside a discrete set of initial covectors, the corresponding extremal trajectory is piecewise regular.

It remains to prove the existence assertion in Theorem~4.1.
\paragraph{Step 6 - Existence of minimizers.}
Let $L^{C}(q,\cdot)$ denote the convexification of $L(q,\cdot)$ with respect to the control variable. The construction above shows that the maximized Hamiltonian associated with $L^{C}$ coincides with $H$. Indeed, the common tangent line of slope $p^*(q)$ determines the affine portion introduced by the convexification, and the maximizers remain on the two outer branches where $L^{C}$ agrees with $L$.

The direct method in the Calculus of Variations yields the existence of a minimizing control $u^C$ for the problem associated with $L^{C}$. 

If $u^C$ is such that $L^C(q(t),u^C(t)) = L(q(t),u^C(t))$ for every $t\in[0,T]$, then, since
$
    L^{C}\le L,
$ 
$u^C$ is a minimizer also for the original functional $L$, and so point 2. of Theorem~\ref{thm:existence-example-1D} follows.

Suppose, by contradiction, that $L^C(q(t),u^C(t)) < L(q(t),u^C(t))$. 
Then, denoting by $\lambda^C$ the extremal for $L^C$ corresponding to the control $u^C$, we must have that there is some non-trivial time interval for which $\la^C_t\in \Sigma$, since outside this set the Hamiltonian systems for the extremals of $L^C$ and $L$ are the same. 
This implies that $\dot \la_t = \alpha(t) \vec H^1(\la_t) + (1-\alpha(t))\vec H^2(\la_t)$, for some function $\alpha$ satisfying $\alpha(t)\in[0,1]$.
But then the same computation as the one in Equation \eqref{eq:transversality-switching-set} forces $\{H^1,H^2\}(\la_t^C)=0$ on this time interval, i.e. $\la_t \in \mathcal Z$. 
So, since $\mathcal Z$ is discrete, $\la_t$ is constant. 
But this is not possible since, by assumption $\alpha \vec H^1(\la) + (1-\alpha)\vec H^2(\la)\neq 0$ for all $\al\in[0,1]$ and all $\la\in\mathcal{Z}$.

This completes the proof of Theorem \ref{thm:existence-example-1D}. \hfill $\qed$

\begin{rem}
    \label{rem:example-1d-hypotheses-fail}
    Notice that if $\{H^1,H^2\}(\bar\la)=0$ at a switching point, i.e. $\bar \la\in\mathcal{Z}$, then we have three cases:
    \begin{itemize}
        \item either $\vec H^1 (\bar\la)=\vec H^2 (\bar\la)=0$, in which case $\bar \la$ is an equilibrium point and the only possible extremal trajectory passing through $\bar \la$ is the constant trajectory;
        \item either one among $\vec H^1 (\bar\la)$ or $\vec H^2 (\bar\la)$ is zero and the other one is non-zero, so that there is still one unique way to extend any extremal trajectory passing through the point $\bar \la$;
        \item or both $\vec H^1 (\bar\la)\neq 0$ and $\vec H^2(\bar \la)\neq0$: in this case the extremal trajectory passing through the point $\bar \la$ is not uniquely determined after this point, because one can choose both the vectors $\vec H^1 (\bar\la)$ and $\vec H^2(\bar \la)$ to extend the trajectory, that is $\bar \la$ is a branching point;
    \end{itemize}
    Even though the case $\{H^1,H^2\}(\bar\la)=0$ could be studied with our approach, a detailed analysis of these degenerate cases goes beyond the purposes of this paper.
\end{rem}

\subsection{Proof of Proposition \ref{prop:conj-point-example-free}}
\label{subsec:example-conj-point}
In this Subsection, we prove Proposition \ref{prop:conj-point-example-free}.
Fix a reference piecewise regular normal extremal pair $(\tilde u,\tilde \lambda)$. 

We point out that in the following computations we only need that the state manifold $M$ is one-dimensional and we do not require the structure introduced in the problem \eqref{eq:control-syst-example-free},\eqref{eq:functional-example-tonelli}. 
So, the results contained in this subsection apply to any one-dimensional control system of the form \eqref{eq:formulation-OCP} or \eqref{eq:formulation-OCP-fixed-time}.

This characterization in the free final time case could be obtained as well by an elementary argument. Nonetheless, in our opinion it is a good illustration of our theory.  

\begin{proof}[Proof of Proposition \ref{prop:conj-point-example-free}]
As already pointed out in Section \ref{sec:results-example}, the cotangent space is trivial, $T^*M \simeq \R^* \times M$, so we have a global chart and we do not have to transport all the dynamics back to the initial point. 
Consequently, the tangent space $T_{\tilde \la_0}(T^*M)$ is also identified with $ \R^* \times \R $.
The Lagrangian Grassmannian $\mathfrak L (T_{\tilde \la_0}(T^*M)) \simeq \mathfrak L (\R^* \times \R)$ identifies with one-dimensional linear subspaces of $\R^* \times \R$, i.e. $\mathbb R \mathbb P^1 \simeq S^1$. 
As a convenient set of coordinates, we can choose to identify $T_{\tilde \la_0}(T_{q_0}^*M) \simeq [0,1]^{\top}$ and $T_{\tilde \la_0}(T_{q_0}M) \simeq [1,0]^{\top}$. 
Here we denote with the squared brackets $[ \cdot \, , \cdot ]$ points on the projective space $\mathbb R \mathbb P^1$ and with the round brackets $( \cdot \, , \cdot )$ points in the space $\R^* \times \R$.
In these coordinates, the canonical symplectic form is given by 
\begin{equation}
    \sigma \big( (0,1)^\top,(1,0)^\top \big) = 1,
\end{equation}
and vectors rotating in the clockwise direction are given a positive orientation:
to show this, we parametrize points in $S^1$ in a slightly unusual way, that is we set 
\begin{equation}
    S^1 = \{ (\sin \theta, \cos \theta)^\top \mid \theta \in [0,2\pi) \},
\end{equation}
so that the point corresponding to $\theta=0$ is the vertical subspace $[0,1]^\top$ and as $\theta$ increases the point $(\sin \theta, \cos \theta)$ moves clockwise.
We have to show that vectors moving clockwise are given a positive orientation.
Take any function $\theta : (-\eps , \eps) \to \R$ such that $\dot \theta > 0$, we have
\begin{equation}
    \sigma 
    \left( 
        ( \sin \theta_0 , \cos \theta_0 )^\top
        , 
        \left.\frac{d}{dt}\right|_{t=0} ( \sin \theta(t), \cos \theta(t) )^\top 
    \right) 
    = 
    \dot \theta(0)
    \sigma
    \left( 
        (\sin \theta_0 , \cos \theta_0)^\top
        , 
        (\cos \theta_0 , -\sin \theta_0 )^\top 
    \right)
    =
    \dot \theta(0) >0,
\end{equation}
as we wanted to show.

The Jacobi curve is the curve $\Lambda : [0,T] \to \mathbb R \mathbb P^1$ given by $\Lambda_0 = [0,1]^{\top}$ and $\Lambda_t = \R \vec h_{\tilde u(t)}$ for $t>0$. 
Indeed, since $\dim M = 1$, the condition $\vec h_{\tilde u(t)} \in \Lambda(t)$ completely determines the Jacobi curve.

In particular, if $t$ is a switching time and we denote by $\vec h_{-}$ and $\vec h_{+}$ the Hamiltonian vector fields generating the maximized flow before and after $t$ respectively and supposing $\vec h_{-}=r_-(\sin \theta_- , \cos \theta_-)^{\top}$ and $\vec h_{+}=r_+(\sin \theta_+ , \cos \theta_+)^{\top}$, with $r_\pm>0$ and $\theta_\pm \in [0,2\pi)$, we have that 
\begin{equation}
    \sigma (\vec h_-, \vec h_+) 
    =
    r_- r_+ \sin (\theta_+ - \theta_-).
\end{equation}
Hence, we have $\sigma (\vec h_-, \vec h_+)>0$ if and only if $\sin (\theta_+ - \theta_-)>0$, that is $\theta_+ \in (\theta_- , \theta_- + \pi )$ $\mathrm{mod} \ 2\pi$.

Now, we want to characterize conjugate times. 
Suppose first that the curve $\Lambda$ is continuous at $t\in[0,T]$. Recall that, in this case, $t$ is a conjugate time if and only if Equation \eqref{eq:def-conj-time-no-switch} holds.  
If $\vec h_{\tilde u(t)}$ is not constant in time, then the left-hand side is $\{0\}$ and, since both $\Lambda$ and $\Pi$ have dimension 1, this condition reduces simply to $\R \vec h_{\tilde u(t)} = [0,1]^{\top}$. 
This happens if and only if $\pi_* \vec h_{\tilde u(t)} = 0$, that is $\dot {\tilde q}(t) = 0$.
    
If instead $t$ is a switching time and using the same notation introduced above, we have that $\pi_* (\Lambda_t + \Lambda_{t}^+) = \R$ and $\pi_* (\Lambda_t \cap \Lambda_{t}^+) = \{0\}$.
Thus, $t$ is a conjugate time if and only if $\pi_* \vec h_-$ and $\pi_* \vec h_+$ belong to different connected components of $\R \setminus \{0\}$, that is they have opposite sign.

This concludes the proof. 
\end{proof}

\subsection{Proof of Proposition \ref{prop:conj-point-example-fixed}}
In this Subsection we prove the characterization of conjugate points in the case of fixed final time, see Proposition \ref{prop:conj-point-example-fixed}.
As explained in Section \ref{sec:fixed-final-time}, we have to consider the extended state space $\R\times M $, where the first $\R$ corresponds to the added time variable.
The purpose of the following calculation is only to track the second coordinate $x_2$ of the Jacobi curve (see notation introduced in Proposition \ref{prop:conj-point-example-fixed}): away from switching times, $x_2=0$ detects conjugacy, while at a switching time the relevant information is whether $x_2$ changes sign.

Thus, the cotangent space is $T^*(\R\times M) \simeq (\R^{*}\times \R) \times T^*M$.
First of all, we have to write the Hamiltonian system with the time as additional variable:
\begin{equation}
    \begin{cases}
        \dot t(s) = 1+\nu(s), \\
        \dot q(s) = (1+\nu(s))u(s).
    \end{cases}
    \quad 
    \begin{cases}
        \dot \tau(s) = 0, \\
        \dot p(s) = (1+\nu(s)) \pa_q L(q(s),u(s)).
    \end{cases}
\end{equation}
We immediately notice that the right-hand sides of the equations for $t$ and $\tau$ depend on neither $p,q$ nor $u$. 
Thus, in the linearized system, i.e. the Jacobi equation, the corresponding linearized variables are constant for $s\in(t_{j-1},t_j)$. 

\begin{proof}[Proof of Proposition \ref{prop:conj-point-example-fixed}]
Again, for simplicity of exposition, we deal with the simpler case of one single switching time, but repeating the same argument one obtains the same result for any finite number of switching times. 

We recall briefly the structure of the Jacobi curve: we have that $\Lambda_0 = \pa_\tau \oplus \Pi$. 
Then, the discontinuity at $0$ is computed as $\Lambda_{0}^+ = \Lambda_0 ^{\pa_t + \vec H^1(\tilde\la_0)}$. 
If $\vec H^1(\tilde\la_0)$ is vertical, that is $x_1(0)=0$, then $\Lambda^+_0 = \Lambda_0$. 
Otherwise, denoting by $\vec H^1(s) = \vec H^1 (\tilde\la_s) = [0,0,y_1(s),x_1(s)]^\top$, we have
\begin{equation}
    \Lambda_{0}^+
    =
    \begin{bmatrix}
        0 & 1 \\
        1 & 0 \\
        y_1(0) & -1/x_1(0) \\
        x_1(0) & 0 
    \end{bmatrix}.
\end{equation}
Then, for $s\in(0,t_1]$, we have that the $\tau,t$ components are constant and the $p,q$ components evolve according to the linearized flow $(\Phi_s ^1)_*$ (which was defined in \eqref{eq:def-phi-j}).
Thus, we have 
\begin{equation}
    \label{eq:example-jacobi-curve-fixed-time}
    \Lambda_{s}
    =
    \begin{bmatrix}
        0 & 1 \\
        1 & 0 \\
        y_1(s) & y_2(s) \\
        x_1(s) & x_2(s) 
    \end{bmatrix}
    =
    \begin{bmatrix}
        \pa_t + \vec H^1(s) & \pa_\tau + \eta(s)
    \end{bmatrix},
    \quad 
    s\in(0,t_1].
\end{equation}
where $\eta(s) = (\Phi_s^1)_*[-\frac{1}{x_1(0)},0]^\top$. 
Then, using a similar notation, we define $\Lambda_{t_1}^+ = \Lambda_{t_1} ^{\pa_t + \vec H^2(t_1)} $ and $\Lambda_s = (\Phi_s^2)_*\Lambda_{t_1}^+ $ for $s\in(t_1,t_2]$, and then repeat the same algorithm for all the subsequent regular subintervals.  

If the vector field $\vec H^1(s)$ is non-constant in $s$, from $\pa_t + \vec H^1(s)\in \Lambda_s$ it follows that there are no constant vertical vectors contained in $\Lambda_s$.

First, suppose that the Jacobi curve is continuous at $s\in[0,T]$. 
We can easily see that the vector $\pa_t + \vec h_{\tilde u(s)}$ is never contained in $\R\pa_\tau \oplus \Pi$, since its projection on the $\pa_t$ component is always non-zero. 
Moreover, every linear combination of $\pa_t + \vec h_{\tilde u(t)}$ and $\pa_\tau + \eta(t)$ with non-zero coefficients cannot be in $\R\pa_\tau \oplus \Pi$ as well.  
Then, following our notation, the only possibility for $s$ to be a conjugate time is if $x_2(s)=0$.

If the Jacobi curve is not continuous at $s$, that is $s=t_1$, then we can compute the Maslov index $\operatorname{ind}_{\Pi}(\Lambda_{t_1},\Lambda_{t_1}^+)$ of the curve (see Section \ref{sec:maslov-index-conjugate-points}).

The following computation shows that the Maslov index is non-zero at a switching time if and only if the fourth component $x_2$ changes sign.
We know that $\Lambda_{t_1}^+ = \Lambda_{t_1}^{\pa_t + \vec H^2(t_1)}$. 
Denoting by $\vec H^2 = [0,0,w,z]^\top$, we have to find $\eta ^+ = \al( \pa_t + \vec H^1(t_1)) + \pa_\tau + \eta(t_1) \in \Lambda_{t_1}$ such that $\sigma(\eta ^+ , \pa_t + \vec H^2 (t_1)) = 0$. 
Writing explicitly, we obtain (we omit the dependence on $t_1$, since it is fixed):
\begin{equation}
    \sigma(\eta ^+ , \pa_t + \vec H^2  )
    =
    \sigma \big( 
        \al( \pa_t + \vec H^1 ) + \pa_\tau + \eta  
        , 
        \pa_t + \vec H^2  
    \big)
    =
    \al \sigma( \vec H^1   , \vec H^2  )
    +
    1
    +
    \sigma(\eta  , \vec H^2  )
    =
    0.
\end{equation}
Solving for $\al$ yields
\begin{equation}
    \label{eq:example-conj-points-def-alpha}
    \al 
    = 
    \frac{
        \sigma \big( \vec H^2  , \eta  \big) -1
    }{
        \sigma \big( \vec H^1  , \vec H^2  \big)
    }.
\end{equation}
Notice that $\alpha$ can be computed using only the projection of $\Lambda_{t_1}$ in $T_{\tilde \la_{t_1}}(T^*M)$. 
So, the right limit of the Jacobi curve is
\begin{equation}
    \Lambda_{t_1 }^+
    =
    \begin{bmatrix}
        \pa_t +\vec H^2(t_1) & \eta^+(t_1)
    \end{bmatrix}
    =
    \begin{bmatrix}
        0 & 1 \\
        1 & \al \\
        w(t_1) & \al y_1(t_1) + y_2(t_1) \\
        z(t_1) & \al u_1(t_1) + x_2(t_1) 
    \end{bmatrix}.
\end{equation}
We can simplify the expression of the second column by subtracting $\al (\pa_t + \vec H^2 (t_1))$:
\begin{equation}
    \Lambda_{t_1}^+
    =
    \begin{bmatrix}
        \pa_t +\vec H^2(t_1) & \eta^+(t_1) - \al (\pa_t +\vec H^2(t_1))
    \end{bmatrix}
    =
    \begin{bmatrix}
        0 & 1 \\
        1 & 0 \\
        w(t_1) & \al (y_1(t_1) - w(t_1)) + y_2(t_1) \\
        z(t_1) & \al (x_1(t_1) - z(t_1)) + x_2(t_1) 
    \end{bmatrix}.
\end{equation}

We compute now the Maslov index. 
Recall that $\operatorname{ind}_{\Pi}(\Lambda_{t_1},\Lambda_{t_1}^+) = \operatorname{ind}^+\mathfrak q + \frac{1}{2} \ker \mathfrak q$, where $\mathfrak q : ((\Lambda_{t_1} + \Lambda_{t_1}^+) \cap \Pi) / (\Lambda_{t_1} \cap \Lambda_{t_1}^+ \cap \Pi) \to \R$ is defined by $ \mathfrak q (\nu) = \sigma (v_1,v_2)$, with $v_1 + v_2 = \nu$. 
We have that $\mathfrak q$ is a quadratic form defined on the one-dimensional space $(\Lambda_{t_1} + \Lambda_{t_1}^+) \cap \Pi$.
The space $\Lambda_{t_1} + \Lambda_{t_1}^+$ is given by
\begin{equation}
    \Lambda_{t_1}
    +
    \Lambda_{t_1}^+
    =
    \begin{bmatrix}
        \pa_t + \vec H^1(t_1) 
        &
        \pa_t + \vec H^2(t_1) 
        & 
        \pa_\tau + \eta(t_1) 
    \end{bmatrix}.
\end{equation}
We have to find $\nu \in \mathbb R\partial_\tau \oplus \Pi$ such that 
\begin{equation}
    \nu 
    =
    \beta_1 \left( \pa_t + \vec H^1(t_1) \right) 
    + 
    \beta_2 \left( \pa_t + \vec H^2(t_1) \right) 
    + 
    \beta_3 \left( \pa_\tau + \eta(t_1) \right), 
\end{equation}
for $(\beta_1,\beta_2,\beta_3) \neq (0,0,0)$.
Clearly, in order to eliminate the component in the direction $\pa_t$, we have to choose
\begin{equation}
    \beta_1 = -\beta_2.
\end{equation}
Since at a genuine switch we have $x_1(t_1) - z(t_1)\neq0$, it follows that we must choose $\beta_3 \neq 0$. 
In particular, up to rescale $\nu$, we can always suppose $\beta_3=1$.
Hence
\begin{equation}
    \nu 
    =
    \beta_1 \left( \vec H^1(t_1) - \vec H^2(t_1) \right) 
    + 
    \pa_\tau + \eta(t_1).
\end{equation}
The condition $\nu \in \partial_\tau \oplus \Pi$ is equivalent to $\beta_1 \left( \vec H^1(t_1) - \vec H^2(t_1) \right) + \eta(t_1) \in \Pi = \mathrm{span}\{(0,1)^\top\}$.
This reads,
\begin{equation}
    \be_1 (x_1(t_1) - z (t_1)) + x_2(t_1) = 0 
    \iff 
    \be_1 = \frac{x_2(t_1)}{z(t_1) - x_1 (t_1)}.
\end{equation}
Finally, we can compute $\mathfrak q (\nu)$
\begin{align}
    \mathfrak q (\nu)
    &=
    \sigma (
        \beta_1 (\pa_t + \vec H^1) + \pa_\tau + \eta 
        ,\,
        -\beta_1 (\pa_t + \vec H^2)
    )
    =
    -\beta_1^2 \sigma(\vec H^1,\vec H^2) - \beta_1 - \beta_1 \sigma (\eta , \vec H^2)
    =
    \\
    &=
    -\beta_1^2 \sigma(\vec H^1,\vec H^2) + \beta_1 ( \sigma (\vec H^2 , \eta) - 1)
    .
\end{align}
Finally, the time $t_1$ is conjugate to $0$ if and only if $\operatorname{ind}_{\Pi}(\Lambda_{t_1},\Lambda_{t_1}^+)>0$.
So, $t_1$ is not conjugate to $0$ if $\mathfrak q (\nu) < 0$, which is equivalent to 
\begin{equation}
    \beta_1^2 - \beta_1 \frac{( \sigma (\vec H^2 , \eta) - 1)}{\sigma(\vec H^1,\vec H^2)} 
    = 
    \beta_1^2 - \beta_1 \al 
    =
    \beta_1 (\beta_1 -\alpha)
    > 0,
\end{equation}
where $\alpha$ is the same as in \eqref{eq:example-conj-points-def-alpha} and we have used the fact that $\sigma(\vec H^1,\vec H^2)>0$.
We compute the product $x_2(t_1)x_2(t_1+)$ (for the sake of readability, we omit the $s$ and denote $x_2(t_1+)$ by $x_{2+}$):
\begin{align}
    x_2 x_{2+}
    &=
    x_2^2 + \alpha x_2 (x_1 - z )
    =
    \\
    &= \beta_1 ^2 (x_1-z)^2 - \alpha \beta_1 (x_1-z)^2
    =
    \\
    &=
    \beta_1 (\beta_1 - \alpha) (x_1-z)^2.
\end{align}
Thus, $t_1$ is not a conjugate time if and only if $x_2(t_1)$ and $x_2(t_1+)$ have the same sign, as we wanted to show. 
\end{proof}

\section{Second variation and Jacobi curve}
\label{sec:sec-order-cond-jacobi-curve}
In this Section, 
we give the formulas for the first and the second differential (sometimes also called second variation) of the functional $J$ constrained to the submanifold $E^{-1}(q_1)$. 
Then, we define the Jacobi curve as the $\mathcal{L}$-derivative of the optimal control problem \eqref{eq:formulation-OCP} at the piecewise regular extremal pair $(\tilde u, \tilde \la)$ and we show that this definition is equivalent to the one given in the Introduction. 

The $\mathcal{L}$-derivative is obtained essentially by solving the linearization of the equation defining the Lagrange multipliers, see Appendix \ref{sec:lagrange-multip}.
We give a rigorous definition of the $\mathcal{L}$-derivative of a constrained optimization problem and recall its main properties in Appendix \ref{subsec:def-L-derivative}. 
However, for our purposes it is sufficient to use the characterization recalled in Proposition \ref{prop:Jacobi-curve-working-def}.

\subsection{First and second differentials of the endpoint map}\label{sec:Hessian-endpoint-map}

In this Subsection, we compute the first and the second differential of the endpoint map $E$ defined in Equation \eqref{eq:def-endpoint-map}.
We give just the main formulas which are needed for the subsequent part of the paper and we postpone almost all the technical computations to Appendix \ref{app:diff-endpoint-map}.

We begin with the first differential, which is a linear map $D E : T_{\tilde u} \U \to T_{q_1} M$. 
For brevity, we let $\mathcal{V} = T_{\tilde u} \U$ be the space of admissible variations. 
We recall that, by the discussion in Section \ref{sec:preliminaries}, we have that the control in the system \eqref{eq:hamilt-system-time-rep} is split into the two components $\nu$ and $u$. 
So, for a variation $w\in\mathcal{V}$ we use the notation $w=(\theta,v)$, where $\theta$ denotes the component tangent to the control variables $\nu$ and $v$ the component tangent to the control variables $u$. 
We say that $(\theta,0)$ (or, for simplicity $\theta$) is a \emph{time variation} and $(0,v)$ (or, for simplicity $v$) is a \emph{control variation}.
In addition, we define the following spaces, which are needed later: 
\begin{align}
    &\mathcal{V}_\tau
    \coloneqq
    \{ 
        w \in \mathcal{V}
        \mid
        \mathrm{supp} w \subset [0,\tau] 
    \},
    \\
    &\mathcal{V}_{\tau_1,\tau_2}
    \coloneqq
    \{ 
        w \in \mathcal{V}
        \mid
        \mathrm{supp} w \subset [\tau_1,\tau_2] 
    \},
\end{align}
where $0\leq \tau \leq T$ and $0\leq \tau_1 < \tau_2 \leq T$.

Now, we can compute the differential of the map $E$ using chronological calculus (see \cite{AgBaBo}, Chapter 6). 
Let $(\Phi_t)_{t\in\R}$ be the Hamiltonian flow of \eqref{eq:hamilt-system-u-tilde}. 
We define $G(\nu,u) \coloneqq \mathcal E (\nu,u) \circ \widetilde{\Phi}_{0,T}^{-1} \circ \pi$, which is the map $E$ transported back to the initial point $q_0$. 
We compute the differential of $G$ instead of $E$, but clearly one can recover the latter from the first one since $\widetilde{\Phi}_{0,T}$ is a smooth diffeomorphism. 
In order to express the differential of $G$, we define
\begin{align}
    &\bullet \,
    \left.
    \matheuler{h}_t (\la,\nu,u) 
    = 
    \big( 
        (1+\nu) h_{u} - h_{\tilde u (t)} 
    \big)
    \circ
    \widetilde{\Phi}_{0,t}
    \right|_ \la 
    \quad
    u\in U, \, \nu\in \R, \la\in T^*M,
    \\
    &\bullet \,
    \vec {\matheuler{h}}_t (\la,\nu,u) 
    = 
    \left.
        (\Phi^{-1}_t)_* \big( 
            (1+\nu) \vec h_{u} - \vec h_{\tilde u(t)} 
        \big)
    \right|_\la,
    \quad
    u\in U, \, \nu\in \R, \la\in T^*M,
    \\
    &\bullet \,
    X_t [\theta(t),v(t)]
    = 
    \left.
        \frac{\pa \vec {\matheuler{h}}_t}{\pa (\nu,u)}[\theta(t),v(t)] 
    \right|_{(\tilde \la(0), \tilde u(t),0)},
    \quad
    \theta(t)\in\R,
    v(t)\in \R^{\dim U_j},
    \\
    &\bullet \,
    Z_t v(t) 
    =
    \left.
        \frac{\pa \vec {\matheuler{h}}_t}{\pa u}[v(t)] 
    \right|_{(\tilde \la(0), \tilde u(t),0)},
    \quad
    v(t)\in \R^{\dim U_j}, 
\end{align}
A standard computation shows that $\vec {\matheuler{h}}_t$ is the Hamiltonian vector field generating the transported-back dynamics of endpoint map $G$, that is, denoting $u_t = u |_{[0,t]}$, we have 
\begin{equation}
    G(u_t) 
    =
    \lambda \circ \overrightarrow{\exp}\int_0 ^t \vec {\matheuler{h}}_s ds,
\end{equation}
see also Appendix \ref{app:diff-endpoint-map}.
Moreover, we introduce the following notation:
\begin{equation}
    \Theta_j 
    =
    \int_{t_{j-1}} ^{t_j} \theta_j(t) dt,
    \quad 
    \Theta_I = (\Theta_1,\dots,\Theta_{k+1}).
\end{equation}
Denoting by $D$ the differential with respect to the $\nu,u$ variables at the point $\tilde u$, which corresponds to $(\nu,u)=(0,0)$ according to the coordinates on $U$ introduced in Section \ref{sec:preliminaries}, the differential of the map $G$ in the direction $(\theta,v)\in \mathcal{V}$ reads 
\begin{align}
    DG[\theta,v]
    =
    \left\langle 
        d_{\tilde \la(0)}\pi
        ,\,
        \int_0 ^T X_t [\theta(t),v(t)] dt
    \right\rangle
    =
    \left\langle 
        d_{\tilde \la(0)}\pi
        ,\,
        \Theta_I \cdot \vec {\mathcal H}^J  
        +
        \int_0 ^T Z_t v(t) dt
    \right\rangle,
\end{align}
where 
$
\vec{\mathcal{H}}^J 
= 
(
    \vec{\mathcal{H}}^1, 
    \dots , 
    \vec{\mathcal{H}}^k , 
    \vec{\mathcal{H}}^{k+1} 
).
$
Since $\tilde u$ is a critical point of $E$ with extremal trajectory $\tilde \la$, we have that $\langle \tilde \la(0), DG[\theta,v]\rangle =0$ for every $(\theta,v)\in \mathcal{V}$.
This reads
\begin{equation}
    \langle \tilde \la(0), DG[\theta,v]\rangle
    =
    \matheuler{ h }_t(\tilde \la_0) \Theta_I
    +
    \int_0 ^T 
    \left.
        \frac{\pa \matheuler{ h }_t}{\pa u} [v(t)] 
    \right|_{|(\tilde \la(0), \tilde u(t),0)} dt
    =0,
\end{equation}
where we have used the identities which are proved in Lemma \ref{lemma:proj-Liouville}. 
In particular, choosing $v=0$, we see that to have a critical point at $(0,0)$ we must have $\matheuler{ h }_t(\tilde \la_0)=0$ for every $t\in[0,T]$, as prescribed by PMP.
In addition, since $v$ is arbitrary, it follows that $\pa \matheuler{ h }_t / \pa u  =0$ for every $t\in[0,T]$, which is again in accordance with PMP.
Hence, the kernel of the first differential is
\begin{equation}
    \ker DG
    =
    \left\{ 
        w\in \mathcal{V}_T 
        \mid 
        w=(\theta,v), 
        \Theta_I \cdot  \vec { \mathcal H} ^i
        +
        \int_0 ^T Z_t v(t) dt 
        \in \Pi_0
    \right\}
    ,
\end{equation}
where $\Pi_0 = T_{q_0} ^* M$.
For simplicity, we denote by $\mathcal{K}$ the kernel of $DG$, $\mathcal{K}_\tau = \mathcal{K} \cap \mathcal{V}_\tau$ and $\mathcal{K}_{\tau_1,\tau_2} = \mathcal{V}_{\tau_1,\tau_2} \cap \mathcal{K}_{\tau_2}$.

To write in a compact form the expression of the second differential of $G$, we introduce the following notation:
\begin{align}
    \e_v ^j
    &=
    \int_{t_{j-1}}^{t_{j}} Z_t v_j(t) dt,
\end{align}
From \eqref{eq:sec-der-constrain}, we can compute $D^2G$ from $D^2 J$ and $D^2 E$ using again chronological calculus.
The final formula is (see Appendix \ref{app:diff-endpoint-map} for precise computations):
\begin{align}
    D^2 G [w]
    =
    -
    \int_0 ^T D^2 \matheuler{ h }_t [w(t)]^2 dt
    -
    \int_0 ^T \int_0^t 
    \sigma \big( 
        X_s w(s) , \, X_t w(t)
    \big) ds dt
\end{align}
In particular, if we separate time variations from control variations, we obtain
\begin{align}
\label{eq:formula-GT}
    D^2 G(w)
    =
    &-
    \int_0 ^T D^2 \matheuler{ h }_t [v(t)]^2 dt
    -
    \sum_{1\leq i < j\leq k+1}
    \Theta_i \Theta_j
    \sigma_{\la_0}
        \big( 
             \vec { \mathcal H} ^i , \, \vec { \mathcal H} ^j 
        \big)
    -
    \int_0 ^T \int_0^t 
    \sigma \big( 
        Z_s v(s) , \, Z_t v(t)
    \big) ds dt
    -
    \\
    &-
    \sum_{1\leq i<j\leq k+1}
    \left[
        \Theta_j
        \sigma \left(
            \e_v ^i
            ,\,
            \vec { \mathcal H} ^j
        \right)
        -
        \Theta_i
        \sigma \left(
             \vec { \mathcal H} ^i
            ,\,
            \e_v ^j
        \right)
    \right],
\end{align}
where, $w=(\theta,v)\in \mathcal{V}$ and all the functions are evaluated at the point $(\tilde \la(0), \tilde u, 0)$. 
Notice that the previous formula makes sense for $w\in \mathcal{V}$, not just for $w\in \ker DG$. 
Hence, we can define $Q : \mathcal{V} \to \R $ to be the quadratic form given by the formula \eqref{eq:formula-GT}, so that $Q |_{\ker DG} = D^2G$. 
Moreover, we define
\begin{equation}
    Q_{\tau_1,\tau_2}(w)
    =
    -
    \int_{\tau_1} ^{\tau_2} D^2 \matheuler{ h }_t [w(t)]^2 dt
    -
    \int_{\tau_1} ^{\tau_2} \int_{\tau_1} ^t 
    \sigma \big( 
        X_s w(s) , \, X_t w(t)
    \big) ds dt,
    \quad 
    w \in \mathcal{V}_{\tau_1,\tau_2}.
\end{equation}

\subsection{Jacobi curves of piecewise regular extremals}\label{subsec:Jacobi-curve-OCP}

In this subsection, we are going to apply the theory of $\cL$-derivatives, which is discussed in Appendix \ref{subsec:def-L-derivative}, to optimal control problems. 
This leads to the notion of Jacobi curve, which is a generalization of the classical concept of Jacobi field from Riemannian geometry to a non-smooth setting.

We use the notion of $\mathcal{L}$-derivative only in Definition \ref{def:Jabobi-curve-come-L-der}, to highlight the connection with the second differential of the endpoint map and to justify some of its properties that would otherwise not be straightforward to prove. 
However, for the purpose of this Section, one can use Proposition \ref{prop:Jacobi-curve-working-def} as a working definition. 

Given $(V,\sigma)$ a symplectic space, we denote by $\mathfrak L(V)$ the set of Lagrangian subspaces, which is a submanifold of the Grassmannian of $V$. 
\begin{defn}[Jacobi curve]
    \label{def:Jabobi-curve-come-L-der}
    Let $(\tilde u , \tilde \la)$ be an extremal pair for the optimal control problem \eqref{eq:control-system},\eqref{eq:cost-functional} defined on the time interval $[0,T]$. 
    Let $E$ be the endpoint map as defined in Section \ref{sec:preliminaries} and $J$ the cost functional \eqref{eq:cost-functional-time-var}. 
    Let $L(u,\la)$ denote the $\mathcal{L}$-derivative of pair $(E,J)$ at the Lagrange point $(u,\la)$, see Definition \ref{def:L-deriv-with-coordinates}.  
    The \emph{Jacobi curve} of $\tilde u, \tilde \la$ is the curve $\Lambda : [0,T] \to \mathfrak L(T_{\la_0}(T^*M))$ defined by
    \begin{equation}
        \Lambda_t 
        \coloneqq
        (\widetilde{\Phi}_{0,t})_* ^{-1}
        L\big( 
            \tilde u|_{[0,t]} 
            , 
            \tilde \la(t) 
        \big).
    \end{equation}
    Moreover, given a subspace $V\subset \mathcal{V}$, we denote by $\Lambda_t (V)\coloneqq (\widetilde{\Phi}_{0,t})_* ^{-1} L( \tilde u , \tilde \la(t) )(V)$.
\end{defn}
Since throughout the paper we consider only the Jacobi curve of a fixed reference extremal pair $(\tilde u, \tilde \la)$, we do not need to specify the dependence of $\Lambda_t$ on $(\tilde u, \tilde \la)$.
We give the following alternative characterization, which is much more practical and can be used as working definition for the rest of this Section.

\begin{prop}[Jacobi curve, working definition]\label{prop:Jacobi-curve-working-def}
    Let $(\tilde u, \tilde \la)$ be an extremal pair for the optimal control problem \eqref{eq:control-system},\eqref{eq:cost-functional} and let $\tilde \la(\cdot)$ be its extremal.
    Let $V \subseteq \mathcal{V}_t$ be a linear subspace. 
    Then, the Jacobi curve $\Lambda_t (V)$ is the vector space generated by the vectors of the form
    \begin{equation}
        \e_0 + \int_0 ^t X_s w_1(s) ds 
        ,
    \end{equation}
    where 
    $
    	\e_0 \in T_{q_0}^*M
    	, 
    	w_1
    	\in 
        V
    $ 
    satisfy the equation
    \begin{equation}
    	\label{eq:def-Jacobi-curve-working-def}
    	Q \big( w_1, w_2 \big)
    	=
    	\int_0 ^t \sigma(X_s w_2(s), \e_0) ds
    \end{equation}
    for all $w_2\in V$. 
\end{prop}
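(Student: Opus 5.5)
The plan is to unwind the abstract definition of the $\mathcal L$-derivative from Appendix \ref{subsec:def-L-derivative} for the pair $(E,J)$, written in the transported-back coordinates given by the map $G$. I will use the standard coordinate description of the $\mathcal L$-derivative at a Lagrange point $(\tilde u,\tilde\la_t)$ (Definition \ref{def:L-deriv-with-coordinates}). It is the set of pairs $(D_{\tilde u}\Phi[w_1],\, \la')$, obtained by linearizing the Lagrange multiplier equation $\la\, D\Phi = d J$ along directions $w_1\in V$. Here $\Phi$ is the (transported-back) endpoint map and the multiplier $\la$ is allowed to vary in $T^*M$.

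First I will identify the tangent space $T_{\tilde\la_0}(T^*M)$ with $T^*_{q_0}M\oplus T_{q_0}M$ near $\tilde\la_0$, via the transport $(\widetilde{\Phi}_{0,t})_*^{-1}$. In this identification a variation of the multiplier corresponds to a vertical vector $\e_0\in T^*_{q_0}M=\Pi$. The horizontal component of an element of the $\mathcal L$-derivative is $DG[w_1]=\pi_*\int_0^t X_s w_1(s)\,ds$. The key point is that the full vector $\int_0^t X_s w_1(s)\,ds$, and not only its projection, is the correct lift. This holds because, by the chronological formula $G(u_t)=\la\circ\overrightarrow{\exp}\int_0^t\vec{\matheuler h}_s\,ds$, the vertical part of the linearized Hamiltonian flow already records the first-order variation of the multiplier induced by $w_1$. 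Adding an arbitrary $\e_0$ then accounts for varying the initial covector.

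Second, I will linearize the Lagrange multiplier equation $\langle\la,DG[w]\rangle-DJ[w]=0$ (for all $w\in V$) at $(\tilde u,\tilde\la_0)$, in the direction $(w_1,\e_0)$. Differentiating in $u$ gives the Hessian term $D^2(\la G-J)[w_1,w_2]$. By the computation of Section \ref{sec:Hessian-endpoint-map} and Appendix \ref{app:diff-endpoint-map}, this term equals the bilinear form $Q(w_1,w_2)$ associated with \eqref{eq:formula-GT}, restricted to $[0,t]$. Differentiating in $\la$ gives $\langle\e_0,DG[w_2]\rangle$. Using Lemma \ref{lemma:proj-Liouville}, this pairing is rewritten as $-\int_0^t\sigma(X_s w_2(s),\e_0)\,ds$, since the pairing of a vertical vector with the projection of $X_sw_2$ is the symplectic product. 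Equating the two gives exactly \eqref{eq:def-Jacobi-curve-working-def}, with the signs fixed by the convention $dh=\sigma(\cdot,\vec h)$.

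Finally, I will check that the resulting space is Lagrangian and coincides with $\Lambda_t(V)$. Isotropy follows by substituting \eqref{eq:def-Jacobi-curve-working-def} into $\sigma$ of two such vectors: the double integral of $\sigma(X_sw_1,X_\tau w_2)$ cancels against the symmetric part of $Q$. Maximality is part of the general theory of the appendix, whenever $Q$ restricted to $\ker DG\cap V$ satisfies the finiteness assumption guaranteed there. I expect the main obstacle to be the sign and normalization bookkeeping in the second step. Specifically, I must verify that the cross terms involving $\Theta_I$ and $\vec{\mathcal H}^j$ in \eqref{eq:formula-GT} match the time-reparametrization contributions coming from $X_s$. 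I would handle this by first treating pure control variations, then pure time variations (piecewise constant $\theta$), and using bilinearity for the mixed terms.
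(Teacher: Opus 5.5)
The paper does not prove this proposition itself; it refers to \cite{AgBes1}. So I assess your argument on its own terms. Its second step fails as written, and the difficulty is not the $\Theta_I$ bookkeeping you anticipate.

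\textbf{Where the second step fails.} The formula of Section~\ref{sec:Hessian-endpoint-map} and Appendix~\ref{app:diff-endpoint-map} is a second differential only on $\ker DG$. On the rest of $\mathcal V$ the paper merely \emph{defines} $Q$ by the same expression. Off the kernel, the term $\lambda D^2G[w_1,w_2]-D^2J[w_1,w_2]$ in \eqref{eq:linearizz-Lagr-multip} depends on the chart. So does the vertical component $\xi$ of a vector in $T_{\tilde\lambda_0}(T^*M)$. Only their combination is intrinsic.

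Consequently you cannot have all three of your assertions at once:
(i) the lift is the full vector $\int_0^tX_sw_1\,ds$;
(ii) the multiplier variation contributes only $\langle\eta_0,DG[w_2]\rangle$;
(iii) the $u$-derivative is the (symmetric) Hessian.
If (i) and (ii) hold, the $u$-derivative must also account for the motion of the multiplier along the perturbed extremal. What comes out is the \emph{ordered} form
\[
Q(w_1,w_2)=-\int_0^tD^2\matheuler{h}_s[w_1(s),w_2(s)]\,ds-\int_0^t\!\!\int_0^s\sigma\big(X_\tau w_1(\tau),X_sw_2(s)\big)\,d\tau\,ds ,
\]
which is the one the paper actually uses in the proof of Theorem~\ref{thm:struttura-Jacobi-curve} (cf.\ (21.3) in \cite{AgSa}). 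This form is not symmetric. It differs from the polarization of \eqref{eq:formula-GT} by $-\frac12\sigma\big(\int_0^tX_sw_1\,ds,\int_0^tX_sw_2\,ds\big)$. That term vanishes on $\ker DG$, because $\Pi$ is Lagrangian, but not on $V$.

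Your own isotropy check shows the symptom. For $v_i=\eta_i+\int_0^tX_sw_i\,ds$ one finds $\sigma(v_1,v_2)=\sigma\big(\int_0^tX_sw_1\,ds,\int_0^tX_sw_2\,ds\big)\pm\big(Q(w_1,w_2)-Q(w_2,w_1)\big)$. So it is the \emph{antisymmetric} part of $Q$, not the symmetric part, that must cancel the first term. With a symmetric $Q$ nothing cancels, and for $d\ge 2$ the space you obtain is in general not even isotropic.

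\textbf{The missing idea.} Your first step contains the right intuition; you need to turn it into the actual parametrization. Parametrize Lagrange points by the initial covector, rather than linearizing ``$\langle\lambda,DG\rangle-DJ$'' at $\tilde\lambda_0$. Note that when $L$ depends on $q$, $\tilde\lambda_0$ is not literally a multiplier of $(G,J)$, since $\widetilde\Phi_{0,t}$ is only fibrewise affine, unless the cost is adjoined as a state coordinate.

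For $\lambda_0\in T^*_{q_0}M$ and $w\in V$, let $\lambda_s(w,\lambda_0)$ solve $\dot\lambda=\vec{\matheuler{h}}_s(\lambda,w(s))$ with $\lambda(0)=\lambda_0$. The first-variation formula gives
\[
\big\langle\widetilde\Phi_{0,t}\big(\lambda_t(w,\lambda_0)\big),\,D_wE[w_2]\big\rangle-D_wJ[w_2]=\int_0^t\partial_w\matheuler{h}_s\big(\lambda_s(w,\lambda_0),w(s)\big)[w_2(s)]\,ds .
\]
The map $(w,\lambda_0)\mapsto\big(w,\widetilde\Phi_{0,t}(\lambda_t(w,\lambda_0))\big)$ is a diffeomorphism onto $E^*(T^*M)$. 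It therefore identifies the Lagrange points and the kernels of the linearized multiplier equations.

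Now linearize at $(0,\tilde\lambda_0)$ in the direction $(w_1,\eta_0)$, with $\eta_0\in\Pi$. Use $d_\lambda\big(\partial_w\matheuler{h}_s[w_2]\big)=\sigma(\cdot,X_sw_2)$ and $\partial_w\lambda_s[w_1]=\int_0^sX_\tau w_1(\tau)\,d\tau$; the latter holds because the unperturbed transported flow is the identity. One gets
\[
\int_0^t\sigma\Big(\eta_0+\int_0^sX_\tau w_1(\tau)\,d\tau,\;X_sw_2(s)\Big)\,ds+\int_0^tD^2\matheuler{h}_s[w_1(s),w_2(s)]\,ds=0 .
\]
The transported-back variation of the multiplier is exactly $\eta_0+\int_0^tX_sw_1(s)\,ds$. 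This is \eqref{eq:def-Jacobi-curve-working-def} with the ordered $Q$, up to the sign convention in front of the $\eta_0$-term. Equality with $\Lambda_t(V)$ then follows directly, with no separate isotropy or maximality argument.
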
 

For a proof, see \cite{AgBes1}.
This characterization allows us to compute the Jacobi curve by means of finite-dimensional approximations. 
The following Lemma was proved in \cite{AgBes1}. It allows us to compute $\Lambda _{t+\eps}$ from $\Lambda _t $. 
\begin{lem}
\label{lemma:approx-jacobi-curve}
    Take any $\tau_1 , \tau_2 \in (0,T)$, with $\tau_1<\tau_2$, and suppose that the negative Morse index of the quadratic form $Q_{\tau_2} |_{\mathcal{K}_{\tau_2}}$ is finite. 
    We denote by 
    $V_2 \subset \mathcal{V}_T$ a subspace of admissible variations supported on $(\tau_1,\tau_2)$, that is $V_2 \subseteq \mathcal{V}_{\tau_1,\tau_2}$.
    Then, $\Lambda_{\tau_2}(\mathcal{V}_{\tau_1}\oplus V_2)$ is the Lagrangian subspace generated by vectors
    \begin{equation}
        \label{eq:lemma-lemma-ivan-vectors}
        \e
        +
        \int_{\tau_1}^{\tau_2} X_t w_1 (t) d t
        ,
    \end{equation}
    where $\e \in \Lambda_{\tau_1}$ and $ w_1 \in V_2$  satisfy
    \begin{equation}
        \label{eq:equation-lemma-ivan}
        Q_{\tau_1,\tau_2}( w_1 , w_2 )
        =
        \int_{\tau_1} ^{\tau_2} \sigma(X_t w_2(t),\eta) dt, 
    \end{equation}
    for all variations $w_2\in V_2$. 
    In particular, $\Lambda_{\tau_2}$ is generated by vectors as in \eqref{eq:lemma-lemma-ivan-vectors} where $\e \in \Lambda_{\tau_1}$ and $ w_1 \in \mathcal{V}_{\tau_1,\tau_2}$ satisfy \eqref{eq:equation-lemma-ivan} for every $w_2\in \mathcal{V}_{\tau_1,\tau_2}$. 
\end{lem}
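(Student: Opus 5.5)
We split the variation space along time and apply Proposition~\ref{prop:Jacobi-curve-working-def} twice. Set $V=\mathcal{V}_{\tau_1}\oplus V_2\subseteq\mathcal{V}_{\tau_2}$ and write every $w\in V$ as $w=w^{(1)}+w^{(2)}$ with $w^{(1)}\in\mathcal{V}_{\tau_1}$ and $w^{(2)}\in V_2$. By Proposition~\ref{prop:Jacobi-curve-working-def} with $t=\tau_2$, $\Lambda_{\tau_2}(V)$ is spanned by the vectors $\e_0+\int_0^{\tau_2}X_sw_1(s)\,ds$, where $\e_0\in T^*_{q_0}M$ and $w_1\in V$ satisfy $Q(w_1,w_2)=\int_0^{\tau_2}\sigma(X_sw_2,\e_0)\,ds$ for all $w_2\in V$. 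Since the supports are ordered, the double-integral term of $Q$ splits as follows:
\begin{equation}
Q(w_1,w_2)=Q_{\tau_1}(w_1^{(1)},w_2^{(1)})+Q_{\tau_1,\tau_2}(w_1^{(2)},w_2^{(2)})+\text{cross terms}.
\end{equation}
The cross terms come from the iterated integral with $s\in[0,\tau_1]$ and $t\in[\tau_1,\tau_2]$. Because the integrand is $\sigma(X_sw(s),X_tw(t))$, their polarization is, up to the sign convention of the formula for $Q$,
\begin{equation}
-\int_{\tau_1}^{\tau_2}\sigma\Big(\int_0^{\tau_1}X_sw_1^{(1)}ds,\;X_tw_2^{(2)}(t)\Big)dt\;-\;(\text{same with }w_1,w_2\text{ swapped}).
\end{equation}

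\textbf{Step 1.} Test the defining equation with $w_2=w_2^{(1)}\in\mathcal{V}_{\tau_1}$. This gives exactly the defining equation of $\Lambda_{\tau_1}$ for the pair $(\e_0,w_1^{(1)})$, plus one extra term coming from $w_1^{(2)}$. That extra term is of the form $\sigma(X_sw_2^{(1)},\cdot)$ applied to $\int_{\tau_1}^{\tau_2}X_tw_1^{(2)}dt$. We absorb it by replacing $\e_0$ with $\e_0'=\e_0$ modified accordingly. Then $\e\coloneqq\e_0'+\int_0^{\tau_1}X_sw_1^{(1)}ds$ belongs to $\Lambda_{\tau_1}$. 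To set up this bookkeeping, we regroup the target vector as
\begin{equation}
\e_0+\int_0^{\tau_2}X_sw_1\,ds=\e+\int_{\tau_1}^{\tau_2}X_tw_1^{(2)}\,dt,
\end{equation}
with $\e=\e_0+\int_0^{\tau_1}X_sw_1^{(1)}ds$, and then check that $\e$ satisfies the $\Lambda_{\tau_1}$ equation. The first terms of the iterated integral should cancel, since for $s<t$ only the earlier variation acts on the later one.

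\textbf{Step 2.} Test with $w_2=w_2^{(2)}\in V_2$. The right-hand side is $\int_{\tau_1}^{\tau_2}\sigma(X_tw_2^{(2)},\e_0)\,dt$. The cross term contributes $\int_{\tau_1}^{\tau_2}\sigma\big(X_tw_2^{(2)},\int_0^{\tau_1}X_sw_1^{(1)}ds\big)dt$. Moving it to the right-hand side combines the two into $\int_{\tau_1}^{\tau_2}\sigma(X_tw_2^{(2)},\e)\,dt$, which is precisely \eqref{eq:equation-lemma-ivan}. The only remaining work is to keep track of signs and of the antisymmetry of $\sigma$.

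\textbf{Step 3 (converse and dimension).} So far we have shown that $\Lambda_{\tau_2}(V)$ is contained in the span of the vectors \eqref{eq:lemma-lemma-ivan-vectors}. For the reverse inclusion, run the computation backwards: given $\e\in\Lambda_{\tau_1}$, produce a pair $(\e_0,w_1^{(1)})$ from the definition of $\Lambda_{\tau_1}$, then add $w_1^{(2)}$. Both spaces should be Lagrangian, so equality would also follow from isotropy of the new span plus $\dim=d$. We get isotropy from the symmetry of $Q_{\tau_1,\tau_2}$ together with the Lagrangianity of $\Lambda_{\tau_1}$.

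\textbf{Main obstacle.} The delicate point is making sense of the equations in infinite dimensions. The Lagrangian property and the existence of solutions use the hypothesis that $Q_{\tau_2}|_{\mathcal{K}_{\tau_2}}$ has finite negative index. We would follow \cite{AgBes1}: first prove the statement for finite-dimensional $V_2$, where everything is linear algebra, and then pass to the limit over an increasing family of finite-dimensional subspaces. Finiteness of the Morse index makes $\Lambda(\cdot)$ eventually stabilize, or at least converge in the Lagrangian Grassmannian. Applying this limit with $V_2\uparrow\mathcal{V}_{\tau_1,\tau_2}$ gives the final assertion of the lemma.
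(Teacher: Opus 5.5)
Your strategy (apply Proposition~\ref{prop:Jacobi-curve-working-def} to $V=\mathcal V_{\tau_1}\oplus V_2$ and split the test variations by support) is the natural one. The paper itself gives no argument here and only cites \cite{AgBes1}. However, the cross-term bookkeeping, which is the entire content of the lemma, is wrong as you set it up.

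Your displayed polarization has both cross terms and no factor $\tfrac12$, so it does not even restrict to $Q$ on the diagonal. The honest symmetric polarization carries $\tfrac12$ on each term, and with it every step breaks.

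\textbf{Step 1.} Testing with $w_2^{(1)}\in\mathcal V_{\tau_1}$ gives the $\Lambda_{\tau_1}$-equation with $\e_0$ replaced by $\e_0+\tfrac12\int_{\tau_1}^{\tau_2}X_tw_1^{(2)}(t)\,dt$ (up to sign). This vector is not vertical in general: its projection is a multiple of $DG[w_1^{(2)}]$. So ``absorbing'' the term violates the requirement $\e_0\in T^*_{q_0}M$ of Proposition~\ref{prop:Jacobi-curve-working-def}, and you cannot conclude $\e\in\Lambda_{\tau_1}$.

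\textbf{Step 2.} Testing with $w_2^{(2)}\in V_2$ moves only half of $\int_0^{\tau_1}X_sw_1^{(1)}ds$ to the right-hand side, so you do not obtain $\sigma(\cdot,\e)$.

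\textbf{Step 3.} The isotropy claim is false. For a symmetric $Q_{\tau_1,\tau_2}$, the pairing of two generators built from $b,b'\in V_2$ reduces to $\sigma\big(\int_{\tau_1}^{\tau_2}X_tb\,dt,\int_{\tau_1}^{\tau_2}X_tb'\,dt\big)$, which is nonzero in general.

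The missing idea is that the bilinear form in Proposition~\ref{prop:Jacobi-curve-working-def} is the \emph{causal} (non-symmetric) form actually used in the proof of Theorem~\ref{thm:struttura-Jacobi-curve} (cf.\ \cite{AgSa}, Ch.~21):
\begin{equation*}
Q(w_1,w_2)=-\int D^2\matheuler h_t[w_1(t),w_2(t)]\,dt-\iint_{s<t}\sigma\big(X_sw_1(s),X_tw_2(t)\big)\,ds\,dt .
\end{equation*}
Write $w_1=a+b$ and $w_2=c+d$ with $a,c\in\mathcal V_{\tau_1}$ and $b,d\in V_2$. The only cross term is then $-\sigma\big(\int_0^{\tau_1}X_sa\,ds,\int_{\tau_1}^{\tau_2}X_td\,dt\big)$.

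Testing with $c$ gives exactly the $\Lambda_{\tau_1}$-equation for $(\e_0,a)$, with no correction. This is your own remark that ``only the earlier variation acts on the later one'', which contradicts your Step~1. Testing with $d$ gives
\begin{equation*}
Q_{\tau_1,\tau_2}(b,d)=\int_{\tau_1}^{\tau_2}\sigma\big(X_td(t),\e_0-\int_0^{\tau_1}X_sa\,ds\big)\,dt .
\end{equation*}
For this to be \eqref{eq:equation-lemma-ivan} with $\e\in\Lambda_{\tau_1}$, the generators must be $\e_0-\int Xw_1$, as in the proof of Theorem~\ref{thm:struttura-Jacobi-curve}. The sign is not cosmetic: with $\e_0+\int Xw_1$ and this form, the span is not isotropic.

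With consistent conventions, the solution pairs for $V$ are in bijection with pairs (solution for $\mathcal V_{\tau_1}$, $b$ solving \eqref{eq:equation-lemma-ivan}). The two spans therefore coincide outright, and your Step~3 is unnecessary. If you want isotropy anyway, it comes from the antisymmetric part $Q(b,b')-Q(b',b)=-\sigma(\int Xb,\int Xb')$, not from symmetry.

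Finally, your limiting argument truncates only $V_2$, but $\Lambda_{\tau_1}$ is built from the infinite-dimensional $\mathcal V_{\tau_1}$. You have two options:
\begin{itemize}
\item take Proposition~\ref{prop:Jacobi-curve-working-def} at face value for infinite-dimensional $V$, as the paper does, in which case the bijection needs no limit at all; or
\item truncate $\mathcal V_{\tau_1}$ as well, and then prove that $\Lambda_{\tau_1}(W_1)\mapsto\Lambda_{\tau_2}(W_1\oplus V_2)$ passes to the limit, which is not automatic.
\end{itemize}
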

\begin{rem}
    \label{rem:causality-property-Jacobi-Curve}
    Notice that this is possible thanks to the fact that the Jacobi curve $\Lambda_t$ is computed using the quadratic form $Q_t$ on all variations $\mathcal{V}_t$ and not just the Hessian of $G_t$, which is defined on the kernel of the first variation $\mathcal{K}_t$. 
    Indeed, while $\mathcal{V}_{\tau_2} = \mathcal{V}_{\tau_1} \oplus \mathcal{V}_{\tau_1,\tau_2}$, in general it does not hold that $\mathcal{K}_{\tau_2} = \mathcal{K}_{\tau_1} \oplus \mathcal{K}_{\tau_1,\tau_2}$. 
    This is the reason why we need to consider the $\cL$-derivatives and the form $Q_t$ defined on the whole space $ \mathcal{V}_t$ and not simply on $\ker DG_t$.
\end{rem}

We introduce another property which is used later in Theorem \ref{thm:struttura-Jacobi-curve} to compute the Jacobi curve. 
\begin{prop}
\label{prop:var-tempo-commutano-var-controlli}
    We have the following formula 
    \begin{equation}
        \left\{ 
            h_{\tilde u(t)} ^j ,\, \left. \frac{\pa h_u ^j}{\pa u} \right|_{u=\tilde u(t)} 
        \right\}(\tilde \la_t)
        =0, \quad t\in(t_{j-1},t_j).
    \end{equation}
\end{prop}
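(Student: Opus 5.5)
Since $\tilde u(t)$ is, on $(t_{j-1},t_j)$, a critical point of $u\mapsto h^j_u(\tilde\la_t)$ along the whole arc, the plan is to differentiate this identity in time and combine it with the time derivative of a function along a Hamiltonian flow, Equation~\eqref{eq:identities-symplectic-geometry}. The Poisson bracket appearing in the statement should be exactly what remains, up to a term killed by criticality.

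First, by the maximality condition \eqref{eq:PMP-max-cond} and the fact that on $(t_{j-1},t_j)$ the control $\tilde u(t)$ lies in the open manifold $U_j$ (in the local coordinates fixed in Section~\ref{sec:preliminaries}), the first-order condition
\begin{equation}
    \left.\frac{\pa h^j_u}{\pa u}\right|_{u=\tilde u(t)}(\tilde\la_t)=0,\qquad t\in(t_{j-1},t_j),
\end{equation}
holds. By Proposition~\ref{prop:pw-regular-controls} and the strong Legendre condition \eqref{eq:strong-legendre-condition-intervall}, $\tilde u(t)=u_j(\tilde\la_t)$ with $u_j$ smooth, so $t\mapsto \tilde u(t)$ is smooth on the open interval. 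Hence the identity above may be differentiated in $t$.

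Next, I would compute $\frac{d}{dt}\big[\pa_u h^j_u|_{u=\tilde u(t)}(\tilde\la_t)\big]$ by the chain rule. It splits into two terms. The first is the derivative of the fixed function $b=\pa_u h^j_u|_{u=\tilde u(t)}$ along $\dot{\tilde\la}_t=\vec h^j_{\tilde u(t)}(\tilde\la_t)$. By \eqref{eq:identities-symplectic-geometry} this equals $\{h^j_{\tilde u(t)},\pa_u h^j_u|_{u=\tilde u(t)}\}(\tilde\la_t)$. The second term comes from the explicit dependence through $\tilde u(t)$ and equals $\pa^2_u h^j_u|_{u=\tilde u(t)}(\tilde\la_t)[\dot{\tilde u}(t),\cdot\,]$.

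The conclusion therefore requires the second term to vanish. This is the main obstacle, since the Legendre form is nondegenerate and $\dot{\tilde u}$ need not be zero. My first attempt is to exploit the freedom in the time-dependent coordinates on $U_j$ chosen in Section~\ref{sec:preliminaries}, in which $\tilde u(t)\equiv 0$. In these coordinates the family $h^j_u$ is really $h^j_{\psi_t(u)}$ with $\psi_t(0)=\tilde u(t)$. One should read the statement with $\pa_u$ understood at the fixed coordinate value $u=0$, so $\dot{\tilde u}=0$ in coordinates. The extra term then moves into the time dependence of the reparametrized Hamiltonian, and one checks that its contribution is again proportional to $\pa_u h^j_u(\tilde\la_t)=0$, or is absorbed by the transport $\widetilde\Phi_{0,t}$.

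If this bookkeeping fails, the fallback is to argue directly with the maximized Hamiltonian. Since $dH^j=\pa_\la h^j_{u_j(\la)}$, we have $\vec H^j=\vec h^j_{u_j(\la)}$. Differentiating $\pa_u h^j_u|_{u_j(\la)}(\la)\equiv0$ in $\la$ along $\vec H^j$ gives $\{H^j,\pa_u h^j_u|_{u_j}\}=-\pa_u^2h^j\,\langle du_j,\vec H^j\rangle$. I would then compare this with the bracket evaluated at the frozen control, carefully tracking which derivatives are taken at frozen $u$. This is exactly the step where the convention for $\pa_u$ must be pinned down.
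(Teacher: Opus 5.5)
Your first two paragraphs are exactly the paper's proof: differentiate the first-order condition $\pa_u h^j_u|_{u=\tilde u(t)}(\tilde\la_t)=0$ in $t$ and use \eqref{eq:identities-symplectic-geometry}. You also correctly spot the term that this one-line argument silently drops. The chain rule actually gives
\[
\Big\{h^j_{\tilde u(t)},\,\pa_u h^j_u\big|_{u=\tilde u(t)}\Big\}(\tilde\la_t)
=
-\,\pa^2_u h^j_u\big|_{u=\tilde u(t)}(\tilde\la_t)\big[\dot{\tilde u}(t),\,\cdot\,\big].
\]
The gap is in your attempt to make the right-hand side disappear, and neither route works.

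\begin{itemize}
\item \emph{Time-dependent chart.} Take $\psi_t$ with $\psi_t(0)=\tilde u(t)$. At each fixed $t$ the bracket in coordinates is the bracket above composed with the isomorphism $D\psi_t(0)$, so whether it vanishes does not depend on the chart. Correspondingly, the explicit $t$-derivative of $\pa_u h^j_{\psi_t(u)}|_{u=0}$ at $\tilde\la_t$ is $\pa^2_v h^j_v|_{v=\tilde u(t)}[\dot{\tilde u}(t),D\psi_t(0)\,\cdot\,]+\pa_v h^j_v|_{v=\tilde u(t)}[(\pa_t D\psi_t(0))\,\cdot\,]$. PMP kills only the second summand. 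The first is the Legendre form applied to $\dot{\tilde u}$, not a multiple of $\pa_u h^j_u$.
\item \emph{Transport.} $\widetilde\Phi_{0,t}$ is symplectic, so $\sigma_{\tilde\la_0}(\vec{\mathcal H}^j,Z_tv)$ is exactly the bracket above applied to $v$; nothing can be absorbed.
\item \emph{Fallback.} It lands on the same identity, because $\langle du_j,\vec H^j\rangle(\tilde\la_t)=\dot{\tilde u}(t)$.
\end{itemize}

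The Legendre form is nondegenerate by \eqref{eq:strong-legendre-condition-intervall}, so the bracket vanishes at $t$ if and only if $\dot{\tilde u}(t)=0$. Hence the statement cannot be proved as written: it holds only on arcs where $\tilde u$ is constant (e.g.\ bang-bang arcs).

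\textbf{Counterexample.} Take $\dot q=u$, $L=\tfrac12u^2+q$. Then $h_u=pu-\tfrac12u^2-q$, $\tilde u=p$, $\dot p=1$, and the bracket is identically $1$. In the paper's own oscillator example the bracket equals $-\tilde q(t)$ on every regular arc.

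What your computation does establish is the corrected identity displayed above. Relatedly, $(\widetilde\Phi_{0,t}^{-1})_*\vec H^j(\tilde\la_t)$ satisfies $\frac{d}{dt}\vec{\mathcal H}^j=Z_t\dot{\tilde u}(t)$, so it is not constant in $t$ either. The later uses of the proposition, such as $\sigma(\vec{\mathcal H}^j,Z_tv)=0$ and the constancy of $\vec{\mathcal H}^j$, therefore need either an extra hypothesis or this correction term.
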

\begin{proof}
    By PMP, we know that $ \left. \frac{\pa h_u ^j}{\pa u} \right|_{u=\tilde u(t)} (\tilde \la_t) = 0 $, $t\in(t_{j-1},t_j)$. 
    Differentiating this relation in $t$ and taking into account that $\dot {\tilde \la}_t = \vec h_{\tilde u(t)} ^j (\tilde \la_{t}) $, we obtain the statement of the Proposition. 
\end{proof}

Thanks to these Lemmas, we can obtain the Jacobi curve for our problem with the following recursive algorithm:
we start from $\Lambda_0 = \Pi_0$; then, for each time interval $(t_{j-1},t_j)$:
\begin{enumerate}
    \item we compute $\Lambda_{t}(V_j)$, $t \in (t_{j-1},t_j]$, where $V_j=\mathcal{V}_{t_{j-1}}\oplus V_\theta$ is the subspace of variations generated by $\mathcal{V}_{t_{j-1}}$, that is, all admissible variations supported in $[0,t_{j-1}]$, and $V_\theta =\mathrm{span}\{\mathds{1}_{[t_{j-1},t]}\}$, the time variations supported in $(t_{j-1},t)$. 
    This is done applying Lemma~\ref{lemma:approx-jacobi-curve}, 
    with $\mathcal{V}_1=\mathcal{V}_{t_{j-1}}$ and $V_2=V_\theta$.
    Notice that, since $Q_\tau$ depends only on the integral of the variations of time on the intervals $(t_{j-1},t_j)$, we can restrict to consider $\theta$ constant on some subinterval of $(t_{j-1},t_j)$.
    \item We compute $\Lambda_{t}$, $t \in (t_{j-1},t_j]$. 
    This is done by characterizing solutions of Equation~\eqref{eq:def-Jacobi-curve-working-def} as solutions of a suitable ODE, called Jacobi Equation in the literature, satisfying appropriate boundary conditions.
\end{enumerate}

\begin{thm}
    \label{thm:struttura-Jacobi-curve}
    Let $(\tilde u,\tilde \la)$ be a piecewise regular extremal pair for the problem \eqref{eq:formulation-OCP}. 
    Denote by $(\Lambda_{t})_{t\in[0,T]}$ the Jacobi curve of $\tilde u$ and $\tilde \la$.
    Then, $\Lambda$ is a left continuous piecewise smooth curve in the Lagrangian Grassmannian of $T_{\tilde\la(0)}(T^*M)$, with discontinuity times at $0,t_1,\dots,t_k$, which is determined recursively as follows:
    \begin{enumerate}
        \item For every $t\in(t_{j-1},t_j]$, $j=1,\dots,k+1$, we have that $\vec {\mathcal{H}}^j_t\in \Lambda_t$.
        In particular, the right limit $\Lambda_{t_j}^+$ at a discontinuity time $t_j$, $j=0,1,\dots,k$, is
        \begin{equation}
            \Lambda_{t_j}^+ = \mathfrak g_j (\Lambda_{t_j}),
        \end{equation}
        where $\mathfrak g_j$ is the transformation defined in \eqref{eq:def-g-j}. 
        \item The vector space $\Lambda_{t}$, $t\in(t_{j-1},t_j]$, is generated 
        by the solutions of the Cauchy problem on $T_{\tilde\la(0)}(T^*M)$:
        \begin{equation}
        	\label{eq:Jacobi-pw-reg}
            \dot \e = Z_t (D^2\matheuler{ h }_t)^{-1} \sigma (Z_t \cdot , \, \e),
            \quad
            \e(t_{j-1})\in \Lambda_{t_{j-1}}^+.
        \end{equation}
        Moreover, let $B_t^j$ be the flow of the right-hand side of \eqref{eq:Jacobi-pw-reg}.
        We have that $B_t^j = (\phi^j_t)_*$, where $\phi_t ^j$ is defined in~\eqref{eq:def-phi-j}.
    \end{enumerate}
    In particular, the $\mathcal{L}$-derivative of the problem \eqref{eq:formulation-OCP} coincides with the Jacobi curve as defined in Definition \ref{def:Jacobi-curve-intro}.
\end{thm}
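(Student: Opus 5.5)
The plan is to build $\Lambda_t$ recursively along the partition $0=t_0<t_1<\dots<t_{k+1}=T$. The main tools are the characterization of Proposition~\ref{prop:Jacobi-curve-working-def} and the causality Lemma~\ref{lemma:approx-jacobi-curve}. On each interval $(t_{j-1},t_j]$ I split the enlargement from $\mathcal V_{t_{j-1}}$ to $\mathcal V_t$ into two stages. The first stage adds only the time variations $V_\theta=\mathrm{span}\{\mathds 1_{[t_{j-1},t]}\}$; this stage produces the jump $\mathfrak g_{j-1}$. The second stage adds the control variations $v$ supported in $(t_{j-1},t)$; this stage produces the smooth Jacobi flow. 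Throughout I use that, by the formula \eqref{eq:formula-GT} for $Q$, the restriction of $X_t$ to time variations is the constant vector $X_t[\theta,0]=\theta\,\vec{\mathcal H}^j$ on $(t_{j-1},t_j)$.

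\emph{Step 1 (jump).} Apply Lemma~\ref{lemma:approx-jacobi-curve} with $\tau_1=t_{j-1}$, $\tau_2=t$ and $V_2=V_\theta$. Inserting a constant $\theta$ into $Q_{t_{j-1},t}$ gives zero. The term $-D^2\matheuler h_t[\theta]^2$ vanishes because $\matheuler h_t$ is linear in $\nu$ at $u=\tilde u$. The double integral vanishes because $\sigma(\vec{\mathcal H}^j,\vec{\mathcal H}^j)=0$. Equation \eqref{eq:equation-lemma-ivan} therefore reduces to
\[
\Theta\,\sigma(\vec{\mathcal H}^j,\eta)\cdot 0=\Theta'\,\sigma(\vec{\mathcal H}^j,\eta)\quad\text{for all }\Theta',
\]
so it forces $\eta\in\Lambda_{t_{j-1}}\cap(\vec{\mathcal H}^j)^\angle$, with $\Theta$ free. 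The resulting space is $\big(\Lambda_{t_{j-1}}\cap(\vec{\mathcal H}^j)^\angle\big)+\R\vec{\mathcal H}^j$.

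I then check that this space equals $\mathfrak g_{j-1}(\Lambda_{t_{j-1}})$. The map $\mathfrak g_{j-1}$ fixes $(\vec{\mathcal H}^j)^\angle$. If $\vec{\mathcal H}^{j-1}\in\Lambda_{t_{j-1}}$, which holds by induction for $j\ge2$, or if $\nu\in\Pi$ is used at $j=1$, then $\mathfrak g_{j-1}$ sends that vector, which lies outside $(\vec{\mathcal H}^j)^\angle$ thanks to the strong Legendre condition $\sigma(\vec{\mathcal H}^{j-1},\vec{\mathcal H}^j)>0$, to $\vec{\mathcal H}^j$. A dimension count then gives equality. For $j=1$ the case $\vec{\mathcal H}^1\in\Pi^\angle=\Pi$ is degenerate, and the jump is trivial there.

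\emph{Step 2 (smooth part).} Add the control variations $v$ on $(t_{j-1},t)$. By Proposition~\ref{prop:var-tempo-commutano-var-controlli}, $\sigma(Z_s\cdot,\vec{\mathcal H}^j)=0$, so the mixed $\Theta$–$v$ terms in $Q$ over this interval vanish. Also $\vec{\mathcal H}^j$ stays in $\Lambda_t$. The equation \eqref{eq:def-Jacobi-curve-working-def}, tested against variations $w_2$ localized near $s$, becomes the pointwise relation
\[
-D^2\matheuler h_s\,v(s)=\sigma\!\Big(Z_s\cdot,\ \eta+\int_{t_{j-1}}^s Z_r v(r)\,dr\Big),
\]
with the sign convention of \eqref{eq:formula-GT}. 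Condition (SLC1) makes $D^2\matheuler h_s$ invertible, so I can solve for $v$. Setting $\eta(s)=\eta+\int_{t_{j-1}}^s Z_r v(r)\,dr$ then yields the ODE \eqref{eq:Jacobi-pw-reg} with $\eta(t_{j-1})\in\Lambda_{t_{j-1}}^+$. The finiteness of the Morse index needed in Lemma~\ref{lemma:approx-jacobi-curve} on short intervals also follows from (SLC1), via the Legendre condition argument of \cite{AgSa}.

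\emph{Step 3 (identification).} To show $B^j_t=(\phi^j_t)_*$, I differentiate $\phi^j_t=\widetilde\Phi_{0,t}^{-1}\circ\Phi_{t-t_{j-1}}\circ\widetilde\Phi_{0,t_{j-1}}$. On $(t_{j-1},t_j)$ the flow $\Phi$ is generated by $\vec H^j=\vec h_{u_j(\la)}$. The linearization of $\vec h_{u_j(\la)}-\vec h_{\tilde u(t)}$, transported back to $\tilde\la_0$, is $Z_t\,du_j$. Differentiating the implicit relation $\partial_u h=0$ gives $du_j=-(\partial_u^2h)^{-1}\,d(\partial_uh)$. Transported back, this is exactly $(D^2\matheuler h_t)^{-1}\sigma(Z_t\cdot,\eta)$, up to sign bookkeeping. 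So both flows solve the same linear ODE with the same initial condition, and they coincide. Left continuity and smoothness on each interval then follow directly, and combining Steps 1–3 gives the agreement with Definition~\ref{def:Jacobi-curve-intro}.

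I expect the main obstacle to be the sign and normalization bookkeeping in Step 2, and in particular justifying the passage from the integral identity to the pointwise ODE, that is, the fundamental-lemma type argument on $L^\infty$ variations. A second delicate point is checking in Step 1 that the jump lands on $\mathfrak g_{j-1}$ and not on some other Lagrangian completion. This requires that the preceding vector $\vec{\mathcal H}^{j-1}$ lie in $\Lambda_{t_{j-1}}$ and that it not lie in $(\vec{\mathcal H}^j)^\angle$, which is where the strong Legendre condition $\{H^{j-1},H^j\}>0$ enters.
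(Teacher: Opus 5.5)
Your argument is the paper's proof, essentially line by line. Time variations fed into Lemma~\ref{lemma:approx-jacobi-curve} give the jump, control variations decoupled by Proposition~\ref{prop:var-tempo-commutano-var-controlli} give \eqref{eq:Jacobi-pw-reg}, and the flow is then identified with $(\phi^j_t)_*$. The one difference is your Step~3. You check $B^j_t=(\phi^j_t)_*$ directly by differentiating $\partial_u h_{u_j(\la)}(\la)=0$, instead of citing Proposition~21.3 of \cite{AgSa}, and that computation is correct.

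However, Steps~1 and~2 rest on two facts that you, like the paper, take for granted, and both hold only when $\tu$ is constant on the arc (e.g.\ bang-bang):
the first is that $X_s[\theta,0]=\theta\,\vec{\mathcal H}^j$ with $\vec{\mathcal H}^j$ independent of $s$;
the second is that $\sigma(Z_s\,\cdot\,,\vec{\mathcal H}^j)=0$.
Your own Step~3 shows why they fail. The flow $\widetilde\Phi$ is generated by $\vec h_{\tu(t)}$, which agrees with $\vec H^j$ at $\tilde\la_t$ only to order zero: there $d\vec H^j-d\vec h_{\tu(t)}=\partial_u\vec h\circ du_j$. Hence $\vec{\mathcal H}^j_t:=(\widetilde\Phi_{0,t}^{-1})_*\vec H^j(\tilde\la_t)$ satisfies $\frac{d}{dt}\vec{\mathcal H}^j_t=Z_t\dot{\tu}(t)$. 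Likewise, differentiating $\partial_u h_u|_{u=\tu(t)}(\tilde\la_t)=0$ correctly gives $\sigma(\vec{\mathcal H}^j_t,Z_tv)=-\partial_u^2h[\dot{\tu}(t),v]$. The proof of Proposition~\ref{prop:var-tempo-commutano-var-controlli} drops the term coming from $t\mapsto\tu(t)$. By \eqref{eq:strong-legendre-condition-intervall}, both quantities are nonzero whenever $\dot{\tu}\neq0$.

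The paper's oscillator example confirms this. There $\Lambda_t=\R\vec{\mathcal H}^j_t$, and a constant line meets $\Pi$ either on a whole arc or nowhere. So the isolated conjugate times inside regular arcs, where $\dot{\tilde q}=0$, would be impossible if $\vec{\mathcal H}^j$ were constant. Consequently $Q$ does not vanish on your $V_\theta$, and the mixed $\Theta$--$v$ terms in Step~2 do not drop out.

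The fix is to replace the raw time variations $(\theta,0)$ by the reparametrization variations $(\theta,\Theta\dot{\tu})$, where $\Theta(s)=\int_{t_{j-1}}^s\theta$. This is a change of basis of $\mathcal V_{t_{j-1},t}$, so the $\mathcal L$-derivative is unchanged. For these variations $X_s[\theta,\Theta\dot{\tu}]=\frac{d}{ds}\big(\Theta(s)\vec{\mathcal H}^j_s\big)$, so their endpoint contribution is $\Theta(t)\vec{\mathcal H}^j_t$.

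The identity $\sigma(\vec{\mathcal H}^j_s,Z_s\dot{\tu})=-\partial_u^2h[\dot{\tu},\dot{\tu}]$ exactly cancels the $D^2\matheuler{h}_s$ contribution, so $Q$ vanishes on this subspace. The same identity cancels the $\Theta$-dependent terms in the pointwise equation for control variations. Your Steps~1--2 then go through with the moving vector $\vec{\mathcal H}^j_t$ in place of a constant one. For $j\ge2$, the jump at $t_{j-1}$ is built from $\vec{\mathcal H}^{j-1}_{t_{j-1}}$ and $\vec{\mathcal H}^j_{t_{j-1}}$, whose pairing is $\{H^{j-1},H^j\}(\tilde\la_{t_{j-1}})>0$. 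Finally, $B^j_t\vec{\mathcal H}^j_{t_{j-1}}=\vec{\mathcal H}^j_t$, which is consistent with your Step~3.
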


\begin{rem}
    The Cauchy problem~\eqref{eq:Jacobi-pw-reg} is defined on the fixed vector space $T_{\tilde\la(0)}(T^*M)$. 
    However, the right-hand side~has a quite complicated formulation due to the fact that the entire dynamics is transported back in the initial space. 
    This Cauchy problem can be equivalently reformulated in a simpler form as a Cauchy problem on the moving space $T_{\tilde\la(t)}(T^*M)$. 
    Indeed, take any solution $\eta$ of \eqref{eq:Jacobi-pw-reg} for $j=1$. 
    Recalling that $ (\phi_t ^j)_* = (\widetilde{\Phi}_{t_{j-1},t} ^{-1})_* (\Phi_{t-t_{j-1}})_*$, we can define $\xi(t) = (\widetilde{\Phi}_{t_{j-1},t} )_* [\eta(t)]$, which solves 
    \begin{equation}
        \label{eq:jacobi-moving-frame}
        \dot \xi 
        = 
        d_{\tilde \la_t} \vec h^1 [\xi],
        \quad
        \xi(0) \in \Lambda_{0}^+,
        \quad 
        t\in[0,t_1].
    \end{equation}
    Thus, the solutions of \eqref{eq:jacobi-moving-frame} form a subspace $\Delta_t \subset T_{\tilde \la_t}(T^*M)$.
   Then the initial conditions for the second Cauchy problem are obtained as $\Delta_{t_1}^+ = \mathfrak g_{\vec h^1,\vec h^2} (\Delta_{t_1}) $ and the Cauchy problem on $(t_1,t_2)$ reads
    \begin{equation}
        \dot \xi 
        = 
        d_{\tilde \la_t} \vec h^2 [\xi],
        \quad
        \xi(t_1) \in \Delta_{t_1 +},
        \quad 
        t\in[t_1,t_2],
    \end{equation}
    and then one can iterate this algorithm on the other intervals of regularity until the end of the time interval.
\end{rem}

\begin{proof}
    We can compute the Jacobi curve following the algorithm described after Proposition \ref{prop:var-tempo-commutano-var-controlli}.
    We begin by proving point 1.~in the statement of the Theorem for $t\in (0,t_1]$.
    
    By definition, we have that $\Lambda _0 = \Pi = T_{\tilde \la_0}( T^* _{q_0} M) \subset T_{\tilde \la_0}( T^* M)$. 
    We fix $t\in (0,t_1)$. 
    We want to compute $\Lambda_{t}(V_\theta)$ from $\Lambda _0$ applying Lemma \ref{lemma:approx-jacobi-curve}, where $V_\theta$ is a subspace of variations of time in the form:
    \begin{equation}
        V_\theta = \mathrm{span} \left\{ (\mathds{1}_{(0,t)} , 0) \right\}.
    \end{equation} 
    Since the quadratic form $Q_t$ depends only on the integral of the variations of time (see Equation \eqref{eq:formula-GT}), we can restrict to consider $\theta$ constant on a subinterval of $(0,t_1)$. 
    In this case, $Q(w_1,w_2)=0$ for every $w_1,w_2\in V_\theta$, since $D^2\matheuler{ h }_t |_{V_\theta} = 0$ and $\sigma (X_s w_1(s), X_t w_2(t)) = \sigma (X_s w_1(s), X_t w_2(t)) = \sigma(\vec{ \mathcal H }^1  , \vec{ \mathcal H }^1  ) = 0$.
    \\
    Furthermore, the right-hand side~of~\eqref{eq:equation-lemma-ivan} reads 
    \begin{equation}
        \label{eq:proof-Jacobi-point-1-rhs-lemma-ivan}
        \int_{0} ^{t} \sigma(X_s w_2(s),\eta) ds
        =
        \int_{0} ^{t} \sigma(\vec{ \mathcal H }^1 ,\eta) ds
        =
        t \sigma(\vec{ \mathcal H }^1 ,\eta),
        \quad 
        \eta\in \Lambda_0.
    \end{equation}
    Notice that we have $\dim \big( \Lambda_0 \cap \vec{ \mathcal H }^{1 \, \angle} \big) \in \{ d-1, d\}$:
    since $\vec{ \mathcal H }^{ 1 \, \angle}$ is a codimension one subspace of $T_{\la_0}(T^*M)$ and $\Lambda_0$ has dimension $d$, by Grassmann's Identity it follows that $\dim \big( \Lambda_0 \cap \vec{ \mathcal H }^{1 \, \angle} \big) \geq d-1$. 
    If $\dim \big( \Lambda_0 \cap \vec{ \mathcal H }^{ 1 \, \angle} \big) = d $, then $\Lambda_0 \subset \vec{ \mathcal H }^{ 1 \, \angle}$ and, since $\Lambda_0$ is a Lagrangian space, $ \vec{ \mathcal H }^1  \in \Lambda_0$. 
    In particular, it holds that $\mathfrak g_1 ( \Lambda_0) = \Lambda_0$. 
    If instead $\dim \big( \Lambda_0 \cap \vec{ \mathcal H }^{ 1 \, \angle} \big) = d-1$, 
    then there is $\eta_1 \in \Lambda_0$ such that $\sigma(\vec{ \mathcal H }^1 ,\eta_1) \neq 0$
    and there are $d-1$ linearly independent vectors $\e_2,\dots,\e_{d} \in \Lambda_0$ such that $\sigma( \vec{ \mathcal H }^1  , \eta_i ) = 0$ for $i=2,\dots,d$.  
    Hence, we see that vectors $\e_2,\dots,\e_{d}$ are zeros of the right-hand side of Equation~\eqref{eq:proof-Jacobi-point-1-rhs-lemma-ivan}. 
    Moreover, the choice $\e=0$ and $ w_1 = \mathds{1}_{[0,t]}$ in Equation~\eqref{eq:lemma-lemma-ivan-vectors} satisfy Equation~\eqref{eq:equation-lemma-ivan}. 
    Hence, by Lemma~\ref{lemma:approx-jacobi-curve} we have that 
    \begin{equation}
        \Lambda_t (V_\theta) 
        =
        \mathrm{span} \left\{ 
            \vec{ \mathcal H }^1  , \, \e_2 , \, \dots , \, \e_d 
        \right\}
    \end{equation}    
    Thus, we have proved that 
    $ 
    \Lambda_t(V_\theta) 
    = 
    (
        \Pi
        \cap 
        \vec{ \mathcal H }^{1 \, \angle} 
    )
    +
    \mathbb R {\vec{ \mathcal H }^{1} }  
    =
    \mathfrak g_1(\Pi)
    $ 
    for $t \in (0,t_1]$.
    In particular, $\lim_{t\to 0+} \Lambda_{t} (V_\theta) = \mathfrak g_1(\Pi)$.

    Now, if we consider any other space of variations $W_t \subset \mathcal{V}_t$ such that $V_\theta \subset W_t$, then, by Proposition~\ref{prop:var-tempo-commutano-var-controlli}, the vector $\vec{ \mathcal H }^1 $ commutes with all the new variations, that is
    \begin{equation}
        \sigma_{\tilde \la_0}
        \left(
            X_s w(s) , X_s w_\theta(s) 
        \right)
        =
        \sigma_{\tilde \la_0}
        \left(
            X_s w(s) , \vec{ \mathcal H }^1  
        \right)
        =
        0, 
        \quad 
        s\in(0,t),
    \end{equation}
    for every $w \in W_{t}$ and $w_\theta \in V_\theta$.
    Hence, the vector $\vec{ \mathcal H }^1 $ satisfies Equation \eqref{eq:def-Jacobi-curve-working-def} for all variations in $W_t$. 
    This implies that $\vec{ \mathcal H }^1  \in \Lambda_t(W_t)$ and then, passing to the generalized limit, we obtain $\vec{ \mathcal H }^1  \in \Lambda_t$.
    In particular, we have that $\vec{ \mathcal H }^1  \in \Lambda_{0} ^+$, which completes the proof of statement 1.~in the Theorem for $t\in(0,t_1]$.
    
    Now, we prove point 2.\ of the Theorem for $t\in(0,t_1]$. 
    We have to compute $\Lambda_t$ for $t\in(0,t_1]$. 
    To this aim, we can apply Lemma~\ref{lemma:approx-jacobi-curve} and obtain $\Lambda_t$ from $\Lambda_t(V_\theta)$:    
    as already pointed out, by Proposition~\ref{prop:var-tempo-commutano-var-controlli}, we have that if we consider any space of variations $V$ containing $V_\theta$, then variations in $V\setminus V_\theta$ satisfy the conditions of Proposition \ref{prop:Jacobi-curve-working-def}, which implies that $ \vec{\mathcal{H}^1} \in \Lambda_t(V_\theta)$. 
    Putting these considerations together, we obtain that the space $\Lambda_t$ is generated by vectors
    \begin{equation}
        \e_0 + \int_0 ^t Z_s v(s) ds,
    \end{equation}
    where $\e_0 \in \Lambda_{0} ^+$ and $v\in \mathcal{V}_{t}$ is a control variation which satisfies Equation~\eqref{eq:equation-lemma-ivan} for all control variations in $\mathcal{V}_{t}$. 

    We just give a sketch of how we can find variations solving this equation, since the argument follows closely the one exposed in the book \cite{AgSa}, Chapter 21.
    Let $W \subset \mathcal{V}_t $ be the subspace of all control variations. 
    We have to find all $ v_1 \in W$ and $\e_0 \in \Lambda_{0} ^+$ such that 
    \begin{equation}
        Q_{t} (v_1,v_2)
        =
        \int_0 ^{t} \sigma(X_t v_2(t),\eta_0) dt, 
    \end{equation}
    for all $v_2\in W$. 
    For these variations, the bilinear form $Q_{t}$ in \eqref{eq:formula-GT} reduces to
    \begin{equation}
    	Q(v_1,v_2)
    	=
    	-
    	\int_0 ^t D^2 \matheuler{ h }_s [v_1(s),v_2(s)] ds
    	-
    	\int_0 ^t \int_0^{s} 
    	\sigma \big( 
    	Z_{s_1} v_1(s_1) , \, Z_s v_2(s)
    	\big) ds_1 ds,
    \end{equation}
	which coincides with Equation 21.3 in \cite{AgSa}. 
    Since $v_2$ is arbitrary in $W$, then we obtain the equality for the integrands, that is
    \begin{equation}
        -
        D^2 \matheuler{ h }_s [v_1(s),\cdot]
        - 
        \sigma \left( 
            \int_0 ^{s} Z_{s_1} v_1(s_1) ds_1 , \, Z_s \cdot
        \right) 
        =
        \sigma(Z_s \cdot,\eta_0),
    \end{equation}
    for almost every $s\in(0,t)$.
    Defining 
    \begin{equation}
        \e(t) 
        \coloneqq 
        \eta_0 
        - 
        \int_0 ^t Z_s v_1(s) ds,
    \end{equation}
    and rearranging terms, we obtain
    \begin{equation}
        \label{eq:proof-jacobi-equation}
        v_1(t)
        =
        (D^2 \matheuler{ h }_t) ^{-1}
        \sigma \left( Z_t \cdot , \, \e(t) \right),
    \end{equation}
    where $(D^2 \matheuler{ h }_t) ^{-1} : (T_{\tilde u(t)} U_1)^* \to T_{\tilde u(t)} U_1$ is the inverse of the linear map obtained by the quadratic form $D^2 \matheuler{ h }_t$, which is negative definite since $\tilde u$ is piecewise regular.
	By definition of $\eta$, we have that $\dot \e(t) = Z_t v_1(t)$ and so, by applying $Z_t$ to both sides of \eqref{eq:proof-jacobi-equation} we obtain the Cauchy problem~\eqref{eq:Jacobi-pw-reg} for $t\in(0,t_1)$.
    The fact that the flow of this equation coincides with $\phi^1$ defined as in \eqref{eq:def-phi-j} follows from Proposition 21.3 in \cite{AgSa} (see also Proposition \ref{prop:flow-Jacobi-is-lin-flow-maxim-hamilt} for a statement adapted to our notation).
    This completes the proof of point 2.~for $t\in(0,t_1)$.
    \medskip
    
    Now, we briefly show how to iterate this algorithm to obtain the Jacobi curve on the whole interval $[0,T]$.
    For $t\in(t_1,t_2]$, we determine first $\Lambda_{t}(V_\theta)$, where $V_\theta$ is the space of time variations supported in $(t_1,t_2)$. 
    This will give the new initial condition at $t_1$ for the Cauchy problem on $(t_1,t_2)$.
    We apply the same procedure already explained for $t\in(0,t_1]$: we have that $\dim (\Lambda_{t_1} \cap \vec { \mathcal H } ^{2 \, \angle}) = d-1$, since $\vec{ \mathcal H }^1 \in \Lambda_{t_1}$ and $\sigma(\vec{ \mathcal H }^1 , \vec { \mathcal H } ^2) \neq 0$ by assumption.
    Thus, we can find $d-1$ linearly independent vectors $\e_2,\dots,\e_d \in \Lambda_{t_1} \cap \vec { \mathcal H } ^{2 \, \angle}$. 
    As before, the quadratic form $Q(w_1,w_2)=0$ for every $w_1,w_2\in V_\theta$, so that the pair $\e=0, w_1 = \mathds{1}_{(t_1,t_2)}$ satisfies Equation~\eqref{eq:equation-lemma-ivan}. 
    Applying Lemma~\ref{lemma:approx-jacobi-curve} as before, we obtain that $\Lambda_{t}(V_\theta) = \mathfrak g_2(\Lambda_{t_1})$ for $t\in(t_1,t_2]$.
    In particular, we have $\Lambda_{t_1}^+ = \mathfrak g_2 (\Lambda_{t_1}) $.
    Similarly to what was done in the previous step, we also have that $\vec { \mathcal H } ^2 \in \Lambda_t$ for $t\in(t_1,t_2]$. 
    Finally, applying again Lemma~\ref{lemma:approx-jacobi-curve} and following the very same procedure described before, we compute $\Lambda_t$ for $t\in(t_1,t_2]$ as solutions of the Cauchy problem~\eqref{eq:Jacobi-pw-reg} on $(t_1,t_2)$ with initial condition at $t_1$ given by $\Lambda_{t_1}^+$.
    Repeating this argument on all the intervals of regularity of $\tilde u$, we obtain the statement of the Theorem.
\end{proof}

\section{Maslov index for discontinuous curves}\label{sec:maslov-index-conjugate-points}
In this section, we put in relation the Morse index of the second variation of the endpoint map as introduced in Section \ref{sec:Hessian-endpoint-map} with the Maslov index of the Jacobi curve. 
First, we recall some definitions and notation needed in the sequel (we mainly refer to \cite{McDuffSolom} for a more extensive introduction to the Maslov index and to \cite{AgBes1} for its application in optimal control).

\subsection{Maslov index for smooth curves}
Let $(V,\sigma)$ be a symplectic linear space. 
Fix a Lagrangian subspace $L_0 \subset V$. The Maslov train of $L_0$ is the set of Lagrangian subspaces with non-trivial intersection with $L_0$, that is
\begin{equation}
    \mathcal{M}_{L_0} 
    \coloneqq 
    \{ 
        W \in \mathfrak{L}(V) \, : \, W \cap L_0 \neq \{0\} 
    \}.
\end{equation}
The Maslov train is a stratified manifold, whose strata are given by 
\begin{equation}
    \mathcal{M}_{L_0} ^k 
    \coloneqq 
    \{ 
        W \in \mathfrak{L}(V) \, : \, \dim (W \cap L_0) = k 
    \},
    \quad 
    k=1,\dots, \frac{\dim V}{2}.
\end{equation}
We have to introduce suitable local coordinates on the Lagrangian Grassmannian $\mathfrak{L}(V)$. 
Let $\Delta$ be any complementary subspace of $\Pi$ in $V$, that is $V = \Pi \oplus \Delta$. 
Then, every Lagrangian subspace $L\in \mathfrak{L}(V)$ which is transverse to $\Delta$ can be identified with the graph of a symmetric linear operator $S : \Pi \to \Delta$. 
More precisely, taking Darboux coordinates $(p,q)$ on $V$ such that $\Pi = \{ (p,0) \}$ and $\Delta = \{ (0,q) \}$, we have that every Lagrangian subspace $L$ transverse to $\Delta$ can be written as 
\begin{equation}
    L = \{ (p,Sp) \, : \, p\in \Pi \}.
\end{equation}
For brevity, we call these coordinates the $(\Pi,\Delta)$ coordinates of $\mathfrak{L}(V)$.
As $\Delta$ varies in $\mathfrak{L}(V)$, we obtain an atlas for $\mathfrak{L}(V)$. 
Notice that every coordinate chart has dense image in $\mathfrak{L}(V)$. 

\begin{defn}[Velocity of a Lagrangian curve]
    Let ${(\Lambda_t)}_{t\in[0,T]}$ be a $C^1$ curve in the Lagrangian Grassmannian $\mathfrak{L}(V)$.
    For $t\in[0,T]$ and $v_t\in \Lambda_t$, let $v : (-\eps,\eps) \to V$ be a function such that $v(0)=v_t$ and $v(\tau)\in\Lambda_{t+\tau}$ for every $\tau\in(-\eps,\eps)$.
    We define the velocity field of the curve $\Lambda$ at $t$ to be the quadratic form on $\Lambda_t$ defined by
    \begin{equation}
        v_t \mapsto\sigma \left( v_t, \left. \frac{d}{d\tau} \right|_{\tau=0} v (\tau) \right).
    \end{equation}
    We denote this quadratic form by $\dot \Lambda_t$. 
\end{defn}
This definition does not depend on the choice of the curve $\tau \mapsto v(\tau)$.
To show this, we can fix any $\Delta\in \mathfrak{L}(V)$, $\Delta\cap\Lambda_t = \{0\}$, take Darboux coordinates such that $\Lambda_t = \{(p,0)\}$ and $\Delta = \{(0,q)\}$, and compute $\sigma \left( v_t, \left. \frac{d}{ds} \right|_{s=0} v (s) \right)$ in these coordinates.
Let $S_{t+\tau}$ be the linear map corresponding to $\Lambda_{t+\tau}$. 
We have that $v(\tau) = (p(\tau),S_{t+\tau} p(\tau))$ and 
\begin{equation}
    \sigma 
    \Big( 
        \big( p(0), S_t p(0) \big) 
        , 
        \big( \dot p (0) , \dot S_t p(0) + S_t \dot p(0) \big) 
    \Big)
    =
    p(0)^T \dot S_t p(0) .
\end{equation}
Thus, we see that the quantity depends only on $p(0)$. 
Furthermore, from this computation in coordinates we see that the quadratic form $\dot \Lambda_t$ is indeed the time derivative of $\Lambda_t$ when computed in coordinates.

We can now define the Maslov index of a $C^1$ curve in the Lagrangian Grassmannian.
\begin{defn}[Maslov index of a curve]\label{def:Maslov-index}
    Let $(\Lambda_t)_{t\in[0,T]}$ be a $C^1$ curve in the Lagrangian Grassmannian $\mathfrak{L}(V)$ and take $\Pi \in \mathfrak{L}(V)$. 
    Suppose that the curve $\Lambda$ has finitely many intersections at times $t_1,\dots,t_\ell$ with the Maslov train $\mathcal{M}_\Pi ^1$. 
    Suppose moreover that all intersections are transverse.
    For $j\in\{1,\dots,\ell\}$, let $v_j\in\Lambda_{t_j}\cap \Pi$ be any vector in the intersection. We define the number $i(j)=1$ if we have $\dot \Lambda_{t_j}(v_j)>0$ and $i(j)=-1$ if $\dot \Lambda_{t_j}(v_j)<0$. 
    The \emph{Maslov index} of the curve $\Lambda$ with respect to $\Pi$ is defined as 
    \begin{equation}
        \mu_{\Pi} (\Lambda) 
        =
        \sum_{j=1}^\ell i(j)
        +
        \big( 
            \dim(\Lambda_0 \cap \Pi ) + \dim(\Lambda_T \cap \Pi ) - \dim(\Lambda_0\cap \Lambda_T \cap \Pi )
        \big).
    \end{equation}
\end{defn}    

\begin{rem}
    Since the stratum $\mathcal{M}_\Pi ^1$ has codimension one in $\mathcal{M}_\Pi$ and $\mathcal{M}_\Pi ^2$ has codimension three, we have that the Maslov index defined as in Definition \ref{def:Maslov-index} is invariant under homotopy of the curve $\Lambda$ with fixed endpoints. 
    Moreover, since $\mathcal{M}_\Pi ^1$ is dense in $\mathcal{M}_\Pi$, we can extend the previous definition of Maslov index to any curve $\Lambda$ such that $\Lambda(0)\cap \Pi = \Lambda(T) \cap \Pi =\{0\}$.  
\end{rem}

\begin{rem}
    In the classical references, see \cite{McDuffSolom}, the Maslov index $\mu_\Pi$ is defined for curves whose initial and final points are transverse to the subspace $\Pi$, hence the terms 
    $
    \dim(\Lambda_0 \cap \Pi ) 
    , \
    \dim(\Lambda_T \cap \Pi ) 
    ,\
    \dim(\Lambda_0\cap \Lambda_T \cap \Pi )
    $ 
    are all zero. 
    However, in the case of a Jacobi curve of a piecewise regular extremal, we always have that the initial point has a non-trivial intersection with the space $\Pi$. 
    Including these terms in the definition of Maslov index allows us to extend the classical definition to our case. 
\end{rem}

We introduce also the positive Maslov index of a triple of Lagrangian subspaces. 
Let $\Pi,\Lambda_1,\Lambda_2 \in \mathfrak{L}(V)$ be three Lagrangian subspaces. 
Define the quadratic form $\mathfrak q : (\Lambda_1 + \Lambda_2) \cap \Pi \setminus (\Pi \cap \Lambda_1 \cap \Lambda_2) \to \R$ by
\begin{equation}
    \mathfrak q(\la) = \sigma(\la_1, \la_2), \quad \la_1 \in \Lambda_1, \, \la_2 \in \Lambda_2, \, \la=\la_1+\la_2.
\end{equation} 
A direct computation shows that $\mathfrak q$ does not depend on the choice of $\la_1,\la_2$. 
\begin{defn}[Maslov index of a triple]
    Let $\Pi,\Lambda_1,\Lambda_2 \in \mathfrak{L}(V)$ be three Lagrangian subspaces.
    The positive Maslov index of the triple $(\Lambda_1, \Pi , \Lambda_2)$ is defined as 
    \begin{equation}
        \mathrm{ind}_{\Pi} (\Lambda_1,\Lambda_2) 
        \coloneqq 
        \mathrm{ind}^+ \mathfrak q + \frac{1}{2}\ker \mathfrak q.
    \end{equation}
\end{defn}
We call $\mathrm{ind}_{\Pi} (\Lambda_1,\Lambda_2)$ the positive Maslov index, stressing the word positive, in contrast with the Maslov index used by some other authors, which is instead the signature of the quadratic form $\mathfrak q$. 
The positive Maslov index of a triple of Lagrangian subspaces enjoys many nice properties, see \cite{AgBes1} for more details. 
We further introduce the notion of increasing and simple curve.
\begin{defn}[Monotone increasing curve]
    Let $(\Lambda_t)_{t\in[0,T]}$ be a $C^1$ curve in the Lagrangian Grassmannian $\mathfrak{L}(V)$.
    We say that $\Lambda$ is increasing if $\dot \Lambda_t \geq 0$ as a quadratic form for every $t\in[0,T]$. 
    We say that it is strictly increasing if $\dot \Lambda_t > 0$. 
\end{defn}
\begin{rem}
    If $(\Lambda_t)_{t\in[0,T]}$ is an increasing curve, then the function $t\mapsto \mu_\Pi (\Lambda|_{[0,t]})$ is non-decreasing for every $\Pi \in \mathfrak{L}(V)$, since at every new intersection we must have $\dot \Lambda_t > 0$.
\end{rem}

\begin{defn}[Simple curve]
    Let $(\Lambda_t)_{t\in[0,T]}$ be a curve in the Lagrangian Grassmannian $\mathfrak{L}(V)$.
    We say that $\Lambda$ is simple if there is $\Delta\in \mathfrak{L}(V)$ such that $\Delta \cap \Lambda_t =\{0\}$ for every $t\in[0,T]$.
    That is, the curve $\Lambda$ is entirely contained in a single coordinate chart.
\end{defn}
We have the following result, which is proved in \cite{cime}. 
\begin{prop}
    \label{prop:maslov-simple-incrasing-curve}
    Let $(\Lambda_t)_{t\in[t_0,t_1]}$ be a $C^1$ curve in the Lagrangian Grassmannian $\mathfrak{L}(V)$.
    Fix $\Pi \in \mathfrak{L}(V)$. 
    Suppose that the curve $\Lambda$ is simple and increasing.
    Then, the Maslov index of the curve $\Lambda$ with respect to $\Pi$ is given by
    \begin{equation}
        \mu_{\Pi} (\Lambda) 
        = 
        \mathrm{ind}_\Pi (\Lambda_{t_0},\Lambda_{t_1}).
    \end{equation}
    That is, in the particular case of a simple increasing curve, the Maslov index depends only on its endpoints.
\end{prop}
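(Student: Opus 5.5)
We work in a single chart and reduce everything to a statement about monotone matrices. Since $\Lambda$ is simple, fix $\Delta\in\mathfrak L(V)$ with $\Delta\cap\Lambda_t=\{0\}$ for all $t\in[t_0,t_1]$, and take Darboux coordinates with $\Delta=\{(0,q)\}$ and a complement $\{(p,0)\}$. In these coordinates $\Lambda_t=\{(p,S_tp)\}$ with $S_t$ symmetric and $C^1$. By the coordinate computation following the definition of velocity, $\dot\Lambda_t$ is represented by $\dot S_t$, so the monotonicity hypothesis reads $\dot S_t\ge 0$. The subspace $\Pi$ need not be transverse to $\Delta$. So we first choose $\Delta$ generic within the open set of Lagrangians transverse to the compact curve, which makes $\Pi$ transverse to $\Delta$ as well. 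Then $\Pi=\{(p,Pp)\}$ for a symmetric $P$.

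With this normalization both sides reduce to quantities attached to the symmetric matrices $S_t-P$.

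\textbf{Right-hand side.} Since $\Lambda_t\cap\Pi\cong\ker(S_t-P)$, a short linear-algebra computation identifies the form $\mathfrak q$ on $((\Lambda_{t_0}+\Lambda_{t_1})\cap\Pi)/(\Lambda_{t_0}\cap\Lambda_{t_1}\cap\Pi)$ with the comparison of $A=S_{t_0}-P$ and $B=S_{t_1}-P$. Concretely, in the generic case where $A$ and $B$ are invertible, $\mathfrak q$ corresponds to the quadratic form $A^{-1}-B^{-1}$ (up to a fixed sign convention we must pin down). This gives
\[
\mathrm{ind}^+\mathfrak q=\mathrm{ind}^-B-\mathrm{ind}^-A
\]
whenever $B\ge A$. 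We would verify this by writing $\lambda=(p_1,S_{t_0}p_1)+(p_2,S_{t_1}p_2)\in\Pi$ and solving for $p_2$ in terms of $p_1$.

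\textbf{Left-hand side.} Each crossing of the path $S_t-P$ through a singular matrix contributes the signature of $\dot S$ restricted to the kernel. For a monotone path every contribution is nonnegative, so, up to the endpoint correction terms, the sum counts eigenvalues of $S_t-P$ crossing from negative to nonnegative. That count equals $\mathrm{ind}^-(S_{t_0}-P)-\mathrm{ind}^-(S_{t_1}-P)$ (with the appropriate sign), again plus boundary terms.

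\textbf{Reduction to the generic case.} We would first prove the identity when the endpoints are transverse to $\Pi$ and $\dot S>0$, where both sides equal the difference of inertia indices. Then we would pass to the general case by perturbation. Replace $S_t$ by $S_t+\eps(t-t_0)I$, which keeps the curve simple and makes it strictly increasing. Also move $\Pi$ slightly within its chart. Finally, check that the boundary terms $\dim(\Lambda_{t_0}\cap\Pi)+\dim(\Lambda_{t_1}\cap\Pi)-\dim(\Lambda_{t_0}\cap\Lambda_{t_1}\cap\Pi)$ in the definition of $\mu_\Pi$ match the contribution of $\tfrac12\dim\ker\mathfrak q$ together with the jumps of $\mathrm{ind}^-$ at the endpoints.

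\textbf{Main obstacle.} We expect this degenerate endpoint bookkeeping to be the hardest step: reconciling the paper's convention for $\mu_\Pi$ (with its specific endpoint terms) with $\mathrm{ind}^+\mathfrak q+\tfrac12\ker\mathfrak q$ when $\Lambda_{t_0}\cap\Pi\neq0$. We would try to handle it by an explicit analysis of eigenvalues of $S_t-P$ near $0$ at the endpoints, using homotopy invariance with fixed endpoints. The reason this should work is that monotonicity forces eigenvalues to move only upward, so each kernel direction at an endpoint is counted consistently on both sides.
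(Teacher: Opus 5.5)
\textbf{Verdict: genuine gap, at exactly the step you flag as the main obstacle.} The paper gives no argument for this statement; it defers to Lemma~2.8 of Chapter~1 of the cited CIME notes. Your single-chart reduction to symmetric matrices is the natural route.

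\textbf{The transverse case, and a sign slip.} Once a sign is corrected, your argument proves the statement when $\Lambda_{t_0}\cap\Pi=\Lambda_{t_1}\cap\Pi=\{0\}$. Since $\Pi\pitchfork\Delta$ you may take $P=0$. Let $\Lambda_{t_0},\Lambda_{t_1}$ be the graphs of $A,B$, write $\lambda=(p_1,Ap_1)+(p_2,Bp_2)\in\Pi$, and set $x=Ap_1=-Bp_2$. With the sign convention that makes $\dot\Lambda_t$ equal to $\dot S_t$, one gets $\mathfrak q(\lambda)=-p_1^\top Ap_1+p_2^\top Bp_2=x^\top(B^{-1}-A^{-1})x$. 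Comparing the two Schur complements of $\begin{pmatrix}A&I\\ I&B^{-1}\end{pmatrix}$ gives $\mathrm{ind}^+A+\mathrm{ind}^+(B^{-1}-A^{-1})=\mathrm{ind}^+B+\mathrm{ind}^+(A-B)$. Hence $\mathrm{ind}^+\mathfrak q=\mathrm{ind}^-A-\mathrm{ind}^-B$ whenever $A\le B$; this is where monotonicity is used. Your formula $\mathrm{ind}^-B-\mathrm{ind}^-A$ is $\le 0$ whenever $B\ge A$, so it would force $\mathrm{ind}^+\mathfrak q=0$. It also contradicts your own count of the left-hand side, which equals $\mathrm{ind}^-A-\mathrm{ind}^-B$ for any path in the chart with transverse endpoints.

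\textbf{The gap: degenerate endpoints.} This step cannot be carried out, because with the boundary term exactly as in the paper's definition of $\mu_\Pi$ the identity is false. Take $d=1$, let $\Pi$ be the graph of $0$, and let $\Lambda_t$ be the graph of $S_t=t$, $t\in[0,1]$. This curve is simple and strictly increasing.
\begin{itemize}
\item The paper's boundary term alone already equals $1+0-0=1$.
\item On the other side, $\Lambda_0=\Pi$ forces $(\Lambda_0+\Lambda_1)\cap\Pi=\Pi$, with every $\lambda$ decomposing as $\lambda+0$. So $\mathfrak q\equiv 0$ and $\mathrm{ind}_\Pi(\Lambda_0,\Lambda_1)=\tfrac12$.
\end{itemize}
So a kernel direction at an endpoint is \emph{not} counted consistently on both sides: $\mathrm{ind}_\Pi$ gives it weight $\tfrac12$, while the paper's boundary term gives it weight $1$. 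Your perturbations cannot bridge this. Both sides jump when an endpoint leaves the Maslov train, and moving $\Lambda_{t_1}$ or $\Pi$ is not a homotopy with fixed endpoints.

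\textbf{How to repair it.} You must either restrict to transverse endpoints, or replace the boundary term by $\tfrac12\big(\dim(\Lambda_{t_0}\cap\Pi)+\dim(\Lambda_{t_1}\cap\Pi)\big)-\dim(\Lambda_{t_0}\cap\Lambda_{t_1}\cap\Pi)$. The second is the convention the paper itself uses later, when it asserts $\mu_\Pi(\Lambda)=d/2$ for a simple increasing curve leaving $\Pi$. With that convention the bookkeeping closes; for instance, if $\dot S>0$:
\begin{itemize}
\item Then $C=B-A>0$ and $\Lambda_{t_0}+\Lambda_{t_1}=V$.
\item On $\Pi$ one finds $\mathfrak q(p)=-p^\top AC^{-1}Bp$, with kernel $\ker A\oplus\ker B$.
\item The congruence $\begin{pmatrix}-C&A\\ A&A\end{pmatrix}\sim\begin{pmatrix}-B&0\\ 0&A\end{pmatrix}$ gives $\mathrm{ind}_\Pi(\Lambda_{t_0},\Lambda_{t_1})=\big(\mathrm{ind}^-A-\mathrm{ind}^-B-\dim\ker B\big)+\tfrac12(\dim\ker A+\dim\ker B)$.
\item The bracket is exactly the number of interior crossings, counted with multiplicity.
\end{itemize}
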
 

\begin{rem}
    \label{rem:monotone-position-lagr-subspaces}
    We point out that, given any pair $\Lambda_1,\Lambda_2\in \mathfrak{L}(V)$, it is always possible to find a suitable $ \Delta\in \mathfrak{L}(V) $ such that $\Lambda_1 \cap \Delta = \Lambda_2 \cap \Delta = \{0\}$ and satisfying $S_1 \leq S_2$, where $S_1,S_2 : \Lambda_1 \to \Delta $ denote the matrices corresponding to $\Lambda_1,\Lambda_2$ respectively in the coordinate chart defined by $\Lambda_1$ and $\Delta$. 
    This is a consequence of the fact that the group of linear symplectomorphisms acts transitively on pairs of Lagrangian subspaces having fixed dimension of the intersection. 
    Indeed, take any $\Lambda_1,\Lambda_2\in \mathfrak{L}(V)$ and a subspace $ \Delta\in \mathfrak{L}(V) $ such that $\Lambda_1 \cap \Delta = \Lambda_2 \cap \Delta = \{0\}$. 
    If in the chart defined by $\Lambda_1$ and $\Delta$ we have $S_1 \leq S_2$, then the desired conclusion holds. 
    Notice that, in these coordinates, $S_1 = 0$.
    If instead $ 0 = S_1 > S_2 $, then we can take $S_2 < S'= S_2 / 2 <0$. 
    Then, letting $\Delta'\in\mathfrak{L}(V)$ be the subspace corresponding to $S'$ in the $\Lambda_1,\Delta$ chart and $\psi$ any linear symplectomorphism such that $\psi(\Lambda_1) = \Lambda_1$ and $\psi(\Delta') = \Delta$, a direct computation shows that, in the chart obtained applying the transformation $\psi$ to $\mathfrak{L}(V)$, the matrix corresponding to $\Lambda_2$ is positive definite.  

    In particular, given any $\Lambda_1,\Lambda_2 \in \mathfrak{L}(V)$, it is always possible to find a simple and increasing curve $\Lambda :[0,1] \to \mathfrak{L}(V)$ such that $\Lambda(0)=\Lambda_1$ and $\Lambda(1)=\Lambda_2$. 
    Indeed, introducing coordinates as above, a curve with these properties is obtained by taking the curve of subspaces corresponding to $t \mapsto (1-t)S_1 + tS_2$.
\end{rem}

For a proof of this Proposition, see \cite{cime}, Lemma 2.8 in Chapter 1.
This leads to the following alternative way to compute the Maslov index of an increasing curve in the Lagrangian Grassmannian.
\begin{cor}
    \label{cor:maslov-triple}
    Let $(\Lambda_t)_{t\in[0,T]}$ be a $C^1$ increasing curve in the Lagrangian Grassmannian $\mathfrak{L}(V)$.
    Fix $\Pi \in \mathfrak{L}(V)$. 
    Then, the Maslov index of the curve $\Lambda$ with respect to $\Pi$ is given by
    \begin{equation}
        \label{eq:maslov-index-alternative-def}
        \mu_{\Pi} (\Lambda) 
        =
        \sum_{i=1}^N \mathrm{ind} _\Pi (\Lambda_{t_i} , \Lambda_{t_{i+1}} )
    \end{equation}
    where the partition $0=t_0 < t_1 < \dots < t_N = T$ is such that $\Lambda_{|(t_i,t_{i+1})}$ is simple for every $i=1,\dots,N-1$.
\end{cor}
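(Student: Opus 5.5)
The plan is to reduce everything to Proposition \ref{prop:maslov-simple-incrasing-curve} by cutting the curve at the points of the partition. First I would check that the Maslov index of Definition \ref{def:Maslov-index} is additive under concatenation: for $0=t_0<t_1<\dots<t_N=T$,
\begin{equation}
    \mu_\Pi(\Lambda) = \sum_{i} \mu_\Pi\big(\Lambda|_{[t_i,t_{i+1}]}\big).
\end{equation}
By homotopy invariance with fixed endpoints (see the Remark after Definition \ref{def:Maslov-index}), I may perturb $\Lambda$ slightly, keeping it increasing, so that all interior intersections with $\mathcal M_\Pi$ are transverse and lie in $\mathcal M^1_\Pi$. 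The transverse crossings away from the $t_i$ then add up directly. The only bookkeeping concerns the endpoint correction terms $\dim(\Lambda_{t_i}\cap\Pi)$ appearing in two consecutive pieces. Here the half-integer convention of the triple index should handle a partition point lying on the train: a transverse positive crossing at $t_i$ contributes $\tfrac12$ to each adjacent piece. Alternatively, I would simply choose the partition points off the train. This is allowed because the intersection times of an increasing curve with $\mathcal M_\Pi$ are isolated, and refining or shifting the partition keeps each piece simple.

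Next I would show that each restriction $\Lambda|_{[t_i,t_{i+1}]}$ is simple and increasing. Monotonicity is inherited directly from $\Lambda$. Simplicity on the open interval is the hypothesis, and it extends to the closed interval because transversality to a fixed $\Delta$ is an open condition. If needed, I would shrink the pieces slightly and use the continuity of $\Lambda$, possibly after inserting extra partition points, since any continuous curve can be covered by finitely many charts by compactness. Proposition \ref{prop:maslov-simple-incrasing-curve} then gives $\mu_\Pi(\Lambda|_{[t_i,t_{i+1}]})=\mathrm{ind}_\Pi(\Lambda_{t_i},\Lambda_{t_{i+1}})$. Summing over $i$ yields \eqref{eq:maslov-index-alternative-def}.

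The main obstacle is the additivity step at partition points where $\Lambda_{t_i}\cap\Pi\neq 0$. The boundary terms in Definition \ref{def:Maslov-index}, namely $\dim(\Lambda_0\cap\Pi)+\dim(\Lambda_T\cap\Pi)-\dim(\Lambda_0\cap\Lambda_T\cap\Pi)$, do not obviously telescope. My first attempt would be to avoid the issue by placing all partition points off $\mathcal M_\Pi$. The independence of the right-hand side of \eqref{eq:maslov-index-alternative-def} from the chosen partition follows from the same proposition: if a simple increasing piece is subdivided, the triple indices add, because that subdivided piece is itself simple and increasing. Only the genuine endpoints $0$ and $T$ then remain, and there the correction terms coincide with those built into $\mathrm{ind}_\Pi$ by Proposition \ref{prop:maslov-simple-incrasing-curve}.
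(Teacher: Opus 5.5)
Your route (cut the curve into simple pieces, apply Proposition~\ref{prop:maslov-simple-incrasing-curve} to each piece, add up) is the one the paper intends. The paper presents the corollary as a direct consequence of that Proposition and takes additivity of $\mu_\Pi$ for granted. The only step with real content is additivity at partition points lying on $\mathcal M_\Pi$, and your way around it fails for three reasons.

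(a) The intersection times of a merely increasing curve ($\dot\Lambda_t\ge 0$) with $\mathcal M_\Pi$ need not be isolated. If $\Lambda_t\cap\Pi$ contains a constant direction $v$ (the paper stresses this always happens for Jacobi curves of bang--bang extremals), then every $t$ lies on the train and no partition point can be placed off it. A perturbation that keeps the curve increasing with fixed endpoints cannot remove $v$ either: on a simple stretch written as $S_t$, $\langle S_tv,v\rangle$ is nondecreasing and vanishes at both ends, which forces $S_tv=0$ throughout.

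(b) Even when off-train points exist, the statement concerns an arbitrary partition $P$. To compare $P$ with an off-train partition $P'$ through $P\cup P'$, you must split pieces of $P'$ at points of $P$. Your justification that ``the triple indices add because the subdivided piece is simple and increasing'' goes through the Proposition, so it needs additivity of $\mu_\Pi$ at exactly those possibly on-train points. That is circular.

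(c) Your remark that a crossing at $t_i$ contributes $\tfrac12$ to each side is a half-weight (Robbin--Salamon type) endpoint convention. Definition~\ref{def:Maslov-index} uses full-weight endpoint terms: for $d=1$ a transverse crossing at $t_i$ is counted $1+1$ by the two pieces, versus $\tfrac12+\tfrac12$ from $\mathrm{ind}_\Pi$. So this step requires changing the convention, not just checking it.

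What is missing is an additivity statement valid at \emph{arbitrary} points. For $\mu_\Pi$, this means adopting the half-weight endpoint convention; concatenation additivity is then standard and the corollary follows at once from the Proposition. On the triple-index side it can be proved directly. Choose $\Delta$ transverse to $\Pi$ and to the closed arc: the $\Delta$ transverse to the closed arc form a nonempty open set, and those not transverse to $\Pi$ form a nowhere dense set, so such a $\Delta$ exists. In the $(\Pi,\Delta)$-chart the arc is $S_t$ with $\dot S_t\ge 0$, and for $a<b$ one checks
$$\mathrm{ind}_\Pi(\Lambda_a,\Lambda_b)=\varphi(S_a)-\varphi(S_b),\qquad \varphi(S)=\operatorname{ind}^-S+\tfrac12\dim\ker S.$$
This telescopes and gives independence of the partition with no genericity assumption.

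Finally, you upgrade simplicity on $(t_i,t_{i+1})$ to $[t_i,t_{i+1}]$ ``because transversality to $\Delta$ is open''. This runs the wrong way: a limit of subspaces transverse to $\Delta$ need not be transverse to it. The upgrade genuinely fails. For $d=1$, take an increasing loop with $\Lambda_{t_i}=\Lambda_{t_{i+1}}=\Delta$. It is simple on the open interval and has $\mu_\Pi=1$ for $\Pi\neq\Delta$, while $\mathrm{ind}_\Pi(\Delta,\Delta)=0$. The hypothesis must therefore be read as simplicity on the closed subintervals, as Proposition~\ref{prop:maslov-simple-incrasing-curve} requires; it cannot be derived.
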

    
\subsection{Maslov index for increasing piecewise smooth curves}
We can notice that the formula in~\eqref{eq:maslov-index-alternative-def} makes sense also for discontinuous curves in the Lagrangian Grassmannian.
This allows us to extend the definition of Maslov index to this more general setting.
\begin{defn}[Maslov index of a piecewise $C^1$ and increasing curve]
    \label{def:Maslov-index-pw-C1}
    Let $(\Lambda_t)_{t\in[0,T]}$ be a piecewise $C^1$ curve in the Lagrangian Grassmannian $\mathfrak{L}(V)$, with finitely many discontinuity times.
    Suppose that for every interval $I\subset [0,T]$ such that $\Lambda|_{I} \in C^1$ we have that $\Lambda|_{I}$ is an increasing curve.
    Let $0\leq t_1 < \dots < t_k \leq T$ be the times of discontinuity and denote by
    \begin{equation}
        \Lambda_j ^+ \coloneqq \lim_{t\to t_j +} \Lambda_{t},
        \quad 
        \Lambda_j ^- \coloneqq \lim_{t\to t_j -} \Lambda_{t},
        \quad 
        j=1,\dots,k
        .
    \end{equation}
    Fix $\Pi \in \mathfrak{L}(V)$.
    For any interval $(t_j,t_{j+1})$ we further consider the subdivision $t_j = t_{j0} < t_{j1} < \cdots < t_{jN_j} = t_{j+1}$ such that $\Lambda$ is simple on each subinterval $(t_{ji},t_{j(i+1)})$, for $i=0,\dots,N_j-1$.
    By convention, we set $\Lambda_{j0} = \Lambda_j ^+$ and $\Lambda_{jN_j} = \Lambda_{j+1} ^ -$.

    The Maslov index of the curve $\Lambda$ with respect to $\Pi$ is defined as
    \begin{equation}
        \mu_{\Pi} (\Lambda) 
        =
        \sum_{j=0}^k \sum_{i=0}^{N_j-1} \mathrm{ind} _\Pi (\Lambda_{t_{ji}} , \Lambda_{t_{j(i+1)}} )
        +
        \sum_{j=1} ^k \mathrm{ind} _\Pi (\Lambda_j ^{-},\Lambda_j ^+).
    \end{equation}
\end{defn}
This definition of Maslov index for an increasing piecewise $C^1$ curve corresponds to the Maslov index of a (globally) $C^1$ curve obtained from the discontinuous one by connecting the subspaces $\Lambda_j ^-$ and $\Lambda_j ^+$ with a simple increasing curve, which is always possible (see Remark \ref{rem:monotone-position-lagr-subspaces}). 
By Proposition~\ref{prop:maslov-simple-incrasing-curve}, we have that any simple increasing curve connecting $\Lambda_j ^-$ and $\Lambda_j ^+$ gives the same result.
\begin{rem}
\label{rem:all-curves-are-C1}
    In the following, whenever we are dealing with a discontinuous piecewise $C^1$ curve in the Lagrangian Grassmannian, we can always replace it with a globally $C^1$ curve having the same Maslov index. 
\end{rem}

\subsection{Maslov index of Jacobi curve in OCP and proof of Theorem \ref{thm:nec-cond-of-optimality}}
Now, we go back to the setting of optimal control problems.
As before, we consider a piecewise regular extremal pair $(\tilde u, \tilde \la)$ and its Jacobi curve $\Lambda_t$ defined as in \ref{def:Jacobi-curve-intro} (see also Subsection~\ref{subsec:Jacobi-curve-OCP}).
We state a theorem which shows how the Maslov index of the Jacobi curve can be used to compute the negative Morse index of the Hessian of the constrained functional $J|_{E^{-1}(q_1)}$. 
Recall that we denote by $\Pi = T^*_{q_0} M$ the fiber over $q_0$ in $T^*M$ and $\dim \Pi = d$.

\begin{thm}\label{thm:relation-Morse-Maslov}
    Let $(\tilde u , \tilde \la) $ be a piecewise regular extremal pair.
    Denote by $\Lambda$ the Jacobi curve of $\tilde u$ and $\tilde \la$. 
    Suppose also that $\operatorname{ind}^- \operatorname{Hess} D_{\tilde u} J|_{E_{T} ^{-1}(q_1)}$ is finite. 
    Then, it holds that
    \begin{equation}
    \label{eq:thm-relation-Morse-Maslov}
        \operatorname{ind}^- \big(D^2_{\tilde u} J |_{E_T ^{-1}(q_1)}\big) 
        = 
        \mu_{\Pi} (\Lambda)
        -
        \frac{1}{2}
        \left(
            d
            -
            \dim \left( \Pi\cap \bigcap_{t\in[0,T]} \Lambda_t \right)
        \right).
    \end{equation}
\end{thm}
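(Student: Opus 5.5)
The plan is to follow the scheme of \cite{AgBes1}: the negative index of $Q_t|_{\mathcal K_t}$ changes only at times where $\Lambda_t$ meets $\Pi$ non-trivially, and each change is measured by a positive Maslov index of a triple. Set $i(t)\coloneqq \operatorname{ind}^- Q_t|_{\mathcal K_t}$, so that $i(T)$ is the left-hand side of \eqref{eq:thm-relation-Morse-Maslov}.

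\textbf{Step 1: increment formula.} For $0\le\tau_1<\tau_2\le T$, I will prove
\begin{equation}
    i(\tau_2)-i(\tau_1)
    =
    \mathrm{ind}_\Pi\big(\Lambda_{\tau_1}(\mathcal V_{\tau_1}),\Lambda_{\tau_2}(\mathcal V_{\tau_1}\oplus V_2)\big)
    -\text{(correction from }\Pi\cap\Lambda_{\tau_1}\cap\Lambda_{\tau_2}\text{)},
\end{equation}
with $V_2\subseteq\mathcal V_{\tau_1,\tau_2}$ finite dimensional. This is the finite-dimensional lemma on the $\mathcal L$-derivative in Appendix \ref{subsec:def-L-derivative}. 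The key input is Lemma \ref{lemma:approx-jacobi-curve}, which expresses $\Lambda_{\tau_2}(\mathcal V_{\tau_1}\oplus V_2)$ through $\Lambda_{\tau_1}$ and the form $Q_{\tau_1,\tau_2}$ on $V_2$. Passing to the generalized limit over $V_2\uparrow\mathcal V_{\tau_1,\tau_2}$ should be harmless, since finiteness of the Morse index gives stabilization.

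\textbf{Step 2: local index on each smooth arc and at each switching time.}

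On a regular arc $(t_{j-1},t_j)$, Theorem \ref{thm:struttura-Jacobi-curve} gives $\Lambda_t=B^j_t\Lambda^+_{t_{j-1}}$, with $B^j_t=(\phi^j_t)_*$ generated by $Z_t(D^2\matheuler h_t)^{-1}\sigma(Z_t\cdot,\cdot)$. Because $D^2\matheuler h_t<0$ by \eqref{eq:strong-legendre-condition-intervall}, the velocity is $\dot\Lambda_t(\eta)=-\sigma(Z_t\cdot,\eta)^\top(D^2\matheuler h_t)^{-1}\sigma(Z_t\cdot,\eta)\ge0$, so the curve is increasing. Subdivide the arc into short intervals on which $\Lambda$ is simple. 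On each such interval, Step 1 together with Proposition \ref{prop:maslov-simple-incrasing-curve} shows that the increment of $i$ equals $\mathrm{ind}_\Pi$ of the endpoints, up to the $\Pi\cap\Lambda\cap\Lambda$ correction.

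At a switching time $t_j$ the only new variations are time variations $\mathds 1_{(t_j,t_j+\eps)}$. For these $Q$ vanishes on $V_\theta$, and the jump is $\Lambda^+_{t_j}=\mathfrak g_j(\Lambda_{t_j})$. Step 1 with $V_2=V_\theta$ then gives the increment $\mathrm{ind}_\Pi(\Lambda_{t_j},\Lambda^+_{t_j})$. This is the jump term in Definition \ref{def:Maslov-index-pw-C1}. The same computation at $t=0$, with $\mathfrak g_0$, produces the contribution $\mathrm{ind}_\Pi(\Pi,\Lambda_0^+)$.

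\textbf{Step 3: summation.} Adding the increments over all pieces, and using Remark \ref{rem:all-curves-are-C1} to pass to a globally $C^1$ increasing curve, yields $\mu_\Pi(\Lambda)$ minus the accumulated half-kernel corrections. Since $\Lambda_0=\Pi$, the endpoint convention in Definition \ref{def:Maslov-index} contributes $\frac12(d-\dim(\Pi\cap\bigcap_t\Lambda_t))$ of spurious index. Its source is the vertical directions that leave $\Pi$ at some time (the $\tfrac12\ker\mathfrak q$ terms). Vertical directions lying in every $\Lambda_t$ are degenerate directions of $Q$ that never enter the negative index, so they cancel. Subtracting this amount gives \eqref{eq:thm-relation-Morse-Maslov}.

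\textbf{Main obstacle.} I expect the hardest step to be the bookkeeping of kernels in Step 1. The increment formula has to be proved with the exact $\frac12\ker\mathfrak q$ corrections, and one must show these telescope to precisely $\frac12(d-\dim(\Pi\cap\bigcap_t\Lambda_t))$, in particular at $t=0$ where the curve starts inside the train. My first attempt will be to reduce to the finite-dimensional identity $\operatorname{ind}^-Q|_{\ker}$ versus $\operatorname{ind}^-Q$ (the standard constrained Morse lemma). I will quotient out the constant vertical subspace $\Pi\cap\bigcap_t\Lambda_t$ by symplectic reduction, so that after reduction $\Lambda$ is transverse to $\Pi$ immediately after $0$, and only then sum.
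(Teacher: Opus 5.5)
\textbf{Verdict: genuine gap.} Your strategy is the one the paper relies on: measure the jumps of $i(t)=\operatorname{ind}^-Q_t|_{\mathcal K_t}$ by positive Maslov indices of triples, then sum. The paper itself gives no argument and only invokes \cite{AgBes1}, Theorem~2.4. Your monotonicity computation on regular arcs is correct, and so is your use of Lemma~\ref{lemma:approx-jacobi-curve} with $V_\theta$ at switching times.

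\textbf{Main gap: the increment identity and its correction.} The increment identity of Step~1 is not in Appendix~\ref{subsec:def-L-derivative}, which only defines the $\mathcal L$-derivative. It is the whole content of the theorem, so it must be imported or proved with an explicit correction term, and that term is not concentrated at $t=0$. Put $C_\tau\coloneqq\Pi\cap\bigcap_{s\le\tau}\Lambda_s$. These are the vertical covectors annihilating $\operatorname{im}DG_\tau$, not degenerate directions of $Q$, so $d-\dim C_\tau=\operatorname{rank}DG_\tau$. A direction of $C_{\tau_1}$ that is lost before $\tau_2$ lies in $\ker\mathfrak q$ for the triple $(\Lambda_{\tau_1},\Pi,\Lambda_{\tau_2})$. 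It therefore adds $\tfrac12$ to $\mu_\Pi$ but nothing to $i$, and such losses happen at positive times.

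\textbf{Counterexample to Step~2 and to your fix.} Take a bang-bang extremal with $d=2$, two arcs, $\dim U_1=\dim U_2=0$, and $\pi_*\vec{\mathcal H}^1,\pi_*\vec{\mathcal H}^2$ independent. Then $Q$ depends only on $\Theta_1,\Theta_2$, which vanish on $\mathcal K$, so $i\equiv 0$. Yet $\operatorname{ind}_\Pi(\Pi,\Lambda_0^+)=\tfrac12$, and also $\operatorname{ind}_\Pi(\Lambda_{t_1},\Lambda_{t_1}^+)=\tfrac12$. The latter holds because $\Lambda_t=\mathfrak g_0(\Pi)$ on $(0,t_1]$ contains the vertical line $\Pi\cap(\vec{\mathcal H}^1)^\angle$, which is not in $\Lambda_{t_1}^+\subset(\vec{\mathcal H}^2)^\angle$. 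So the Morse increment at a switching time is not $\operatorname{ind}_\Pi(\Lambda_{t_j},\Lambda_{t_j}^+)$, contrary to Step~2; the formula survives only because the two halves add up to $\tfrac12(d-\dim C_T)=1$. The same example defeats your proposed reduction. Here $C_T=\{0\}$, so reducing by $\Pi\cap\bigcap_t\Lambda_t$ does nothing, while $\Lambda_t\cap\Pi\neq\{0\}$ on all of $(0,t_1]$. In general $C_t$ is only non-increasing, with jumps at $0=s_0<s_1<\dots<s_m$. You must either carry $-\tfrac12(\dim C_{\tau_1}-\dim C_{\tau_2})$ through every increment, after which the telescoping is immediate, or reduce along the whole flag as in Lemma~\ref{lem:AgSa21.2}.

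\textbf{Two steps you call harmless also need an argument.} First, stabilization of $i$ as $V_2\uparrow\mathcal V_{\tau_1,\tau_2}$ does not transfer to the right-hand side, because $\operatorname{ind}_\Pi(\cdot,\cdot)$ jumps whenever the intersections with $\Pi$ change. You need subdivision points with $\Lambda_\tau\cap\Pi=C_\tau$, and you must check that the finite-dimensional $\mathcal L$-derivatives eventually have the same intersections. Second, at the final time, a direction of $\Pi\cap\Lambda_T$ outside $C_T$ contributes $\tfrac12$ through the last triple. For a regular arc with $d=1$ and $\dot{\tilde q}(0)\neq 0$, stopped exactly at its first conjugate time, the right-hand side of \eqref{eq:thm-relation-Morse-Maslov} computed with Definition~\ref{def:Maslov-index-pw-C1} is $\tfrac12+\tfrac12-\tfrac12=\tfrac12$, which is not an integer. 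So your telescoping can give exactly $\tfrac12(d-\dim C_T)$ only when $\Pi\cap\Lambda_T=C_T$. The displayed identity tacitly assumes this, and your proof would have to state it.
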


\begin{rem}
    If $\dim \Pi\cap \bigcap_{t\in[0,T]} \Lambda_t = \{0\}$, then the term in the right-hand side inside the parenthesis reduces to $d$. 
    This takes into account that $\Lambda_0 = \Pi$, but the initial time does not contribute to the Morse index. 
    Indeed, recalling the definition of $\mathfrak{q}$, we have that for any $\Delta$ such that $\Delta \cap \Pi = \{0\}$, we have $\mathrm{ind}_\Pi(\Pi,\Delta)=\frac{d}{2}$.
    Hence, if $\Lambda$ is a simple increasing curve starting from $\Pi$ and going out from $\Pi$ without other non-trivial intersections with this subspace, then $\mu_\Pi(\Lambda)=\frac{d}{2}$ and we have to cancel this contribution. 
    Finally, the term $\dim \left( \Pi\cap \bigcap_{t\in[0,T]} \Lambda_t \right)$ is a correction that takes into account the fact that $\Lambda_t$ can have a constant intersection with $\Pi$ for every $t\in[0,T]$, which already does not contribute to $\mu_\Pi(\Lambda)$ and so we do not have to subtract it from $\mu_\Pi(\Lambda)$.  
\end{rem} 

Our statement is adjusted to the setting of the OCP \eqref{eq:formulation-OCP}.
However, similar results hold under much weaker assumptions:
for instance, this Theorem is obtained as a particular case of Theorem 2.4 in \cite{AgBes1}, which holds under very weak hypotheses both on the pair $(\tilde u, \tilde \la)$ and on the Jacobi curve $\Lambda$. 
Since the proof of this statement follows closely the same argument of the more general case treated in the reference, we do not repeat it here.

In order to compute $\mu_\Pi(\Lambda)$ in the right-hand side of \eqref{eq:thm-relation-Morse-Maslov} according to Definition \ref{def:Maslov-index-pw-C1}, we have to show that the Jacobi curve is a piecewise $C^1$ and increasing curve. 
We have already proved that the Jacobi curve is piecewise $C^1$ (see Theorem \ref{thm:struttura-Jacobi-curve}). 
To prove that each smooth arc of the Jacobi curve is increasing, we apply the next Proposition.

\begin{prop}
    \label{prop:Jacobi-curve-sono-monotone}
	Let $h_t$ be a time dependent quadratic Hamiltonian function on $T_{\la}(T^*M)$. 
    Define $(B_t)_{t\in\R}$ the flow of $h_t$. 
    For a given $\Lambda_0\in \mathfrak L(T_{\la}(T^*M))$, let $\Lambda_t = B_t(\Lambda_0)$. 
    Suppose moreover that there are $\Pi, \Delta \in \mathfrak L(T_{\la}(T^*M))$, with $\Pi\cap\Delta=\{0\}$, such that $\Lambda_t \cap \Delta =\{0\}$ for every $t\in[0,T]$ and $\Lambda_t = \{ (p , S_t p) \mid p \in \Pi \}$, where, as before, we are using the coordinates in $T_{\la}(T^*M)$ such that $\Pi=\{ (p , 0) \mid p \in \Pi \}$ and $ S_t : \Pi \to \Delta$ is symmetric. 
    Then 
	\begin{equation}
		p^\top \dot S_t p = 2 h_t (p, S_t p), \quad p \in \Pi. 
	\end{equation}   
    In particular, if $h_t$ is positive definite for every $t\in[0,T]$, then also $\dot S_t$ is positive definite for every $t\in[0,T]$ and $\Lambda$ is an increasing curve.
\end{prop}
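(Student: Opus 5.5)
The plan is to work entirely in the Darboux coordinates adapted to the splitting $\Pi\oplus\Delta$. I take $\Pi=\{(p,0)\}$ and $\Delta=\{(0,q)\}$, so that $\sigma((p_1,q_1),(p_2,q_2))$ is the standard form in these coordinates. The flow $B_t$ of the quadratic Hamiltonian $h_t$ is linear, so for each $p\in\Pi$ I choose a curve inside $\Lambda_t$ by transporting a point. Fix $t$ and $p_0\in\Pi$. The vector $(p_0,S_tp_0)$ lies in $\Lambda_t$, hence the curve
\begin{equation}
    \tau\mapsto v(\tau)=B_{t+\tau}B_t^{-1}(p_0,S_tp_0)
\end{equation}
stays in $\Lambda_{t+\tau}$ and satisfies $v(0)=(p_0,S_tp_0)$ and $\dot v(0)=\vec h_t(v(0))$. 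By the computation following the definition of the velocity of a Lagrangian curve, which shows that the velocity quadratic form does not depend on the chosen curve, I get
\begin{equation}
    p_0^\top\dot S_tp_0=\sigma\big(v(0),\dot v(0)\big)=\sigma\big(v(0),\vec h_t(v(0))\big).
\end{equation}
Note that the velocity form computed in coordinates $(\Pi,\Delta)$ does not require $\Lambda_t=\Pi$. The coordinate computation in the paper, $\sigma((p,Sp),(\dot p,\dot Sp+S\dot p))=p^\top\dot Sp$, holds for any symmetric $S$, because the terms involving $\dot p$ cancel by symmetry of $S$. I would verify this cancellation explicitly, since it is exactly what lets us avoid placing $\Lambda_t$ at the origin of the chart.

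The next step is to evaluate $\sigma(v,\vec h_t(v))$. By the definition $dh=\sigma(\cdot,\vec h)$, we have $\sigma(v,\vec h_t(v))=d_vh_t(v)$. Since $h_t$ is a homogeneous quadratic form on the linear symplectic space, Euler's identity gives $d_vh_t(v)=2h_t(v)$. Therefore $p_0^\top\dot S_tp_0=2h_t(p_0,S_tp_0)$. The only care needed is the sign convention for $\sigma$ in the chosen coordinates, so that $\sigma((p,Sp),(0,\dot Sp))=p^\top\dot Sp$ is consistent with $dh=\sigma(\cdot,\vec h)$. I would fix $\sigma((p_1,q_1),(p_2,q_2))=p_1^\top q_2-p_2^\top q_1$, or whichever convention matches the paper's orientation check (where the vertical direction paired with the horizontal one gives $+1$), and check both sides with the same convention.

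For the last claim, suppose $h_t>0$. Then $p^\top\dot S_tp=2h_t(p,S_tp)>0$ for every $p\neq0$, because $(p,S_tp)\neq0$. Hence $\dot S_t$ is positive definite. Since $\dot\Lambda_t$ is represented in coordinates by $\dot S_t$, the curve $\Lambda$ is (strictly) increasing. The main, mild, obstacle I expect is the bookkeeping of sign conventions: making sure the orientation of $\sigma$ used in the velocity definition agrees with that used to define $\vec h$. Everything else is linear algebra.
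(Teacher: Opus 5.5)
Your proposal is correct and follows the paper's proof. You take the integral curve of $\vec h_t$ through $(p,S_tp)$, which stays in $\Lambda$. You pair its velocity $(\dot p,\dot S_tp+S_t\dot p)$ with the point and use the symmetry of $S_t$ to get $p^\top\dot S_tp$. You then identify $\sigma(v,\vec h_t(v))=2h_t(v)$; you justify this with Euler's identity and $dh=\sigma(\cdot,\vec h)$, where the paper simply asserts it. Your convention $\sigma((p_1,q_1),(p_2,q_2))=p_1^\top q_2-p_2^\top q_1$ is the one compatible with $dh=\sigma(\cdot,\vec h)$, so the signs come out as claimed.
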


\begin{proof}
    Using the coordinates introduced in the statement, we have that any integral trajectory of the Hamiltonian vector field $\vec h_t$ can be written as $(p(t),q(t)) = (p(t), S_t p(t))$ and in particular
    \begin{equation}
        \dot q = \dot S_t p + S_t \dot p,
    \end{equation} 
    that is $\vec h_t (p,q) = (\dot p , \dot S_t p + S_t \dot p)$. 
    Moreover, since the Hamiltonian $h_t$ is quadratic, we have that $\sigma ( (p,q), \vec h_t (p,q) ) = 2 h_t(p,q)$. 
    Thus, the left hand side of the previous equality reads
    \begin{equation}
        \sigma ( (p,S_t p), \vec h_t (p,S_t p) ) 
        = 
        \sigma ( (p,S_t p), (\dot p , \dot S_t p + S_t \dot p) ) 
        = 
        p^T \dot S_t p.
    \end{equation}
    where we have used the fact that $S_t$ is symmetric.
    Hence, we see that $\dot S_t(p) = 2 h_t (p, S_t p)$, as we wanted to show.
\end{proof}

\begin{cor}
    Let $(\tilde u , \tilde \la) $ be a piecewise regular extremal pair and denote by $\Lambda$ the Jacobi curve of $\tilde u$ and $\tilde \la$.
    Then, $\Lambda$ restricted to its interval of regularity is an increasing curve in $\mathfrak{L}(T_{\tilde\la_0}(T^*M))$.
\end{cor}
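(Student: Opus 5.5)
On each open interval $(t_{j-1},t_j)$, Theorem \ref{thm:struttura-Jacobi-curve} describes the Jacobi curve as $\Lambda_t = B_t^j \Lambda_{t_{j-1}}^+$. Here $B^j_t$ is the flow of the linear Jacobi equation \eqref{eq:Jacobi-pw-reg}. The plan is to recognise this flow as the flow of a time-dependent quadratic Hamiltonian on $T_{\tilde\la_0}(T^*M)$ that is nonnegative, and then apply Proposition \ref{prop:Jacobi-curve-sono-monotone} locally.

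\textbf{Step 1: identify the Hamiltonian.} I would define
\begin{equation}
    b_t(\e) = -\tfrac12 \, (D^2\matheuler{h}_t)^{-1}\big[\sigma(Z_t\cdot,\e),\sigma(Z_t\cdot,\e)\big], \quad \e\in T_{\tilde\la_0}(T^*M).
\end{equation}
This is quadratic in $\e$. By \eqref{eq:strong-legendre-condition-intervall}, $D^2\matheuler{h}_t$ is negative definite on $T_{\tilde u(t)}U_j$. Hence its inverse is negative definite, and $b_t\ge 0$.

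A direct computation with $d b_t = \sigma(\cdot,\vec b_t)$ should give
\begin{equation}
    \vec b_t(\e) = Z_t (D^2\matheuler{h}_t)^{-1}\sigma(Z_t\cdot,\e),
\end{equation}
which is exactly the right-hand side of \eqref{eq:Jacobi-pw-reg}. Since $Z_t$ maps into a space with a convention-dependent orientation, I would fix the sign by this check and adjust the overall sign of $b_t$ if needed. The orientation is then pinned by the consistency requirement: the velocity form must agree with the positivity of the classical regular case, as in \cite{AgSa}, Chapter 21.

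\textbf{Step 2: the velocity form.} For $\e\in\Lambda_t$, take the curve $\tau\mapsto B^j_{t+\tau}(B^j_t)^{-1}\e$. It lies in $\Lambda_{t+\tau}$, and its derivative at $\tau=0$ is $\vec b_t(\e)$. Therefore
\begin{equation}
    \dot\Lambda_t(\e) = \sigma(\e,\vec b_t(\e)) = 2b_t(\e) \ge 0.
\end{equation}
This is the same identity as in Proposition \ref{prop:Jacobi-curve-sono-monotone}. Here it is obtained pointwise, with no global simplicity assumption, because near each $t$ we may choose a $\Delta$ transverse to $\Lambda_s$ for $s$ close to $t$. This gives an increasing curve on every interval of smoothness.

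\textbf{Main obstacle.} The delicate point is the sign bookkeeping between $\vec b_t$, the convention $dh=\sigma(\cdot,\vec h)$ and the definition of $\dot\Lambda$. A sign error would make the curve decreasing instead of increasing. I would check the sign on a simple example: the one-dimensional case of Section \ref{sec:example-free-control-system}, where $\Lambda_t=\R\vec H(\tilde\la_t)$ must rotate clockwise, which is the positive direction computed there.

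Note that monotonicity only needs $b_t\ge0$. The quadratic form $b_t$ is degenerate on $\bigcap_s\ker\sigma(Z_s\cdot,\cdot)$, which contains for example $\vec{\mathcal H}^j$ by Proposition \ref{prop:var-tempo-commutano-var-controlli}. So strict increase should not be claimed.
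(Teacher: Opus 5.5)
Your argument is correct, but it reaches the key inequality $b_t\ge 0$ by a different route than the paper.

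\textbf{The paper's route.} The paper never uses the explicit form of the Jacobi equation. It invokes Proposition~\ref{prop:flow-Jacobi-is-lin-flow-maxim-hamilt}, which rests on Proposition~21.3 of \cite{AgSa}, to identify $(\phi^j_t)_*$ with the flow of $b^j_t=\frac12\operatorname{Hess}_{\tilde\la_0}\matheuler{H}^j_t$, where $\matheuler{H}^j_t=(H^j-h_{\tilde u(t)})\circ\widetilde\Phi_{0,t}$. It then obtains $b^j_t\ge0$ from maximality: $\matheuler{H}^j_t\ge0$ and $\matheuler{H}^j_t(\tilde\la_0)=0$, so $\tilde\la_0$ is a minimum point and the Hessian there is positive semidefinite. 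Finally it applies Proposition~\ref{prop:Jacobi-curve-sono-monotone}.

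\textbf{Your route.} You read $b_t\ge0$ directly off the strong Legendre condition: the inverse of the negative definite form $D^2\matheuler{h}_t$ is negative definite. Your $b_t$ is the same form as the $b^j_t$ of Appendix~\ref{app:flow-pullback}. However, you need neither the identification $B^j_t=(\phi^j_t)_*$ nor the Hessian formula. You also compute $\dot\Lambda_t(\e)=\sigma(\e,\vec b_t(\e))$ intrinsically. This removes any need for the simplicity hypothesis of Proposition~\ref{prop:Jacobi-curve-sono-monotone}, which is only available locally.

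\textbf{What each buys.} Your version is shorter and uses only Theorem~\ref{thm:struttura-Jacobi-curve} and \eqref{eq:strong-legendre-condition-intervall}. The paper's version ties monotonicity to the maximality of $H^j$. That is the field-of-extremals picture reused in Section~\ref{sec:suff-cond-optimality}, where $\mathfrak M_0$ is evolved by $(\phi_t)_*$.

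\textbf{A correction on the sign step.} The sign of $b_t$ is not something you may ``adjust if needed''. The Hamiltonian is determined by the vector field together with the convention $db=\sigma(\cdot,\vec b)$. If the computation produced $b_t\le0$, the curve would be decreasing, and no choice could repair that. So this check is the entire content of the corollary and should be carried out, not hedged. It does go through with your $b_t$ and no sign change:
\begin{itemize}
\item Write $\ell_t(\e)=\sigma(Z_t\cdot,\e)$ and $P_t=(D^2\matheuler{h}_t)^{-1}$.
\item Then $d_\e b_t[\xi]=-\langle\ell_t(\xi),P_t\ell_t(\e)\rangle=-\sigma(Z_tP_t\ell_t(\e),\xi)=\sigma(\xi,Z_tP_t\ell_t(\e))$.
\item Hence $\vec b_t(\e)=Z_tP_t\ell_t(\e)$, which is exactly the right-hand side of \eqref{eq:Jacobi-pw-reg}.
\item Consequently $\sigma(\e,\vec b_t(\e))=-\langle\ell_t(\e),P_t\ell_t(\e)\rangle=2b_t(\e)\ge0$.
\end{itemize}
State this computation explicitly in place of the hedge and the appeal to consistency with the classical case.
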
 
\begin{proof}
    As before, let $0=t_0 < t_1 < \dots < t_k < t_{k+1} \leq T$ be such that $\Lambda|_{[t_{j-1},t_j]}$ is smooth, $j=1,\dots,k+1$. 
    Recall that $\Lambda_t = (\phi^j_t)_* \Lambda_{t_{j-1}}^+$.
    By Proposition \ref{prop:flow-Jacobi-is-lin-flow-maxim-hamilt}, we have that
    \begin{equation}
        \phi^j_t 
        =
        \widetilde{\Phi}_{0,t_{j-1}} ^{-1} 
        \circ
        \overrightarrow{\exp} \int_{t_{j-1}} ^t
            \vec H^j - \vec h_{\tilde u(s)}
        ds
        \circ 
        \widetilde \Phi_{0,t_{j-1}}.
    \end{equation}
    In particular, $\phi^j_t $ is the Hamiltonian flow of the Hamiltonian function
    \begin{equation}
        \matheuler{H}_t^j = (H^j - h_{\tilde u(t)}) \circ \widetilde \Phi_{0,t},
        \quad 
        t\in(t_{j-1},t_j].
    \end{equation}
    Again by Proposition \ref{prop:flow-Jacobi-is-lin-flow-maxim-hamilt}, we have that the Hamiltonian function generating the flow $(\phi^j_t)_*$ is
    \begin{equation}
        b_t^j=\frac{1}{2}\operatorname{Hess}_{\tilde \la_0} \matheuler{H}_t ^j.
    \end{equation}
    By PMP, $\matheuler{H}_t ^j\geq 0$ and $\matheuler{H}_t ^j(\tilde \la_0) = 0$. 
    As a consequence, $\operatorname{Hess}_{\tilde \la_0} \matheuler{H}_t ^j \geq 0$ and hence also $b_t ^j\geq 0$. 
    Thus, by Proposition \ref{prop:Jacobi-curve-sono-monotone}, it follows that the Jacobi curve is increasing on every interval of regularity.
\end{proof}

We are now in a position to prove Theorem \ref{thm:nec-cond-of-optimality}. 
It is a consequence of Theorem \ref{thm:relation-Morse-Maslov} and the following result, which is an adaptation to our framework of Theorem 20.3 in \cite{AgSa}. 
\begin{thm}
    \label{thm:nec-opt-criterion-morse-index}
    Let $\tilde u$ be an admissible control for the optimal control problem~\eqref{eq:formulation-OCP} or \eqref{eq:formulation-OCP-fixed-final-time-intro} and let $Q$ be the Hessian of the endpoint map at $\tilde u$ (see \eqref{eq:def-endpoint-map}).
    If $\operatorname{ind}^- Q > \operatorname{corank} \tilde u$, the extended endpoint map $\mathcal{U} \ni u \mapsto (E(u),J(u))$ is locally open at $\tilde u$.
\end{thm}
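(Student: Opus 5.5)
Since this is Theorem 20.3 of \cite{AgSa} transplanted to the Banach manifold $\mathcal U$ of Section~\ref{sec:preliminaries}, I will reproduce its finite-dimensional-reduction argument and check that the piecewise regular setting adds nothing. The extended endpoint map $F(\nu,u)=(E(\nu,u),J(\nu,u))$ is smooth on $\mathcal U$, because the time reparametrisation $\nu$ makes the switching structure fixed, so chronological calculus applies arc by arc. Write $r=\operatorname{corank}\tilde u$. The image of $D_{\tilde u}F$ has codimension $r$ in $T_{q_1}M\times\R$, and its annihilator is spanned by the normalised Lagrange multipliers $(\la_T^i,-1)$ together with the abnormal ones. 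The Hessian $Q$ is the $\R^{d+1}/\operatorname{im}DF$-valued quadratic form $D^2F|_{\ker DF}$ modulo the image. Pairing it with a multiplier $\ell$ gives the scalar form $\ell Q$, which coincides with \eqref{eq:formula-GT} restricted to $\mathcal K$.

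\textbf{Step 1: reduce to finite dimensions.} Choose a finite-dimensional subspace $W\subset\mathcal V$ of variations such that $D_{\tilde u}F|_W$ already has the full image $\operatorname{im}D_{\tilde u}F$. Inside $\ker DF$, choose a finite-dimensional subspace $W'$ with $\dim W'=\operatorname{ind}^-Q$ on which the relevant multiplier forms are negative definite. Restrict $F$ to the finite-dimensional affine family $\tilde u+W\oplus W'$, realised through the local chart of $\mathcal U$. Then both the kernel of the differential and the Hessian of the restricted map are the restrictions of the original ones, so openness of the restricted map implies openness of $F$.

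\textbf{Step 2: apply the finite-dimensional criterion.} This is the standard lemma of \cite{AgSa}, Chapter 20. Let $\Phi:\R^N\to\R^{d+1}$ be smooth, $x_0$ critical with $\operatorname{coker}D\Phi$ of dimension $r$. Suppose that for every nonzero $\ell\in(\operatorname{im}D\Phi)^\perp$ the form $\ell\,D^2\Phi|_{\ker D\Phi}$ has negative index at least $r$. Then $\Phi$ is locally open at $x_0$. The proof splits the equation $\Phi(x)=y$ into its component along $\operatorname{im}D\Phi$, solved by the implicit function theorem, and a quadratic map into the $r$-dimensional cokernel. The latter is shown to be open by a degree/topological argument, using that no nonzero covector makes the quadratic map semidefinite on a large subspace.

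\textbf{Step 3: check the hypothesis.} The condition $\operatorname{ind}^-Q>r$ holds, because the theorem is stated precisely with $Q$ equal to $D^2J|_{E^{-1}(q_1)}$ read through multipliers. In the normal piecewise-regular setting this gives $\operatorname{ind}^-\ell Q\ge \operatorname{ind}^-Q-(r-1)\ge r$ for each $\ell$. The reason is that varying $\ell$ in an $r$-dimensional space changes the form only on a subspace of codimension at most $r-1$; I would make this precise via the Lagrange-multiplier description of Appendix~\ref{sec:Lagr-multip}.

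\textbf{Main obstacle.} Step 3 is the delicate bookkeeping. The point needing care is to verify that the finite-dimensional slice $W\oplus W'$ can be taken inside the chart of $\mathcal U$, with $\nu>-1$ and control values in $V_j$. This holds for small amplitudes, since $W$ consists of $L^\infty$ functions. Given this, the openness transfers back to $\mathcal U$ and hence, via the reparametrisation correspondence $v(t+\int_0^t\nu)=u(t)$, to $L^1$-neighbourhoods in $\mathfrak U$.
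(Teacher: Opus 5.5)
\textbf{Verdict: genuine gap at Step~3.} Your Steps 1--2 follow the route the paper intends; the paper gives no proof beyond citing \cite{AgSa}, Theorem 20.3. But Step~3 asserts an inequality that is false.

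\textbf{Why Step 3 fails.} The hypothesis of that theorem, which your Step 2 also uses, is $\operatorname{ind}^-\ell\,\operatorname{Hess}_{\tilde u}(E,J)\ge r$ for \emph{every} nonzero $\ell$ in the annihilator of $\operatorname{im}D_{\tilde u}(E,J)$. You try to obtain this from the single form $Q=D^2J|_{E^{-1}(q_1)}$ of the reference multiplier $\ell_0$, via $\operatorname{ind}^-\ell Q\ge\operatorname{ind}^-Q-(r-1)$.
\begin{itemize}
\item The annihilator is a linear space, so it contains $-\ell_0$, whose form is $-Q$. Already for $r=1$ your bound would say $\operatorname{ind}^+Q\ge\operatorname{ind}^-Q$, which has nothing to do with the hypothesis.
\item For $r>1$ the other forms are $c\,\ell_0Q+\mu\,D^2E|_{\mathcal K}$ with $\mu$ an abnormal multiplier. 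Here $\mu D^2E$ is a genuine quadratic form on the infinite-dimensional $\mathcal K$, in general with a nonzero Legendre part of its own. It is not a modification confined to a subspace of codimension $\le r-1$.
\item The only available codimension estimate, $\operatorname{ind}^-(q|_V)\ge\operatorname{ind}^-q-\operatorname{codim}V$, concerns restricting one fixed form, so it does not help here.
\end{itemize}

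\textbf{Related defect in Step 1.} No nonzero subspace makes both $\ell Q$ and $-\ell Q$ negative definite, so there is no common $W'$. The correct reduction chooses, for each $\ell$ on the unit sphere of the annihilator, a finite-dimensional subspace on which $\ell Q$ has index $\ge r$. Compactness of the sphere then lets you sum finitely many of them.

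\textbf{The gap needs extra input, which is not always available.} For $-\ell_0$ the input is \eqref{eq:strong-legendre-condition-intervall}. On an arc with $\dim U_j>0$ the Legendre term in \eqref{eq:formula-GT} dominates a compact remainder, so $\operatorname{ind}^+Q=\infty$. In corank one the hypothesis of \cite{AgSa} then reduces to $\operatorname{ind}^-Q\ge 1$.

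If every $U_j$ is a point, nothing of the sort holds, and the conclusion can fail. Take $M=\R$, $f(q,u)=u$, $U=\{-1,1\}$, $L(q,\pm1)=\frac12(1-q^2)$, and the free-time normal extremal from $q_0=0$ that goes right to $1$, left to $-1$, and right to $q_1=0$.
\begin{itemize}
\item This extremal is piecewise regular of corank one. Both switching brackets equal $2$, and there is no nontrivial abnormal multiplier.
\item Controls in $\mathcal U$ reaching $q_1$ are parametrized by the two turning points $a,b$.
\item $J(a,b)$ has a strict local maximum at $(1,-1)$: its Hessian is $\operatorname{diag}(s'(1),-s'(-1))=\operatorname{diag}(-2,-2)$, where $s=L(\cdot,1)+L(\cdot,-1)=1-q^2$.
\end{itemize}
So $\operatorname{ind}^-Q=2>1=r$, while $(E,J)$ is not locally open: over $q_1$ only $J\le J(\tilde u)$ is attained.

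\textbf{What you can do instead.} With $Q$ read as a single form, as you (and the proof of Theorem~\ref{thm:nec-cond-of-optimality}) do, you have two options:
\begin{itemize}
\item assume the index bound for every $\ell$, as \cite{AgSa} does; or
\item prove only the one-sided statement, reaching $(q_1,J(\tilde u)-\eps)$, which is all Theorem~\ref{thm:nec-cond-of-optimality} needs.
\end{itemize}
Your stated ``main obstacle'' (keeping the slice inside the chart) is harmless; Step 3 is the real one.
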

We do not prove this result here, since it is a straightforward rephrasing of the one in the reference.

\begin{proof}[Proof of Theorem \ref{thm:nec-cond-of-optimality}]
    Let $(\tilde u,\tilde \la)$ be a piecewise regular extremal pair and let $\Lambda$ be the corresponding Jacobi curve.
    By Theorem \ref{thm:nec-opt-criterion-morse-index}, we know that, under the hypothesis $\mathrm{ind}^- \operatorname{Hess}(J|_{E^{-1}(q_1)})\geq r$, the extended endpoint map is locally open at $\tilde u$. 
    Thus, for every $\eps>0$ it is possible to find $u_\eps\in\mathcal{U}$ close to $\tilde u$ in the topology of $\mathcal{U}$ such that $E(u_\eps) = E(\tilde u)$ and $J(u_\eps) = J(\tilde u) - \eps$.
    It follows that, $\tilde u$ is not a local minimizer in $E^{-1}(q_1)$. 
    In particular, a control $u_\eps \in \mathcal{U}$ close to $\tilde u$ in the topology of $\mathcal{U}$ is also close to $\tilde u$ with respect to the topology induced by the $L^1$ norm on the space $\mathfrak U$. 
    Finally, applying Equation \eqref{eq:thm-relation-Morse-Maslov} in Theorem \ref{thm:relation-Morse-Maslov}, yields the desired result. 
\end{proof}

\subsection{Conjugate points and Maslov index}
In this last subsection, we show how the Maslov index of the Jacobi curve $\Lambda$ can be used to detect the presence of conjugate points of the trajectory $\tilde q$, as defined in the Introduction (see Definition \ref{def:conjugate-point}). 
As before, we analyse separately the case of regular and discontinuity times of the control $\tilde u$.

In the case of a regular extremal trajectory, the presence of a conjugate time $t$ is equivalent to the fact that $\Lambda_t$ has a non-constant and non-trivial intersection with the vertical subspace $\Pi$.

\begin{prop}
    \label{prop:conj-point-cont-time}
    Let $(\tilde u, \tilde \la)$ be a normal piecewise regular extremal pair and let $\Lambda$ be the corresponding Jacobi curve.
    Let $t\in(0,T)$ be a time such that $\Lambda$ is continuous at $t$.
    Assume that $\mu_\Pi(\Lambda|_{[0,t]})$ is finite. 
    Then, we have that 
    $
        \Pi \cap \bigcap_{0\leq\tau\leq t} \Lambda_\tau 
        \neq 
        \Pi \cap \Lambda_t
    $,
    that is $t$ is a time conjugate to $0$,
    if and only if there is $\eps >0 $ such that $\mu_{\Pi}(\Lambda|_{[0,t]}) - \mu_{\Pi}(\Lambda|_{[0,t-\eps]}) \geq 1/2$.
\end{prop}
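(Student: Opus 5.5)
Since $\Lambda$ is continuous at $t$ and the discontinuity times are isolated, I first fix $\eps>0$ small so that $[t-\eps,t]$ contains no switching time. Then $\Lambda|_{[t-\eps,t]}$ is a smooth arc of the form $\Lambda_\tau=B^j_\tau\Lambda^+_{t_{j-1}}$, which is increasing by the preceding Corollary. After shrinking $\eps$, I may also assume it is simple: choose $\Delta$ transverse to $\Lambda_t$ and use continuity. By Definition \ref{def:Maslov-index-pw-C1} and additivity of the index over the partition, the difference $\mu_\Pi(\Lambda|_{[0,t]})-\mu_\Pi(\Lambda|_{[0,t-\eps]})$ equals the contribution of the final simple increasing piece. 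By Proposition \ref{prop:maslov-simple-incrasing-curve} this contribution is $\mathrm{ind}_\Pi(\Lambda_{t-\eps},\Lambda_t)$, up to the endpoint correction terms in Definition \ref{def:Maslov-index}, which I will need to bookkeep carefully. So the claim reduces to a local statement: $\mathrm{ind}_\Pi(\Lambda_{t-\eps},\Lambda_t)\ge 1/2$ for some $\eps$ if and only if $\Pi\cap\Lambda_t$ is strictly larger than the constant part $N:=\Pi\cap\bigcap_{\tau\le t}\Lambda_\tau$.

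Next I work in coordinates. I choose Darboux coordinates with $\Lambda_t=\{(p,0)\}$ and $\Delta=\{(0,q)\}$, write $\Lambda_\tau$ as the graph of $S_\tau$ with $S_t=0$, and use monotonicity, $\dot S_\tau\ge0$ by Proposition \ref{prop:Jacobi-curve-sono-monotone}. The triple index $\mathrm{ind}_\Pi(\Lambda_{t-\eps},\Lambda_t)$ is then computed from the quadratic form $\mathfrak q$ on $(\Lambda_{t-\eps}+\Lambda_t)\cap\Pi$ modulo $\Lambda_{t-\eps}\cap\Lambda_t\cap\Pi$. In the chart, $\mathfrak q$ is expressed through $S_t-S_{t-\eps}=\int_{t-\eps}^t\dot S\ge 0$ restricted to the relevant vectors.

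For the direction ($\Leftarrow$), assume $v\in(\Pi\cap\Lambda_t)\setminus N$. I would show that $v$ is not in $\Lambda_{t-\eps}$ for small $\eps$. Indeed, if $v$ lay in $\Lambda_\tau$ for all $\tau$ near $t$, then by the Jacobi ODE and the regularity structure it would lie in $\Lambda_\tau$ on the whole arc, and backwards through the jumps, since $\mathfrak g_j$ fixes vectors in $\vec{\mathcal H}^{j\angle}$. That would put $v$ in $N$, a contradiction. Consequently $v$ contributes a nontrivial vector to the domain of $\mathfrak q$, and monotonicity gives $\mathfrak q(v)\ge 0$, so it adds at least $1/2$ (from the kernel or the positive part).

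For the direction ($\Rightarrow$), if $\Pi\cap\Lambda_t=N$, then the intersections with $\Pi$ are constant on a left neighbourhood. Using that $\Pi\cap\Lambda_{t-\eps}\supseteq N$ and dimensions are upper semicontinuous, I would conclude that the domain of $\mathfrak q$ reduces to the quotient by $N$, which is trivial. The difference is then $0$.

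The main obstacle is the backward-propagation step in ($\Leftarrow$). To show that a vertical vector lying in $\Lambda_\tau$ for $\tau$ in a left neighbourhood of $t$ must lie in every $\Lambda_\tau$, I must rule out that $\Lambda_\tau\cap\Pi$ is nonconstant only on earlier arcs. I would first try a real-analyticity-free argument: use that $\dot S_\tau v=0$ on an interval forces $b^j_\tau(v)=0$, hence $\sigma(Z_\tau\cdot,v)=0$, hence $v$ is invariant under $B^j$. Then I would handle the jumps via the explicit form of $\mathfrak g_j$, together with the finiteness of $\mu_\Pi$.
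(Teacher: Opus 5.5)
Your architecture matches the paper's: isolate the last simple increasing arc, then compare $\Pi\cap\Lambda_t$ with its constant part. However, two steps do not work as written.

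\textbf{The non-conjugate direction.} The domain $((\Lambda_{t-\eps}+\Lambda_t)\cap\Pi)/N$ of $\mathfrak q$ is not trivial in general. Already for $d=1$, two distinct lines transverse to $\Pi$ span the whole plane, so the domain is all of $\Pi$. Knowing $\Pi\cap\Lambda_\tau=N$ on $[t-\eps,t]$ does not make the domain vanish; what it gives is that $\mathfrak q$ is negative definite there. You can obtain this from your own setup:
\begin{itemize}
\item split off $N$, as the paper does with $K\oplus H\oplus\Pi'\oplus\Lambda'_\tau$;
\item note that the reduced arc is simple, increasing and never meets $\Pi'$, so its Maslov index is $0$;
\item apply Proposition \ref{prop:maslov-simple-incrasing-curve} to conclude that the triple index is $0$.
\end{itemize}
Also, Definition \ref{def:Maslov-index-pw-C1} is by construction a sum of triple indices over a partition, so no endpoint corrections enter the difference.

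\textbf{The conjugate direction.} The backward-propagation step is not merely hard: it is false, so the local equivalence you reduced to cannot hold. From $b^j_\tau(v)=0$ on $(t-\eps_0,t]$ you only learn that $v$ is stationary for $B^j$ on that interval, not on the earlier part of the arc. Likewise, $\mathfrak g_j$ fixing $\vec{\mathcal H}^{j+1\,\angle}$ gives $\Lambda_{t_j}\cap\vec{\mathcal H}^{j+1\,\angle}\subset\Lambda^+_{t_j}$, not $\Pi\cap\Lambda^+_{t_j}\subset\Lambda_{t_j}$.

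Concretely, $\Lambda^+_{t_j}$ contains vectors $w+c\vec{\mathcal H}^{j+1}$ with $w\in\Lambda_{t_j}$ and $c\neq0$. Such a vector is vertical as soon as $\pi_*w=-c\,\pi_*\vec{\mathcal H}^{j+1}$. It is not in $\Lambda_{t_j}$, because $\vec{\mathcal H}^{j}\in\Lambda_{t_j}$ and $\sigma(\vec{\mathcal H}^j,\vec{\mathcal H}^{j+1})\neq0$. Now suppose the next arc is a bang arc, so $\dim U_{j+1}=0$ and $\Lambda$ is constant on it. Then every $t$ inside that arc is conjugate by Definition \ref{def:conjugate-point}, yet $\mathrm{ind}_\Pi(\Lambda_{t-\eps},\Lambda_t)=0$ for all small $\eps$.

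\textbf{How to repair it.} Let $\eps$ reach back to the time at which $v$ entered. Set $\tau_*=\inf\{\tau : v\in\Lambda_s \text{ for all } s\in(\tau,t]\}$.
\begin{itemize}
\item $\tau_*>0$, because $v\in\Pi=\Lambda_0$ while $v\notin N$.
\item Either the jump term at $\tau_*$, or a simple-arc term ending at $\tau_*$, contains the nonzero class $[v]$ with $\mathfrak q([v])=0$. This follows by writing $v=0+v$; monotonicity plays no role here, contrary to what you wrote.
\item That term is therefore at least $1/2$, and every other triple index is nonnegative, which gives the required jump.
\end{itemize}

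This argument in general needs a large $\eps$. The other direction, by contrast, needs $\eps$ small, since otherwise an earlier conjugate time already produces a jump. So the statement is only correct if the quantifier on $\eps$ is read differently in the two directions. The paper's own proof passes over exactly this point: it asserts $\eta(t-\eps)\notin\Pi$ having only shown $\eta(\tau)\notin\Pi$ for some $\tau\in(0,t)$.
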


\begin{proof}
    Suppose that $\Pi \cap \bigcap_{0\leq\tau\leq t} \Lambda_\tau = \Pi \cap \Lambda_t = K$.
    We want to prove that 
    $\mu_{\Pi}(\Lambda|_{[0,t]}) - \mu_{\Pi}(\Lambda|_{[0,t-\eps]}) < 1/2$, that is 
    $\mu_{\Pi}(\Lambda|_{[0,t]}) = \mu_{\Pi}(\Lambda|_{[0,t-\eps]})$ since $\mu_\Pi$ is a positive half-integer.
    
    If $K = 0$, since the Jacobi curve is continuous at $t$, we have that $\Lambda_\tau \cap \Pi = \{0\}$ also for $\tau\in[t-\eps,t]$ and $\eps>0$ small enough. 
    Hence, the Jacobi curve has no intersection times with the Maslov train of $\Pi$ in $[t-\eps,t]$. 
    So, it follows that $\mu_{\Pi}(\Lambda|_{[0,t]}) = \mu_{\Pi}(\Lambda|_{[0,t-\eps]})$. 

    If $K\neq 0$, we can reduce to the previous case. 
    Since $\Lambda$ is continuous at $t$, there is $\eps >0$ such that $\Lambda|_{[t-\eps,t]}$ is simple.
    Thus, for every $\tau\in [t -\eps, t]$, we can find a symplectic splitting of $T_{\la_0}(T^*M)$ such that: 
    \begin{itemize}
        \item $T_{\la_0}(T^*M) = K \oplus H \oplus \Pi' \oplus \Lambda' _\tau$, $H,\Pi',\Lambda'_\tau$ being isotropic subspaces;
        \item $\dim K = \dim H = k$, $\dim \Pi'= \dim \Lambda' = d-k$;
        \item the spaces $(K \oplus H, \sigma|_{K \oplus H})$ and $(\Pi' \oplus \Lambda'_\tau,\sigma|_{\Pi' \oplus \Lambda'_\tau})$ are symplectic;
        \item $\Pi = K \oplus \Pi'$, $\Lambda_\tau = K \oplus \Lambda'_\tau$. 
    \end{itemize}
    Notice that, by the definition of the quadratic form $\mathfrak q$ and by Corollary \ref{cor:maslov-triple}, the curves $\Lambda$ and $\Lambda'$ have the same Maslov index.
    In particular, by the first part of the proof, we obtain that $\mu_{\Pi}(\Lambda'|_{[0,t]}) = \mu_{\Pi}(\Lambda'|_{[0,t-\eps]})$, so the same equality holds also for $\Lambda$.

    Conversely, suppose that $\Pi \cap \bigcap_{0\leq\tau\leq t} \Lambda_\tau \neq \Pi \cap \Lambda_t$. 
    By Theorem \ref{thm:struttura-Jacobi-curve}, this implies that there is some $\nu\in\Pi$ and $w\in \mathcal V_t$ for which the function $[0,t] \ni \tau \mapsto \eta(\tau)= \nu + \int_0 ^\tau X_s w(s) ds$ is such that $\eta(\tau)\in \Lambda_\tau$ for every $\tau\in[0,t]$, $\eta(t)\in \Pi\cap \Lambda_t$ and $\eta(\tau)\notin \Pi$ for some $\tau\in (0,t)$.  
    Moreover, since $\Lambda$ is continuous at $t$, we have that there is $\eps>0$ such that $\Lambda|_{[t-\eps,t]}$ is simple. 
    In particular,
    \begin{equation}
        \mu_{\Pi}(\Lambda|_{[0,t]}) 
        \geq
        \operatorname{ind}_\Pi (\Lambda_{t-\eps},\Lambda_t)
        +
        \mu_\Pi (\Lambda|_{[0,t-\eps]})
    \end{equation}
    Moreover, $\operatorname{ind}_\Pi (\Lambda_{t-\eps},\Lambda_{t})\geq \frac{1}{2}$, since $\eta(t)\in \Pi\cap \Lambda_t$ and $\eta(t-\eps)\notin \Pi$.
    Hence, we conclude that $\mu_{\Pi}(\Lambda|_{[0,t]}) - \mu_\Pi (\Lambda|_{[0,t-\eps]})\geq 1/2$.
\end{proof}

Now, we analyse the case of a discontinuity time of the control $\tilde u$. 
Recall that we denote by $\pi : T^*M \to M$ the canonical bundle projection.
For this part, some results discussed in Appendix \ref{sec:Lagr-multip} and \ref{app:cose-Lipschitz} are needed. 
Fix a point $\lambda \in \mathcal L$, where $\mathcal{L}$ is defined in \ref{def:manif-lagrange-multip}, at which the manifold $\mathcal{L}$ is not smooth.
Following Proposition \ref{prop:struct-Lagr-manif}, we recall that, locally around $\la$, the manifold $\mathcal{L}$ splits in three parts, which we call $\mathcal L ^+$,$\mathcal{L}^-$ and $\partial \mathcal{L}$. 
We denote by $\Lambda^-$ and $\Lambda^+$ the left and the right limits of tangent spaces $T_{\mu(s)}\mathcal{L}$, as $\mu(s) \to \lambda $, where we have either $\mu(s)\in \mathcal{L}^-$ or $\mu(s)\in \mathcal L ^+$.
We denote by $H^+$,$H^-$ the maximized Hamiltonian on $\mathcal L ^+$,$\mathcal L^{-}$ respectively.
Finally, recall that $T_\la \mathcal{L} = \bigcup_{\alpha \in [0,1]} (\al \Lambda^+ +(1-\al) \Lambda^-)$.

\begin{prop}
    \label{prop:punti-di-switch-sono-coniugati-iff-there-is-maslov-index}
    We have that $\pi_*\vec H^- (\la)$ and $\pi_*\vec H^+ (\la)$ belong to different connected components of $\pi_*(\Lambda^- + \Lambda^+) \setminus \pi_*(\Lambda^- \cap \Lambda^+)$ if and only if $\operatorname{ind}_{\Pi}(\Lambda^{-},\Lambda^{+}) \geq 1/2$.
\end{prop}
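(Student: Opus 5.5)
We reduce everything to a two-dimensional symplectic computation, in the spirit of the proof of Proposition~\ref{prop:conj-point-example-fixed}. Since $\Lambda^+ = \mathfrak g_{\vec H^-,\vec H^+}(\Lambda^-)$, we have $\Lambda^-\cap\Lambda^+ = \Lambda^-\cap (\vec H^+)^\angle =: W$. This is a $(d-1)$-dimensional isotropic subspace, because $\sigma(\vec H^-,\vec H^+)\neq 0$ forces $\vec H^-\in\Lambda^-\setminus(\vec H^+)^\angle$. Hence
\begin{equation}
    \Lambda^- = W\oplus\R\vec H^-, \qquad \Lambda^+ = W\oplus \R\vec H^+, \qquad \Lambda^-+\Lambda^+ = W\oplus\R\vec H^-\oplus\R\vec H^+ .
\end{equation}
The first step is to describe $(\Lambda^-+\Lambda^+)\cap\Pi$ modulo $W\cap\Pi$, which is the domain of the quadratic form $\mathfrak q$ in $\operatorname{ind}_\Pi(\Lambda^-,\Lambda^+)$. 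A vector $\nu = w + a\vec H^- + b\vec H^+$ lies in $\Pi$ if and only if $\pi_*w + a\pi_*\vec H^- + b\pi_*\vec H^+ = 0$. Decomposing $\nu = (w+a\vec H^-) + b\vec H^+$ gives
\begin{equation}
    \mathfrak q(\nu) = \sigma(w+a\vec H^-, b\vec H^+) = ab\,\sigma(\vec H^-,\vec H^+),
\end{equation}
using $W\subset (\vec H^+)^\angle$. Since $\sigma(\vec H^-,\vec H^+)>0$ by piecewise regularity, the sign of $\mathfrak q(\nu)$ is the sign of $ab$.

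Next we relate this to the half-space picture. Let $P=\pi_*(\Lambda^-+\Lambda^+)$ and $P_0=\pi_*W$. By the codimension-one claim in Definition~\ref{def:conjugate-point}, $P_0$ is a hyperplane in $P$, and we argue by cases.

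\textbf{Case (a):} $\pi_*\vec H^-$ and $\pi_*\vec H^+$ lie off $P_0$. Then $\pi_*\vec H^+ = c\,\pi_*\vec H^- + p_0$ with $p_0\in P_0$ and $c\neq 0$, and the two vectors lie in opposite half-spaces exactly when $c<0$. Up to elements of $W\cap\Pi$, the quotient $\big((\Lambda^-+\Lambda^+)\cap\Pi\big)/(W\cap\Pi)$ is one-dimensional, spanned by $\nu$ with $b=1$, $a=-c$, and $w$ chosen so that $\pi_*w=-p_0$. Then $\mathfrak q(\nu) = -c\,\sigma(\vec H^-,\vec H^+)$. Therefore $\operatorname{ind}_\Pi = 1$ if $c<0$ and $0$ if $c>0$, and $\mathfrak q$ is nondegenerate. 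This is the equivalence in this case.

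\textbf{Case (b):} one of the vectors, say $\pi_*\vec H^+$, lies in $P_0$, or $\dim P$ drops. Then the "different components" condition should fail or be interpreted by convention, and we must check directly that the index is $1/2$ or $0$ accordingly. If $\pi_*\vec H^+\in P_0$, we may take $a=0$ and get a nonzero null vector of $\mathfrak q$, giving kernel contribution $1/2$. We have to reconcile this with the statement, presumably by showing that this configuration cannot occur: $\pi_*\vec H^\pm\notin\pi_*W$ should follow from the codimension-one claim, since otherwise $P=P_0$. This rules it out.

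The main obstacle is the bookkeeping of the quotient by $W\cap\Pi$. We must show that the solution space of $\pi_*w = -a\pi_*\vec H^- - b\pi_*\vec H^+$ is exactly $W\cap\Pi$ plus a one-dimensional family. This uses $\ker(\pi_*|_W) = W\cap\Pi$ together with $\pi_*\vec H^\pm\notin P_0$. We must also check that $\mathfrak q$ is well defined on the quotient, which follows because $W\cap\Pi$ lies in both $\Lambda^\pm$.
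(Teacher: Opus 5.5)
Your computation in the nondegenerate case is correct, and it takes a different route from the paper's. The paper goes through Clarke's generalized Jacobian of $\pi_{\mathcal L}$, the increasing interpolating family $\Xi_\alpha$ and Corollary~\ref{prop:conj-point-disc-time}, and writes out only one implication. You instead evaluate $\mathfrak q(w+a\vec H^-+b\vec H^+)=ab\,\sigma(\vec H^-,\vec H^+)$ directly on a one-dimensional quotient, which gives both implications and the exact value of the index. You also need not borrow the one-dimensionality from Definition~\ref{def:conjugate-point}. Since $\Lambda^-+\Lambda^+=W^\angle$, one has $(\Lambda^-+\Lambda^+)\cap\Pi=(W+\Pi)^\angle$, of dimension $1+\dim(W\cap\Pi)$, and this is precisely the codimension-one claim.

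The genuine error is in case (b). The codimension-one property only forbids \emph{both} $\pi_*\vec H^-$ and $\pi_*\vec H^+$ from lying in $P_0=\pi_*W$. If exactly one does, say $\pi_*\vec H^+\in P_0$, then $P=P_0\oplus\R\,\pi_*\vec H^-$ and $P_0$ is still a hyperplane, so your argument excludes nothing.

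This configuration does occur. For $d=1$ one has $W=0$, $P_0=0$, $P=\R$, and it is enough that $\dot{\tilde q}$ vanish on one side of the switch but not on the other. This is compatible with $\sigma(\vec H^-,\vec H^+)>0$, since the bracket then reduces, up to sign, to $\partial_pH^-\,\partial_qH^+$. In this case your own choice $a=0$, $b=1$ gives $\mathfrak q\equiv0$ on the one-dimensional quotient, so $\operatorname{ind}_\Pi(\Lambda^-,\Lambda^+)=1/2$. Meanwhile $\pi_*\vec H^+$ lies in no component of $P\setminus P_0$. Read literally, the equivalence fails here, and no argument can remove the case.

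There are two ways to fix this. You can use the condition in the form the paper's proof actually uses it: some convex combination $\alpha\pi_*\vec H^-+(1-\alpha)\pi_*\vec H^+$, $\alpha\in[0,1]$, lies in $P_0$, or equivalently the two vectors are not in the same open half-space. Alternatively, you can add the hypothesis $\pi_*\vec H^\pm\notin\pi_*(\Lambda^-\cap\Lambda^+)$, under which only your case (a) occurs. With the first reading your computation already finishes the proof: the index is $1$ when the vectors are strictly separated, $1/2$ when one of them lies on $P_0$, and $0$ when both lie in the same open half-space.
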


Before proving the previous proposition, we have to show the relation between the Maslov index of the Jacobi curve at discontinuity times and the canonical bundle projection $\pi$. 
Let $i_{\mathcal L} : \mathcal L \to T^*M$ be the inclusion of $\mathcal{L}$ in $T^*M$ and denote by $\pi_\mathcal{L} \coloneqq \pi \circ i_\mathcal{L}$.
\begin{rem}
    \label{rem:rel-Maslov-pi}
    By the chain rule, we have that $\pa_\la \pi_\mathcal{L} = d_\la \pi \circ \pa_\la i = \{d_\la \pi \circ L \mid L\in \pa_\la i_\mathcal{L}\}$.
    Since $i_\mathcal{L}$ is a Lipschitz immersion, the mappings $L\in\pa_\la i$ are injective and a map $A\in \partial_\la \pi_\mathcal{L}$, $A=d_\la \pi \circ L$ does not have maximal rank if and only if $\mathrm{im}L \cap T_\la(T_{\pi(\la)}^*M) \neq \{0\}$.
    In particular, recalling Proposition \ref{prop:struct-Lagr-manif}, there is a neighbourhood $O_\la$ of $\la$ such that, letting $\mathcal{L} \cap O_\la = \mathcal{L}^- \cup \mathcal{L}^+$ and 
    \begin{equation}
        \vec H^\pm(\la) \coloneqq \lim_{\substack{\ell \to \la \\ \ell \in \mathcal{L}^\pm}} \vec H(\ell),    
    \end{equation}
    then $T_\la \mathcal{L} =T_\la \pa \mathcal{L} + \cup_{\alpha \in [0,1]} \R \left(\alpha \vec H^+(\la) + (1-\alpha) \vec H^-(\la)\right) $ and for any $L\in \pa_\la i_{\mathcal L}$ there is $\alpha \in [0,1]$ such that $\mathrm{im}L = T_\la \pa \mathcal{L} + \left(\alpha \vec H^+(\la) + (1-\alpha) \vec H^-(\la)\right)$.
    Define $\Lambda^\pm \coloneqq T_\la \pa \mathcal{L} + \R \vec H^\pm (\la)$.
    Denoting by $\Lambda^- = \mathrm{im}L^-$, $\Lambda^+ = \mathrm{im}L^+$, $L^-,L^+\in \pa_\la i_\mathcal{L}$, we have that a vector $v\in \ker A$ for every $A\in \pa_\la \pi_\mathcal{L}$ if and only if $v\in \Lambda^-\cap\Lambda^+\cap\Pi_\la$, where $\Pi_\la\coloneqq T_\la(T_{\pi(\la)}^*M)$ denotes the vertical fiber over the point $\pi(\la)$. 
\end{rem}

Denote by $K_{\la} \coloneqq \Lambda^-\cap\Lambda^+\cap\Pi_\la$ the common kernel of the elements of $\partial_{\la} \pi_{\mathcal{L}}$.
For every $A \in \partial_\la \pi_\mathcal{L} $, let $D(A)$ be the domain of $A$ and define $\bar A : D(A) / K_\la \to T_{\pi (\la)}M$, $\bar A[v] = Av$, where $[v]$ denotes the equivalence class of $v\in D(A)$ in $D(A)/K_\la$.
Let $\bar \partial_\la \pi_\mathcal{L} = \{ \bar A \mid A \in \partial_\la \pi_\mathcal{L} \}$.

\begin{prop}
    Let $L_\alpha \in \pa_\la i_{\mathcal L}$ such that $\mathrm{im}L_\alpha = T_\la \pa \mathcal{L} + \left(\alpha \vec H^+(\la) + (1-\alpha) \vec H^-(\la)\right)$.
    Then, $\operatorname{ind}_{\Pi_\la}(\Lambda^{-},\Lambda^{+}) = 0$ if and only if for every $\alpha \in [0,1]$ we have $(\mathrm{im}L_\alpha \cap \Pi_\la) / K_\la = \{0\}$.
\end{prop}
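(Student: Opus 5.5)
The plan is to reduce the statement to a computation on a space of dimension at most one, where the quadratic form $\mathfrak q$ defining $\operatorname{ind}_{\Pi_\la}(\Lambda^-,\Lambda^+)$ can be evaluated explicitly. I use the description of Remark~\ref{rem:rel-Maslov-pi}: $\Lambda^\pm = T_\la\pa\mathcal L + \R\vec H^\pm(\la)$, with $T_\la\pa\mathcal L\subset\Lambda^-\cap\Lambda^+$.

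First I show that $\vec H^-(\la)$ and $\vec H^+(\la)$ are linearly independent modulo $T_\la\pa\mathcal L$. Suppose $\vec H^+ = c\vec H^- + w$ with $w\in T_\la\pa\mathcal L$. Since $w\in\Lambda^-$ and $\Lambda^-$ is Lagrangian, this would give $\sigma(\vec H^-,\vec H^+)=\sigma(\vec H^-,w)=0$. That contradicts the switching condition $\{H^-,H^+\}(\la)=\sigma(\vec H^-,\vec H^+)>0$ of Definition~\ref{def:piecewise-regular-control}. Consequently:
\begin{itemize}
\item $\Lambda^-\cap\Lambda^+=T_\la\pa\mathcal L$;
\item $\Lambda^-+\Lambda^+ = T_\la\pa\mathcal L\oplus\operatorname{span}\{\vec H^-,\vec H^+\}$;
\item the domain $\big((\Lambda^-+\Lambda^+)\cap\Pi_\la\big)/K_\la$ of $\mathfrak q$ has dimension $0$ or $1$.
\end{itemize}

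Next I parametrize that domain. Every $\nu\in(\Lambda^-+\Lambda^+)\cap\Pi_\la$ can be written as $\nu=w+a\vec H^-+b\vec H^+$ with $w\in T_\la\pa\mathcal L$. By the independence just proved, the pair $(a,b)$ is uniquely determined by $\nu$. Moreover $(a,b)=(0,0)$ exactly when $\nu\in T_\la\pa\mathcal L\cap\Pi_\la=K_\la$. Splitting $\nu=(w+a\vec H^-)+b\vec H^+$ with the first summand in $\Lambda^-$ and the second in $\Lambda^+$, I get
\begin{equation}
\mathfrak q(\nu)=\sigma(w+a\vec H^-,\,b\vec H^+)=ab\,\sigma(\vec H^-,\vec H^+)+b\,\sigma(w,\vec H^+)=ab\,\sigma(\vec H^-,\vec H^+),
\end{equation}
because $w,\vec H^+\in\Lambda^+$ and $\Lambda^+$ is Lagrangian. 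Since $\sigma(\vec H^-,\vec H^+)>0$, the sign of $\mathfrak q(\nu)$ equals the sign of $ab$.

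Since $\operatorname{ind}_{\Pi_\la}=\operatorname{ind}^+\mathfrak q+\tfrac12\dim\ker\mathfrak q$ on a space of dimension at most one, $\operatorname{ind}_{\Pi_\la}(\Lambda^-,\Lambda^+)=0$ holds exactly in two cases. Either the quotient is $\{0\}$, or it is spanned by some $\nu$ with $ab<0$.

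On the other side, a vector $\nu\notin K_\la$ lies in $\mathrm{im}L_\alpha\cap\Pi_\la$ for some $\alpha\in[0,1]$ exactly when, after rescaling $\nu$, $(a,b)=(1-\alpha,\alpha)$. This is equivalent to $ab\ge0$ with $(a,b)\neq(0,0)$. The endpoint cases $\alpha\in\{0,1\}$ correspond to $ab=0$, which is the kernel contribution $\tfrac12$.

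Combining the two descriptions: $(\mathrm{im}L_\alpha\cap\Pi_\la)/K_\la=\{0\}$ for every $\alpha$ if and only if every nonzero class in the domain has $ab<0$, or the domain is trivial. That is precisely the condition $\operatorname{ind}_{\Pi_\la}(\Lambda^-,\Lambda^+)=0$.

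The main point requiring care is the well-definedness of $(a,b)$, i.e.\ the independence of $\vec H^\pm$ modulo $T_\la\pa\mathcal L$. I also need to ensure that the sign convention $\{a,b\}=\sigma(\vec a,\vec b)$ indeed makes $\sigma(\vec H^-,\vec H^+)$ positive rather than negative. If it were negative, the argument is unchanged except that the roles of $ab>0$ and $ab<0$ swap, and this would have to be rechecked.
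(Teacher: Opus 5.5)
Your proof is correct, but it takes a different route from the paper.

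\textbf{The paper's route.} It chooses a Lagrangian $\Delta$ transverse to $\Lambda^-$, $\Lambda^+$ and $\Pi_\la$ such that $S^-\le S^+$ (Remark~\ref{rem:monotone-position-lagr-subspaces}). It interpolates $S(\alpha)=\alpha S^++(1-\alpha)S^-$ to get a simple increasing curve $\Xi$ with $\Xi_\alpha\subset T_\la\mathcal L$. It then applies Proposition~\ref{prop:maslov-simple-incrasing-curve}, $\operatorname{ind}_{\Pi_\la}(\Lambda^-,\Lambda^+)=\mu_{\Pi_\la}(\Xi)$, to conclude that the index vanishes iff this path meets $\Pi_\la$ only inside $K_\la$.

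\textbf{Your route.} You compute $\mathfrak q$ directly. In the coordinates $(a,b)$ of $\nu$ modulo $T_\la\pa\mathcal L$ you get $\mathfrak q(\nu)=ab\,\sigma(\vec H^-,\vec H^+)$. You then observe that $\nu\notin K_\la$ lies in some $\mathrm{im}L_\alpha$ exactly when $ab\ge 0$.

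\textbf{Comparison.} Your version is elementary and self-contained. It needs no transversal chart and no Maslov index of curves, whose endpoint correction terms are awkward when $K_\la\neq 0$. It also shows exactly where the sign of the switching bracket enters. In the paper that sign is hidden in the claim $\Xi_\alpha\subset T_\la\mathcal L$. That claim holds because, among Lagrangian subspaces $V$ with $\Lambda^-\cap\Lambda^+\subset V\subset\Lambda^-+\Lambda^+$, the increasing arc from $\Lambda^-$ to $\Lambda^+$ consists of the convex combinations of $\vec H^\pm$ precisely when $\sigma(\vec H^-,\vec H^+)>0$. The paper's version, in turn, presents $\operatorname{ind}_{\Pi_\la}(\Lambda^-,\Lambda^+)$ as the Maslov index of the convex arc filling the jump. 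That is what ties the statement to Definition~\ref{def:Maslov-index-pw-C1}.

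\textbf{The sign.} This is not a convention still to be checked. The paper defines $\{a,b\}\coloneqq\sigma(\vec a,\vec b)$, so condition~2 of Definition~\ref{def:piecewise-regular-control} gives $\sigma(\vec H^-,\vec H^+)>0$ at $\tilde\la_{t_j}$. By continuity it also holds at nearby $\la\in\pa\mathcal L_j$, with $\vec H^-$ the field active before the switch. The sign is also essential: with the opposite sign, $\operatorname{ind}_{\Pi_\la}=0$ would correspond to $ab>0$, and the equivalence would fail.

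\textbf{The dimension claim.} Independence of $\vec H^\pm$ modulo $T_\la\pa\mathcal L$ only bounds the domain of $\mathfrak q$ by $2$, not by $1$. The bound $\le 1$ needs the isotropy of $\Pi_\la$: classes with $(a,b)=(1,0)$ and $(0,1)$ would pair to $\sigma(\vec H^-,\vec H^+)\neq 0$. In fact the dimension is exactly $1$, by linear symplectic reduction of $\Pi_\la$ to $(\Lambda^-+\Lambda^+)/(\Lambda^-\cap\Lambda^+)$, so your ``trivial domain'' case never occurs. None of this affects your conclusion. In any dimension, $\operatorname{ind}_{\Pi_\la}(\Lambda^-,\Lambda^+)=0$ iff $\mathfrak q$ is negative definite, iff every nonzero class has $ab<0$.
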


\begin{proof}
    Let $\Delta\in \mathfrak{L}(T_{\la}(T^*M))$ be transverse to $\Lambda^-,\Lambda^+$ and $\Pi$ and consider the $(\Pi,\Delta)$-coordinates in $\mathfrak{L}(T_{\la}(T^*M))$.
    Let $S^-,S^+$ be the symmetric matrices corresponding to $\Lambda^-,\Lambda^+$ respectively in these coordinates. 
    By Remark \ref{rem:monotone-position-lagr-subspaces}, we can choose $\Delta$ such that $S^-\leq S^+$.
    Consider the curve $\Xi$ in $\mathfrak{L}(T_{\la}(T^*M))$ corresponding to the curve of matrices $S(\al) = \al S^++(1-\al)S^-$, $\al\in[0,1]$.
    Clearly, $\Xi$ is a simple and increasing curve and for $\al\in[0,1]$ we have $\Xi_\al \subset T_\la \mathcal L$. 

    By Proposition \ref{prop:maslov-simple-incrasing-curve}, if $\operatorname{ind}_{\Pi_\la}(\Lambda^{-},\Lambda^{+}) = 0$, then if $v \in (\Xi_\al \cap \Pi_\la) \setminus \{0\}$, we must have $v\in K_\la$. 
    Vice-versa, again by Proposition \ref{prop:maslov-simple-incrasing-curve} it follows that if for every $\alpha \in [0,1]$ we have $(\mathrm{im}L_\alpha \cap \Pi_\la) / K_\la = \{0\}$, then $\operatorname{ind}_{\Pi_\la}(\Lambda^{-},\Lambda^{+}) =\mu_{\Pi_\la}(\Xi) = 0$ .
\end{proof}
    From the previous Proposition, we immediately obtain the following Corollary.
\begin{cor}
    \label{prop:conj-point-disc-time}
    Let $\la \in \mathcal L$ be a point where $\mathcal{L}$ is not smooth . 
    Then, $\operatorname{ind}_{\Pi_\la}(\Lambda^{-},\Lambda^{+}) = 0$ if and only if $\bar \partial_\la \pi_\mathcal{L}$ contains only non-singular linear mappings. 
\end{cor}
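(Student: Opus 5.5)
The plan is to derive the Corollary directly from the preceding Proposition, translating its statement about intersections of $\mathrm{im}L_\alpha$ with the vertical fibre into a statement about singularity of the elements of $\bar\partial_\la\pi_{\mathcal L}$, using Remark \ref{rem:rel-Maslov-pi}.

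First, by Remark \ref{rem:rel-Maslov-pi}, every $A\in\partial_\la\pi_{\mathcal L}$ has the form $A=d_\la\pi\circ L_\alpha$ for some $\alpha\in[0,1]$. Here $L_\alpha\in\partial_\la i_{\mathcal L}$ is injective with image $\mathrm{im}L_\alpha=T_\la\partial\mathcal L+\R(\alpha\vec H^+(\la)+(1-\alpha)\vec H^-(\la))$, which is a $d$-dimensional subspace. Conversely, each such $\alpha$ gives an element of $\partial_\la\pi_{\mathcal L}$: this is the convex-hull description of the Clarke generalized Jacobian of the piecewise smooth map $i_{\mathcal L}$ recalled there.

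Second, we identify the kernel. Since $L_\alpha$ is injective and $\ker d_\la\pi=\Pi_\la$, the map $A$ has kernel $L_\alpha^{-1}(\mathrm{im}L_\alpha\cap\Pi_\la)$. The common kernel $K_\la=\Lambda^-\cap\Lambda^+\cap\Pi_\la$ is contained in $T_\la\partial\mathcal L\subset\mathrm{im}L_\alpha$ for all $\alpha$: indeed $\Lambda^-\cap\Lambda^+\supseteq T_\la\partial\mathcal L$, with equality when $\vec H^\pm$ are independent modulo $T_\la\partial\mathcal L$, which holds at a genuine switch since $\sigma(\vec H^-,\vec H^+)\neq0$. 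Hence $\bar A$ is well defined on $D(A)/K_\la$. Identifying $D(A)$ with $\mathrm{im}L_\alpha$ through $L_\alpha$, we get $\ker\bar A=(\mathrm{im}L_\alpha\cap\Pi_\la)/K_\la$. The target $T_{\pi(\la)}M$ has dimension $d$, so "non-singular" should be read as injectivity of $\bar A$, i.e. maximal rank modulo the common kernel. Thus $\bar A$ is non-singular if and only if $(\mathrm{im}L_\alpha\cap\Pi_\la)/K_\la=\{0\}$.

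Third, we combine the two steps. All elements of $\bar\partial_\la\pi_{\mathcal L}$ are non-singular if and only if $(\mathrm{im}L_\alpha\cap\Pi_\la)/K_\la=\{0\}$ for every $\alpha\in[0,1]$. By the preceding Proposition, this holds if and only if $\operatorname{ind}_{\Pi_\la}(\Lambda^-,\Lambda^+)=0$.

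The main point needing care is the first step: checking that the family $\{L_\alpha\}_{\alpha\in[0,1]}$ exhausts $\partial_\la i_{\mathcal L}$, so that no other generalized Jacobians appear, and that each $L_\alpha$ is injective. I would take both facts from Remark \ref{rem:rel-Maslov-pi} and Proposition \ref{prop:struct-Lagr-manif}. If needed, I would justify them directly by noting that $\partial_\la i_{\mathcal L}$ is the convex hull of the two one-sided differentials, whose images are $\Lambda^\pm$. These differentials agree on $T_\la\partial\mathcal L$ and differ only in the transverse direction, where they send it to $\vec H^\mp$. Their convex combinations therefore have the stated images, and these are $d$-dimensional because $\alpha\vec H^++(1-\alpha)\vec H^-\notin T_\la\partial\mathcal L$, given the transversality of $\vec H^\pm$ to $\partial\mathcal L$.
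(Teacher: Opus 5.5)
Your proposal is correct and is essentially the paper's argument. The paper derives this Corollary immediately from the preceding Proposition, and your identification of $\ker\bar A$ with $(\mathrm{im}L_\alpha\cap\Pi_\la)/K_\la$ through Remark~\ref{rem:rel-Maslov-pi} is exactly the translation step the paper leaves implicit. (One small nit: $\alpha\vec H^+(\la)+(1-\alpha)\vec H^-(\la)\notin T_\la\partial\mathcal L$ follows from the independence of $\vec H^\pm$ modulo $T_\la\partial\mathcal L$, which you already derived from $\sigma(\vec H^-,\vec H^+)\neq0$; it does not follow from the transversality of each field separately.)
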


Finally, we can prove Proposition \ref{prop:punti-di-switch-sono-coniugati-iff-there-is-maslov-index}.

\begin{proof}[Proof of Proposition \ref{prop:punti-di-switch-sono-coniugati-iff-there-is-maslov-index}]
    Suppose that $\pi_*\vec H^- (\la)$ and $\pi_*\vec H^+ (\la)$ belong to different connected components of the set $\pi_*(\Lambda^+ + \Lambda^{-}) \setminus \pi_*(\Lambda^{+} \cap \Lambda^{-})$. 
    Then, there is a convex combination $\alpha \pi_*\vec H^- (\tilde \la_0) + (1-\alpha) \pi_*\vec H^+ (\tilde \la_0)$ which belongs to $\pi_*(\Lambda^{-} \cap \Lambda^{+})$.
    Following the argument of the previous proposition, one obtains that for such $\alpha$ the subspace $\Xi_\alpha$ corresponding to the matrix $\alpha \bar S^- + (1-\alpha)\bar S^+$ satisfies $d_\la \pi (\Xi_\alpha) \subset d_\la\pi(\Lambda^-\cap\Lambda^+) $, which has codimension one in $T_{\pi(\la)}M$. 
    Hence, $\bar\partial_\la\pi$ contains a singular map and we must have that $\operatorname{ind}_{\Pi}(\Lambda^{-},\Lambda^{+}) > 0$.
\end{proof}

\section{Sufficient condition for optimality}
\label{sec:suff-cond-optimality}
The aim of this Section is to prove Theorem \ref{thm:suff-cond-optimality}, that is, if there are no conjugate points along a given piecewise regular extremal satisfying PMP, then its corresponding trajectory on $M$ is locally optimal. 
First of all, we make precise the notion of local optimality.

\begin{defn}[Local strong optimality]
\label{def:str-loc-opt}
Given an admissible control $u\in \mathfrak U$ defined on the time interval $[0,T_u]$, we say that $u$ and the trajectory $q(\cdot\,;u)$ are \emph{locally strongly optimal} if there exist a neighbourhood $\mathcal{O}$ of $q(\cdot;u)$ in $M$ and $\eps>0$ such that for any other admissible control $v\in\mathfrak U$ defined on the time interval $[0,T_v]$, with $|T_u-T_v|<\eps$ and satisfying 
    \begin{equation}
        q(\cdot\,;v) \subset \mathcal{O}, \, q(0;v)=q(0;u), \, q(T_v;v)=q(T_u;u), 
    \end{equation}
    we have
        \begin{equation}
        \label{eq:funzionale-lso}
        J(u) \leq J(v).
    \end{equation}
We say that $q(\cdot;u)$ is locally strongly \emph{strictly} optimal if the inequality in \eqref{eq:funzionale-lso} is strict.
\end{defn}
Notice that this definition can be equivalently restated saying that $q(\cdot\,;u)$ is locally strongly optimal if it is a minimizer among all other admissible trajectories contained in a neighbourhood of $q(\cdot\,;u)$ and with final time in a neighbourhood of $T_u$, where on the space of admissible trajectories we put the topology induced by the strong topology of $C^0([0,T];M)$.

Locally strongly optimal trajectories are sometimes also referred to as \emph{(time,state)-local minimizers}, to emphasize the fact that the word \emph{local} refers both to the final time and to the state.
\begin{figure}[ht]
    \centering
    \begin{tikzpicture}[rotate=-40]
          \draw (0,0) node[below] {$q(0;u)$}
            to[in=245,out=45] (.5,1.5)
            to[out=65,in=200] (1,2.5)
            to[in=240,out=20] (3.5,3)
            to[out=60,in=250] (4.5,4)
            to[out=70,in=215] (5,5.5)
            to[out=35,in=190] (6,6)
            node[below] {$q(T_u;u)$};
    
          \foreach \x in {1,-1}
          {
          \draw[dashed] ($(0,0)+(\x*.5,-\x*.5)$)
            to[in=245,out=45] ($(.5,1.5)+(\x*.5,-\x*.5)$)
            to[out=65,in=200] ($(1,2.5)+(\x*.5,-\x*.5)$)
            to[in=240,out=20] ($(3.5,3)+(\x*.5,-\x*.5)$)
            to[out=60,in=250] ($(4.5,4)+(\x*.5,-\x*.5)$)
            to[out=70,in=215] ($(5,5.5)+(\x*.5,-\x*.5)$)
            to[out=35,in=190] ($(6,6)+(\x*.5,-\x*.5)$) ;
          }
          \draw[dashed] (-.5,.5) arc (125:317:.71);
          \draw[dashed] (6.5,5.5) .. controls (7.3,6) and (6.5,6.7) .. (5.5,6.5);
          \node at (2.3,4.4) {$\mathcal{O}$};
          \draw[->] (2.5,4.25) -- (3.2,3.8);
          \node[above] at (4.5,3) {$q(\cdot;u)$};
          \draw[red] plot[smooth] coordinates {(0,0) (.5,.5) (.3,1) (.5,2.5) (1,2.5) (1.5,2.3) (2,2.5) (3.5,2.5) (3.5,3.5) (4.7,4) (4.3,5) (5,5.7) (5.4,5.3) (6,6)};
          \node[red] at (0,3.5) {$q(\cdot;v)$};
          \draw[->,red] (.2,3.2) -- (.5,2.7);
\end{tikzpicture}
    \caption{Graphical representation of Local Strong Optimality. }
    \label{fig:loc-str-opt}
\end{figure}

For convenience of the reader, we restate Theorem \ref{thm:suff-cond-optimality} using the notion of strong local optimality.
\begin{thm}
    \label{thm:suff-cond-no-conj-points}
    Let $(\tilde u,\tilde\la)$ be a piecewise regular normal extremal pair. 
    Assume that there are no conjugate points along the trajectory $q(\cdot , \tilde u)$. 
    Then, the trajectory $q(\cdot , \tilde u)$ is locally strongly strictly optimal. 
\end{thm}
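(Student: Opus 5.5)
I would follow the classical field-of-extremals (Weierstrass/Hamilton--Jacobi) strategy of \cite{AgSa}, Chapter 17 and 21, adapted to a Lagrangian submanifold with corners. First, I would fix a Lagrangian submanifold $\mathcal L_0\subset T^*M$ through $\tilde\la_0$ on which $s$ is exact, chosen as a small perturbation of the fibre $T^*_{q_0}M$. A typical choice is the graph of $d a$ for a function $a$ with $d_{q_0}a=\tilde\la_0$, restricted to the level set $\{H=0\}$ and swept by the flow. I would then transport it by the Lipschitz flow $\Phi_t$ of Proposition \ref{prop:pw-regular-controls}:
\[
\mathcal L=\{\Phi_t(\la)\mid \la\in\mathcal L_0\cap\{H=0\},\ t\in(-\delta,T+\delta)\}.
\]
The perturbation is needed because the fibre itself projects to a point, so the actual starting object is a family of Lagrangian manifolds $\mathcal L^\eps$ with $T_{\tilde\la_0}\mathcal L^\eps_0\to\Pi$. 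By Proposition \ref{prop:struct-Lagr-manif} (Appendix), $\mathcal L$ is a Lipschitz Lagrangian submanifold, smooth away from the switching hypersurfaces $\{H^j=H^{j+1}\}$. Near each $\tilde\la_{t_j}$ it splits as $\mathcal L^-\cup\partial\mathcal L\cup\mathcal L^+$, and its one-sided tangent spaces transported back to $\tilde\la_0$ are exactly $\Lambda_{t_j}$ and $\Lambda^+_{t_j}=\mathfrak g_j(\Lambda_{t_j})$, by Theorem \ref{thm:struttura-Jacobi-curve}. The Liouville form $s$ restricted to $\mathcal L$ is closed, hence locally exact, and $H\equiv0$ on $\mathcal L$ because the final time is free.

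The second step is to show that $\pi|_{\mathcal L}$ is a bi-Lipschitz homeomorphism from a neighbourhood of the curve $\tilde\la$ in $\mathcal L$ onto a neighbourhood $\mathcal O$ of $\tilde q$. I would apply Clarke's inverse function theorem pointwise along the curve.
\begin{itemize}
\item At regular times, the absence of conjugate points means $\Lambda_t\cap\Pi$ is only the constant part $\Pi\cap\bigcap_\tau\Lambda_\tau$. This constant part is killed by the $\eps$-perturbation, using that the perturbed Jacobi curves converge and that the Maslov index is stable, via Proposition \ref{prop:conj-point-cont-time}. So $d\pi|_{T\mathcal L}$ is invertible.
\item At switching times, the absence of conjugacy gives $\operatorname{ind}_\Pi(\Lambda^-,\Lambda^+)=0$ by Proposition \ref{prop:punti-di-switch-sono-coniugati-iff-there-is-maslov-index}. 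Corollary \ref{prop:conj-point-disc-time} then says that every element of the generalized Jacobian $\bar\partial\pi_{\mathcal L}$ is nonsingular, which is exactly Clarke's hypothesis.
\end{itemize}
Local invertibility combined with injectivity along the compact curve $\tilde q$ gives a global inverse on a thin tube $\mathcal O$. The injectivity uses that the curve has no self-intersections, by a standard compactness argument.

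Third, I would carry out the Hilbert invariant integral argument. Take any competitor $v$ with trajectory $q_v\subset\mathcal O$ and $|T_v-T|<\eps$, and lift it as $\la_v(t)=(\pi|_{\mathcal L})^{-1}(q_v(t))$. Then
\[
J(v)=\int_0^{T_v}\big(\langle\la_v,f_{v(t)}\rangle-h_{v(t)}(\la_v)\big)dt\ \ge\ \int_{\la_v}s-\int_0^{T_v}H(\la_v)\,dt=\int_{\la_v}s .
\]
Exactness of $s$ on the simply connected piece of $\mathcal L$ gives $\int_{\la_v}s=\int_{\tilde\la}s=J(\tilde u)$. Equality forces $v(t)=u_{\max}(\la_v(t))$ a.e., so $q_v$ is a trajectory of the field. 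By uniqueness of the Lipschitz flow it then coincides with $\tilde q$, which gives strict optimality. One also has to check that the lift is absolutely continuous, which follows from the bi-Lipschitz property.

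I expect the main obstacle to be the second step at the initial time and at the switching times. Near $t=0$ the manifold degenerates, since it starts from the fibre, and the argument needs a quantitative perturbation so that Clarke's condition holds uniformly down to $t=0$. At the switching points one must verify that the Clarke generalized Jacobian of $\pi_{\mathcal L}$ is exactly the segment described in Remark \ref{rem:rel-Maslov-pi}, and that a small perturbation of $\mathcal L_0$ does not create new singular convex combinations. My first attempt would be to choose $\mathcal L_0$ so that the perturbed Jacobi curve equals $\Lambda$ composed with a fixed symplectic map close to the identity, and to use openness of the nondegeneracy conditions on $\Lambda_t$ and on the pairs $(\Lambda_{t_j},\Lambda^+_{t_j})$.
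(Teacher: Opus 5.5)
Your architecture matches the paper's. You use a field of extremals generated by $\{d_qa\}\cap H^{-1}(0)$, Clarke's inverse function theorem to make $\pi|_{\mathcal M}$ bi-Lipschitz, and the Hilbert invariant integral; the paper delegates that last step to Theorem~\ref{thm:crit-cond-suff}. The gap is in the step that carries the content of the theorem, which you flag as the main obstacle but do not resolve. That step is choosing $\mathfrak M_0=T_{\tilde\la_0}\mathcal M_0$ near $\Pi$ so that $B_t\mathfrak M_0\cap\Pi=\{0\}$ for every $t\in[0,T]$, including every space in the convex hull at switching times. The tools you propose cannot produce this:

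\begin{itemize}
\item \emph{Openness is unavailable.} The unperturbed configuration is degenerate exactly where it matters: $\Lambda_0=\Pi$, $\Lambda_0^+=\mathfrak g_0(\Pi)$ meets $\Pi$ in dimension at least $d-1$, and $\Lambda_t\cap\Pi=C_t$ may be nonzero on whole intervals. For $d\ge 2$ this always happens on the first arc of a bang-bang extremal.
\item \emph{Smallness is not enough.} As Figure~\ref{fig:choice-L0} illustrates, a perturbation on the wrong side of $\Pi$ is crossed again at arbitrarily small times, so the sign of the perturbation is what matters.
\item \emph{Stability of the Maslov index does not help.} The starting point lies on the Maslov train, so the index of the perturbed curve depends on which side you perturb to. Without monotonicity, a vanishing index would not rule out intersections anyway.
\item \emph{The symplectic-map idea fails.} You cannot arrange $B_t\mathfrak M_0=\psi(\Lambda_t)$ for a fixed symplectic $\psi$, since $B_t$ and the $\mathfrak g_j$ do not commute with $\psi$.
\end{itemize}

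The missing idea is monotonicity. By PMP, $\matheuler H^j_t\ge 0$ with $\matheuler H^j_t(\tilde\la_0)=0$, so the generator $b^j_t=\frac12\operatorname{Hess}_{\tilde\la_0}\matheuler H^j_t$ of $B_t$ is nonnegative. By Proposition~\ref{prop:Jacobi-curve-sono-monotone}, every curve $B_t\mathfrak M_0$ is then increasing on the regular arcs. The segments filling the corners are also increasing, because $\sigma(\vec{\mathcal H}^j,\vec{\mathcal H}^{j+1})>0$.

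This dictates the choice $S_0=\eps\I>0$ in a $(\Pi,\Delta_0)$ chart. Then $S_t\ge S_0>0$ for small $t$, so there is no intersection near $t=0$. When $C_t=\{0\}$ on $(0,T]$, a compactness argument using $B_t\Pi\cap\Pi=C_t$ handles $[\delta,T]$.

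When $C_t\ne\{0\}$ you also need the inductive reduction of Lemma~\ref{lem:AgSa21.2}. The spaces $C_t$ and $C_t^\angle$ are $B_t$-invariant. So on each interval $(s_i,s_{i+1}]$ where $C_t$ is constant, you argue in $C_t^\angle/C_t$, which has no constant vertical directions. You then re-choose the transversal $\Delta_{i+1}$ so that $\mathfrak M_0$ stays on the positive side as further directions leave $C_t$.

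The same mechanism is needed at switching times. Corollary~\ref{prop:conj-point-disc-time} only gives nonsingularity modulo $K_\la=\Lambda^-\cap\Lambda^+\cap\Pi$, and this is not an open condition when $K_\la\ne\{0\}$. So your claim that the perturbation creates no new singular convex combinations has to come from the positivity and quotient argument, not from openness.

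Separately, absence of self-intersections of $\tilde q$ is not a hypothesis of the theorem, so your global injectivity step cannot rest on it as stated. The paper also passes over this point.
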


To prove this Theorem, we apply the following general criterion, which ensures local strong strict optimality of an admissible trajectory.
To state the criterion, we need to introduce some notations.
Let $a\in C^\infty(M)$ and denote by
\begin{equation}
    \mathscr{L}_0 = \{ d_q a \in T^*M \mid q\in M \}\subset T^*M,
\end{equation}
the image of $da$.
Suppose that $\tilde \la_0 = d_{q_0} a$ and let $\mathscr{O} \subset T^*M$ be an open neighbourhood of $\tilde \la_0$. 
Consider a relatively open connected subset $\mathcal{M}_0 \subset \mathscr{L}_0 \cap H^{-1}(0) \cap \mathscr O$ such that $\tilde \la_0 \in \mathcal{M}_0$. 
We define $\mathcal{M}_t \coloneqq \Phi_t(\mathcal{M}_0)$ and 
\begin{equation}
    \mathcal{M} \coloneqq 
    \{
        \Phi_t(\lambda_0) \in T^*M 
        \mid 
        \lambda_0 \in \mathcal{M}_0, \, t\in \R
    \},
\end{equation}
where $\Phi_t$ is the Hamiltonian flow on $T^*M$ generated by the maximized Hamiltonian function $H$. 
Finally, we denote by $\pi : T^*M \to M$ the canonical bundle projection.

\begin{thm} 
    \label{thm:crit-cond-suff}
    Let $(\tilde u, \tilde \lambda)$ be a piecewise regular extremal pair. 
    Suppose that there is $a\in C^\infty(M)$ such that $\tilde \lambda_0\in \mathcal M_0 $ and the map $\pi|_{\mathcal{M}}$ is a bi-Lipschitz map onto its image.
    Then, the trajectory $\tilde q(t) = \pi( \tilde\la(t))$ is locally strongly strictly optimal. 
\end{thm}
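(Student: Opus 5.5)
The plan is the classical field-of-extremals (Weierstrass) argument, carried out with the Lipschitz flow $\Phi_t$ of Proposition~\ref{prop:pw-regular-controls} in place of a smooth flow. The key object is the Poincaré--Cartan form $s-H\,dt$ on $T^*M\times\R$, where $s$ is the Liouville form.

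First I show that this form is exact on the set $\mathcal{M}$. Consider the map $\Psi:\mathcal{M}_0\times\R\to T^*M$, $\Psi(\la_0,t)=\Phi_t(\la_0)$. It is Lipschitz, and it is smooth away from the preimage of the switching hypersurfaces $\{H^j=H^{j+1}\}$. Because $\Phi_t$ is piecewise a Hamiltonian flow, $\Psi^*(s-H\,dt)$ is closed on each smooth piece.
\begin{itemize}
\item On $\mathcal{M}_0$, which lies in $\mathscr{L}_0\cap H^{-1}(0)$, the restriction of $s$ equals $d(a\circ\pi)$.
\item Since $H\equiv0$ on $\mathcal{M}$, the form reduces to $s|_{\mathcal M}$.
\end{itemize}
Across a switching hypersurface, $\Psi$ is continuous. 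Stokes' theorem on each piece, together with continuity of the pulled-back form along the interface, gives that $s|_{\mathcal M}$ is closed in the Lipschitz (distributional) sense. The cleanest route is to check that integrals of $s$ over Lipschitz loops in $\mathcal{M}$ vanish. I would do this by splitting each loop along the interface and using that the two one-sided tangent spaces $\Lambda^\pm$ are Lagrangian and share the face $T\partial\mathcal{M}$. Since $\mathcal{M}$ is simply connected (it is a product of a connected open set with an interval), $s|_{\mathcal M}=d\varphi$ for some Lipschitz function $\varphi$ on $\mathcal{M}$.

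Second, I use that $\pi|_{\mathcal M}$ is bi-Lipschitz onto its image. Shrinking $\mathcal{M}_0$ and the time interval, I take $\mathcal{O}=\pi(\mathcal{M})$ to be a neighbourhood of $\tilde q$; the dimension count is $\dim\mathcal{M}=(d-1)+1=d$. I then define a Lipschitz covector field $\la(q)=(\pi|_{\mathcal M})^{-1}(q)$ on $\mathcal{O}$.

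Now take a competitor $v$ with trajectory $q_v\subset\mathcal{O}$, the same endpoints and $|T_v-T|<\eps$. Lift it to $\ell(t)=\la(q_v(t))$, which is a Lipschitz curve in $\mathcal{M}$. By the maximality of $H$ and $H|_{\mathcal M}=0$,
\[
L(q_v,v)=\langle\ell,f(q_v,v)\rangle-h_v(\ell)\ \ge\ \langle\ell,\dot q_v\rangle-H(\ell)=\langle \ell,\dot q_v\rangle .
\]
Integrating gives $J(v)\ge\int_\ell s=\varphi(\ell(T_v))-\varphi(\ell(0))$. For $\tilde u$ equality holds, with lift $\tilde\la$. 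The endpoints of the two lifts coincide, because $\la(\cdot)$ is single-valued over $q_0$ and over $q_1$. Hence $J(v)\ge J(\tilde u)$.

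For strict inequality, equality forces $h_{v(t)}(\ell_t)=H(\ell_t)$ almost everywhere. By the uniqueness of the maximizer in $\mathcal N$ (Proposition~\ref{prop:pw-regular-controls}), this gives $v=u_{\max}(\ell)$, so $\ell$ solves $\dot\ell=\vec H(\ell)$ from a point of $\mathcal{M}_0$. By bi-Lipschitz injectivity it is then $\tilde\la$, so $v=\tilde u$.

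I expect the main obstacle to be the exactness of $s$ on the non-smooth $\mathcal{M}$, namely controlling Stokes' theorem across the corner. My first attempt would be to approximate loops by curves transverse to the switching surface and use the Lipschitz continuity of $\Phi_t$.
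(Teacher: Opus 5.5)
Your proposal is correct and follows essentially the paper's route. The paper omits the proof and defers to the field-of-extremals argument of Theorem~17.2 in \cite{AgSa}: exactness of the Poincar\'e--Cartan form on the Lipschitz manifold $\mathcal M$ via Stokes' theorem on its smooth pieces, lifting competitors by $(\pi|_{\mathcal M})^{-1}$, and using $h_v\le H=0$ on $\mathcal M$, exactly as you plan.

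Two small repairs are needed.

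\begin{itemize}
\item $\mathcal M_0\times I$ need not be simply connected, but you do not need this. Closedness of $\Psi^*s$ together with $\Psi^*s|_{\mathcal M_0\times\{0\}}=d(a\circ\pi)$ already gives an explicit primitive: $a(\pi(\la_0))$ plus the cost of the extremal from $\la_0$ up to time $t$.
\item In the equality case the maximizer is not unique on $\{H^j=H^{j+1}\}$. You should first check that the lift spends zero time on $\partial\mathcal M_j$. There its velocity would be tangent to $\partial\mathcal M_j$, yet, by injectivity of $\pi_*$ on the one-sided tangent spaces, it would equal $\vec H^j$ or $\vec H^{j+1}$, both of which are transverse to $\partial\mathcal M_j$.
\end{itemize}
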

This result is a direct extension of Theorem 17.2 in \cite{AgSa} to the case of a piecewise smooth extremal trajectory $\tilde \la$. 
It can be proved by following the very same argument, but replacing the smooth manifold of the field of extremals with the Lipschitz manifold $\mathcal{M}$; the Poincaré-Cartan form is still an exact form and Stokes' theorem still applies without substantial changes to manifolds with corners.
For this reason we omit the proof of this result.

\begin{proof}[Proof of Theorem \ref{thm:suff-cond-no-conj-points}]
    To prove the Theorem, we have to find a suitable function $a\in C^\infty(M)$ whose corresponding submanifold $\mathcal M$ satisfies the hypothesis of Theorem \ref{thm:crit-cond-suff}, that is $\pi|_{\mathcal M}$ is a bi-Lipschitz map onto its image. 
    We point out that, by the same argument in the proof of Proposition \ref{prop:struct-Lagr-manif}, the set $\mathcal{M}$ has a structure of Lipschitz manifold similar to that of the manifold of Lagrange multipliers $\mathcal{L}$.
    By Clarke's inverse function theorem, it is sufficient to show that the tangent cone $T_{\la}\mathcal{M}$ has a trivial intersection with the vertical space $T_{\la} \big( T^* _{\pi(\la)} M \big)$ for $\la\in \mathcal{M}$:
	\begin{equation}
		\label{eq:inters1}
		T_{\la}\mathcal{M}
		\cap 
		T_{\la} \big( T^* _{\pi(\la)} M \big)
		=
		\{0\},
		\quad \la\in \mathcal{M}.
	\end{equation}
    Here and afterwards, if $\la$ is a point where $\mathcal{M}$ is not smooth, the set $T_{\la}\mathcal{M}$ is the convex hull of two subspaces of $T_{\la}(T^*M)$ (see Proposition \ref{prop:struct-Lagr-manif} and Appendix \ref{app:cose-Lipschitz}) and, by writing $T_{\la}\mathcal{M} \cap V = W$, we mean that for every vector space $Z \subset T_{\la}\mathcal{M}$ we have that $Z\cap V=W$. 
    By continuity of $\pi$, it suffices to prove that \eqref{eq:inters1} holds for $\la=\tilde \la_t$, $t\in[0,T]$. 
    
	Define $T_{\la_0}\mathcal{M} = \mathfrak{M} _0$. 
    We have that
	$ T_{\tilde \la_t}\mathcal{M} = (\Phi_t)_* \mathfrak{M} _0 $.
	Also, 
    it holds 
    $ 
        T_{\tilde \la_t} \big( T^* _{\tilde q(t)} M \big) 
        = 
        (\widetilde\Phi_{0,t})_* \Big(T_{\tilde \la_0}\big( T^* _{q_0} M \big)\Big) 
        = 
        (\widetilde\Phi_{0,t})_* \Pi
    $.
	So, putting these remarks together, from \eqref{eq:inters1} we obtain
	\begin{align}
		(\Phi_t)_* \mathfrak M_0
		\cap
		(\widetilde \Phi_{0,t})_* \Pi
		&=
		\{0\},
		\quad t\in [0,T],
		\\
        \label{eq:dim-cond-suff-sottospazi}
		(\widetilde \Phi_{0,t} ^{-1})_* (\Phi_t)_* \mathfrak M_0
		\cap
        \Pi
		&=
		\{0\},
		\quad t\in [0,T].
	\end{align}
    \begin{claim}
        Let $\mathfrak{M}_t \coloneqq (\widetilde \Phi_{0,t} ^{-1})_* (\Phi_t)_* \mathfrak{M}_0$, $t\in[0,T]$. 
        We denote by $\mathfrak{M}_{t_j}^- \coloneqq \lim_{t\to t_j -} \mathfrak{M}_t$.
        The space $\mathfrak{M}_t$ can be determined recursively as follows:
        \begin{enumerate}
            \item if $t\in[0,t_1)$, then $\mathfrak{M}_t = (\phi_{t}^1)_* \mathfrak{M}_0$;
            \item for $j=1,\dots,k$, then $\mathfrak{M}_{t_j}$ is 
            $
                (
                \mathfrak{M}_{t_j}^-
                \cap 
                \mathfrak g_j(\mathfrak{M}_{t_j}^-)
                ) 
                \cup
                \bigcup_{\alpha\in[0,1]} 
                \R \big(
                    \alpha \vec{\mathcal  H}^j + (1-\alpha) \vec {\mathcal  H}^{j+1}
                \big)
            $
            ;
            \item for $j=2,\dots,k+1$ and $t\in(t_{j-1},t_j)$, then 
            $
            \mathfrak{M}_t  
            = 
            \left((\phi_{t}^j)_* \circ \mathfrak{g}_{j-1} \right)
            (\mathfrak{M}_{t_{j-1}}^{-})
            $;
        \end{enumerate}
        where the $\phi^j_t$ are defined in \eqref{eq:def-phi-j}. 
        In particular, for $\mathfrak{M}_0 = \Pi$, then $\mathfrak{M}_t = \Lambda_t$ for $t\neq t_j$ and $\mathfrak{M}_{t_j}$ is the convex hull of $\Lambda_{t_j}$ and $\Lambda_{t_j}^+$.
    \end{claim}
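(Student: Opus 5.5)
The plan is to compute $T_{\tilde\la_t}\mathcal M$ directly from the structure of $\mathcal M$ as the union of the pieces $\Phi_t(\mathcal M_0)$, and then transport it back to $\tilde\la_0$ by $(\widetilde\Phi_{0,t}^{-1})_*$.

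\emph{Regular times.} For $t\in(t_{j-1},t_j)$ the flow $\Phi_t$ is smooth near the points of $\mathcal M_0$ close to $\tilde\la_0$, so $\Phi_t(\mathcal M_0)$ is smooth near $\tilde\la_t$. Write $\Phi_t=\Phi_{t-t_{j-1}}\circ\Phi_{t_{j-1}}$ and recall the definition \eqref{eq:def-phi-j} of $\phi^j_t$. Then
\begin{equation}
(\widetilde\Phi_{0,t}^{-1})_*(\Phi_t)_*=(\phi^j_t)_*\circ(\widetilde\Phi_{0,t_{j-1}}^{-1})_*(\Phi_{t_{j-1}})_* .
\end{equation}
For $j=1$ this is item 1, since $\widetilde\Phi_{0,0}=\Phi_0=\mathrm{Id}$.

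\emph{Switching times: the right limit.} Let $\la=\Phi_{t_j}(\la_0)$ and consider a nearby initial point $\la_0'\in\mathcal M_0$. By Proposition~\ref{prop:pw-regular-controls} its trajectory hits the hypersurface $\{H^j=H^{j+1}\}$ at a time $t_j(\la_0')$, which depends smoothly on $\la_0'$ by transversality, since $\{H^j,H^{j+1}\}\neq0$. Linearizing the switch gives the classical jump formula for the differential of a piecewise smooth flow across a transversal switching surface $S$:
\begin{equation}
v\mapsto v+\frac{d(H^{j+1}-H^j)(v)}{\{H^j,H^{j+1}\}}\,(\vec H^{j+1}-\vec H^j).
\end{equation}
Since $d(H^{j+1}-H^j)(v)=\sigma(v,\vec H^{j+1}-\vec H^j)$, one checks this agrees with $\mathfrak g_{\vec H^j,\vec H^{j+1}}$ on vectors $v$ tangent to $H^{-1}(0)$, that is, on vectors with $\sigma(v,\vec H^j)=0$. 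This holds for all of $T\mathcal M$, because $\mathcal M\subset H^{-1}(0)$. Transported back to $\tilde\la_0$, the right limit is $\mathfrak g_j(\mathfrak M_{t_j}^-)$, and flowing it by $(\phi^{j+1}_t)_*$ gives item 3.

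\emph{Switching times: the tangent cone at the corner.} At $t=t_j$ itself, $\mathcal M$ near $\tilde\la_{t_j}$ is the union of two smooth half-manifolds glued along $\partial\mathcal M=\mathcal M\cap\{H^j=H^{j+1}\}$. Its tangent space $T\partial\mathcal M$ is the common part $\mathfrak M^-_{t_j}\cap\mathfrak g_j(\mathfrak M^-_{t_j})$, since $\mathfrak g_j$ fixes exactly $(\vec{\mathcal H}^{j+1})^\angle$. The two half-spaces are generated by $\vec{\mathcal H}^j$ and $\vec{\mathcal H}^{j+1}$ respectively. The Clarke generalized tangent, as in Proposition~\ref{prop:struct-Lagr-manif}, is then the union over $\alpha\in[0,1]$ of $T\partial\mathcal M+\R(\alpha\vec{\mathcal H}^j+(1-\alpha)\vec{\mathcal H}^{j+1})$. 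The claim's formula should be read this way, as a sum rather than a union.

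\emph{The case $\mathfrak M_0=\Pi$.} Compare items 1–3 with Definition~\ref{def:Jacobi-curve-intro} and Theorem~\ref{thm:struttura-Jacobi-curve}. The one extra point is $t=0$: because $\mathcal M$ includes the flow direction, $\mathfrak M_0$ should effectively be $\mathfrak g_0(\Pi)$. This is consistent with $\Lambda_0^+$, and I would note that the relevant $\mathfrak M_0$ is $T\mathcal M_0+\R\vec H^1$.

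\emph{Main obstacle.} The hardest step is the switching-time linearization. It requires verifying that $\la_0'\mapsto t_j(\la_0')$ is smooth and that the resulting jump equals $\mathfrak g_j$ exactly. This means using $\mathcal M\subset H^{-1}(0)$ to turn the functional $d(H^{j+1}-H^j)$ into the symplectic form against $\vec{\mathcal H}^{j+1}$. The identity $\mathfrak g_j\vec{\mathcal H}^j=\vec{\mathcal H}^{j+1}$ gives a useful check on the result.
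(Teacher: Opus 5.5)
Your proposal is correct and follows the paper's route. Item 1 comes straight from the definition of $\phi^1_t$. Item 3 comes from composing the regular transported-back flows with the jump at each switching time. Item 2 comes from the two-sided structure of $\mathcal M$ near $\tilde\la_{t_j}$, as in Proposition~\ref{prop:struct-Lagr-manif}. The one difference is that you derive $T^+=\mathfrak g_{\vec H^j,\vec H^{j+1}}(T^-)$ explicitly: you linearize the switching time to get the jump map and observe that on $\ker dH^j$ it coincides with $\mathfrak g$, whereas the paper simply asserts this from $\sigma(\vec H^j,\vec H^{j+1})\neq 0$.

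Both of your readings of the statement are right. The cone at $t_j$ should be $\bigcup_{\alpha\in[0,1]}\bigl(T\partial\mathcal M+\R(\alpha\vec{\mathcal H}^j+(1-\alpha)\vec{\mathcal H}^{j+1})\bigr)$, not a set-theoretic union. The ``in particular'' requires starting from $\mathfrak g_0(\Pi)$ rather than $\Pi$, which matches how the paper itself later handles the initial jump.
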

    \begin{proof}[Proof of the Claim]
        For $t\in[0,t_1)$, then $\phi_{t}^1 = \widetilde \Phi_{0,t} ^{-1} \circ \Phi_t$ by definition.
        This proves point 1. of the Claim.  
        
        The same argument used in Proposition \ref{prop:struct-Lagr-manif} shows that $T_{\la_{t_j}}\mathcal{M}$ is the convex hull of $T_{\la_{t_j -}}\mathcal{M}$ and $T_{\la_{t_j +}}\mathcal{M}$, where $ T_{\la_{t_j \pm}}\mathcal{M} \coloneqq \lim_{t\to t_j \pm} T_{\la_{t}}\mathcal{M}$.
        Since $\sigma_{\la_{t_j}}(\vec H^j , \vec H^{j+1}) \neq 0$, the same argument also shows that $T_{\la_{t_j +}}\mathcal{M} = \mathfrak{g}_{\vec H^j, \vec H^{j+1}}( T_{\la_{t_j -}}\mathcal{M} ) $. 
        Since $\mathfrak{M}_t = (\widetilde \Phi_{0,t} ^{-1})_* (\Phi_t)_* \mathfrak M_0$, point 2. follows. 
        Recall that, for $t\in(t_{j-1},t_j]$, 
        \begin{equation}
            (\Phi_t)_* \mathfrak M_0
            = 
            (\exp((t-t_{j-1})\vec H^j)_* 
            \circ 
            \mathfrak g_{\vec H^j,\vec H^{j-1}} 
            \circ 
            \exp((t_{j-1}-t_{j-2})\vec H^{j-1})_* 
            \circ 
            \dots 
            \circ
            \mathfrak g_{\vec H^2,\vec H^{1}} 
            \circ
            \exp(t_{1}\vec H^{1})_* 
            \mathfrak M_0
        \end{equation}
        So, letting $(\widetilde \Phi_{j}^-)_*= \lim_{t\to t_j-}(\widetilde \Phi_{0,t})_*$ and $(\Phi_{j}^-)_*= \lim_{t\to t_j-}( \Phi_{t})_*$, it follows that 
        \begin{align}
            \mathfrak{M}_t
            =
            (\widetilde \Phi_{0,t} ^{-1})_* (\Phi_t)_*\mathfrak{M}_0
            &=
            (\widetilde \Phi_{0,t} ^{-1})_* (\Phi_{t-t_{j-1}})_* 
            (\widetilde \Phi_{j-1}^-)_*
            (\widetilde \Phi_{j-1}^-)_* ^{-1}
            \circ 
            \mathfrak g_{\vec H^j,\vec H^{j-1}}
            \circ
            (\Phi_{t_{j-1}}^-)_*\mathfrak{M}_0
            =
            \\
            &=
            (\phi_t^j)_*
            \circ 
            \mathfrak g_{j}
            \circ
            (\widetilde \Phi_{j-1}^-)_* ^{-1}
            (\Phi_{t_{j-1}}^-)_*\mathfrak{M}_0
            =
            \\
            &=(\phi_t^j)_*
            \circ 
            \mathfrak g_{j}
            \circ
            \mathfrak{M}_{t_{j-1}}^-,
        \end{align}
        which yields point 3.
    \end{proof}
    Thus, letting $\phi_t = (\widetilde \Phi_{0,t} ^{-1})_* (\Phi_t)_*$ and $B_t=(\phi_t)_*$, Equation \eqref{eq:dim-cond-suff-sottospazi} reads 
    \begin{equation}
		B_t \mathfrak M_0
		\cap
		\Pi
		=
		\{0\},
		\quad t\in [0,T].
	\end{equation}
	Notice that $B_t \mathfrak g_1\Pi$ coincides with the Jacobi curve of $\tilde u,\tilde \la$ for $t\neq t_j$, $j=0,1,\dots,k$.
    
    Now, we know, by assumption, that there are no conjugate points along the trajectory $\tilde \la_t$.
    More precisely, define 
    \begin{equation}
        C_t
        \coloneqq
        \{
            v\in \Pi \mid B_s v = v \text{ for all } s\in[0,t]
        \}
        =
        \bigcap_{\tau\in [0,t]} \Lambda_\tau \cap \Pi
        ,
    \end{equation}
    the subspace of constant vertical directions in the Jacobi curve at time $t$. 
    Then, the absence of conjugate points can be reformulated as
	\begin{equation}
        \label{eq:no-conj-points}
        B_t \Pi \cap \Pi 
        = 
        C_t.
    \end{equation}
    By definition of Jacobi curve, we have that $C_0=\Pi$.
    We first consider the case of $C_t=\{0\}$ for $t\in(0,T]$. 
    Then, we will see how we can reduce the general case to this simpler one. 
    
    If $C_t=\{0\}$ for $t\in(0,T]$, the idea is the following: from Remark \ref{rem:rel-Maslov-pi}, the differential of $\pi|_{\mathcal L}$ at $\tilde \la_t$ contains only nonsingular maps if $C_t = \{0\}$. 
    Hence, by Clarke's invertibility theorem, $\pi$ is a bi-Lipschitz map from a neighbourhood of $\tilde\la_t$ onto its image. 
    Thus, the only point where $\pi$ fails to be locally bi-Lipschitz is $\tilde\la_0$.
    To fix this issue, we proceed as follows:
    we fix $\eps>0$ and we show that in every neighbourhood of $\Pi$ it is possible to find a suitable Lagrangian subspace $\mathfrak M_0$ such that
    \begin{equation}
        \label{eq:choice-of-L0}
        B_t \mathfrak M_0 \cap \Pi =\{0\} \quad \text{for } t\in[0,\eps].
    \end{equation}
    Notice that, for every $\eps > 0$ and any neighbourhood $\mathcal{N}$ of $\Pi$, there are Lagrangian subspaces $L\in\mathcal{N}$ such that $B_t L \cap \Pi \neq \{0\}$ with $t<\eps$ (see Figure \ref{fig:choice-L0}), so $\mathfrak M_0$ must be chosen carefully.
    Then, choosing $\eps>0$ small enough, the continuity of the flow $B_t$ ensures that
    \eqref{eq:choice-of-L0} holds also for $t\in[\eps,T]$. 
    
    \begin{figure}
        \centering
        \begin{subfigure}{0.4\linewidth}
            \begin{tikzpicture}
    \draw[->] (-3,0) -- (3,0) node[anchor=north] {$\Delta$};
    \draw[->] (0,-3) -- (0,3) node[yshift=0.5cm] {$\Pi $};

    \draw[thick, blue,domain=-2.5:2.5] plot ({\x*cos(70)},{\x*sin(70)}) ;
    \draw[thick, blue,domain=-2.5:2.5] plot ({\x*cos(110)},{\x*sin(110)}) ;
    \draw (-0.5,2.7) node[above] {$\mathcal{N}$} ;
    \draw [blue,dashed,domain=70:110] plot ({2.5*cos(\x)}, {2.5*sin(\x)});

    \draw[thick, green , domain=-2.5:2.5] plot ({\x*cos(80)},{\x*sin(80)}) ;
    \draw[thick, green ] ({2.5*cos(80)},{2.5*sin(80)}) node[above] {$\mathfrak M_0$} ;
    \draw [<-, green , domain=60:80] plot ({2.25*cos(\x)}, {2.25*sin(\x)});
    
    \draw[blue, thin] (0,0) circle [radius=2];
    
    \fill (0,0) circle (1.5pt) node[anchor=north east] {$O$};
\end{tikzpicture}
            \caption{Good choice of $\mathfrak M_0$, corresponding to $S_0>0$}
        \end{subfigure}
        \hfill
        \begin{subfigure}{0.4\linewidth}
            \begin{tikzpicture}
    \draw[->] (-3,0) -- (3,0) node[anchor=north] {$H$};
    \draw[->] (0,-3) -- (0,3) node[above] {$\Pi$};

    \draw[thick, blue,domain=-2.5:2.5] plot ({\x*cos(70)},{\x*sin(70)}) ;
    \draw[thick, blue,domain=-2.5:2.5] plot ({\x*cos(110)},{\x*sin(110)}) ;
    \draw (0.5,2.7) node {$\mathcal{N}$} ;
    \draw [blue,dashed,domain=70:110] plot ({2.5*cos(\x)}, {2.5*sin(\x)});

    \draw[thick, red ,domain=-2.5:2.5] plot ({\x*cos(100)},{\x*sin(100)}) ;
    \draw[thick, red ] ({2.5*cos(100)},{2.5*sin(100)}) node[above] {$\mathfrak M_0$};
    \draw [<-, red ,domain=80:100] plot ({2.25*cos(\x)}, {2.25*sin(\x)});
    \fill[red] (0,2.25) circle (1.5pt);

    \draw[blue, thin] (0,0) circle [radius=2];
    
    \fill (0,0) circle (1.5pt) node[anchor=north east] {$O$};
\end{tikzpicture}
            \caption{Bad choice of $\mathfrak M_0$, corresponding to $S_0<0$}
        \end{subfigure}
        \caption{
        Graphical explanation on how the initial subspace $\mathfrak M_0$ should be chosen, for $n=1$ (so, $L(T_{\la_0}(T^*M)) \simeq S^1$). 
        Assuming that a curve with positive velocity moves clockwise, one sees that if we choose $\mathfrak M_0$ in the right-half of $\mathcal{N}$ (case (a)), then $B_t \mathfrak M_0$ does not cross the vertical space $\Pi$ for small times. 
        If instead we choose $\mathfrak M_0$ in the left-half of $\mathcal{N}$, then there can be an intersection with $\Pi$ at arbitrarily small times. 
        }
        \label{fig:choice-L0}
    \end{figure}
    \medskip
    
    In order to find a suitable $\mathfrak M_0$, we can use once again local coordinates in the Lagrangian Grassmannian $\mathfrak{L}(T_{\tilde \la_0}(T^*M))$. 
    We can choose any Lagrangian subspace $\Delta$ such that $ \Pi \cap \Delta = \{0\}$ and consider the local chart in $\Delta^\pitchfork$ for which $\Pi$ corresponds to the null symmetric matrix. 
    Recall that, by Proposition~\ref{prop:Jacobi-curve-sono-monotone}, the curve $B_t \mathfrak M _0$ is monotone increasing. 
    This means that, if we choose any initial Lagrangian subspace $\mathfrak M_0$ corresponding to a matrix $S_0 > 0$ and we let it evolve by the flow of $B_t$, then, denoting by $S_t$ the matrix corresponding to the subspace $B_t \mathfrak M_0$, we have
    \begin{equation}
        \dot S_t \geq 0,
    \end{equation}
    and in particular $S_t > 0$.
    Therefore, for $\eps$ sufficiently small, if $t \in [0,\eps]$ then $B_t \mathfrak M _0$ stays in the chart $\Delta^{\pitchfork}$ and we have $S_t > 0$. 
    This implies that $S_t$ does not cross the vertical subspace $\Pi$, since it corresponds to the zero matrix.

    Now, if in addition we choose $S_0 > 0$ and close enough to the zero matrix, then by continuity of the flow $B_t$ there are no intersections with $\Pi$ also for $t\in[\eps,T]$. 
    Indeed, let $(\mathcal{N}_k)_{k\in\N}$ be a countable neighbourhood system of $\Pi$, with $\mathcal N_{k+1}\subset \mathcal{N}_k$. 
    Suppose, by contradiction, that for every $k\in \N$ there is a matrix $S^{(k)}\in \mathcal{N}_k$, $S^{(k)}>0$, such that $\lim_{k\to \infty} S^{(k)} = 0$, for which the corresponding curve of Lagrangian subspaces $(L_t ^{(k)})_{t\in[0,T]}$ has a non-trivial intersection with the vertical space at time $t_k \in [\eps,T]$. 
    Then, since $[\eps,T]$ is compact, there is a limit point $t_\infty\in[\eps,T]$ of the sequence $(t_k)_{k\in \N}$. 
    Moreover, by continuity of the flow $B_t$, also the curve $B_t \Pi$ has a non-trivial intersection with the vertical space at time $t_\infty$, which is a contradiction with \eqref{eq:no-conj-points}. 
    
    Summarizing, we have proven that 
    \begin{prop}
        \label{prop:suff-cond-no-const-sol}
        If $C_t=\{0\}$ for $t\in(0,T]$, then, for $\eps>0$ small enough, there is a sufficiently small neighbourhood $\mathcal N$ of $\Pi$ and a Lagrangian subspace $\mathfrak{M}_0 \in \mathcal N$ such that $B_t  \mathfrak{M}_0 \cap \Pi =\{0\}$ for $t\in[0,T]$. 
    \end{prop}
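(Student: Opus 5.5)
The proof has three steps: choose the initial subspace in a chart around $\Pi$, control small times using monotonicity of the Jacobi curve, and control the remaining times by a compactness argument based on the absence of conjugate points. We use the $(\Pi,\Delta)$-coordinates on $\mathfrak L(T_{\tilde\la_0}(T^*M))$ centred at $\Pi$: fix $\Delta$ Lagrangian with $\Delta\cap\Pi=\{0\}$, so that $\Pi$ corresponds to the zero matrix. We take $\mathfrak M_0$ to be the subspace associated with $\delta I$ for a small $\delta>0$, or more generally any $S_0>0$ close to $0$.

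\textbf{Small times.} The curve $t\mapsto B_t\mathfrak M_0$ is piecewise $C^1$, with the jumps $\mathfrak g_j$ from the Claim at the switching times. We may take $\eps<t_1$, so the first discontinuity, which occurs at $t=0$ (the map $\mathfrak g_1$, or $\mathfrak g_0$ in the notation of Definition \ref{def:Jacobi-curve-intro}), is the only one in $[0,\eps]$. The point that needs care is that at $t=0$ the Jacobi curve itself jumps from $\Pi$ to $\mathfrak g_1\Pi$. We therefore read the Claim as describing $B_t$ as the flow composed with these jumps, and we verify directly that the jump preserves positivity.

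Concretely, $\mathfrak g$ fixes $\vec{\mathcal H}^{1\angle}$ and replaces one direction by $\vec{\mathcal H}^1$. Since $\mathfrak g_1\Pi\cap\Pi=\Pi\cap\vec{\mathcal H}^{1\angle}$, this intersection is exactly $C_{0^+}$. Under the standing assumption $C_t=\{0\}$ for $t>0$, it follows that $\vec{\mathcal H}^1\notin\Pi$ (otherwise $\mathfrak g_1\Pi=\Pi$ and $C_t\neq 0$ for small $t$) and that $d=1$. In general we argue with the positive Maslov index instead: the interpolating segment between $\mathfrak M_0$ and its image is increasing.

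Inside $(0,t_1)$ we use Proposition \ref{prop:Jacobi-curve-sono-monotone} together with the Corollary after it. The Hamiltonian $b^1_t$ is nonnegative, so in a chart where the curve stays, $\dot S_t\ge0$. Hence $S_t\ge S_0>0$ as long as $B_t\mathfrak M_0$ remains in the chart $\Delta^\pitchfork$, which holds for $t\in[0,\eps]$ once $\eps$ and $S_0$ are small, by continuity of $B_t$ on $[0,\eps]$. A positive definite $S_t$ means $B_t\mathfrak M_0\cap\Pi=\{0\}$.

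\textbf{Remaining times.} We argue by contradiction and compactness. Suppose that for $S^{(k)}>0$ with $S^{(k)}\to0$ there are times $\tau_k\in[\eps,T]$ with $B_{\tau_k}\mathfrak M_0^{(k)}\cap\Pi\neq\{0\}$. Passing to a subsequence, $\tau_k\to\tau_\infty$. Since $B_t$ depends continuously on the initial subspace, uniformly in $t$, the limit gives $B_{\tau_\infty}\Pi\cap\Pi\neq\{0\}$ (or the corresponding one-sided limit if $\tau_\infty$ is a switching time). This contradicts \eqref{eq:no-conj-points} with $C_{\tau_\infty}=\{0\}$.

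\textbf{Main obstacle.} The delicate point is that $\tau_k$ may accumulate at a switching time $t_j$. There we need that, in addition to $\Lambda_{t_j}\cap\Pi=0$ and $\Lambda_{t_j}^+\cap\Pi=0$, every element of the convex hull $\mathfrak M_{t_j}$ meets $\Pi$ trivially. This is exactly the condition that $t_j$ is not conjugate, through Proposition \ref{prop:punti-di-switch-sono-coniugati-iff-there-is-maslov-index} and Corollary \ref{prop:conj-point-disc-time}. Since the set of $\alpha$-interpolants is compact, transversality to $\Pi$ is an open condition on it, and the limiting argument goes through on $[\eps,T]$ uniformly, including neighbourhoods of the switching times.
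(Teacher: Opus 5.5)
Your skeleton matches the paper's: a $(\Pi,\Delta)$ chart in which $\mathfrak M_0$ has a small matrix $S_0>0$, monotonicity on $[0,\eps]$, and compactness on $[\eps,T]$. The gap is in the small-time step, at exactly the point you flag.

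Once $B_t$ is read as including the jump at $t=0$, what must be positive in the chart is the jumped subspace $\mathfrak M_0^+\coloneqq(\mathfrak M_0\cap\vec{\mathcal H}^{1\angle})+\R\vec{\mathcal H}^1$, and your verification of this fails. The identity $\mathfrak g\Pi\cap\Pi=\Pi\cap\vec{\mathcal H}^{1\angle}$ is correct, but this space is not $C_{0^+}$. The space $C_t$ consists of vertical vectors lying in every $\Lambda_\tau$, $\tau\in[0,t]$. By contrast, $\Pi\cap\vec{\mathcal H}^{1\angle}=\Pi\cap\Lambda_0^+$ only records the intersection at the instant $0^+$. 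For $\tau>0$ the subspaces $\Lambda_\tau=B^1_\tau\Lambda_0^+$ in general meet $\Pi$ in a smaller space, typically $\{0\}$.

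So nothing forces $d=1$: $C_t=\{0\}$ for $t>0$ is the typical situation in any dimension. The remark that the interpolating segment is increasing gives no transversality for small $t>0$. Your next paragraph also concludes $S_t\ge S_0$, with $S_0$ the matrix of $\mathfrak M_0$. With the jump included, monotonicity only gives $S_t\ge S_0^+$, where $S_0^+$ is the matrix of $\mathfrak M_0^+$. Since $\mathfrak M_0^+$ is close to $\Lambda_0^+$, which meets $\Pi$ in dimension $d-1$, the sign of $S_0^+$ is exactly what has to be proved.

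A direct computation settles it. Take Darboux coordinates with $\Pi=\{x=0\}$, $\Delta=\{p=0\}$, $\sigma((p_1,x_1),(p_2,x_2))=\langle p_1,x_2\rangle-\langle p_2,x_1\rangle$, and write $\vec{\mathcal H}^1=(a,b)$ and $\mathfrak M_0=\{(p,\delta p)\}$. Then $\mathfrak M_0^+$ consists of the vectors $(p+sa,\delta p+sb)$ with $p\perp b-\delta a$ and $s\in\R$. Its quadratic form is $\delta|p|^2+2s\delta\langle a,p\rangle+s^2\langle a,b\rangle$. For small $\delta$ this is positive definite if and only if $\langle a,b\rangle>0$, which is a condition on $\Delta$ and not automatic.

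If $\vec{\mathcal H}^1\in\Pi$ there is no jump. Otherwise $b\neq0$, and you can replace $\Delta$ by $\{p=-\mu x\}$ with $\mu$ large. This turns $\langle a,b\rangle$ into $\langle a,b\rangle+\mu|b|^2>0$, while $\mathfrak M_0$ keeps the positive matrix $\frac{\delta}{1+\mu\delta}\mathrm{Id}$. In this adapted chart $S_0^+>0$, and your monotonicity argument on $(0,\eps]$ goes through.

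The paper applies monotonicity to $S_0>0$ directly and does not isolate the jump. In your reading you cannot skip it, because including the jump is exactly what makes the limit curve in your compactness step equal to the Jacobi curve. Your convex-hull discussion at switching times is not needed for this statement; it belongs to the later use of Clarke's theorem in the proof of Theorem~\ref{thm:suff-cond-no-conj-points}.
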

    \noindent
    This completes the proof of Theorem \ref{thm:suff-cond-no-conj-points} the case $C_t=\{0\}$ for $t\in(0,T]$.

    \medskip
    Now, we have to consider the case $C_t \neq \{0\}$. 
    It is possible to reduce it to the previous case $C_t=\{0\}$ passing to the quotient $(C_t)^ \angle / C_t$.
    We describe this procedure precisely. 
    
    The family of subspaces $C_t$ is monotone non-increasing, that is:
    \begin{equation}
        C_{\tau_2} \subset C_{\tau_1} \: \text{if } \tau_1 \leq \tau_2 . 
    \end{equation}
    We have $C_0= \Pi$ and the family $C_t$ is continuous from the left. 
    Denote its points of discontinuity as 
    \begin{equation}
        0 = s_0 < s_1 < s_2 < \dots < s_m \leq T. 
    \end{equation}
    Notice that, by definition of Jacobi curve, we always have a jump at $s_0=0$.
    Hence, the family $C_t$ is constant on the segments $(s_i,s_{i+1}]$. We can then decompose each $ C_t $ as
    \begin{equation}
        C_t = E_{i+1} \oplus \dots \oplus E_m, \quad t\in(s_i,s_{i+1}]. 
    \end{equation}
    so that $C_0 = \Pi = E_{0} \oplus E_1\oplus \dots \oplus E_m $, $C_t=E_{1} \oplus \dots \oplus E_m $ for $t\in(0,s_1]$, and so on.
    We can construct the corresponding dual splitting in $T_{\tilde \la_0}(T^*M)$, that is, we can find isotropic subspaces $F_0,\dots, F_m$ such that
    \begin{equation}
    \label{eq:splitting-sympl}
        \Delta = 
            F_0 \oplus \dots \oplus F_m, 
        \quad 
        \sigma_{\la_0} ( E_i , F_j ) = 0, \text{ if } i \neq j,
    \end{equation}
    and $(E_i \oplus F_i, \sigma |_{E_i \oplus F_i})$ is a symplectic space for $i=0,\dots,m$.
    We have the following Lemma, which is a straightforward extension of Lemma 21.2 in \cite{AgSa}.
    \begin{lem}
    \label{lem:AgSa21.2}
        Fix $\Delta_0$ any Lagrangian subspace in $T_{\tilde \la_0}(T^*M)$ such that $ \Pi\cap \Delta_0=\{0\}$. Then, there are $\Delta_1,\dots,\Delta_m$ Lagrangian subspaces, $\Delta_i\cap  \Pi = 0$, and $\eps_1\geq\dots\geq\eps_m>0$ such that, for any Lagrangian subspace $ \mathfrak M_i$, $  \mathfrak M_i\cap \Delta_0 = \{0\}$ and
        corresponding to the matrix $\eps\I$, with $\eps < \eps_i$, in the $( \Pi,\Delta_0)$ local chart, we have
        \begin{enumerate}
            \item $B_t  \mathfrak M_i \cap  \Pi = \{0\}$, for $t\in[0,s_i]$;
            \item $ B_t  \mathfrak M_i \cap \Delta_i = \{0\} $, for $t \in [0,s_i]$ and $S_0 > 0$ in the $( \Pi,\Delta_i)$ local chart. 
        \end{enumerate}
    \end{lem}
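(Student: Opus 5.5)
The proof goes by induction on $i$, following the argument of Lemma 21.2 in \cite{AgSa}, which is built on the symplectic splitting \eqref{eq:splitting-sympl}. For $i=0$ there is nothing to prove: $s_0=0$, and we take $\Delta_0$ as given. Suppose now that $\Delta_{i-1}$ and $\eps_{i-1}$ have been constructed. Then for every $\eps<\eps_{i-1}$ the curve $B_t\mathfrak M_{i-1}$ meets neither $\Pi$ nor $\Delta_{i-1}$ on $[0,s_{i-1}]$, and $S_0>0$ in the $(\Pi,\Delta_{i-1})$ chart. The task is to extend these conclusions from $[0,s_{i-1}]$ to $[0,s_i]$, possibly after shrinking $\eps$ and replacing $\Delta_{i-1}$ by a new transversal $\Delta_i$.

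\emph{Step 1: decompose the flow on $(s_{i-1},s_i]$.} On this interval the subspace $C_t=E_i\oplus\dots\oplus E_m$ is constant. By definition, $B_t$ fixes every vector of $C_t$. Since $B_t$ is symplectic, it also preserves $C_t^\angle$, and hence induces a symplectic flow $\bar B_t$ on the quotient $C_t^\angle/C_t$. This quotient is identified with $\bigoplus_{j<i}(E_j\oplus F_j)$. The no-conjugate-point hypothesis \eqref{eq:no-conj-points}, $B_t\Pi\cap\Pi=C_t$, becomes the statement that the projected Jacobi curve meets the projected vertical space $\bar\Pi=E_0\oplus\dots\oplus E_{i-1}$ only trivially for $t\in(s_{i-1},s_i]$.

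The piecewise flows are all monotone: this holds on each regular arc by Proposition \ref{prop:Jacobi-curve-sono-monotone} and the corollary after it, and across the jumps $\mathfrak g_j$ by the convex-hull description in the Claim. Monotonicity passes to the quotient. So in the quotient we are exactly in the situation of Proposition \ref{prop:suff-cond-no-const-sol}, with $C_t=\{0\}$. That proposition, applied on $(s_{i-1},s_i]$, yields a neighbourhood of $\bar\Pi$ whose positive-matrix elements avoid $\bar\Pi$ under $\bar B_t$.

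\emph{Step 2: lift back to $T_{\tilde\la_0}(T^*M)$.} For $\mathfrak M_i=\{(p,\eps p)\}$ in the $(\Pi,\Delta_0)$ chart, take the $E_j\oplus F_j$ components with $j\ge i$. The flow fixes $E_j$, so by monotonicity these components stay in the "positive" cone, and they cannot produce vertical intersections for small $\eps$. Choose $\Delta_i$ close to $\Delta_{i-1}$, adapted to the splitting so that $\Delta_i\cap\Pi=0$, and transverse to the compact arc $\{B_t\mathfrak M_i\}_{t\in[0,s_i]}$. Such a transversal exists because, for $\eps$ small, this arc stays close to the compact arc $\{B_t\Pi\}$, which has only finitely many jumps. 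A compactness-and-contradiction argument, as in Proposition \ref{prop:suff-cond-no-const-sol}, then gives $\eps_i\le\eps_{i-1}$ such that conclusions 1 and 2 hold for every $\eps<\eps_i$. Finally, $S_0>0$ in the $(\Pi,\Delta_i)$ chart because positivity is an open condition and $\Delta_i$ is a small perturbation of $\Delta_{i-1}$.

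The main obstacle is the behaviour near $t=s_{i-1}^+$, where the intersection with $\Pi$ drops dimension. There the components in $E_{i-1}\oplus F_{i-1}$ start at $\eps\I$ close to the vertical space. One has to show that they move into the positive side, and do not cross back through $\Pi$ at arbitrarily small times after $s_{i-1}$. My first attempt would use the strict positivity that makes $E_{i-1}$ leave $C_t$: the velocity form of the Jacobi curve, or the jump $\mathfrak g_j$ if $s_{i-1}$ is a switching time, should be positive on $E_{i-1}$ modulo $C_t$. This would give $\dot S_t>0$ there, so the argument of Figure \ref{fig:choice-L0} applies locally. If $s_{i-1}=t_j$ is a switching time, I would instead invoke Proposition \ref{prop:punti-di-switch-sono-coniugati-iff-there-is-maslov-index} together with the hypothesis that $t_j$ is not conjugate, which gives zero jump Maslov index and hence no crossing.
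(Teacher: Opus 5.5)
The induction on $i$ and the reduction modulo the constant subspace $C_t$ are the paper's skeleton. However, the step you flag as the main obstacle is resolved by a mechanism that does not work.

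\textbf{The obstacle near $s_{i-1}^+$.} Proposition~\ref{prop:suff-cond-no-const-sol} cannot be quoted verbatim in the quotient. Its hypothesis $C_t=\{0\}$ fails on $(0,s_{i-1}]$. If you restart at $s_{i-1}$ instead, the reduced curve starts from $\bar B_{s_{i-1}}\bar{\mathfrak M}$, whose unperturbed counterpart $B_{s_{i-1}}\Pi$ still contains $E_{i-1}\subset\Pi$.

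Strict positivity of the velocity on $E_{i-1}$ is not available. The Jacobi curve is only non-strictly increasing, $b^j_t=\frac12\operatorname{Hess}_{\tilde\la_0}\matheuler H^j_t\geq 0$. Since $2b^j_t(v)=\sigma(v,\vec b^j_t(v))$, the form $b^j_t$ vanishes on every vector fixed by the flow. So on a regular arc the velocity form vanishes on $E_{i-1}$ for $t<s_{i-1}$, hence also at $s_{i-1}$ by continuity. On bang-bang arcs $Z_t=0$ and $B^j_t=\I$, so the velocity is identically zero.

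Your switching-time alternative does not transfer either. Proposition~\ref{prop:punti-di-switch-sono-coniugati-iff-there-is-maslov-index} concerns the unperturbed pair $(\Lambda_{t_j},\Lambda^+_{t_j})$. Their intersection with $\Pi$ is destroyed by the perturbation, and the triple index is not continuous under it.

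The missing idea is condition~2 of your induction hypothesis, which you state but never use. From $S_0>0$ in the $(\Pi,\Delta_{i-1})$ chart, transversality to $\Delta_{i-1}$ on $[0,s_{i-1}]$, and monotonicity (jumps included, as you correctly note), you get $S_t\geq S_0>0$ there. So at $s_{i-1}$ the curve lies strictly on the positive side of $\Pi$. Non-strict monotonicity then keeps $S_t$ nonsingular, i.e.\ $B_t\mathfrak M\cap\Pi=\{0\}$, for as long as the curve stays transverse to $\Delta_{i-1}$. This replaces $S_0=\eps\I$ when the argument of Proposition~\ref{prop:suff-cond-no-const-sol} is rerun in the quotient from $s_{i-1}$. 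The paper compresses this into ``by the argument already used'', but that argument needs exactly such a positive starting point in a fixed chart. Away from $s_{i-1}$, the reduced no-conjugate-point condition and compactness do the rest.

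\textbf{The choice of $\Delta_i$.} Taking $\Delta_i$ as a perturbation of $\Delta_{i-1}$ fails. Nothing makes it transverse to the arc on $(s_{i-1},s_i]$: if the arc crosses the Maslov train of $\Delta_{i-1}$ transversally there, it crosses that of every nearby subspace. Closeness of the arc to $\{B_t\Pi\}$ does not produce a common transversal, and $\Delta_i$ must not depend on $\eps$.

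The paper first proves condition~1 on $[0,s_i]$. It then takes $\Delta_i$ with $(\Pi,\Delta_{i-1})$-parametrization $-\delta\langle p,p\rangle$, $\delta>0$ small. Such a $\Delta_i$ is close to $\Pi$, hence transverse to the arc because the arc avoids $\Pi$. It also lies on the side of $\Pi$ opposite to $\mathfrak M$, so $\mathfrak M$ has the explicit positive form $\eps'=\eps/(1+\eps/\delta)$ in the new chart; this is not an openness argument. Then $\dot S_t\geq 0$ gives condition~2.

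\textbf{The lift back from the quotient.} This step is exact linear algebra, not monotonicity of ``components in $E_j\oplus F_j$, $j\geq i$''; $B_t$ fixes those $E_j$ but does not preserve the blocks. Since $\Pi\subset C_t^\angle$ and $B_tC_t^\angle=C_t^\angle$, one has $B_t\mathfrak M\cap\Pi=B_t(\mathfrak M\cap C_t^\angle)\cap\Pi$. The reduced statement puts this intersection inside $C_t$. Finally, $B_t(\mathfrak M\cap C_t^\angle)\cap C_t=B_t(\mathfrak M\cap C_t)=\{0\}$ because $\mathfrak M\cap\Pi=\{0\}$.
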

    \begin{proof}[Proof of Lemma \ref{lem:AgSa21.2}]
        Since $ s_0 = 0 $, the statement for $i=0$ is trivial: any pair of Lagrangian subspaces $\mathfrak M_0, \Delta_0$ transverse to $\Pi$ satisfies point 1. and 2. of the Lemma.
        Hence, to prove the full statement it suffices to show that it holds for $i+1$ assuming that it holds true up to the time $s_i$, i.e. there are $\Delta_1,\dots,\Delta_i$ Lagrangian subspaces and $\eps_1\geq\dots\geq\eps_i>0$ such that for every $\mathfrak M_i$, corresponding to the matrix $\eps\I$, with $\eps < \eps_i$, points 1. and 2. hold. 
        
        Let $t \in (s_i, s_{i+1}]$, then $C_t = E_{i+1} \oplus \cdots \oplus E_m$. Introduce a splitting of the horizontal subspace $\Delta_i$ as in \eqref{eq:splitting-sympl}:
        \begin{equation}
            \Delta_i = F_1 \oplus \cdots \oplus F_m.
        \end{equation}
        Choose any Lagrangian subspace $\mathfrak M_{i+1}$ such that $\mathfrak M_{i+1}\cap \Delta_0 = \mathfrak M_{i+1}\cap \Pi = \{0\}$ and corresponding to the matrix $\eps\I$ in the $(\Pi,\Delta_0)$. Denote
        \begin{align}
            E_1' &= E_1 \oplus \cdots \oplus E_i,  
            &E_2' &= C_t = E_{i+1} \oplus \cdots \oplus E_m,
            \\
            F_1' &= F_1 \oplus \cdots \oplus F_i, 
            &F_2' &= F_{i+1} \oplus \cdots \oplus F_m,
            \\
            \mathfrak M_0^1 &= \mathfrak M_0 \cap (E_1' \oplus F_1'), 
            &\mathfrak M_0^2 &= \mathfrak M_0 \cap (E_2' \oplus F_2').
        \end{align}
        Since $B_t E_2' = E_2'$, then the skew-orthogonal complement $(E_2')^\angle = E_1' \oplus E_2' \oplus F_1'$ is also invariant for the flow $B_t$:
        \begin{equation}
            B_t (E_2')^\angle = (E_2')^\angle.
        \end{equation}
        In order to prove that $B_t \mathfrak M_{i+1} \cap \Pi = \{0\}$ for $t\in[0,s_{i+1}]$, we compute this intersection. 
        We have $\Pi \subset (E_2')^\angle$, thus
        \begin{equation}
        \label{eq:chain-of-equality-lemma-reduction-const-sol}
            B_t \mathfrak M_0 \cap \Pi 
            = 
            B_t \mathfrak M_0 \cap B_t (E_2')^\angle \cap \Pi
            = 
            B_t (\mathfrak M_0 \cap (E_2')^\angle) \cap \Pi
            = 
            B_t \mathfrak M_0^1 \cap \Pi.
        \end{equation}
        So we have to prove that 
        $B_t \mathfrak M_0^1 \cap \Pi = \{0\}$, $t \in (s_i, s_{i+1}]$.
        Since the subspaces $E_2'$ and $(E_2')^\angle$ are invariant with respect to the flow $B_t$, the quotient flow is well-defined:
        \begin{equation}
            \tilde{B}_t : \tilde{\Sigma} \to \tilde{\Sigma}, 
            \quad 
            \tilde{\Sigma} = (E_2')^\angle / E_2'.
        \end{equation}
        In the quotient, the flow $\tilde{B}_t$ has no constant vertical vectors:
        \begin{align}
            \tilde{B}_t \tilde \Pi \cap \tilde \Pi &= \{0\}, 
            \quad 
            t \in (s_i, s_{i+1}],
            \\
            \tilde \Pi &= \Pi / E_2'.
        \end{align}
        By the argument already used in the proof of Proposition \ref{prop:suff-cond-no-const-sol}, it follows that
        \begin{align}
            \tilde{B}_t \tilde{\mathfrak M}_0^1 \cap \tilde \Pi &= \{0\}, 
            \quad 
            t \in (s_i, s_{i+1}],
            \\
            \tilde{\mathfrak M}_0^1 &= \mathfrak M_0^1 / E_2',
        \end{align}
        for $\mathfrak M_0$ sufficiently close to $\Pi$, i.e., for $\varepsilon$ sufficiently small.
        Notice that, after passing to the quotient space, the reduced differential of $\pi$ contains only linear isomorphisms, so that the previous argument for $C_t = \{0\}$ can be applied to this case.
        Thus, we obtain
        \begin{equation}
            B_t \mathfrak M_0^1 \cap \Pi \subset E_2', \quad t \in (s_i, s_{i+1}].
        \end{equation}
        Now, it follows that this intersection is empty:
        \begin{equation}
            B_t \mathfrak M_0^1 \cap \Pi \subset B_t \mathfrak M_0^1 \cap E_2'
            = 
            B_t \mathfrak M_0^1 \cap B_t E_2'
            = 
            B_t (\mathfrak M_0^1 \cap E_2') = \{0\}, \quad t \in (s_i, s_{i+1}].
        \end{equation}
        In view of Equation \eqref{eq:chain-of-equality-lemma-reduction-const-sol},
        \begin{equation}
        B_t \mathfrak M_{i+1} \cap \Pi = \{0\}, \quad t \in (s_i, s_{i+1}],
        \end{equation}
        that is, we proved condition 1. in the statement for $i+1$.
        
        Now we pass to condition 2. 
        By continuity of the flow $B_t$, there exists a horizontal Lagrangian subspace $\Delta_{i+1}$ with a $(\Pi, \Delta_i)$-parametrization $-\delta \langle p,p \rangle$, $\delta > 0$, such that $B_t \mathfrak M_{i+1} \cap \Delta_{i+1} = \{0\}$, $t \in [0, s_{i+1}]$. One can check that the subspace $\mathfrak M_{i+1}$ in $(\Pi, \Delta_{i+1})$-parametrization is given by the quadratic form
        \begin{equation}
            S_0(p,p) 
            = 
            \varepsilon' \langle p,p \rangle 
            > 
            0, 
            \quad 
            \varepsilon' 
            = 
            \frac{\varepsilon}{1 + \varepsilon/\delta} 
            < 
            \varepsilon_i.
        \end{equation}
        We already proved that $\dot S_t \ge 0$, thus
        \begin{equation}
            S_t > 0, \quad t \in [0, s_{i+1}],
        \end{equation}
        in the $(\Lambda_0, \Delta_{i+1})$-parametrization.
        Hence, condition 2. is proved for $i+1$ and the whole statement of the lemma follows.
    \end{proof} 
        
    From this Lemma, for $i=m$, we obtain that $B_t  \mathfrak M_0 \cap \Pi = \{0\}$, for $t\in[0,T]$, provided that $ \mathfrak M_0$ corresponds to the matrix $S_0 = \eps \I$, for $\eps < \eps_m$, in the $( \Pi,\Delta_0)$ local chart.
    Thus, equation \eqref{eq:choice-of-L0} is satisfied for this choice of $\mathfrak M_0$, which was exactly what we needed to complete the proof of the theorem. 
\end{proof}

Notice that, in the proof of Lemma \ref{lem:AgSa21.2}, the subspaces $\Delta_i$ are necessary because in general there can be a non-trivial intersection 
$ B_t \mathfrak M_0 \cap \Delta_0$.
But we want to avoid it since we want to use the local coordinates in $\Delta^{\pitchfork}$, for some Lagrangian subspace $\Delta$, and this is always possible by means of small perturbation of the subspaces $\Delta_i$.

\appendix

\section{Useful formulas}
In this Section, we are going to prove some formulas that are used in this paper. 
Let $M,U_1,U_2$ be smooth manifolds and $f:M\times U_i \to TM$ be a smooth map such that $f(\cdot,u)\in \operatorname{Vec}(M)$ for all $u\in U_i$, $i=1,2$. 
Define $h_u^i : T^*M \times U_i\to \R$, $ h_u(\la)=\langle \la , f(q,u)\rangle $.
We denote by $s : T^*M \to T^*(T^*M)$ the tautological (or Liouville) 1-form, that is the 1-form that to any $\la\in T^*M $ associate the linear form on $T_\la(T^*M)$ defined by $ T_\la(T^*M) \ni v \mapsto \langle \la , d_\la \pi [v] \rangle $. 
\begin{lem}
    \label{lemma:proj-Liouville}
    Let $u_i : T^*M \to U_i$, for $i=1,2$, be smooth maps such that $\pa_u h_{u_i(\la)}^i (\la)=0$ for every $\la\in T^*M$. 
    Define $h^i : T^*M \to \R$ to be the smooth functions $h^i(\la) = h^i _{u_i(\la)}(\la)$, $i=1,2$.
    Then
    \begin{itemize}
        \item $s_\la \overrightarrow{ h^i} = h_{u_i(\la)} ^i(\la)$, $i=1,2$;
        \item $s_\la [\vec h^1, \vec h^2] = \sigma_{\la}( \vec h^1, \vec h^2)$.
    \end{itemize}    
\end{lem}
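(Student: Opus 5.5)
The plan is to reduce both identities to two standard facts: the Hamiltonian vector field of a function maximized in $u$ coincides with that of the frozen-control Hamiltonian, and the symplectic form is the exterior differential of the Liouville form, $\sigma = ds$. This is the convention of \cite{AgSa}, compatible with $dh = \sigma(\cdot,\vec h)$ and with \eqref{eq:identities-symplectic-geometry}.

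\emph{First identity.} Fix $\la$ and set $\bar u = u_i(\la)$. By the chain rule,
\begin{equation}
d_\la h^i = \pa_\la h^i_u\big|_{u=\bar u} + \pa_u h^i_u\big|_{u=\bar u}\circ d_\la u_i .
\end{equation}
The second term vanishes by the hypothesis $\pa_u h^i_{u_i(\la)}(\la)=0$. Hence $\vec{h^i}(\la)=\vec h^i_{\bar u}(\la)$, where the right-hand side is the Hamiltonian vector field of the function $\mu\mapsto\langle \mu, f(\pi(\mu),\bar u)\rangle$ with $\bar u$ frozen. This is the same argument as in the proof of Proposition \ref{prop:pw-regular-controls}.

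For a Hamiltonian that is linear on fibres, $h_X(\mu)=\langle\mu,X(\pi(\mu))\rangle$, the projection satisfies $\pi_*\vec h_X = X$. I will check this in canonical coordinates, where $\dot q = \pa_p h_X = X(q)$. Therefore
\begin{equation}
s_\la(\vec{h^i}) = \langle \la, \pi_*\vec h^i_{\bar u}(\la)\rangle = \langle\la, f(\pi(\la),\bar u)\rangle = h^i_{u_i(\la)}(\la).
\end{equation}
In words, $s(\vec{h^i}) = h^i$ as functions on $T^*M$.

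\emph{Second identity.} I will apply Cartan's formula for the differential of a $1$-form to $X=\vec{h^1}$ and $Y=\vec{h^2}$, which are smooth vector fields on all of $T^*M$ because $u_1,u_2$ are smooth:
\begin{equation}
ds(X,Y) = X\big(s(Y)\big) - Y\big(s(X)\big) - s([X,Y]).
\end{equation}
By the first identity, $s(Y)=h^2$ and $s(X)=h^1$. By \eqref{eq:identities-symplectic-geometry},
\begin{equation}
X(h^2)=d h^2(X)=\sigma(X,Y), \qquad Y(h^1)=\sigma(Y,X)=-\sigma(X,Y).
\end{equation}
Substituting gives $ds(X,Y)=2\sigma(X,Y)-s([X,Y])$. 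Using $ds=\sigma$, this yields $s_\la[\vec h^1,\vec h^2]=\sigma_\la(\vec h^1,\vec h^2)$.

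The only delicate point is the sign bookkeeping. The three conventions involved must be mutually consistent: $\sigma=ds$ rather than $-ds$, the bracket convention in Cartan's formula, and $dh=\sigma(\cdot,\vec h)$. I will verify this in Darboux coordinates. With $s=p\,dq$ and $\sigma = dp\wedge dq$, the rule $dh=\sigma(\cdot,\vec h)$ gives $\vec h = (-\pa_q h, \pa_p h)$ in $(p,q)$ components, i.e.\ the usual Hamilton equations. If the paper's convention turned out to be $\sigma=-ds$, the same computation would produce the identity with the opposite sign. In that case I would recheck against the one-dimensional example of Section \ref{sec:example-free-control-system}, where $\sigma(\vec H^1,\vec H^2)$ and $\{H^1,H^2\}$ were computed explicitly.
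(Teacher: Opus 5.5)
Your proof is correct. The first identity is obtained as in the paper: the envelope argument gives $\vec{h^i}(\la)=\vec h^i_{u_i(\la)}(\la)$, and then $s(\vec h_u)=h_u$ holds because $h_u$ is linear on fibres.

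For the second identity your route is different.

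\emph{The paper's route.} It writes $[\vec h^1,\vec h^2]$ as the Hamiltonian field of the Poisson bracket $g=\sigma(\vec h^1,\vec h^2)$ and then applies $s$. This requires $s(\vec g)=g$, which the paper justifies by saying that $g$ is linear on fibres. Since $u_i$ depends on $\la$, this is not immediate. It does follow, for two reasons. First, $h^1,h^2$ satisfy Euler's relation $s(\vec h^i)=h^i$, and Poisson brackets preserve this fibrewise degree-one homogeneity. Second, under the lemma's global smoothness on $T^*M$, such functions are in fact linear on fibres.

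\emph{Your route.} You apply Cartan's formula to $ds=\sigma$. This uses only two facts: $s(\vec h^i)=h^i$ as an identity of functions, and $dh^i=\sigma(\cdot,\vec h^i)$. So you never need any information about the bracket Hamiltonian. The argument is local, so it works verbatim when $u_i$ is defined only on an open subset of $T^*M$. That is exactly the situation of the maximizing controls of Proposition~\ref{prop:pw-regular-controls}.

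\emph{What each buys.} The paper's route yields in addition the structural identity $[\vec h^1,\vec h^2]=\overrightarrow{\{h^1,h^2\}}$. This fits the chronological-calculus computations of Appendix~\ref{app:diff-endpoint-map}.

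Finally, your fallback for the case $\sigma=-ds$ is unnecessary. The paper fixes the convention $dh=\sigma(\cdot,\vec h)$. Together with the Hamilton equations $\dot q=\pa_p h$, $\dot p=-\pa_q h$ and the formula $\{H^1,H^2\}=\pa_pH^1\pa_qH^2-\pa_qH^1\pa_pH^2$ used in Section~\ref{sec:example-free-control-system}, this forces $\sigma=dp\wedge dq=ds$.
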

\begin{proof}
    By definition of Hamiltonian vector field, for fixed $u\in U$, we have that
    \begin{equation}
        \langle d_\la h_u ^i , v \rangle 
        = 
        \sigma_\la(v, \vec h_u ^i), 
        \quad \forall v\in T_\la(T^*M).
    \end{equation}
   We have that
    \begin{equation}
        d_\la h^i
        = 
        (\pa_\la h_u^i) |_{(\la,u_i(\la))}
        +
        \left.{\pa_u h^i_u}\right|_{(\la,u_i(\la))},
    \end{equation}
    for every $\la\in T^*M$. By assumption, we have $\pa_u h_{u_i(\la)}^i (\la)=0$, yielding
    \begin{equation}
        d_\la h^i
        = 
        \sigma_\la \big(\cdot, \vec h _{u_i(\la)} \big).
    \end{equation}
    Thus, the Hamiltonian vector field of $h^i$ is given by $\vec h^i(\la) = \vec h^i _{u_i(\la)}(\la)$.
    Since $h_u$ is linear in the variable $\la$, we have that $s_\la \vec h_u = h_u$ for every $u\in U$, which proves the first identity.
    
    Concerning the second identity, we have that
    \begin{equation}
        [\vec h^1, \vec h^2]
        =
        \left[
            \overrightarrow{h_{u_1}^1}, \overrightarrow{ h_{u_2} ^2}
        \right]
        =
        \left[
            \vec{h}_{u_1}^1, \vec{h}_{u_2}^2
        \right]
        =
        \overrightarrow{\sigma (\vec{h}_{u_1}^1, \vec{h}_{u_2}^2)}
        .
    \end{equation}
    Since $\la \mapsto \sigma_\la (\vec{h}_{u_1}^1, \vec{h}_{u_2}^2)$ is linear on the fibers, applying to both sides $s_\la$ and using the first part of the Lemma, we obtain 
    \begin{equation}
        s_\la [\vec h^1, \vec h^2] = \sigma_{\la}( \vec h^1, \vec h^2),
    \end{equation}
    which is the formula we wanted to prove.
\end{proof}

\section{First and second differential of endpoint map}
\label{app:diff-endpoint-map}
In this section, we compute the first and second differential of the endpoint map introduced in Section \ref{sec:preliminaries}. 
We consider only the case of one switching time, that is $k=1$ in the notation of Subsection \ref{sec:main-results}. 
The computation in the case of more switching times is completely analogous, but it requires more involved notation. 

Following the notation introduced for piecewise regular extremals, we denote by $t_1$ the switching time, so that the control function $\tilde u$ is smooth on $(0,t_1)$ and on $(t_1,T)$.
Recall the definition of the endpoint map: 
\begin{equation}
    \mathcal E (\nu,u)
    =
    \overrightarrow{\exp} \int _{t_1} ^T (1+\nu(t)) \vec h_{u_2(t)} ^2 \, dt
    \circ
    \overrightarrow{\exp} \int _0 ^{t_1} (1+\nu(t)) \vec h_{u_1(t)} ^1 \, dt
    \circ
    \tilde \la_0,
\end{equation}
where $(\nu,u)$ is a pair of functions such that $\nu \in L^\infty([0,T],\R)$, $u$ is chosen as described in Section \ref{sec:preliminaries}, with $u(t)\in U_1$ for $t \in [0,t_1)$ and $u(t)\in U_2$ for $t \in [t_1,T]$.
Analogously, $E = \pi \circ \mathcal E $.
Recall also the transported-back endpoint map defined as $G = \mathcal E \circ \Phi_{0,T}^{-1} \circ \pi$, where $\Phi_t$ is the composition of the flows generated by the vector field $\vec h_0 ^1$ for $t\in[0,t_1]$ and by $\vec h_0 ^2$ for $t\in[t_1,T]$.
We recall the definition of the Hamiltonian function
\begin{equation}
    \matheuler h_t (\la,\nu,u) 
    = 
    \big( 
        (1+\nu) h_{u} ^j - h_0 ^j 
    \big)
    \circ 
    \Phi_t
    \circ
    \la
    \quad
    u\in U, \, \nu\in \R, \la\in T^*M,
\end{equation}
where $j=1$ for $t\in[0,t_1]$ and $j=2$ for $t\in[t_1,T]$. Recall that we are choosing coordinates such that $\tilde u(t)=0$ for every $t\in[0,T]$.
By the variation formula in chronological calculus (see \cite{AgSa,AgBaBo}), we have that the map $G$ can be expressed as the flow of the Hamiltonian vector field $\vec {\matheuler h}_t (\cdot,\nu(t),u(t))$:
\begin{align}
    G (\nu,u)
    &=
    \pi
    \circ 
    \overrightarrow{\exp} \int_0 ^T \vec {\matheuler h}_t (\cdot,\nu(t),u(t)) \, dt
    \circ
    \tilde \la_0
    =
    \\
    &=
    \pi
    \circ
    \int_{t_1} ^T (\widetilde \Phi_{0,t}^{-1})_* \big( (1+\nu(t)) \vec h_{u(t)} ^2 - \vec h_0 ^2 \big) dt
    \circ 
    \int_0 ^{t_1} (\widetilde \Phi_{0,t}^{-1})_* \big( (1+\nu(t)) \vec h_{u(t)} ^1 - \vec h_0 ^1 \big) dt
    \circ
    \tilde \la_0,
\end{align}
where $(\nu,u)$ are as above.
The chronological series expansion of $G$ is 
\begin{equation}
    G(\nu,u)
    =
    \pi
    \circ 
    \left(
    \operatorname{Id} 
    + 
    \int_0 ^T \vec {\matheuler h}_t %
    \, dt
    +
    \iint\limits_{0\leq s \leq t \leq T}
    \vec {\matheuler{h}}_s %
    \circ
    \vec {\matheuler h}_t %
    ds \, dt
    + 
    O(\|u\|^3 _{L^\infty}+\|\nu\|_{L^\infty}^3)
    \right)
    \circ \tilde \la_0,
\end{equation}
where the vector field $ \vec {\matheuler h}_t$ is evaluated at $(\cdot,u(t),\nu(t))$ in the right-hand side.
We recall the notation that we have introduced in Section \ref{sec:sec-order-cond-jacobi-curve}: 
\begin{align}
    &\bullet \,
    X_t [w(t)]
    = 
    \left.
        \frac{\pa \vec {\matheuler h}_t}{\pa (\nu,u)}[w(t)] 
    \right|_{(\tilde \la(0), \tilde u(t),0)},
    \quad
    w(t) = (\theta(t),v(t)) \in\R \times \R^{\dim U_j},
    \\
    &\bullet \,
    Z_t v(t) 
    =
    \left.
        \frac{\pa \vec {\matheuler h}_t}{\pa u}[v(t)] 
    \right|_{(\tilde \la(0), \tilde u(t),0)},
    \quad
    v(t)\in \R^{\dim U_j},
\end{align}
where, again, $j=1$ for $t\in[0,t_1]$ and $j=2$ for $t\in[t_1,T]$.
The first differential at the point $(\nu,u)=(0,0)$ is
\begin{align}
    DG[w]
    &=
    d_{\tilde \la_0} \pi
    \circ
    \int_0 ^T \frac{\pa \vec {\matheuler h}_t}{\pa (\nu,u)} [w(t)] dt
    \circ 
    \tilde \la_0
    =
    \\
    &=
    d_{\tilde \la_0} \pi
    \circ
    \left(
    \int_0 ^T Z_t v(t) dt
    +
    \int_0 ^{t_1} \theta(t) 
        (\widetilde \Phi_t^{-1})_* 
        \big[
            \vec h_{0} ^1 
        \big] dt
    +
    \int_{t_1} ^T \theta(t) 
        (\widetilde \Phi_t^{-1})_* 
        \big[
            \vec h_{0} ^2 
        \big] dt
    \right)
    \circ
    \tilde \la_0
    \\
    &=
    d_{\tilde \la_0} \pi
    \circ
    \left(
        \int_0 ^T Z_t v(t) dt
        +
        \left( \int_0 ^{t_1} \theta(t) dt\right) \vec {\mathcal H}^1
        + 
        \left( \int_{t_1} ^{T} \theta(t) dt\right) \vec {\mathcal H}^2
    \right)
    \circ
    \tilde \la_0
\end{align}
where $w=(\theta,v)\in \mathcal{V}_T$.
If we apply the Lagrange multiplier $\tilde \la_0$ to both sides of this expression, we obtain
\begin{equation}
    \langle \tilde \la_0 , DG[w] \rangle
    =
    \left(
        \int_0 ^T \frac{\pa \matheuler{h}_t}{\pa u} [v(t)] dt
    \right)
    \circ
    \tilde \la_0
    +
    \left( \int_0 ^T \theta(t) dt\right)
    \matheuler{h}_t(\tilde \la_0, 0),
\end{equation}
Here, we have used the fact that $\matheuler{h}_t(\tilde \la_0,0)$ is constant in $t$ together with point 1. in Lemma \ref{lemma:proj-Liouville}.
Hence, if we consider variations $w=(\theta,0)$ such that $\int_0 ^T \theta(t) dt \neq 0$, we have that $\langle \tilde \la_0 , DG[(\theta,0)] \rangle = 0$ if and only if $\matheuler{h}_t(\tilde \la_0,0)=0$.
If instead we consider variations $w=(0,v)$, we have that $\langle \tilde \la_0 , DG[(0,v)] \rangle = 0$, which implies that the first integrand must be identically zero, that is $\pa_u \matheuler{h}_t[v] |_{(\tilde \la_0,0)}=0$ for every $t\in[0,T]$ and $v\in \R^{\dim U_j}$.
Hence, we recover the well-known conditions of Pontryagin Maximum Principle.

Now, we want to compute the expression of the second differential of $G$.
First, we have that the kernel of the first differential is
\begin{equation}
    \ker DG
    =
    \left\{
        w=(\theta,v) \in \mathcal V_T
        \, \Big| \,
        \int_0 ^T Z_t v(t) dt 
        +
        \left( \int_0 ^{t_1} \theta(t) dt\right) \vec {\mathcal H}^1
        + 
        \left( \int_{t_1} ^{T} \theta(t) dt\right) \vec {\mathcal H}^2
        \in
        T_{\tilde \la_0} (T^*_{q_0} M)
    \right\}.
\end{equation}
The general formula for the second differential of the endpoint map of an optimal control problem is computed in \cite{AgSa}, Section 20.3.
In our case, the second differential of $G$ reads
\begin{equation}
    D^2 G[w]
    =
    d_{\tilde \la_0} \pi 
    \circ 
    \left(
        -
        \int_0 ^T D^2 \vec {\matheuler h}_t [w(t)] dt
        -
        \int_0 ^T \int_0 ^t 
            \big[
                X_s [w(s)] , \, X_t [w(t)]
            \big]
        ds dt
    \right)
    \circ 
    \tilde \la_0
    ,
\end{equation}
where $w\in \ker DG$.
Since the control $\tilde u$ satisfies the first order necessary optimality condition, the Hamiltonian $\matheuler{h}_t$ satisfies the assumptions of Lemma~\ref{lemma:proj-Liouville}.
Hence, we have
\begin{equation}
    D^2 G[w]
    =
    \left(
        -
        \int_0 ^T D^2 \matheuler{h}_t [w(t)] dt
        -
        \int_0 ^T \int_0 ^t 
            \sigma_{\tilde \la_0} 
            \big(
                X_s [w(s)] , \, X_t [w(t)]
            \big)
        ds dt
    \right)
    \circ
    \tilde \la_0 .
\end{equation}
We can give a more explicit expression of the terms appearing in the right-hand side of the previous formula.
First, since $\matheuler{h}_t$ is linear in $\nu$, we have that $D ^2 \matheuler{h}_t [0,\theta(t)]^2 = 0$. 
Moreover, the mixed derivative is $D ^2 \matheuler{h}_t [(w(t),0),(0,\theta(t))] = \theta(t) \frac{\pa \matheuler{h}_t}{\pa u} [v(t)] |_{(\tilde \la_0,0)} = 0$ since $\tilde u$ satisfies the first order necessary optimality condition.
Hence, the only non-zero contribution in the first integral comes from $D ^2 \matheuler{h}_t [v(t),0]^2$.
Regarding the second integral, we have three different contributions, depending on whether we consider $X_t [v(t),0]$ or $X_t [0,\theta(t)]$.
If we take only variations of time, we obtain
\begin{align}
    &
    \int_0^T \int_0 ^t
    \sigma_{\tilde \la_0}
        \big( 
            X_s [0,\theta(s)] , \, X_t [0,\theta(t)]
        \big)
    dsdt
    =
    \int_{t_1}^T \int_0 ^t
    \theta(t) \theta(s)
    \sigma_{\tilde \la_0}
        \big( 
            X_s (\cdot,0) , \vec {\mathcal H}^2
        \big)
    dsdt
    =
    \\
    &=
    \int_{t_1}^T \int_0 ^{t_1}
    \theta(t) \theta(s)
    \sigma_{\tilde \la_0}
        \big( 
            \vec {\mathcal H}^1, \vec {\mathcal H}^2 
        \big)
    dsdt
    =
    \left(
        \int_{t_1}^T
            \theta(t)
        dt
    \right)
    \left(
        \int_{0}^{t_1}
            \theta(t)
        dt
    \right)
    \sigma_{\tilde \la_0}
        \big( 
            \vec {\mathcal H}^1, \, \vec {\mathcal H}^2
        \big),
\end{align}
where we have used the fact that $\sigma_{\tilde \la_0} (X_s, X_t) = \sigma_{\tilde \la_0} (\vec {\mathcal H}^1, \vec {\mathcal H}^1) = 0$ for $0\leq s \leq t \leq t_1$ and $\sigma_{\tilde \la_0} (X_s, X_t) = \sigma_{\tilde \la_0} (\vec {\mathcal H}^2, \vec {\mathcal H}^2) = 0$ for $t_1\leq s \leq t \leq T$.
For mixed derivatives, we have
\begin{align}
    &
    \int_0 ^T \int_0 ^t
        \sigma_{\tilde \la_0}
            \big( 
                X_s [\theta(s),0] , \, X_t [0,v(t)]
            \big)
    ds dt 
    =
    \\
    &=
    \int_0 ^{t_1} \int_0 ^t
        \sigma_{\tilde \la_0}
            \big( 
                \vec {\mathcal H}^1, \, Z_t v(t)
            \big)
    ds dt 
    +
    \int_{t_1} ^T \int_0 ^{t_1}
        \sigma_{\tilde \la_0}
            \big( 
                \vec {\mathcal H}^1, \, Z_t v(t)
            \big)
    ds dt 
    +
    \int_{t_1} ^T \int_{t_1} ^t
        \sigma_{\tilde \la_0}
            \big( 
                \vec {\mathcal H}^2, \, Z_t v(t)
            \big)
    ds dt 
    \\
    &=
    0 + 
    \left( \int_0 ^{t_1} \theta(t)dt \right) 
    \sigma_{\tilde \la_0}\left( \vec {\mathcal H}^1, \int_{t_1}^T Z_t w(t) dt \right)  
    + 0,
\end{align}
where the last equality follows by
\begin{equation}
    \sigma_{\tilde \la_0} \big( \vec {\mathcal H}_{j-1} , Z_t v(t) \big) = 0, 
    \quad 
    t\in[t_{j-1}, t_{j}],
    \quad 
    j=1,2,
\end{equation}
which is a rephrasing of Proposition~\ref{prop:var-tempo-commutano-var-controlli}. 
Similarly, we have
\begin{equation}
    \int_0 ^T \int_0 ^t
        \sigma_{\tilde \la_0}
            \big( 
               X_t [0,v(s)]  , \, X_s [\theta(t),0]
            \big)
    ds dt 
    =
    \left( \int_{t_1} ^T \theta(t)dt \right) 
    \sigma_{\tilde \la_0} 
    \left( 
        \int_0 ^{t_1} Z_t v(t) dt , \vec {\mathcal H}^2
    \right).
\end{equation}

\section{Lagrange multipliers, second order conditions and $\mathcal{L}$-derivatives}
\label{sec:Lagr-multip}
In this Section, we introduce Lagrange multipliers, second derivatives and $\cL$-derivatives in the general setting of constrained optimization problems.
These concepts are crucial for the definition of the Jacobi curve of an optimal control problem, which is used to define conjugate times and conjugate points.

\subsection{Lagrange multipliers}
\label{sec:lagrange-multip}
Let $\U$ be a smooth Banach manifold and $M$ a finite dimensional smooth manifold. 
We consider the functional $J : \U \to \R$ and a function $F : \U \to M$. 
We assume that both $J$ and $F$ are smooth.
Given a point $q_1\in M$, we are interested in the minimization problem:
\begin{equation}
    \label{eq:constr-opt-problem}
    \min \{ J(u) \mid u\in F^{-1}(q_1) \}.
\end{equation}
In the case of an optimal control problem, $\U$ is the space of admissible controls, $M$ is the state manifold, $F$ is the endpoint map and $J$ the minimized functional.
By the Lagrange multipliers rule (see, for instance, \cite{cime,AgBaBo}), it is well known that, if $u \in \U$ is a critical point of the function $J|_{F^{-1}(q_1)}$, then there is $\la \in T_{q_1} ^*M$ and $\nu\in\{0,1\}$ such that 
\begin{equation}
    \label{eq:def-Lagr-mult}
    \langle \la , D_u F[v] \rangle = \nu D_u J[v], 
    \quad 
    \forall v \in T_u \U.
\end{equation}
The pair $(\la,\nu)$ is called a \emph{Lagrange multiplier}. 
If $\nu=1$, we say that $(\la,\nu)$ is a normal Lagrange multiplier.
If instead $\nu=0$, we say that $(\la,\nu)$ is an abnormal Lagrange multiplier.
In the following, we consider only the normal case.
Finally, if $u$ is a critical point of the function $J|_{F^{-1}(q_1)}$ and $\lambda\in T^* _{q_1}M$ is a normal Lagrange multiplier for $u$, we say that the pair $(u,\lambda)$ is a \emph{Lagrange point}.

Let $F^*(T^*M)$ be the pullback bundle of $T^*M$ over $\U$, that is 
\begin{equation}
    F^*(T^*M)
    =
    \{
        (u , \lambda) \in T^*M \times \mathcal{U} \mid \lambda \in T^*_{F(u)} M
    \}
    \subset 
    T^*M \times \mathcal{U}.
\end{equation}
We define the map
\begin{equation}
    \label{eq:def-of-psi}
    \Psi : F^*(T^*M) \to T^* \U, 
    \quad 
    \Psi(u,\lambda) = \langle \lambda , D_u F \rangle - D_u J. 
\end{equation}

\begin{defn}[set of normal Lagrange multipliers]\label{def:manif-lagrange-multip}
    We define 
    \begin{equation}
        \mathcal{L}
        =
        \{
            \lambda\in T^*M
            \mid 
            \text{ there is }
            u \in \mathcal{U}
            \text{ such that }
            \langle \la , D_u F \rangle = D_u J 
        \}.
    \end{equation}
    The set $\mathcal{L}$ is called the \emph{set of normal Lagrange multipliers}.
\end{defn}

Now, we focus on the case of an optimal control problem as in \eqref{eq:formulation-OCP}. 
In this case, the set $\mathcal{L}$ near a piecewise regular extremal has a nice structure of Lipschitz manifold. 
We just give a sketch of the proof of this fact, since it is obtained by combining some standard arguments.  
We begin with the purely regular case, that is an extremal arc for which the strong Legendre conditions, introduced in Corollary \ref{cor:legendre-condition} and the discussion below it, hold.

\begin{prop}[Structure of $\mathcal{L}$ near a regular extremal]
    Let $(\tilde u,\tilde \la)$ be a regular extremal pair and $\Pi = \{q_0\} \times T_{q_0}^*M \subset T^*M$. 
    Suppose that $\vec H(\la_0)\not \in T_{\tilde \la_0} (\Pi\cap H^{-1}(0))$.    
    Then, there is a neighbourhood $\mathcal O _{\tilde \la} \subset T^*M$ of the trajectory $\tilde \la$ such that $\mathcal L \cap \mathcal{O}_{\tilde \la}$ is an immersed smooth Lagrangian submanifold.
\end{prop}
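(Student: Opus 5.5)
My plan is to identify $\mathcal L$ near $\tilde\la$ with the field of extremals issuing from the initial fibre, and then to use Hamiltonian flows to get smoothness and the Lagrangian property. For a regular extremal, the strong Legendre condition and the argument of Proposition~\ref{prop:pw-regular-controls} (now with no switching) show that $u_{\max}$ is smooth on a neighbourhood $\mathcal N$ of $\tilde\la$, with $\vec H=\vec h_{u_{\max}(\la)}$. So $H$ is smooth on $\mathcal N$ with flow $\Phi_t$.

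The first step characterizes normal Lagrange multipliers near $\tilde\la$. By the chronological formula for $DG$ in Section~\ref{sec:Hessian-endpoint-map}, the condition $\langle\la,D_uF\rangle=D_uJ$ says three things. There is a covector $\la_0\in T^*_{q_0}M$ with $\la=\la_T$, where $\la_t$ solves $\dot\la_t=\vec h_{u(t)}(\la_t)$. The variations in $v$ force $\partial_u h_u(\la_t)=0$. The time variations $\theta$ force $h_{u(t)}(\la_t)=0$, since the final time is free.

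Near $\tilde\la$, the strong Legendre condition implies that the critical point $u(t)$ must equal $u_{\max}(\la_t)$. Thus, locally,
\[
\mathcal L\cap\mathcal O_{\tilde\la}=\{\Phi_t(\la_0)\mid \la_0\in\Pi\cap H^{-1}(0)\cap\mathcal O,\ t\in(-\delta,T+\delta)\}.
\]
Conversely, every point of this set is a multiplier for the restricted control $u_{\max}(\la_t)$ on $[0,t]$. One technical point is that this equality holds only up to choosing $\mathcal O$ small enough that nearby extremals stay in $\mathcal N$; I will handle it by compactness of $\tilde\la([0,T])$.

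The second step proves the immersion. Consider the map $\Psi:(\la_0,t)\mapsto\Phi_t(\la_0)$ on $N_0\times(-\delta,T+\delta)$, where $N_0\coloneqq \Pi\cap H^{-1}(0)$. The set $N_0$ is a smooth $(d-1)$-manifold near $\tilde\la_0$, because $d H|_\Pi\neq 0$ at $\tilde\la_0$. This follows from the hypothesis $\vec H(\tilde\la_0)\notin T_{\tilde\la_0}(\Pi\cap H^{-1}(0))$: if $dH|_{T\Pi}=0$, then $\vec H\in (T\Pi)^\angle=T\Pi=T(\Pi\cap H^{-1}(0))$, a contradiction. The differential of $\Psi$ is
\[
(\Phi_t)_*\bigl(T_{\la_0}N_0\oplus\R\vec H(\la_0)\bigr).
\]
It is injective because $\vec H(\la_0)\notin T_{\la_0}N_0$; this is the hypothesis at $\tilde\la_0$, and it extends to nearby $\la_0$ by openness. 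Hence $\Psi$ is an immersion of a $d$-dimensional manifold.

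The third step is the Lagrangian property. The space $T_{\la_0}N_0$ is isotropic, being contained in the vertical Lagrangian $T\Pi$. Moreover, $\sigma(\vec H,v)=-dH(v)=0$ for $v\in T N_0\subset\ker dH$. So $T_{\la_0}N_0\oplus\R\vec H$ is Lagrangian, and $(\Phi_t)_*$ preserves $\sigma$ and maps $\vec H$ to $\vec H$. Shrinking $\delta$ and $N_0$ gives the neighbourhood $\mathcal O_{\tilde\la}$.

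I expect the main obstacle to be the first step. There I must argue rigorously that every multiplier near $\tilde\la$ comes from an extremal near $\tilde\la$ with $u=u_{\max}$, and not from some other critical branch of $h_u$. I would try to settle it by the implicit function theorem uniqueness in Proposition~\ref{prop:pw-regular-controls}, together with the fact that the admissible controls in $\mathcal U$ are constrained to the neighbourhoods $V_j$ of $\tilde u$.
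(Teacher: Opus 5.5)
Your proposal is correct and takes the same route as the paper's sketch: you identify $\mathcal L$ near $\tilde\la$ with the flow-out $\bigcup_t \Phi_t(\Pi\cap H^{-1}(0))$ via PMP and the uniqueness of the regular maximizing control, you get an immersion from the hypothesis that $\vec H(\tilde\la_0)$ is not tangent to $\Pi\cap H^{-1}(0)$, and you get the Lagrangian property because $T_{\la_0}N_0\oplus\R\vec H(\la_0)$ is Lagrangian and is preserved by the symplectic flow. Your extra checks (that $N_0$ is a smooth hypersurface of the fibre, and that $\sigma(\vec H, T N_0)=0$ because $H$ vanishes on $N_0$) fill in details the paper leaves implicit; note only that, as in the paper, parameters $t<0$ extend the flow-out but do not give genuine multipliers, so your ``conversely'' claim holds only for $t\ge 0$.
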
  

\begin{proof}[Proof (sketch)]
    By PMP, the set $\mathcal{L}$ is the set of endpoints of extremal trajectories starting from $\Pi\cap H^{-1}(0)$. 
    By the regularity assumption, Proposition \ref{prop:pw-regular-controls} implies that extremal trajectories in a sufficiently small neighbourhood $\mathcal O _{\tilde \la} \subset T^*M$ of the trajectory $\tilde \la$ satisfy the equation $\dot \la = \vec H(\la)$.
    Let $\mathcal{L}_0 = \Pi \cap H^{-1}(0) \cap \mathcal{O}_{\tilde \la}$. 
    Thus, it holds that there is $\eps>0$ such that $ \mathcal{L}\cap \mathcal O _{\tilde \la} = \bigcup_{t\in[-\eps, T+\eps]} \Phi_t(\mathcal L_0) $. 
    The assumption $\vec H(\la_0)\not \in T_{\tilde \la_0} (\Pi\cap H^{-1}(0))$, implies that the map $\Xi : \mathcal{L}_0 \times [-\eps, T+\eps] \to T^*M$ defined by $\Xi(\la,t)=\Phi_t(\la)$ is an immersion, thus $\mathcal{L}\cap \mathcal O _{\tilde \la}$ is an immersed manifold of dimension $d$.
    The fact that $\mathcal{L}\cap \mathcal O _{\tilde \la}$ is Lagrangian follows from the fact that $T_{\la_0}\mathcal{L}_0 \oplus \mathbb R \vec H(\la_0)$ is a Lagrangian subspace and then, for $\la \in \mathcal{L}\cap \mathcal{O}_{\tilde \la}, \, \la = \Phi_t(\la_0)$, $T_\la \mathcal{L} = d_\la \Phi_t (T_{\la_0}\mathcal{L}_0)$ is also Lagrangian since the Hamiltonian flow preserves the symplectic form.  
\end{proof}

\begin{prop}[Structure of $\mathcal{L}$ near a piecewise regular extremal]
\label{prop:struct-Lagr-manif}
    Let $(\tilde u, \tilde \la)$ be a piecewise regular extremal pair. 
    Then there is some open neighbourhood $\mathcal O_{\tilde \la}$ in $T^*M$ of the trajectory $ \tilde \la$ such that $\mathcal{L}\cap \mathcal O_{\tilde \la}$ is a Lipschitz continuous $d$-dimensional submanifold of $T^*M$. 
    The set of points where $\mathcal{L}\cap \mathcal O_{\tilde \la}$ is not smooth is the union of $k$ disjoint $d-1$-dimensional submanifolds, which we denote by $\partial \mathcal{L}_j$, $j=1,\dots, k$. 
    Moreover, 
    \begin{equation}
        \mathcal{L}\cap \mathcal O_{\tilde \la} 
        =
        \bigcup_{j=1}^{k+1} \mathcal{L}_j
    \end{equation}
    where $\mathcal{L}_j$ are smooth Lagrangian submanifolds with boundary satisfying $\mathcal{L}_{j}\cap \mathcal{L}_{j+1} = \partial\mathcal{L}_{j}$, for $j=1,\dots,k$, and 
    \begin{itemize}
        \item for $j=1$, the boundary of $\mathcal{L}_1$ has one connected component, coinciding with $\partial \mathcal{L}_1$;
        \item for every $j\in\{2,\dots, k\}$, the boundary of $\mathcal{L}_j$ has two connected components, coinciding with $\partial \mathcal{L}_{j-1}$ and $\partial \mathcal{L}_j$;
        \item for $j=k+1$, the boundary of $\mathcal{L}_{k+1}$ has one connected component, coinciding with $\partial \mathcal{L}_k$;
    \end{itemize}
    The submanifolds $\partial \mathcal L_j$ are contained in the subsets $\{H^j=H^{j+1}\}$.
    Finally, if $\la\in \mathcal{L}\cap \mathcal O_{\tilde \la}$ is a point where $\mathcal{L}$ is smooth, then $ \vec H(\la) \in T_{\la} \mathcal{L}$. 
    If instead $\la\in\partial \mathcal L_j$, for some $j$, we have 
    \begin{equation}
        T_{\la} \partial \mathcal L_j 
        =
        \lim_{\eps \to 0-}
        D_{\Phi_\eps ^{j}(\la)}\Phi^j _{-\eps} (T_{\Phi_\eps ^{j}(\la)} \mathcal{L}_j)
        \cap 
        \lim_{\eps \to 0+}
        D_{\Phi_\eps ^{j+1}(\la)}\Phi _{-\eps}^{j+1} (T_{\Phi_\eps ^{j+1}(\la)} \mathcal{L}_{j+1}),
    \end{equation}
    where $(\Phi^j _t)_{t\in\R}$ is the flow of the Hamiltonian vector field $\vec H^j$.
    If $\la\in \partial\mathcal{L}_j$ for some $j$, Clarke's tangent cone $T_\la\mathcal{L}$ is given by
    \begin{equation}
        T_\la\mathcal{L}
        =
        T_{\la} \partial \mathcal L_j  
        \cup 
        \bigcup_{\alpha\in[0,1]} 
        \R \big(
            \alpha \vec H^j (\la) + (1-\alpha) \vec H^{j+1} (\la) 
        \big).
    \end{equation}
    In particular, both the Hamiltonian vector fields $\vec H^{j}$ and $\vec H^{j+1}$ are transverse to $\partial \mathcal L_j$.
\end{prop}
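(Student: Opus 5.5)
The plan is to describe $\mathcal{L}\cap\mathcal O_{\tilde\la}$ as the image of a piecewise smooth flow map and to read off its structure arc by arc. By PMP, $\mathcal{L}$ near $\tilde\la$ is the set of endpoints of normal extremals starting in $\mathcal{L}_0\coloneqq \Pi\cap H^{-1}(0)\cap\mathcal O$, where $\Pi=T^*_{q_0}M$. By Proposition~\ref{prop:pw-regular-controls}, every such extremal near $\tilde\la$ solves $\dot\la=\vec H(\la)$, and its flow $\Phi$ is Lipschitz. Hence $\mathcal{L}\cap\mathcal O_{\tilde\la}=\Xi(\mathcal{L}_0\times(-\eps,T+\eps))$ with $\Xi(\la,t)=\Phi_t(\la)$.

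\textbf{Switching-time function.} Treat one switching time first and then argue by induction on $k$. Set $g=H^2-H^1$. Since $\{H^1,H^2\}(\tilde\la_{t_1})>0$, we have $\frac{d}{dt}g(\Phi^1_t(\la))\neq 0$ near $(\tilde\la_0,t_1)$. The implicit function theorem then gives a smooth function $\tau_1(\la)$ on $\mathcal{L}_0$, close to $t_1$, with $\Phi^1_{\tau_1(\la)}(\la)\in\{H^1=H^2\}$. The flow map is then given by $\Phi^1_t(\la)$ for $t\le\tau_1(\la)$ and by $\Phi^2_{t-\tau_1(\la)}\circ\Phi^1_{\tau_1(\la)}(\la)$ for $t\ge\tau_1(\la)$. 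Each piece is smooth, so we define $\mathcal{L}_1$ and $\mathcal{L}_2$ as the images of the regions $\{t\le\tau_1\}$ and $\{t\ge\tau_1\}$. The interface $\partial\mathcal{L}_1=\{\Phi^1_{\tau_1(\la)}(\la)\}$ is a $(d-1)$-dimensional submanifold contained in $\{H^1=H^2\}$. Each $\mathcal{L}_j$ is Lagrangian by the regular-case proposition: its tangent space at the start of the arc is Lagrangian and the Hamiltonian flow preserves $\sigma$.

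\textbf{Immersion and tangent spaces.} I expect this step to be the main obstacle. One has to show that $\Xi$ restricted to each piece is an immersion up to the boundary, and that the two sheets glue into a topological Lipschitz manifold rather than folding back. On each side the differential has image $d\Phi^j(T\mathcal{L}_0)+\R\vec H^j$. Injectivity comes from $\vec H$ being transverse to the $d-1$ directions coming from $\mathcal{L}_0$, which is the normal condition $H^{-1}(0)$ plus $\vec H(\tilde\la_0)\notin T\mathcal{L}_0$; this may need a shrinking or a nondegeneracy assumption inherited from the regular case. For the gluing: $\vec H^1$ and $\vec H^2$ are both transverse to $\{H^1=H^2\}$, because $dg(\vec H^j)=\pm\{H^1,H^2\}\ne0$ with the same sign. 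So both sheets leave $\partial\mathcal{L}_1$ on the same side in the time parameter, and in the coordinates $(\la,t-\tau_1(\la))$ the map is bi-Lipschitz onto its image, with $t-\tau_1$ of opposite sign on the two pieces.

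\textbf{Boundary tangent space and Clarke cone.} $T_\la\partial\mathcal{L}_1$ is the common limit of the tangent spaces of the two sheets intersected with $T\{g=0\}$. Pulling these tangent spaces back by $\Phi^j_{-\eps}$ and letting $\eps\to0^\mp$ gives exactly the displayed formula. Clarke's generalized Jacobian of the Lipschitz parametrization at a boundary point is the convex hull of the two one-sided Jacobians. Since these share $T\partial\mathcal{L}_1$ and differ only in the image of $\partial_t$, namely $\vec H^1$ versus $\vec H^2$, the Clarke tangent cone is $T\partial\mathcal{L}_1+\bigcup_{\alpha\in[0,1]}\R(\alpha\vec H^1+(1-\alpha)\vec H^2)$. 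At smooth points, $\vec H\in T_\la\mathcal{L}$ follows directly from the flow parametrization.
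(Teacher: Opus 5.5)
This is correct and is essentially the paper's argument: the paper likewise treats the regular arcs with the regular-case proposition (which is where the nondegeneracy $\vec H(\tilde\la_0)\notin T\mathcal{L}_0$ that you flag is implicitly inherited), uses $\{H^j,H^{j+1}\}(\tilde\la_{t_j})\neq 0$ to make $\vec H$ transverse to $\{H^j=H^{j+1}\}$, and takes as Lipschitz chart near $\tilde\la_{t_j}$ the pair (hitting point on the switching surface, hitting time), which is just the inverse of your parametrization by $(\la_0,\,t-\tau_1(\la_0))$. Your observation that $dg(\vec H^1)=dg(\vec H^2)=\{H^1,H^2\}$ makes every convex combination $\alpha\vec H^1+(1-\alpha)\vec H^2$ transverse to $\partial\mathcal{L}_1$, and this is exactly the detail the paper leaves as ``straightforward to check'' for the bi-Lipschitz property and the Clarke cone.
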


\begin{figure}
    \centering
    \begin{tikzpicture}[x=1cm,y=1cm,>=Stealth,
  leftreg/.style={fill=yellow!40, fill opacity=0.20, draw=none},
  midreg/.style={fill=blue!30,   fill opacity=0.16, draw=none},
  rightreg/.style={fill=orange!40,fill opacity=0.18, draw=none},
  bound/.style={line width=1.2pt, draw=green!60!black},
  traj/.style={very thick, black},
  trajarrow/.style={
    decoration={
      markings,
      mark=at position 0.72 with {\arrow[scale=1.0]{Stealth}}
    }, postaction={decorate}
  },
  nodept/.style={fill=black, circle, inner sep=1.2pt},
  outer/.style={draw=red!80!black, dashed, line width=0.9pt, rounded corners=6pt}
]

\def\xmin{0}  \def\xmax{12}
\def\ymin{0}  \def\ymax{6}

\draw[outer] (\xmin-0.3,\ymin-0.3) rectangle (\xmax+0.3,\ymax+0.3);
\node[red!80!black, yshift=5mm] at (\xmax+0.0,\ymax+0.1) {\large $\mathcal{L}\cap \mathcal O_{\tilde\lambda}$};

\coordinate (G1top) at (3,\ymax);
\coordinate (G1bot) at (3,\ymin);
\coordinate (G2top) at (7,\ymax);
\coordinate (G2bot) at (7,\ymin);

\begin{scope}
  \clip (\xmin,\ymax) rectangle (3,\ymin);
  \fill[leftreg] (\xmin,\ymin) rectangle (\xmax,\ymax);
\end{scope}
\begin{scope}
  \clip (3,\ymax) rectangle (7,\ymin);
  \fill[midreg] (\xmin,\ymin) rectangle (\xmax,\ymax);
\end{scope}
\begin{scope}
  \clip (7,\ymax) rectangle (\xmax,\ymin);
  \fill[rightreg] (\xmin,\ymin) rectangle (\xmax,\ymax);
\end{scope}

\draw[bound] (G1top) -- (G1bot);
\draw[bound] (G2top) -- (G2bot);

\node at (1.5,5.5) {\Large $\mathcal{L}_1$};
\node at (5.0,5.2) {\Large $\mathcal{L}_2$};
\node at (9.0,5.5) {\Large $\mathcal{L}_3$};

\tikzset{trajfull/.style={traj,trajarrow}}

\newcommand{\threeparttraj}[4]{%
  \coordinate (#1start) at (0.5,#2);
  \coordinate (#1a) at (3,{#2+#3});  %
  \coordinate (#1b) at (7,{#2-#4});  %
  \coordinate (#1end) at (11.5,#2);

  \draw[trajfull] (#1start) to[out=8,in=180] (#1a);

  \draw[trajfull] (#1a) to[out=30,in=150] (#1b);

  \draw[trajfull] (#1b) to[out=10,in=180] (#1end);

  \fill (#1a) circle (1.2pt);
  \fill (#1b) circle (1.2pt);
}

\threeparttraj{tA}{5.0}{ 0.08}{  0.02}
\threeparttraj{tB}{4.1}{ 0.04}{ -0.03}
\threeparttraj{tC}{3.0}{-0.02}{  0.00}
\threeparttraj{tD}{2.0}{ 0.06}{ -0.05}
\threeparttraj{tE}{1.0}{-0.05}{  0.04}

\node at (1.8,3.3) {$\vec H^1$};
\node at (5,4) {$\vec H^2$};
\node at (9.6,3.4) {$\vec H^3$};
\node[right, green!60!black] at (3.0,0.35) {$\partial\mathcal{L}_1$};
\node[right, green!60!black] at (7.0,0.35) {$\partial\mathcal{L}_2$};
\node at (1,2.8) {$\widetilde{\lambda}$};

\end{tikzpicture}
    \caption{Graphical representation of Proposition \ref{prop:struct-Lagr-manif}}
    \label{fig:lagrange-multip-manifold}
\end{figure}

\begin{proof}[Proof (sketch)]
    Let $0 = t_0 < t_1 < \dots < t_k < t_{k+1} = T$ be the switching times of $\tilde \la$. 
    By the previous Proposition, if $[\tau_1,\tau_2] \subset (t_{j-1},t_j)$, then there is a neighbourhood of $\tilde \la|_{[\tau_1,\tau_2]}$ such that $\mathcal{L}\cap \mathcal O_{[\tau_1,\tau_2]}$ is a smooth manifold. 
    Thus, everything reduces to finding Lipschitz local coordinates in a neighbourhood of every $\tilde \la_{t_j}$. 
    Again by Proposition \ref{prop:pw-regular-controls}, the maximizing control is uniquely determined in a sufficiently small neighbourhood $\mathcal O _{\tilde \la} \subset T^*M$ of the trajectory $\tilde \la$. 
    Fix $O_{\la_{t_j}} \subset \mathcal{O}_{\tilde \la}$ a neighbourhood of $\tilde \la_{t_j}$. 
    By the piecewise regularity assumption we have $\{H^{j},H^{j+1}\}(\tilde \la_{t_j}) \neq 0$, which implies that $\vec H^j$ is transverse to the set $\{H^j=H^{j+1}\}$.
    It follows that $\mathcal L \cap O_{\tilde \la_{t_j}} \cap \{H^j=H^{j+1}\}$ is a $d-1$ dimensional smooth submanifold, which is $\partial \mathcal{L}_j$ in the notation of the statement. 
    After shrinking $O_{\tilde \la_{t_j}}$ if necessary, the set $\{H^j=H^{j+1}\}$ divides $\mathcal L \cap O_{\tilde \la_{t_j}}$ into two connected components, namely $\mathcal L \cap O_{\tilde \la_{t_j}} \cap \{H^j>H^{j+1}\}$ and $\mathcal L \cap O_{\tilde \la_{t_j}} \cap \{H^j<H^{j+1}\}$.  
    A Lipschitz coordinate chart is obtained by considering the foliation into extremal trajectories. 
    More precisely, let $\varphi : \partial \mathcal L_j \cap O_{\tilde \la_{t_j}} \to \R^{d-1}$ a local chart. 
    Furthermore, for $\la \in O_{\tilde \la_{t_j}}$, since $\vec H$ is transverse to $\{H^j=H^{j+1}\}$ there is $\tau(\la)\in \R$ such that $\Phi_{\tau(\la)}(\la) \in \{H^j=H^{j+1}\}$. 
    Then, a local chart at $\la_{t_{j}}$ is given by 
    \begin{equation}
        \Xi : \mathcal{L}\cap O_{\la_{t_j}} \to \R^{d}, \quad \Xi(\la) = (\varphi (\Phi_{\tau(\la)}(\la)) , \tau(\la) ). 
    \end{equation}
    It is straightforward to check that this is indeed a bi-Lipschitz map. 
    Finally, using these local coordinates it is not difficult to check that all the other considerations about tangent spaces hold true. 
\end{proof}

\subsection{Second derivatives for constrained maps}
The first derivative of a map $F : \U \to M$ at a point $u\in \U$ is a linear mapping $D_u F : T_u \U \to T_{F(u)}M$, which is computed as
\begin{equation}
    D_u F[v] 
    = 
    \left.\frac{d}{ds}\right|_{s=0} F(\gamma(s)),
\end{equation}
where $\gamma$ is a smooth curve with value in $\U$ and $\gamma(0)=u$, $\dot \gamma(0) = v$. 
A direct computation shows that the value of $D_u F[v]$ does not depend on the choice of $\gamma$ but only on the values $u\in \U$ and $v\in T_u\U$.

For second derivatives, one can define 
\begin{equation}
    \label{eq:def-sec-der}
    D_u ^2 F[v] 
    = 
    \left.\frac{d^2}{ds^2}\right|_{s=0} F(\gamma(s)).
\end{equation}
In this case, a direct computation shows that the right-hand side does not depend on $\gamma$ only if $v\in \ker D_u F$. 
Hence, if $u$ is a critical point, the second differential of $F$ is a well defined quadratic form $D^2 _u F : \ker D_u F \to \R$ which can be computed with Equation \eqref{eq:def-sec-der}. 
In particular, for constrained optimization problems as in \eqref{eq:constr-opt-problem}, we have that, if $u\in \U$ is a critical point of $J|_{F^{-1}(q_1)}$, then, since $T_u F^{-1}(q_1) = \ker D_u F$, the second differential of $J|_{F^{-1}(q_1)}$ is a quadratic form $D^2 _u J|_{F^{-1}(q_1)} : \ker D_u F \to \R $, which can be again computed using \eqref{eq:def-sec-der}. 

We recall also the following Proposition, which is proved for instance in \cite{AgBaBo}. 
In the case of a normal Lagrange multiplier, it gives a formula for evaluating $D^2 _u (J|_{F^{-1}(q_1)})$ from the functions $J$ and $F$ without constraints.
\begin{prop}
    Let $J : \U \to \R$ and $F : \U \to M$ be smooth functions. 
    Fix $q\in M$ and assume that $F$ is a submersion on $F^{-1}(q_1)$.
    Assume that $u \in F^{-1}(q_1)$ is a critical point for the map $J|_{F^{-1}(q_1)}$. 
    Suppose that $\la \in T^* M$ is a normal Lagrange multiplier of the critical point $u$. 
    Then, for $v \in \ker D_u F$, we have
    \begin{equation}
        \label{eq:sec-der-constrain}
        D^2_u \left( J|_{F^{-1}(q_1)} \right) (v)
        =
        D_u ^2 J (v) - \la D_u ^2 F(v). 
    \end{equation}
\end{prop}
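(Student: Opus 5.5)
The plan is to compute the constrained second derivative along a curve in $F^{-1}(q_1)$ and use the normal Lagrange multiplier to eliminate the first-order correction term. Since $F$ is a submersion along $F^{-1}(q_1)$, the level set is a smooth Banach submanifold with $T_u F^{-1}(q_1) = \ker D_u F$. For $v\in\ker D_uF$ we choose a smooth curve $\gamma:(-\eps,\eps)\to F^{-1}(q_1)$ with $\gamma(0)=u$ and $\dot\gamma(0)=v$. By definition, $D^2_u(J|_{F^{-1}(q_1)})(v) = \frac{d^2}{ds^2}\big|_{s=0} J(\gamma(s))$, and this value is independent of $\gamma$ because $u$ is a critical point of the restriction.

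We then work in local coordinates: a chart of $\U$ around $u$, which identifies a neighbourhood with an open subset of a Banach space, and a chart of $M$ around $q_1$. In these coordinates
\begin{equation}
    \frac{d^2}{ds^2}\Big|_{s=0} J(\gamma(s)) = D^2_uJ(v,v) + D_uJ[\ddot\gamma(0)],
\end{equation}
and, since $F(\gamma(s))\equiv q_1$,
\begin{equation}
    0 = \frac{d^2}{ds^2}\Big|_{s=0} F(\gamma(s)) = D^2_uF(v,v) + D_uF[\ddot\gamma(0)].
\end{equation}
Here $D^2_uJ$ and $D^2_uF$ denote the coordinate second derivatives.

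Next we pair the second identity with $\la\in T^*_{q_1}M$ and apply the normal multiplier relation \eqref{eq:def-Lagr-mult} with $\nu=1$. This gives $D_uJ[\ddot\gamma(0)] = \langle\la, D_uF[\ddot\gamma(0)]\rangle = -\langle\la, D^2_uF(v,v)\rangle$. Substituting into the first identity yields $D^2_uJ(v,v) - \la D^2_uF(v,v)$, which is \eqref{eq:sec-der-constrain}.

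The main subtlety is that the coordinate expressions $D^2_uJ(v)$ and $\la D^2_uF(v)$ are not separately intrinsic; only their combination is. We address this through the definition \eqref{eq:def-sec-der}. A change of chart alters $\frac{d^2}{ds^2}F(\gamma)$ by a term $D_uF[\cdot]$ plus a term from the Hessian of the target chart change applied to $(D_uF v, D_uF v)$. The latter vanishes because $v\in\ker D_uF$. The multiplier converts the former into exactly the corresponding change in $D^2_uJ$, so the right-hand side of \eqref{eq:sec-der-constrain} is chart independent for $v\in\ker D_uF$. I expect this well-posedness check to be the only step needing care; the rest is the routine computation above.
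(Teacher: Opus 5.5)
Your proof is correct. The paper gives no proof of this proposition and only cites \cite{AgBaBo}; your argument is the standard one: differentiate $J$ and $F$ twice along a curve in $F^{-1}(q_1)$, then use the normal multiplier to replace $D_uJ[\ddot\gamma(0)]$ by $-\la D^2_uF(v)$. Your final chart-independence check is harmless but not needed: the left-hand side is intrinsic and your computation holds in any pair of charts, so chart-independence of the right-hand side follows automatically.
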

Before concluding this part, we recall the following result, which shows the relation between the second differential of $J|_{F^{-1}(q_1)}$ and the local minimality of a critical point.
To state the result, we need the notion of Morse index. 
\begin{defn}[negative Morse index]
    Let $V$ be a vector space and $Q : V \to \R$ a quadratic form on $V$.
    The negative Morse index of $Q$ is 
    \begin{equation}
        \operatorname{ind}^- Q = \sup \{ \dim L \mid L \text{ linear subspace of } V, \, \dim L <+\infty, \, Q_{|L\setminus\{0\}} <0 \}. 
    \end{equation}
\end{defn}
\begin{thm}
    \label{thm:nec-cond-corank-conj-points}
    Let $F : \U \to M$ and $J : \U \to \R$ be smooth and let $u$ be a critical point of $J|_{F^{-1}(q_1)}$. 
    Let $\la\in T_q^*M$ be a normal Lagrange multiplier of $u$. 
    If we have 
    \begin{equation}
        \operatorname{ind}^- D^2 _u J|_{F^{-1}(q_1)}
        \geq 
        \operatorname{codim} \operatorname{im} D_u(F,J),
    \end{equation}
    then $u$ is not a local minimum of $J|_{F^{-1}(q_1)}$.
\end{thm}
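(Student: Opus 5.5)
The plan is to deduce the statement from local openness of the extended map $\Phi\coloneqq (F,J):\U\to M\times\R$ at $u$. If $\Phi$ is locally open at $u$, then for every neighbourhood $\mathcal W$ of $u$ the image $\Phi(\mathcal W)$ contains a point $(q_1,J(u)-\eps)$ with $\eps>0$. This gives $v\in\mathcal W\cap F^{-1}(q_1)$ with $J(v)<J(u)$, so $u$ is not a local minimum. The work is therefore to prove openness from the index hypothesis, following the scheme of \cite{AgSa}, Theorem 20.3.

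\textbf{Step 1 (linear algebra of the critical point).} Since $(\la,1)$ is a normal Lagrange multiplier, we have $D_uJ=\langle\la,D_uF\rangle$. Hence $\ker D_u\Phi=\ker D_uF$, and the covector $(\la,-1)$ annihilates $\operatorname{im}D_u\Phi$. Set $r\coloneqq\operatorname{codim}\operatorname{im}D_u\Phi\geq 1$ and let $N\coloneqq(\operatorname{im}D_u\Phi)^\perp$, which has dimension $r$. The intrinsic second derivative of $\Phi$ is the quadratic map
\[
\mathbf Q:\ker D_u\Phi\to \R^r\simeq (T_{\Phi(u)}(M\times\R))/\operatorname{im}D_u\Phi ,\qquad \ell\circ\mathbf Q=\ell\, D^2_u\Phi \quad (\ell\in N).
\]
For $\ell=(\la,-1)$, formula \eqref{eq:sec-der-constrain} gives $\ell\circ\mathbf Q=-D^2_u(J|_{F^{-1}(q_1)})$. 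Consequently $\ell\circ\mathbf Q$ has positive index at least $r$.

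\textbf{Step 2 (finite-dimensional reduction).} Choose a finite-dimensional subspace $E\subset T_u\U$ with $D_u\Phi(E)=\operatorname{im}D_u\Phi$. Choose also a subspace $L\subset\ker D_u\Phi$ of dimension at least $r$ on which the relevant forms $\ell\circ\mathbf Q$ have the required index; this is possible by finiteness of the indices and a compactness argument over the unit sphere of $N$. Restrict $\Phi$ to a smooth finite-dimensional family $u(e,w)$ with $e\in E$, $w\in L$, tangent to $E\oplus L$. Openness of this restriction implies openness of $\Phi$. By the implicit function theorem, solve for $e$ to absorb the components of $\operatorname{im}D_u\Phi$. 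This reduces the problem to openness at $0$ of a map $\Psi:L\to\R^r$ of the form $\Psi(w)=\mathbf Q(w)+O(|w|^3)$, up to smooth corrections.

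\textbf{Step 3 (openness of the quadratic model).} I would show that $\mathbf Q|_L$ has a regular zero, that is, a $w_0\neq0$ with $\mathbf Q(w_0)=0$ and $D_{w_0}\mathbf Q$ surjective. Then rescaling $w=s\,w_0+\dots$ and applying the implicit function theorem to $s^{-2}\Psi(s\,\cdot)$ gives openness, and hence a point of the image of the form $(q_1,J(u)-\eps)$. To find $w_0$, I would use the \cite{AgSa} argument. If $\mathbf Q(L)$ missed a neighbourhood of some direction, a separating covector $\ell\in N$ would satisfy $\ell\circ\mathbf Q\geq0$ on a subspace of codimension less than $r$. This contradicts $\operatorname{ind}^-(\ell\circ\mathbf Q)\geq r$. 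For the degenerate case, where there is no regular zero but the image is still full, use a small generic perturbation together with a degree (Brouwer) argument.

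\textbf{Main obstacle.} I expect the main obstacle to be in Step 3, namely controlling $\ell\circ\mathbf Q$ for \emph{all} $\ell\in N\setminus\{0\}$ and not only for the normal multiplier $(\la,-1)$. When $r=1$, $N$ is spanned by $(\la,-1)$ and the hypothesis applies directly. In general the abnormal directions $(\mu,0)\in N$ also enter. I would first try to exploit the fact that the hypothesis bounds the index on the whole of $\ker D_uF$: by a Sylvester-type inequality, $\operatorname{ind}^-$ of a form exceeding $r-1$ on an $r$-dimensional family of covectors still produces the separating contradiction above. If that fails, I would strengthen the hypothesis to the form used in \cite{AgSa}, namely index at least $r$ for every $\ell$ in the annihilator, which is what is available in the application to Theorem \ref{thm:nec-cond-of-optimality}.
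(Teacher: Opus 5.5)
The paper gives no proof of this statement. It is recalled from the literature, and the argument behind it is the corank criterion of Theorem 20.3 in \cite{AgSa}, which is the scheme you reproduce: openness of $(F,J)$ via a regular zero of the Hessian map $\mathbf Q$. The gap is the one you flag as the main obstacle, and it cannot be closed. The regular-zero step needs $\operatorname{ind}^-(\ell\circ\mathbf Q)\geq r$ for \emph{every} $\ell\in N\setminus\{0\}$. The hypothesis controls only $\ell=(-\lambda,1)$, for which $\ell\circ\mathbf Q=D^2_uJ-\lambda D^2_uF$. When $r\geq 2$ the abnormal covectors $(\mu,0)\in N$ are unconstrained. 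With the constrained Hessian read as in your Step 1, the statement is then false, so no Sylvester-type inequality can help. Take $\mathcal U=\mathbb R^4\ni(x_1,x_2,y_1,y_2)$, $M=\mathbb R$, $F=x_1^2+x_2^2$, $J=2(x_1y_1+x_2y_2)$, $q_1=0$, $u=0$. Then $D_0F=0$ and $D_0J=0$, so $r=2$ and every $\lambda\in\mathbb R$ is a normal multiplier. The form $D^2_0J-\lambda D^2_0F=2\sum_{i=1,2}(2x_iy_i-\lambda x_i^2)$ has negative index $2$ for every $\lambda$, since each summand takes both signs on its own plane. Yet $J\equiv 0$ on $F^{-1}(0)=\{x_1=x_2=0\}$, so $u$ is a local minimum. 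Consistently, the abnormal covector $(1,0)\in N$ gives $\ell\circ\mathbf Q=2(x_1^2+x_2^2)\geq 0$, of index $0$. Your fallback, imposing the index bound on all of $N$, therefore proves a different theorem, namely the one in \cite{AgSa}. Separately, your Step 3 sketch (a separating covector, then a perturbation and degree argument) does not establish the regular-zero lemma. That is a separate nontrivial lemma in \cite{AgSa}, which you should invoke rather than re-derive.

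Even for $r=1$, where the statement is true, your route asks for more than the hypothesis gives. Step 3 needs a regular zero of the scalar form $\mathbf Q$, i.e.\ $\mathbf Q$ indefinite, and only one sign is controlled. Take $F=y$, $J=-x^2$ on $\mathbb R^2\ni(x,y)$ with $q_1=0$. There $\lambda=0$, the constrained Hessian is negative definite, and $(F,J)$ is not open at $0$, although $0$ is not a local minimum. The case $r=1$ is in fact elementary. It forces $D_uF$ to be onto, since a covector $\mu\neq 0$ with $\mu\circ D_uF=0$ would give a second independent element $(\mu,0)$ of $N$. Hence $F^{-1}(q_1)$ is a submanifold near $u$ with tangent space $\ker D_uF$. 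Take a curve $\gamma\subset F^{-1}(q_1)$ with $\gamma(0)=u$, $\dot\gamma(0)=v$ and $(D^2_uJ-\lambda D^2_uF)(v)<0$. Then $\frac{d^2}{ds^2}J(\gamma(s))|_{s=0}=(D^2_uJ-\lambda D^2_uF)(v)<0$, so $u$ is not a local minimum. This submersion setting is also the only one in which the paper actually establishes its formula for $D^2_u(J|_{F^{-1}(q_1)})$.
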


\subsection{
$\mathcal{L}$-derivative: definition and general properties
}\label{subsec:def-L-derivative}
In the previous section, we saw that if a control $u\in \U$ is a critical point of the functional $J|_{F^{-1}(q_1)}$, then there is a Lagrange multiplier $\la$ satisfying Equation \eqref{eq:def-Lagr-mult}. 
The aim of this subsection is to introduce the $\mathcal{L}$-derivative, which is a Lagrangian subspace obtained by the linearization at a Lagrange point of the Lagrange multipliers equation. 
From a geometrical point of view, if $\mathcal{L}$ is a smooth manifold, as in the regular case for instance,
then the $\mathcal{L}$-derivative $L(u,\la)$ is the tangent space $T_{\lambda}\mathcal{L}$.
However, the $\mathcal{L}$-derivative is still well defined even if the set $\mathcal{L}$ is not a smooth manifold.
First, we give an intrinsic definition of the $\mathcal{L}$-derivative, which does not rely on any choice of coordinates.
Then, we give an explicit formula for it, depending on local coordinates.

Let $(u,\lambda)$ be a Lagrange point, that is $\Psi(u,\lambda)=0$, where $\Psi$ was defined in \eqref{eq:def-of-psi}. 
The differential of $\Psi$ at $(u,\lambda)$ is a linear map 
\begin{equation}
    D_{(u,\lambda)} \Psi : T_{(u,\lambda)} \big(F^*(T^*M) \big) \to T_{0}(T^*\U).
\end{equation}
Let $\mathcal{W}$ be a finite dimensional submanifold of $\U$. 
We denote by $F_\mathcal{W}\coloneqq F|_\mathcal{W}$ the restriction to $\mathcal{W}$ and we use similar notation for $J$, $\Psi$. 
\begin{defn}[Intrinsic $\mathcal{L}$-derivative]
    Let $\U$ be a Banach manifold modelled on a separable Banach space and $M$ a finite dimensional smooth manifold. 
    Let $\mathcal{W}$ be a finite dimensional submanifold of $\U$. 
    We define $\Lambda_\mathcal{W} \coloneqq \mathfrak p \circ [D_{(u,\lambda)}\Psi_\mathcal{W}]^{-1}(0)$, where $\mathfrak p : T_{(u,\lambda)} \big(F^*(T^*M) \big) \to T_{\la}(T^*M)$ is the differential of the canonical projection $F^*(T^*M) \ni (u,\lambda)\mapsto \lambda \in T^*M$. 
    The space $\Lambda_\mathcal{W}$ is the $\mathcal{L}$-derivative of the problem \eqref{eq:constr-opt-problem} at the Lagrange point $(u,\lambda)$ restricted to the submanifold $\mathcal{W}$.
    
    Now, take any sequence of finite-dimensional submanifolds $(\mathcal{W}_j)_{j\in\N}$, $\mathcal{W}_j\subset \U$ for every $j\in\N$, such that $\mathcal{W}_j\subset \mathcal{W}_{j+1}$ and $\overline{\cup_{j=1}^\infty \mathcal{W}_j}=\U$. 
    We define 
    \begin{equation}
        \Lambda \coloneqq \lim_{j\to+\infty} \Lambda_{\mathcal{W}_j},
    \end{equation}
    provided that the limit exists.
\end{defn}
This definition is clearly intrinsic, since the whole construction does not rely on coordinates.
However, it is useful to give a more explicit formula for the $\mathcal{L}$-derivative using local coordinates. 
Fix any local charts $M \to \mathbb R ^d$ and $\mathcal{U} \to B$. 
If we linearize this equation with respect to $u$ and $\la$, we obtain
\begin{equation}
    \label{eq:linearizz-Lagr-multip}
    \langle \xi , D_u F[w] \rangle 
    +
    \la D^2 _u F[v,w]
    =
    D^2_u J[v,w], 
    \quad
    D_u F[v] = x,
    \quad v,w\in T_{u} \U.
\end{equation}
\begin{defn}[$\mathcal{L}$-derivative with local coordinates]
    \label{def:L-deriv-with-coordinates}
    Let $\U$ be a Banach manifold modelled on a separable Banach space and $M$ a finite dimensional smooth manifold. 
    Let $F:\U \to M$ and $J : \U \to \R$ be two smooth maps. 
    Let $u\in F^{-1}(q_1)$ be a critical point of $J|_{F^{-1}(q_1)}$ and let $\la \in T^* _{q_1} M$ be its Lagrange multiplier. 
    Take a finite dimensional subspace $V \subset T_{ u} \U$.
    The $\cL$-derivative of $(F,J)$ at $(u,\la)$ over the subspace $V$ is the vector space 
    \begin{equation}
        \label{eq:def-L-deriv}
        L(u,\la)(V)
        =
        \{
            (\xi, x) \in T_\la (T^*M) 
            \mid
            \exists v \in V
            \text{ such that }
            \xi , v \text{ satisfy \eqref{eq:linearizz-Lagr-multip} for every } 
            w \in V 
        \}.
    \end{equation}
    Moreover, let $(V_k)_{k\in\N}$ be an increasing sequence of finite dimensional subspaces of $T_{u} \U$, that is every $V_k$ is finite dimensional, they satisfy $V_k\subset V_{k+1}$ and $\overline{\cup_{k\in\N}V_k}=T_{u}\U$. 
    We define the $\cL$-derivative of $(F,J)$ over $T_{u}\U$ as
    \begin{equation}
        \label{eq:def-L-der-inf-dimensional}
        L(u,\la) \coloneqq \lim_{k\to+\infty} L(u,\la)(V_k),
    \end{equation}
    where the limit in the right-hand side is with respect to the topology of the Grassmannian of $T_{\la}(T^*M)$.
    One can check that the limit does not depend on the choice of the sequence $(V_k)_{k\in\N}$.
\end{defn}
\begin{rem}
    We recall some properties of the $\mathcal{L}$-derivative, which are proved, for instance, in \cite{AgBes1}.
    \begin{enumerate}
        \item If $\dim V < +\infty$, then the space $L(u,\la)(V)$ is a Lagrangian subspace of $T_\la(T^*M)$, that is, denoting by $\sigma$ the standard symplectic form, we have $\sigma_{\la}|_{L(u,\la)(V)} = 0$ (see \cite{cime} for a proof of this fact). 
        \item It is necessary to define $L(u,\la)$ as a limit of the finite dimensional approximations $L(u,\la)(V_k)$ because Equation \eqref{eq:linearizz-Lagr-multip} might be ill-posed on infinite dimensional spaces and the resulting space might have dimension strictly less than $\dim M$. 
        \item The limit in \eqref{eq:def-L-der-inf-dimensional} might not exist in general. 
        However, if $D^2_u J|_{F^{-1}(q_1)}$ has finite negative Morse index, then one can prove that the limit exists. 
        \item If $V\subset T_u \mathcal{U}$ is a dense subspace, then $L(u,\lambda)=L(u,\lambda)(V)$.
    \end{enumerate}
\end{rem}

\section{Lipschitz maps and Lipschitz manifolds}
\label{app:cose-Lipschitz}
Throughout this section, $\Omega\subset\mathbb{R}^n$ is an open set and $f:\Omega\to\mathbb{R}^m$ denotes a locally Lipschitz continuous map. 
We write $\operatorname{co} S$ for the convex hull of a set $S$ and $\overline{S}$ for its closure. 

\begin{defn}[Clarke generalized differential / generalized Jacobian]
    \label{def:clarke-jacobian}
    Let $x\in \Omega$.
    The \emph{Clarke generalized differential} (or \emph{generalized Jacobian}) of $f$ at $x$ is
    \begin{equation}
        \partial f(x) 
        \coloneqq 
        \overline{\operatorname{co}} \Big\{ 
            \lim_{k\to\infty} Df(x_k) \in \mathbb{R}^{m\times n} 
            \colon \
            x_k \to x,\ x_k\ \text{are points of differentiability of }f \Big\}.
    \end{equation}
\end{defn}

\begin{prop}[Basic properties]
Let $f:\Omega\to\mathbb{R}^m$ be a locally Lipschitz continuous map and let $x\in \Omega$.
Then,
\begin{enumerate}
  \item $\partial f(x)$ is a non-empty, compact, convex subset of $\mathbb{R}^{m\times n}$.
  \item If $f$ is $C^1$ in a neighbourhood of $x$, then $\partial f(x)=\{Df(x)\}$.
  \item The set-valued map $x\mapsto\partial f(x)$ is locally bounded and upper semicontinuous
        in the Hausdorff sense.
\end{enumerate}
\end{prop}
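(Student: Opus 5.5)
The plan is to prove the three items directly from Definition~\ref{def:clarke-jacobian}. Write $D_f\subset\Omega$ for the set of points where $f$ is differentiable and $\partial_B f(x)$ for the set of all limits $\lim_k Df(x_k)$ with $x_k\to x$, $x_k\in D_f$, so that $\partial f(x)=\overline{\operatorname{co}}\,\partial_B f(x)$. Everything rests on two facts. First, by Rademacher's theorem $D_f$ has full Lebesgue measure in $\Omega$, so $D_f$ is dense. Second, if $f$ is $K$-Lipschitz on a ball $B(x,r)\subset\Omega$, then $\|Df(y)\|\le K$ for every $y\in D_f\cap B(x,r)$, because each difference quotient is bounded by $K$.

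For item 1, I first use density to choose $x_k\in D_f$ with $x_k\to x$. The matrices $Df(x_k)$ are eventually bounded by $K$, so a subsequence converges, and $\partial_B f(x)\neq\emptyset$. The same bound shows $\partial_B f(x)$ lies in the closed ball of radius $K$ in $\mathbb{R}^{m\times n}$, and a diagonal argument shows $\partial_B f(x)$ is closed, hence compact. In finite dimension the convex hull of a compact set is compact by Carath\'eodory, so the closure in the definition changes nothing. Thus $\partial f(x)$ is nonempty, convex and compact.

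For item 2, suppose $f$ is $C^1$ near $x$. Then every point near $x$ is a differentiability point, and $Df(x_k)\to Df(x)$ by continuity of $Df$. Hence $\partial_B f(x)=\{Df(x)\}$, and taking the convex hull gives $\{Df(x)\}$.

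For item 3, local boundedness follows from the same estimate: for $y\in B(x,r/2)$ every generalized Jacobian is contained in the $K$-ball. This is the step I expect to need the most care, because upper semicontinuity has to pass through the convex hull. I will first show that the graph of $\partial_B f$ is closed. If $y_j\to x$ and $A_j\in\partial_B f(y_j)$ with $A_j\to A$, I pick $z_j\in D_f$ with $|z_j-y_j|<1/j$ and $\|Df(z_j)-A_j\|<1/j$; then $z_j\to x$ and $Df(z_j)\to A$, so $A\in\partial_B f(x)$.

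Next I transfer this to $\partial f$ using Carath\'eodory. Each $B_j\in\partial f(y_j)$ can be written as $\sum_{i=0}^{N}\mu_{j,i}A_{j,i}$ with $N=mn$, $A_{j,i}\in\partial_B f(y_j)$ and weights $\mu_{j,i}\ge 0$ summing to $1$. All these data lie in compact sets, so I can extract a convergent subsequence. Closedness of the graph of $\partial_B f$ then shows that every limit of such $B_j$ lies in $\partial f(x)$. A closed graph together with local boundedness gives upper semicontinuity in the Hausdorff sense: for every $\varepsilon>0$ there is $\delta>0$ with $\partial f(y)\subset\partial f(x)+B_\varepsilon$ whenever $|y-x|<\delta$. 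I will prove this by contradiction, extracting a convergent subsequence from a violating sequence.
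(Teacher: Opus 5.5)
The paper gives no proof of this proposition: it is quoted in the appendix on Lipschitz maps as standard background from Clarke's nonsmooth analysis, so there is no argument in the paper to compare yours against.

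Your proof is correct and complete, and it is essentially the standard textbook argument. You use Rademacher's theorem for nonemptiness and the uniform bound $\|Df\|\le K$ at differentiability points for boundedness. A diagonal choice gives closedness of the graph of the set of limiting Jacobians. Carath\'eodory's theorem then lets you drop the closure in $\overline{\operatorname{co}}$ and transfer graph-closedness to the convex hulls. Finally, a compactness argument turns closed graph plus local boundedness into Hausdorff upper semicontinuity.
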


\begin{defn}[Clarke regular point]
    \label{def:Clarke-reg-point}
    We shall say that $x\in\Omega$ is a \emph{Clarke regular point} of $f$ if every matrix $A\in\partial f(x)$ has maximal rank. 
\end{defn}

\begin{thm}[Clarke's local invertibility theorem]
    Let $\Omega\subset \R^n$ be an open set, $f:\Omega\to\mathbb{R}^m$ be a locally Lipschitz continuous map and let $x_0$ be a Clarke regular point of $f$. 
    Then, there are $O_{x_0}$ and $O_{f(x_0)}$ open neighbourhoods of $x_0$ and $f(x_0)$ respectively such that the restriction $f : O_{x_0} \to O_{f(x_0)}$ is a bi-Lipschitz homeomorphism. 
\end{thm}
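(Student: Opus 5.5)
The plan is the classical argument of Clarke. Throughout I take $m=n$: for a map between open sets to be a bi-Lipschitz homeomorphism the dimensions must agree, and then ``maximal rank'' means invertible. The strategy is to turn the Clarke regularity of $x_0$ into a uniform lower bound $|f(y)-f(x)|\geq \delta|y-x|$ on a small ball, and to obtain openness of the image from invariance of domain.

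\emph{Step 1: a uniform invertibility margin.} By the basic properties, $\partial f(x_0)$ is compact and convex, and by hypothesis every element is invertible. Compactness gives $\delta>0$ with $|Av|\geq 3\delta|v|$ for all $A\in\partial f(x_0)$ and $v\in\R^n$. Let $\mathcal{C}$ be the closed $\delta$-neighbourhood of $\partial f(x_0)$ in operator norm. Then $\mathcal{C}$ is convex, being a neighbourhood of a convex set, and every $B\in\mathcal{C}$ satisfies $|Bv|\geq 2\delta|v|$. By upper semicontinuity of $x\mapsto\partial f(x)$ there is $r>0$ with $\partial f(z)\subset\mathcal{C}$ for all $z\in B(x_0,r)$. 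The convexity of $\mathcal{C}$ is the key point, since Step 2 only controls convex combinations of generalized Jacobians along segments.

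\emph{Step 2: mean value inclusion (main obstacle).} For $x,y\in B(x_0,r)$ I want to show that $f(y)-f(x)$ lies in $\overline{\operatorname{co}}\{A(y-x) : A\in\partial f(z),\ z\in[x,y]\}$. The difficulty is that the segment $[x,y]$ may lie entirely in the null set where $f$ is not differentiable.

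My first attempt is a Fubini perturbation. By Rademacher's theorem $f$ is differentiable a.e. So for almost every small $w$ the map $t\mapsto f(x+w+t(y-x))$ is differentiable with derivative $Df(x+w+t(y-x))(y-x)$ for a.e. $t$. Integrating gives
\begin{equation}
    f(y+w)-f(x+w)=\int_0^1 Df(x+w+t(y-x))(y-x)\,dt ,
\end{equation}
and the right-hand side lies in $\overline{\operatorname{co}}\{A(y-x):A\in\partial f(z),\ \operatorname{dist}(z,[x,y])\leq|w|\}$. I then let $w\to 0$ along such good perturbations, using continuity of $f$ and upper semicontinuity of $\partial f$.

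A cheaper variant suffices here. Only the conclusion $f(y)-f(x)\in\mathcal{C}(y-x)$ is needed, and since the points $z$ stay inside $B(x_0,r)$ for small $|w|$ (shrink $r$ slightly), each integrand already lies in $\mathcal{C}(y-x)$. Convexity and closedness of $\mathcal{C}$ then handle both the integral and the limit $w\to 0$.

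\emph{Step 3: conclusion.} From Step 2 and Step 1, $|f(y)-f(x)|\geq 2\delta|y-x|$ for all $x,y\in B(x_0,r)$. Hence $f$ is injective on $O_{x_0}\coloneqq B(x_0,r)$, with inverse Lipschitz of constant $(2\delta)^{-1}$. The map $f$ itself is Lipschitz on the ball. Since $f$ is continuous and injective on an open subset of $\R^n$, Brouwer's invariance of domain shows that $O_{f(x_0)}\coloneqq f(O_{x_0})$ is open and that $f:O_{x_0}\to O_{f(x_0)}$ is a homeomorphism. With the two Lipschitz bounds, it is a bi-Lipschitz homeomorphism.

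If one prefers to avoid invariance of domain, openness can be obtained from a degree argument. Compare $f$ with the homotopy $f_s(x)=f(x_0)+A_0(x-x_0)+s\,(f(x)-f(x_0)-A_0(x-x_0))$ for a fixed $A_0\in\partial f(x_0)$. Applying the bound of Step 2 to each $f_s$ (whose Clarke Jacobians lie in $\mathcal{C}$ by convexity), the homotopy stays bounded away from $f(x_0)$ on $\partial B(x_0,r)$, so the degree is $\pm1$ and a neighbourhood of $f(x_0)$ is covered.
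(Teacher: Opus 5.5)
Your proof is correct. The paper does not prove this statement. It is quoted in Appendix~\ref{app:cose-Lipschitz} as a standard result of nonsmooth analysis and is used only to conclude that $\pi|_{\mathcal M}$ is locally bi-Lipschitz, so the natural comparison is with Clarke's classical argument. Your restriction to $m=n$ is necessary, not merely convenient. With arbitrary $m$, as the paper writes it, the statement is false. For $m>n$, e.g.\ $x\mapsto(x,0)$, no neighbourhood has open image in $\R^m$. For $m<n$ the map cannot be injective. The case $m=n=d$ is exactly the one used in the proof of Theorem~\ref{thm:suff-cond-no-conj-points}.

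Your Steps 1--2 are essentially Clarke's injectivity lemma. He obtains $|f(y)-f(x)|\geq\delta|y-x|$ near $x_0$ by separating the compact convex set $\partial f(x_0)v$ from the origin with a unit functional. He then integrates along almost every segment in direction $v$, using the same Rademacher--Fubini perturbation you use. Your closed convex neighbourhood $\mathcal C$ does this in one step, and you rightly avoid proving the full mean value inclusion.

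One detail deserves a line: you need $Df(z)\in\mathcal C$ at differentiability points $z$ near $x_0$. This follows either from $Df(z)\in\partial f(z)$ (take a constant sequence in the definition) or directly from the definition of $\partial f(x_0)$ by a compactness argument.

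The real difference is in surjectivity. Clarke argues variationally: he minimizes $|f(x)-y|$ over a closed ball and uses the lower bound to force an interior minimizer $\bar x$. The nonsmooth Fermat rule and chain rule then give $A^{\top}(f(\bar x)-y)=0$ for some $A\in\partial f(\bar x)$, so $f(\bar x)=y$ by invertibility. Your route via invariance of domain is shorter once Brouwer's theorem is admitted. So is the degree homotopy, which is sound since $\partial f_s(z)=(1-s)A_0+s\,\partial f(z)\subset\mathcal C$. Both need no nonsmooth calculus beyond Rademacher's theorem. Clarke's route stays entirely within nonsmooth analysis and avoids algebraic topology.
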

Now, we introduce Lipschitz manifolds, which are the analogous objects of differentiable manifolds but with Lipschitz regularity. 
For brevity of exposition, we use only the immersed point of view, but every definition given here has a natural reformulation in terms of Lipschitz maps.  

\begin{defn}[Lipschitz manifold]
    We say that $M \subset \mathbb R^n$ is a Lipschitz immersed manifold if, for every $x\in M$, there is an open neighbourhood $O_x$ of $x$ and a locally Lipschitz map $f:O_x\to\mathbb{R}^m$ such that $M$ is the zero level set of $f$:
    \begin{equation}
        M\cap O_x = f^{-1}(0).
    \end{equation}
\end{defn}

\begin{defn}[Clarke tangent cone]
    Let $M$ be a Lipschitz immersed manifold. 
    The \emph{Clarke tangent cone} at $x\in M$ is the set
    \begin{equation}
        T_x M \coloneqq \bigcup_{A\in\partial f(x)} \ker A.
    \end{equation}
\end{defn}
If $M$ is a $C^1$ submanifold and $x\in M$, then $T_x M$ coincides with the classical tangent space.

\section{The transported-back and the maximized Hamiltonian flows}
\label{app:flow-pullback}
In this Appendix we establish the connection between the linearized flow of the maximized Hamiltonian function and the flow of the regular arcs of the Jacobi curves. 

As before, we let $(\tilde u, \tilde \la)$ be a piecewise regular extremal pair, we denote by $H$ the maximized Hamiltonian, $\Phi_t$ denotes the Hamiltonian flow on $T^*M$ of $H$ and $\widetilde \Phi _t$ the Hamiltonian flow of $h_{\tilde u(t)}$. 
We consider the transported-back flows $\phi_t ^j = ( \widetilde \Phi_{0,t} )^{-1} \circ \Phi_{t-t_{j-1}} \circ \widetilde \Phi_{0,t_{j-1}}$, for $t\in(t_{j-1},t_j)$.
Moreover, we denote by $b_t ^j$ the Hamiltonian function defining the Jacobi equation \eqref{eq:Jacobi-pw-reg} in the interval $t\in(t_{j-1},t_j)$:
\begin{equation}
    b_t ^j
    \coloneqq
    -\frac{1}{2}
    \big \langle
        Z_t (D^2 \matheuler h_t )^{-1}\sigma_{\tilde \la_0 }( Z_t \cdot, \eta) 
        , 
        \sigma_{\tilde \la_0 }( Z_t \cdot, \eta)
    \big \rangle.
\end{equation}
Finally, we denote by $B_t^j$ the Hamiltonian flow associated to $b_t ^j$:
\begin{equation}
    B_t^j
    \coloneqq
    \overrightarrow {\exp} \int_{t_{j-1}} ^t
    \vec b_s ^j 
    ds.
\end{equation}

\begin{prop}
\label{prop:flow-Jacobi-is-lin-flow-maxim-hamilt}
For every $j=1,\dots,k+1$, the following facts hold true:
\begin{enumerate}
    \item the flow $\phi_t^j$ is generated by the Hamiltonian function $\matheuler H_t ^j = (H^j - h_{u(t)}) \circ \widetilde \Phi_{0,t}$, that is
    \begin{equation}
        \phi_t ^j 
        = 
        ( \widetilde \Phi_{0,t_{j-1}} )^{-1} 
        \circ 
        \overrightarrow {\exp} \int_{t_{j-1}} ^t
        \vec H ^j - \vec h_{u(s)}
        ds
        \circ 
        \widetilde \Phi_{0,t_{j-1}},
        \quad 
        t\in[t_{j-1},t_j]
    \end{equation}
    \item the quadratic Hamiltonian function $b_t ^j : T_{\tilde \la_0}(T^*M) \to \R$ coincides with the Hessian of $\matheuler H_t ^j$:
    \begin{equation}
        b_t ^j = \frac{1}{2} \operatorname{Hess}_{\tilde \lambda_0} \matheuler H_t ^j .
    \end{equation}
    In particular $B_t = (\phi_t ^j)_*$. 
\end{enumerate}
\end{prop}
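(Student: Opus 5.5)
I will prove the two points in order, working on a single regular interval $(t_{j-1},t_j)$ and using chronological calculus.

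\emph{Point 1.} By definition, $\phi^j_t = \widetilde\Phi_{0,t}^{-1}\circ\Phi_{t-t_{j-1}}\circ\widetilde\Phi_{0,t_{j-1}}$. On the interval, $\Phi_{t-t_{j-1}}$ near $\tilde\la_{t_{j-1}}$ is the flow of $\vec H^j$ (Proposition \ref{prop:pw-regular-controls}), and $\widetilde\Phi_{0,t}=\widetilde\Phi_{t_{j-1},t}\circ\widetilde\Phi_{0,t_{j-1}}$. Hence
\[
\phi^j_t=\widetilde\Phi_{0,t_{j-1}}^{-1}\circ\big(\widetilde\Phi_{t_{j-1},t}^{-1}\circ e^{(t-t_{j-1})\vec H^j}\big)\circ\widetilde\Phi_{0,t_{j-1}}.
\]
The variation formula of chronological calculus (\cite{AgBaBo}, Chapter 6) rewrites the middle factor as $\overrightarrow{\exp}\int_{t_{j-1}}^t (\widetilde\Phi_{t_{j-1},s}^{-1})_*(\vec H^j-\vec h_{\tilde u(s)})\,ds$. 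Conjugating by $\widetilde\Phi_{0,t_{j-1}}$ turns the integrand into $(\widetilde\Phi_{0,s}^{-1})_*(\vec H^j-\vec h_{\tilde u(s)})$. Symplectomorphisms map Hamiltonian fields to Hamiltonian fields, so this is the Hamiltonian field of $\matheuler H^j_s=(H^j-h_{\tilde u(s)})\circ\widetilde\Phi_{0,s}$, which gives point 1. The expression stated in point 1 is exactly this identity before the conjugation is absorbed.

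\emph{Point 2.} By PMP, $\matheuler H^j_t(\tilde\la_0)=0$ and $\matheuler H^j_t\ge 0$, so $\tilde\la_0$ is a critical point of $\matheuler H^j_t$ for every $t$. Consequently $\phi^j_t(\tilde\la_0)=\tilde\la_0$, and the linearization $(\phi^j_t)_*$ at the fixed point is the flow of the quadratic Hamiltonian $\frac12\operatorname{Hess}_{\tilde\la_0}\matheuler H^j_t$. This is the standard fact that the linearization of a Hamiltonian flow at an equilibrium is generated by half the Hessian.

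It then remains to identify $\frac12\operatorname{Hess}\matheuler H^j_t$ with $b^j_t$. I will write
\[
\matheuler H^j_t(\la)=\max_u \matheuler h_t(\la,u)=\matheuler h_t\big(\la,\bar u(\la)\big),
\]
where $\bar u$ is the smooth maximizer given by the implicit function theorem under \eqref{eq:strong-legendre-condition-intervall}, with $\bar u(\tilde\la_0)=\tilde u(t)=0$. Differentiating $\partial_u\matheuler h_t(\la,\bar u(\la))=0$ gives
\[
d\bar u=-(\partial_u^2\matheuler h_t)^{-1}\partial_\la\partial_u\matheuler h_t .
\]
The second-order Taylor expansion at $\tilde\la_0$ then yields
\[
\operatorname{Hess}\matheuler H^j_t=\partial^2_\la\matheuler h_t-\big\langle(\partial_u^2\matheuler h_t)^{-1}\partial_\la\partial_u\matheuler h_t,\ \partial_\la\partial_u\matheuler h_t\big\rangle .
\]
The first term vanishes because $\matheuler h_t(\cdot,0)\equiv 0$ (recall $\matheuler h_t(\la,0)=(h_{\tilde u(t)}-h_{\tilde u(t)})\circ\widetilde\Phi_{0,t}$). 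For the second term, $\partial_\la\partial_u\matheuler h_t[v]$ at $\tilde\la_0$ is the differential of the function whose Hamiltonian field is $Z_tv$, so $\partial_\la\partial_u\matheuler h_t[v](\eta)=\sigma(\eta,Z_tv)=-\sigma(Z_tv,\eta)$. Substituting reproduces $b^j_t$ up to the factor $\frac12$ and sign, giving $b^j_t=\frac12\operatorname{Hess}\matheuler H^j_t$ and hence $B_t=(\phi^j_t)_*$.

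I expect the main obstacle to be this sign and normalization bookkeeping: the convention $dh=\sigma(\cdot,\vec h)$, the orientation of $\sigma(Z_t\cdot,\eta)$, and the two signs of $D^2\matheuler h_t$ must combine to exactly $-\frac12$. I will check it against the one-dimensional example $h_u=pu-\phi(u)+V(q)$, where both sides can be computed explicitly.
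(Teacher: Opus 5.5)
Your proof is correct. Point 1 is the paper's argument: the chronological variation formula applied to $\widetilde\Phi_{t_{j-1},t}^{-1}\circ e^{(t-t_{j-1})\vec H^j}$. Writing the integrand as $(\widetilde\Phi_{0,s}^{-1})_*(\vec H^j-\vec h_{\tilde u(s)})$ is what the paper's displayed formula has to mean. Like the paper, you tacitly read $\Phi_{t-t_{j-1}}$ as $e^{(t-t_{j-1})\vec H^j}$. This reading is needed for $(\phi^j_t)_*$ to exist, because near the switching point $\tilde\la_{t_{j-1}}$ the Lipschitz flow $\Phi$ agrees with $e^{s\vec H^j}$ only on one side of $\{H^{j-1}=H^j\}$.

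For point 2 the routes differ. The paper cites Proposition 21.3 of \cite{AgSa} for $j=1$, and for $j>1$ applies it at the base point $\tilde\la_{t_{j-1}}$ and conjugates by $\widetilde\Phi_{0,t_{j-1}}$. You instead reprove that proposition directly at $\tilde\la_0$, uniformly in $j$, from two elementary facts. First, the linearized flow at a point that is an equilibrium for all $t$ is generated by half the Hessian. Second, the implicit-function formula gives the Hessian of $\max_u\matheuler h_t$. The paper's version is shorter but rests on the external reference plus a conjugation step; yours is self-contained and produces the quadratic form explicitly.

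The sign bookkeeping you worry about closes cleanly. The factor $\sigma(\eta,Z_tv)=-\sigma(Z_tv,\eta)$ appears twice, so you get exactly $\frac12\operatorname{Hess}_{\tilde\la_0}\matheuler H^j_t(\eta)=-\frac12\langle(D^2\matheuler h_t)^{-1}\sigma(Z_t\cdot,\eta),\sigma(Z_t\cdot,\eta)\rangle$. State it this way rather than ``up to the factor $\frac12$ and sign''. Its Hamiltonian vector field is precisely the right-hand side of \eqref{eq:Jacobi-pw-reg}. This is the paper's $b^j_t$ once its displayed formula is made to typecheck: either drop the leading $Z_t$, or read the outer pairing as $\sigma(\cdot,\eta)$; both give your expression. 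It also yields $b^j_t\ge 0$ directly from $D^2\matheuler h_t<0$.
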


\begin{proof}
    Concerning the first point, we have that 
    \begin{equation}
        \phi_t ^j 
        = 
        ( \widetilde \Phi_{0,t} )^{-1} 
        \circ 
        \Phi_{t-t_{j-1}} 
        \circ 
        \widetilde \Phi_{0,t_{j-1}}
        =
        ( \widetilde \Phi_{0,t_{j-1}} )^{-1} 
        \circ 
        ( \widetilde \Phi_{t_{j-1},t} )^{-1} 
        \circ 
        \Phi_{t-t_{j-1}} 
        \circ 
        \widetilde \Phi_{0,t_{j-1}},
    \end{equation}
    and applying the standard variation formula for non-autonomous flows (see Proposition 6.16 in \cite{AgBaBo}), one obtains
    \begin{equation}
        ( \widetilde \Phi_{t_{j-1},t} )^{-1} 
        \circ 
        \Phi_{t-t_{j-1}} 
        =
        \overrightarrow {\exp} \int_{t_{j-1}} ^t
        \vec H ^j - \vec h_{u(s)}
        ds,
    \end{equation}
    which yields the required equality.

    Now, we prove point 2. in the statement. 
    Since $t_0=0$ and $ \widetilde \Phi_{0,t_{0}}=\mathrm{Id}$, for $j=1$ point 2. reduces to Proposition 21.3 in \cite{AgSa}.
    For $j>1$, the same Proposition shows that 
    \begin{equation}
        (\psi_t ^j)_*
        \coloneqq
        \left(
        \overrightarrow {\exp} \int_{t_{j-1}} ^t
        \vec H ^j - \vec h_{u(s)}
        ds
        \right)_*,
    \end{equation}
    generates the flow of the regular arc of the Jacobi curve but based at the point $\tilde \la_{t_j}$, that is 
    $ 
    (\psi_t ^j)_* 
    = 
    (\widetilde\Phi_{0,t_{j-1}})_* \circ B_t ^j \circ (\widetilde\Phi_{0,t_{j-1}})_* ^{-1} 
    $ is a map from $T_{\tilde \la _j}(T^*M)$ into itself.
    In particular, again by the same Proposition, $(\psi_t ^j)_*$ is Hamiltonian with Hamiltonian function equal to the Hessian of $\frac{1}{2}(H^j - h_{u(t)}) \circ \widetilde \Phi_{t_{j-1},t}$ at $\tilde \la_0$. 
    Hence, we can transport back to the space $T_{\tilde \la _0}(T^*M)$ by considering the composition 
    $
    ( \widetilde \Phi_{0,t_{j-1}} )^{-1} 
    \circ
    \psi_t ^j
    \circ 
    \widetilde \Phi_{0,t_{j-1}} 
    $, 
    which is exactly $\phi_t ^j$. 
\end{proof}

\newpage
\printbibliography

\end{document}